\documentclass[10pt]{amsart}
\setlength{\textheight}{21cm} \setlength{\textwidth}{14.5cm}
\setlength{\topmargin}{1cm} \setlength{\oddsidemargin}{1cm}

\usepackage{pgf,tikz}
\usepackage{braids}
\usetikzlibrary{decorations.pathreplacing}
\usepackage{caption}
\usepackage{amssymb}
\usepackage{stmaryrd}
\usepackage{mathrsfs}
\usepackage[all]{xy}

\makeatletter
  \def\@wrindex#1{%
    \protected@write\@indexfile{}%
      {\string\indexentry{#1}{ \S\thesubsection (p.\thepage)}}
    \endgroup
  \@esphack}

\makeatother

\makeindex

\newcommand{\AAA}{\varpi}
\newcommand{\BB}{\rho}
\newcommand{\DR}{\mathrm{DR}}
\newcommand{\B}{\mathrm{B}}
\newcommand{\KZ}{\mathrm{KZ}}
\newcommand{\topo}{\mathrm{top}}
\newcommand{\fil}{\mathrm{fil}}
\newcommand{\gr}{\mathrm{gr}}

\usepackage{rotating}
\usepackage{amscd}
\usepackage{epsfig}

\author{Benjamin Enriquez}
\author{Hidekazu Furusho}

\address{Institut de Recherche Math\'{e}matique Avanc\'{e}e, UMR 7501, 
Universit\'{e} de Strasbourg et CNRS, 7
rue Ren\'{e} Descartes, 67000 Strasbourg, France}
\email{enriquez@math.unistra.fr}

\address{Graduate School of Mathematics, Nagoya University, 
Furo-cho, Chikusa-ku, Nagoya, 464-8602, Japan}
\email{furusho@math.nagoya-u.ac.jp}

\date{March 7, 2021}
\newtheorem{thm}{Theorem}[section]
\newtheorem{lem}[thm]{Lemma}
\newtheorem{lemdef}[thm]{Lemma-Definition}
\newtheorem{cor}[thm]{Corollary}
\newtheorem{prop}[thm]{Proposition}

{\theoremstyle{definition} \newtheorem{rem}[thm]{Remark}}
{\theoremstyle{definition} \newtheorem{defn}[thm]{Definition}}

{\theoremstyle{remark} }

\numberwithin{equation}{subsection}
\numberwithin{figure}{section}

\begin{document}

\baselineskip 16pt 

\title[The Betti side of the double shuffle theory. I. The harmonic coproducts]{The Betti side of the 
double shuffle theory. \\  I. The harmonic coproducts}

\begin{abstract}
This paper is the first in a series which aims at: (a) giving a proof that the
associator relations between multizeta values imply the double shuffle and regularization
(DSR) ones, alternative to that of the second-named author's 2010 paper; 
(b) enhancing Racinet's construction of 
a torsor structure over the $\mathbb Q$-scheme of DSR relations to an explicit bitorsor structure.

In this paper, we revisit Racinet's original DSR formalism, whose main character is an
algebra coproduct, called the harmonic coproduct, and we introduce a variant which is a
module coproduct; we explain the `de Rham' nature of this formalism and construct a `Betti'
counterpart of it; we show how both formalisms can be interpreted in terms of geometry,
following the ideas of Deligne and Terasoma's unfinished 2005 preprint; 
we use Bar-Natan's interpretation of associators as functors from
the category of parenthesized braids to that of chord diagrams to show that any associator
relates the Betti and de Rham geometric objects, both in the `algebraic' and in the `module'
setups; we derive that any associator relates the Betti and de Rham algebra coproducts, as
well as their module counterparts. These results will be used in the next parts of the series. 
\end{abstract}
\bibliographystyle{amsalpha+}
\maketitle

{\footnotesize \tableofcontents}

\section*{Introduction}

\subsection{The context}

\subsubsection{The bitorsor of mixed Tate motives over $\mathbb Z$}

\cite{DG} contains the construction of a $\mathbb Q$-linear neutral Tannakian category $\mathcal T:=
\mathrm{MT}(\mathbb Z)_{\mathbb Q}$ of mixed Tate motives over $\mathbb Z$, equipped with `Betti' and `de Rham' fiber 
functors $\omega_\B,\omega_\DR:\mathcal T\to\mathrm{Vec}_{\mathbb Q}$ to the category of finite dimensional 
$\mathbb Q$-vector spaces. It gives rise to a $\mathbb Q$-scheme ${\mathsf{Isom}}^{\otimes}_{\mathcal T}
(\omega_\B,\omega_\DR) : \mathbf k\mapsto{\mathrm{Isom}}^{\otimes}_{\mathcal T}(\omega_\B\otimes\mathbf k,
\omega_\DR\otimes\mathbf k)$, and to `motivic Galois' $\mathbb Q$-group schemes ${\mathsf{Aut}}^{\otimes}_{\mathcal T}
(\omega_?):\mathbf k\mapsto{\mathrm{Aut}}^{\otimes}_{\mathcal T}(\omega_X\otimes\mathbf k)$, for 
$?\in\{\B,\DR\}$ (denoted $G^?$ in \cite{DG}), $\mathbf k$ running over $\mathbb Q$-algebras. Then 
${\mathsf{Isom}}^{\otimes}_{\mathcal T}(\omega_\B,\omega_\DR)(\mathbf k)$ is equipped with commuting free and transitive 
actions of ${\mathsf{Aut}}^{\otimes}_{\mathcal T}(\omega_\DR)(\mathbf k)$ from the left, and of 
${\mathsf{Aut}}^{\otimes}_{\mathcal T}(\omega_\B)(\mathbf k)$ from the right.

The triple formed by this set and these two groups is a {\it bitorsor,}
meaning that the set is equipped with commuting left and right, free and transitive actions of
the groups, and the assignment of the triple to $\mathbf k$ is then a {\it bitorsor $\mathbb Q$-scheme.}

The category $\mathcal T$ is closely connected with the family of real 
numbers known as multiple zeta values (MZVs), which are given by 
\begin{equation}\label{def:MZV:3jan2020}
\forall s\geq 1,k_1\geq 1,\ldots,k_{s-1}\geq1,k_s\geq 2, \quad \zeta(k_1,\ldots,k_s)=\sum_{0<n_1<\cdots<n_s}
{1\over n_1^{k_1}\cdots n_s^{k_s}};
\index{zeta(k_1,\ldots,k_s)@$\zeta(k_1,\ldots,k_s)$} 
\end{equation}
it follows from their integral expressions that these numbers, as well as\footnote{We set $\mathrm{i}:=\sqrt{-1}\index{i@$\mathrm{i}$}$.}  
$2\pi\mathrm{i}$, are periods of mixed Tate motives. 

\subsubsection{The bitorsor of associators}

Combining \cite{Dr} and \cite{LM}, one can exhibit a family of algebraic relations satisfied by the MZVs and $2\pi\mathrm{i}$, the 
`associator relations'. The corresponding $\mathbb Q$-scheme is the scheme\footnote{An affine $\mathbb Q$-scheme (resp., 
$\mathbb Q$-group scheme) $\mathsf X$ can be identified with a representable functor from the category of $\mathbb Q$-algebras 
to that of sets (resp. groups), denoted $\mathbf k\mapsto \mathsf X(\mathbf k)$, for $\mathbf k$ running over 
$\mathbb Q$-algebras.
} of associators 
$\mathsf M$; 
it is equipped with commuting free and transitive actions of affine $\mathbb Q$-group schemes 
$\mathsf{GT}$ and $\mathsf{GRT}$ (the Grothendieck-Teichm\"uller group 
scheme and its graded version) from the left and from the right, so that $\mathsf M$ is a bitorsor $\mathbb Q$-scheme. 

In \cite{An}, it is explained that there is a bitorsor $\mathbb Q$-scheme morphism 
${\mathsf{Isom}}_{\mathcal T}^{\otimes}(\omega_\B,\omega_\DR)\to \mathsf M$, i.e., a morphism
between these $\mathbb Q$-schemes, together with compatible $\mathbb Q$-group scheme morphisms 
${\mathsf{Aut}}_{\mathcal T}^{\otimes}(\omega_\B)\to\mathsf{GT}$ and  
${\mathsf{Aut}}_{\mathcal T}^{\otimes}(\omega_\DR)\to \mathsf{GRT}$. 

\subsubsection{The double shuffle torsor}

In \cite{IKZ,Rac} (see also \cite{G}), another family of algebraic relations satisfied by MZVs and $2\pi\mathrm{i}$ was exhibited, the 
`double shuffle and regularization' (DSR) relations. The corresponding $\mathbb Q$-scheme was constructed in \cite{Rac}: it is a 
functor\footnote{$\mathsf{DMR}$ stands for the French `double m\'elange et r\'egularisation'.} 
$\mathsf{DMR}^{\DR,\B}:=(\mathbf k\mapsto \sqcup_{\mu\in\mathbf k^\times}\{\mu\}\times 
\mathsf{DMR}_\mu(\mathbf k))$, which is equipped with a free and transitive left action of a $\mathbb Q$-group scheme
$\mathsf{DMR}^\DR:=(\mathbf k\mapsto \mathbf k^\times\ltimes(\mathsf{DMR}_0(\mathbf k),\circledast))$, therefore 
with a torsor $\mathbb Q$-scheme structure. An interpretation of $\mathsf{DMR}^\DR$ in terms of a stabilizer group was
obtained in \cite{EFu}. 

\subsubsection{Relations between the associator and double shuffle schemes}

The unfinished preprint \cite{DT}, based on the ideas of Deligne's letter to Racinet (April 21, 2001), contains an attempt 
at constructing a $\mathbb Q$-scheme inclusion $\mathsf M\hookrightarrow\mathsf{DMR}^{\DR,\B}$, using constructions involving 
the categories of perverse sheaves over the moduli space $\mathfrak M_{0,4}$ and $\mathfrak M_{0,5}$. 

In \cite{F}, a $\mathbb Q$-scheme inclusion $\mathsf M\hookrightarrow\mathsf{DMR}^{\DR,\B}$ was constructed; a compatible 
$\mathbb Q$-group scheme inclusion $\mathsf{GRT}^{\mathrm{op}}\hookrightarrow \mathsf{DMR}^\DR$ can 
be constructed, so that 
this $\mathbb Q$-scheme inclusion can be upgraded to a $\mathbb Q$-torsor inclusion. This work is based on the construction, 
based on bar-complex techniques and the combinatorics of multiple polylogarithms, of explicit linear forms on the algebra 
$U(\mathfrak p_5)$, in which the associator relations take place, and on study of the interaction of these linear forms
with the associator relations. 

\subsection{The motivation and objectives of the series of papers} 

The main motivation of this series of papers are (1) to gain a better understanding of the inclusion 
$\mathsf M\subset\mathsf{DMR}^{\DR,\B}$, in particular whether it is an equality (2) making explicit the bitorsor structure of 
$\mathsf{DMR}^{\DR,\B}$. In order to reach (1), it seems useful to obtain a conceptual interpretation of this inclusion. 

The objectives of the series of papers are therefore: (a) to extend the interpretation of the group $\mathsf{DMR}^\DR$ in 
terms of a stabilizer (\cite{EFu}) to the torsor $\mathsf{DMR}^{\DR,\B}$, for which the ideas of \cite{DT} will prove to be useful; 
(b) to give a proof of 
the inclusion $\mathsf M\hookrightarrow\mathsf{DMR}^{\DR,\B}$ based on this interpretation and on the interpretation of 
$\mathsf M$ obtained in \cite{BN}; (c) to enhance the torsor structure of $\mathsf{DMR}^{\DR,\B}$ into a bitorsor structure.

The material of the series is distributed as follows: the present part I is preparational; part II (\cite{EF2}) will be devoted to objectives (a) and (b), and part III (\cite{EF3}) to objective (c). 

\subsection{The contents of the present paper (part I of the series)}

After treating categorical and topological preliminaries (Part 1), we revisit the DSR formalism and construct its Betti counterpart 
(Part 2); interpret both formalisms in terms of braids and moduli spaces (Part 3); and derive the compatibility of their main 
constituents with associators (Part 4). The corresponding compatibility statements are Theorem 10.9 (compatibility of harmonic 
algebra coproducts with associators) and Theorem 11.13 (compatibility of harmonic module coproducts with associators). These 
statements will play a key role in the next papers of the series and should be viewed as the main results of the present paper. 
The necessary material for understanding their formulation is to be found in: \S\S\ref{sect:assoc:20032018} and 
\ref{subsect:Gamma:fun:20032018} (associators and their $\Gamma$-functions), \S\S\ref{sect:1:1:1:crm} and 
\ref{sect:completions:DR} (definitions of the algebra $\hat{\mathcal W}^{\DR}$ and of the module 
$\hat{\mathcal M}^{\DR}$), \S\S\ref{sect:tcDaD} and \ref{sect:completions:DR} (definitions of the de Rham harmonic algebra 
coproduct $\hat\Delta^{\mathcal W,\DR}$ and of its module version $\hat\Delta^{\mathcal M,\DR}$), \S\S\ref{sect:2:1:28oct} 
and \ref{sect:compl:28oct} (definitions of the algebra $\hat{\mathcal W}^{\B}$ and of the module $\hat{\mathcal M}^{\B}$), 
\S\S\ref{sect:2:3:28oct} and \ref{sect:compl:28oct} (definitions of the Betti harmonic coproduct $\hat\Delta^{\mathcal W,\B}$ 
and of its module version $\hat\Delta^{\mathcal M,\B}$), and \S\ref{sect:3:3:28oct} (definitions of the algebra isomorphism 
$\mathrm{comp}^{\mathcal W,(1)}_{(\mu,\Phi)}$ and of the module isomorphism 
$\mathrm{comp}^{\mathcal M,(10)}_{(\mu,\Phi)}$).

\subsubsection{Conventions}\label{conventions:0703}
 
Throughout the paper, $\mathbf k$\index{k@$\mathbf k$} is a commutative and associative $\mathbb Q$-algebra.
By a $\mathbf k$-algebra we mean a $\mathbf k$-module, equipped with a $\mathbf k$-bilinear associative product; 
a  $\mathbf k$-algebra is called unital if it contains a (necessarily unique) unity element.  
 We denote by $A^\times$ the group of invertible elements of an associative unital $\mathbf k$-algebra $A$. For 
$u\in A^\times$, we denote by $\mathrm{Ad}(u)$ the inner automorphism of $A$ induced by $u$, so 
$\mathrm{Ad}(u)(a)=uau^{-1}$ for $a\in A$. If $f:A\to B$ is a morphism of $\mathbf k$-algebras, if $M,N$ are respectively an 
$A$-module and a $B$-module, then a $\mathbf k$-module morphism $g:M\to N$ is said to be {\it compatible with $f$} 
iff $g(am)=f(a)g(m)$ for any $a\in A$, $m\in M$. 

\subsubsection{Acknowledgements}

The collaboration of both authors has been supported by grants JSPS KAKENHI JP15KK0159 and JP18H01110. 

\part{Categorical preliminaries}\label{part:categorical}

In this section, we introduce various symmetric monoidal categories of $\mathbf k$-modules and functors relating them. 
This material will be used through the rest of the paper. 

\begin{defn}\label{def:cat}
(1) $\mathbf k\text{-mod}_{\gr}$\index{k-mod_gr@$\mathbf k\text{-mod}_{\gr}$} is the category where objects are $\mathbb Z$-graded $\mathbf k$-modules and where morphisms are $\mathbf k$-module morphisms of degree $0$; 

(2) $\mathbf k\text{-mod}_{\fil}$\index{k-mod_fil@$\mathbf k\text{-mod}_{\fil}$} is the category where objects are pairs $(M,i\mapsto F^iM)$, where $M$ is a $\mathbf k$-module
and $i\mapsto F^iM$ is a map $\mathbb Z\to\{\mathbf k$-submodules of M$\}$, which is decreasing (i.e. $F^iM\supset F^{i+1}M$ for 
$i\in\mathbb Z$), and where the set of morphisms from $(M,i\mapsto F^iM)$ to $(N,i\mapsto F^iN)$ is the set of $\mathbf k$-module morphisms 
$f:M\to N$, which are compatible with the filtrations on both sides. We denote by $F^if:F^iM\to F^iN$ the induced $\mathbf k$-module map
corresponding to $i\in\mathbb Z$; 

(3) $\mathbf k\text{-mod}_{\gr,+}$\index{k-mod_gr+@$\mathbf k\text{-mod}_{\gr,+}$} is the full subcategory of $\mathbf k\text{-mod}_{\gr}$ of bounded below $\mathbb Z$-graded $\mathbf k$-modules, i.e. of modules $M=\oplus_{i\in\mathbb Z}M_i$,  such that there exists $i(M)\in\mathbb Z$, such that $M_i=0$ for $i<i(M)$; 

(4) $\mathbf k\text{-mod}_{\fil,+}$\index{k-mod_fil+@$\mathbf k\text{-mod}_{\fil,+}$} 
is the full subcategory of $\mathbf k\text{-mod}_{\fil}$ of pairs $(M,i\mapsto F^iM)$, such that 
there exists $i(M)\in\mathbb Z$, such that $F^{i(M)}M=M$;

(5) $\mathbf k\text{-mod}_{\topo}$\index{k-mod_top@$\mathbf k\text{-mod}_{\topo}$} is the full subcategory of $\mathbf k\text{-mod}_{\fil,+}$ of pairs $(M,i\mapsto F^iM)$, such that 
the map $M\to \lim_{\leftarrow i}M/F^iM$ is a $\mathbf k$-module isomorphism, i.e. $M$ is complete 
for the topology defined by $i\mapsto F^iM$. 
\end{defn}

\subsubsection*{Functors}\label{sect:functors}

There are tautological functors to each category from each of its full subcategories. We will denote some of these functors
as follows $\mathrm{taut}_{\topo}^{\fil,+}:\mathbf k\text{-mod}_{\topo}\to\mathbf k\text{-mod}_{\fil,+}$\index{tautfiltop@$\mathrm{taut}_{\topo}^{\fil,+}$} and 
$\mathrm{taut}_{\gr,+}^{\fil,+}:\mathbf k\text{-mod}_{\gr,+}\to\mathbf k\text{-mod}_{\fil,+}$\index{tautgrfil@$\mathrm{taut}_{\gr,+}^{\fil,+}$}. 

There is also an `associated graded' functor $\gr:\mathbf k\text{-mod}_{\fil}\to\mathbf k\text{-mod}_{\gr}$\index{gr@$\gr$}, which takes 
$(M,i\mapsto F^iM)$ to $\mathrm{gr}(M):=\oplus_{i\in\mathbb Z}\mathrm{gr}_i(M)$, where $\mathrm{gr}_i(M):=F^iM/F^{i+1}M$ for 
$i\in\mathbb Z$. It induces a functor $\mathrm{gr}:\mathbf k\text{-mod}_{\fil,+}\to\mathbf k\text{-mod}_{\gr,+}$. 

For each $\alpha\in\mathbb Z$, there are functors $F^\alpha:\mathbf k\text{-mod}_{\fil}\to\mathbf k\text{-mod}_{\fil,+}$ and 
$(-)_{\geq\alpha}:\mathbf k\text{-mod}_{\gr}\to\mathbf k\text{-mod}_{\gr,+}$, defined by $F^\alpha(M,i\mapsto F^iM):=(F^\alpha M,i\mapsto F^iM\cap F^\alpha M)$, and $M_{\geq\alpha}:=\oplus_{i\geq\alpha}M_i$ for $M=\oplus_{i\in\mathbb Z}M_i$. 

Finally, there is a `completion' functor $(-)^\wedge:\mathbf k\text{-mod}_{\fil,+}\to\mathbf k\text{-mod}_{\topo}$, given by 
$(M,i\mapsto F^iM)\mapsto(\underset{j}{\varprojlim}
M/F^jM,i\mapsto\underset{j}{\varprojlim}F^iM/F^jM)$; it gives rise to a functor 
$(-)^\wedge\circ\mathrm{taut}_{\gr,+}^{\fil,+}:\mathbf k\text{-mod}_{\gr,+}\to\mathbf k\text{-mod}_{\topo}$. 

A diagram of all these functors and categories is as follows
$$
\xymatrix{
 \mathbf k\text{-mod}_{\gr}\ar^{(-)_{\geq0}}[r]& \mathbf k\text{-mod}_{\gr,+}\ar@<2pt>^{\mathrm{taut}_{\gr,+}^{\fil,+}}[d]& \\ 
\mathbf k\text{-mod}_{\fil} \ar_{F^0}[r]\ar^{\mathrm{\gr}}[u]&\mathbf k\text{-mod}_{\fil,+} \ar@<2pt>^{\mathrm{\gr}}[u] 
\ar@<2pt>^{(-)^\wedge}[r]& \mathbf k\text{-mod}_{\topo}\ar@<2pt>^{\mathrm{taut}_{\topo}^{\fil,+}}[l]}
$$

\begin{lem}\label{lemma:injectivity:completion} 
1) The functor $(-)^\wedge\circ\mathrm{taut}_{\gr,+}^{\fil,+}(f) : \mathbf k\text{-mod}_{\gr,+}\to\mathbf k\text{-mod}_{\topo}$ is exact. 

2) If $f$ is a morphism in $\mathbf k\text{-mod}_{\fil,+}$ such that the morphism ${\gr}(f)$ in $\mathbf k\text{-mod}_{\gr,+}$ is 
injective, then the morphism $\hat f$ in $\mathbf k\text{-mod}_{\topo}$ is injective. 
\end{lem}

\proof 1) Let $0\to A\to B\to C\to 0$ be an exact sequence in $\mathbf k\text{-mod}_{\gr,+}$. Then for each $k\in\mathbb Z$, 
its degree $k$ component $0\to A_k\to B_k\to C_k\to 0$  is exact. The image of $0\to A\to B\to C\to 0$ by 
$(-)^\wedge\circ\mathrm{taut}_{\gr,+}^{\fil,+}$ is the sequence $0\to (\oplus_{k<0}A_k)\oplus(\prod_{k\geq0}A_k)\to
(\oplus_{k<0}B_k)\oplus(\prod_{k\geq0}B_k)\to(\oplus_{k<0}C_k)\oplus (\prod_{k\geq0}C_k)\to 0$, which is then exact. 

2) Let $f:M\to N$ be a morphism in $\mathbf k\text{-mod}_{\fil,+}$ with ${\gr}(f)$ injective. Let $i_0:=\mathrm{min}(i(M),i(N))$. 
For any $k\geq i_0$, the map $f_{\leq k}:M/F^kM\to N/F^kN$ induced by $f$ is compatible with the filtrations on both sides induced by 
the images of $F^iM$ and $F^iN$, $i\leq k$. The associated graded map is $\oplus_{i=i_0}^{k-1}\mathrm{gr}_i(f)$, which is injective. 
Therefore $f_{\leq k}$ is injective. So  $\hat f$ is injective as well.  \hfill\qed\medskip 

\subsubsection*{Symmetric monoidal structures}

Each of the categories from Definition \ref{def:cat}, (1)-(4), is equipped with a symmetric monoidal structure, induced at the level of objects 
by the tensor product of $\mathbf k$-modules. The category $\mathbf k\text{-mod}_{\topo}$ is also equipped with a symmetric monoidal structure
arising from the completed tensor product $(M,i\mapsto F^M)\hat\otimes(N,i\mapsto F^N):=(\underset{j}{\varprojlim}(M\otimes N)/F^j(M\otimes N),
i\mapsto\underset{j}{\varprojlim}F^i(M\otimes N)/F^j(M\otimes N))$. For $X,Y$ objects in $\mathbf k\text{-mod}_{\topo}$, one has therefore 
$X\hat\otimes Y=(\mathrm{taut}_{\topo}^{\fil,+}(X)\otimes\mathrm{taut}_{\topo}^{\fil,+}(Y))^\wedge$. The functors in the previous paragraphs are then all monoidal. 

\part{The de Rham and Betti frameworks of double shuffle theory}\label{part:2:2jan2020}

In this part, we recall from \cite{Rac} the DSR formalism and complement it by the introduction of a harmonic module coproduct 
$\Delta^{\mathcal M,\DR}$ (\S\ref{sect:1}). We construct a variant of this formalism, where its basic ingredient, the free algebra 
in two generators, is replaced by the group algebra of the free group in the same number of generators (\S\ref{sect:2:28oct}). 
We then construct families of isomorphisms relating the various ingredients of the formalism of \S\ref{sect:1} with their 
counterparts of \S\ref{sect:2:28oct} (\S\ref{sect:fci:24dec2019}). The formalism of \S\ref{sect:1} (resp. \S\ref{sect:2:28oct}) 
is called `de Rham' (resp. `Betti') for reasons explained in \S\ref{sect:3:4:28oct}.

\section{The de Rham framework of double shuffle theory}\label{sect:1}

We recall from \cite{Rac} the basic formalism of double shuffle theory. Its main ingredients are: $\mathbf k$-bialgebras 
$(\mathcal V^{\DR},\Delta^{\mathcal V,\DR})$ and $(\mathcal W^{\DR},\Delta^{\mathcal W,\DR})$, a $\mathbf k$-coalgebra 
$(\mathcal M^{\DR},\Delta^{\mathcal M,\DR})$, related by an algebra inclusion $\mathcal W^{\DR}\hookrightarrow\mathcal V^{\DR}$ 
and a $\mathcal V^{\DR}$-module structure on $\mathcal M^{\DR}$, inducing a free rank one $\mathcal W^{\DR}$-module structure 
on $\mathcal M^{\DR}$, compatible with the coproducts; completions of all these structures; and a ``$\Gamma$-function'' map 
$\mathcal G(\hat{\mathcal V}^\DR)\to\mathbf k[[t]]$, where the source is the set of group-like elements of the completion of 
$\hat{\mathcal V}^\DR$. 

We introduce the algebra and module structures in \S\ref{sect:1:1:1:crm}, the coproducts in \S\ref{sect:tcDaD}, the completions 
in \S\ref{sect:completions:DR}, the $\Gamma$-function map in \S\ref{sect:Gamma}. We explain the relation with the formalism 
of \cite{Rac} in \S\ref{sect:rel:Rac}. 

\subsection{The algebras $\mathcal V^{\DR}$, $\mathcal W^{\DR}$ and the module $\mathcal M^{\DR}$}
\label{sect:1:1:1:crm}

Let $\mathfrak f_2$ be the free $\mathbb Z$-graded $\mathbf k$-Lie algebra over generators $e_0$ and $e_1$\index{e_0, e_1@ $e_0$, $e_1$} of degree 1 and let 
$$
\mathcal V^{\DR}:=U\mathfrak f_2\index{V^DR@$\mathcal V^{\DR}$}
$$
be its enveloping algebra; it is a $\mathbb Z$-graded algebra whose components of degree $<0$ vanish, therefore an algebra in 
$\mathbf k\text{-mod}_{\mathrm{gr},+}$. 

Then
$$
\mathcal W^{\DR}:=\mathbf k\oplus (U\mathfrak f_2)e_1\subset U\mathfrak f_2=\mathcal V^{\DR} \index{W^DR@$\mathcal W^{\DR}$}
$$
is a $\mathbb Z$-graded subalgebra of $\mathcal V^{\DR}$, hence a subalgebra of it in $\mathbf k\text{-mod}_{\gr,+}$. 

The quotient $\mathbf k$-module 
$$
\mathcal M^\DR:=\mathcal V^{\DR}/\mathcal V^{\DR}e_0 
\index{M^DR@$\mathcal M^{\DR}$}
$$
is an object in $\mathbf k\text{-mod}_{\gr,+}$; it is a left module over $\mathcal V^\DR$, and by restriction also 
over $\mathcal W^\DR$. Denote by $1_\DR$\index{1DR@$1_\DR$} the class 
of $1\in\mathcal V^{\DR}$ in $\mathcal M^\DR$. The map
\begin{equation}\label{def:pi:DR}
(-)\cdot 1_\DR 
: \mathcal V^\DR\to\mathcal M^\DR,\quad a\mapsto a\cdot 1_\DR
\index{-1_DR@$(-)\cdot 1_\DR$}
\end{equation} 
is a surjective morphism in  $\mathbf k\text{-mod}_{\gr,+}$, and its kernel is equal to $(U\mathfrak f_2)e_0$. 
It follows from the direct sum decomposition $U\mathfrak f_2=\mathbf k\oplus (U\mathfrak f_2)e_0\oplus (U\mathfrak f_2)e_1$ that  
$\mathcal M^\DR$ is free of rank one as a $\mathcal W^\DR$-module, generated by $1_\DR$. We denote by 
\begin{equation}\label{iso:W:M:DR}
\mathcal W^\DR\to\mathcal M^\DR,\quad a\mapsto a\cdot 1_\DR
\end{equation}
the corresponding isomorphism of $\mathcal W^\DR$-modules. 

\subsection{The coproducts $\Delta^{\mathcal V,\DR},\Delta^{\mathcal W,\DR}$ and $\Delta^{\mathcal M,\DR}$}
\label{sect:tcDaD}

We denote by $\Delta^{\mathcal V,\DR}:\mathcal V^{\DR}\to(\mathcal V^{\DR})^{\otimes2}$\index{DeltaVDR@$\Delta^{\mathcal V,\DR}$} the enveloping algebra 
coproduct; it is determined by the conditions that $e_0,e_1$ are primitive. 

One shows that $\mathcal W^{\DR}$ is freely generated by the elements
$y_n:=-e_0^{n-1}e_1$\index{y_n@$y_n$}, where $n\geq 1$. One denotes by 
$$
\Delta^{\mathcal W,\DR}:\mathcal W^{\DR}\to(\mathcal W^{\DR})^{\otimes2} 
\index{DeltaWDR@$\Delta^{\mathcal W,\DR}$}
$$
the algebra morphism determined by 
\begin{equation}\label{image:yn}
\forall n\geq 1,\quad \Delta^{\mathcal W,\DR}(y_n)=y_n\otimes1+1\otimes y_n+\sum_{k=1}^{n-1}y_k\otimes y_{n-k};  
\end{equation}
it equips $\mathcal W^{\DR}$ with a Hopf algebra structure. 

We denote by 
$$
\Delta^{\mathcal M,\DR}:\mathcal M^{\DR}\to(\mathcal M^{\DR})^{\otimes2} 
\index{DeltaMDR@$\Delta^{\mathcal M,\DR}$}
$$
the $\mathbf k$-module morphism determined by the conditions that $\Delta^{\mathcal M,\DR}(1_\DR)=(1_\DR)^{\otimes2}$, and that it 
is compatible with the module structures on both sides and with the algebra morphism $\Delta^{\mathcal W,\DR}$.  Then 
$(\mathcal M^{\DR},\Delta^{\mathcal M,\DR})$ is a cocommutative and coassociative coalgebra. 

\begin{rem}
As a Hopf algebra,  $(\mathcal W^\DR,\Delta^{\mathcal W,\DR})$ is isomorphic to the universal enveloping algebra $U(\mathfrak g^\DR)$ of the 
free Lie algebra $\mathfrak g^\DR$ with generators $(u_k)_{k\geq1}$, the isomorphism being given by $\mathfrak g^\DR[[t]]\ni
\sum_{k\geq1}t^ku_k\mapsto\mathrm{log}(1+\sum_{k\geq1}t^ky_k)\in\mathcal W^\DR[[t]]$. 
\end{rem}

\subsection{Completions}\label{sect:completions:DR}

The pairs $({\mathcal V}^\DR,\Delta^{\mathcal V,\DR})$, $({\mathcal W}^\DR,\Delta^{\mathcal W,\DR})$ 
(resp. $({\mathcal M}^\DR,\Delta^{\mathcal M,\DR})$) are Hopf algebras (resp. is a coalgebra) in $\mathbf k\text{-mod}_{\gr,+}$
(see Definition \ref{def:cat}). Their images by the functor $(-)^\wedge\circ\mathrm{taut}_{\gr,+}^{\fil,+}:\mathbf k\text{-mod}_{\gr,+}
\to\mathbf k\text{-mod}_{\topo}$ (see \S\ref{sect:functors}) are topological Hopf algebras 
$(\hat{\mathcal V}^\DR\index{V^DR^@$\hat{\mathcal V}^\DR$},\hat\Delta^{\mathcal V,\DR}\index{DeltaVDR^@$\hat\Delta^{\mathcal V,\DR}$})$ and 
$(\hat{\mathcal W}^\DR\index{W^DR^@$\hat{\mathcal W}^\DR$},\hat\Delta^{\mathcal W,\DR}\index{DeltaWDR^@$\hat\Delta^{\mathcal W,\DR}$})$, and a topological coalgebra 
$(\hat{\mathcal M}^\DR\index{M^DR^@$\hat{\mathcal M}^\DR$},\hat\Delta^{\mathcal M,\DR}\index{DeltaMDR^@$\hat\Delta^{\mathcal M,\DR}$})$; the topological $\mathbf k$-module $\hat{\mathcal M}^\DR$ is then a free
rank 1 module over the topological $\mathbf k$-algebra $\hat{\mathcal W}^\DR$. 

\subsection{$\Gamma$-functions}\label{sect:Gamma}

Define a map 
$$
\hat{\mathcal V}^\DR
\to \mathbf k[[t]]^\times, \quad g\mapsto \Gamma_g, 
$$
where 
\begin{equation}\label{def:Gamma:fun:of:NC:series}
\Gamma_g(t):=\mathrm{exp}\Big(\sum_{n\geq 1}{(-1)^{n+1}\over n}(g|e_0^{n-1}e_1)t^n\Big)\in\mathbf k[[t]]^\times 
\index{Gamma@$\Gamma_g$}
\end{equation}
and where $g\mapsto ((g|w))_w$
is the map $\hat{\mathcal V}^\DR\to\mathbf k^{\{\text{words in }e_0,e_1\}}$ such that the identity $g=\sum_{w}(g|w)w$ holds. 

\subsection{Relation with double shuffle theory}\label{sect:rel:Rac}

In order to explain how the above material is used in the theory of double shuffle relations between the MZVs given by 
\eqref{def:MZV:3jan2020}, we introduce the notation $\hat{\mathcal V}_{\mathbb C}^\DR$,  
$\hat{\mathcal M}_{\mathbb C}^\DR$ (resp. ${\mathcal V}_{\mathbb Q}^\DR$) for the spaces $\hat{\mathcal V}^\DR$,  
$\hat{\mathcal M}^\DR$ (resp. ${\mathcal V}^\DR$) when $\mathbf k=\mathbb C$ (resp. $\mathbf k=\mathbb Q$). 

The series $\varphi_{\mathrm{KZ}}\in\mathbb C\langle\langle A,B\rangle\rangle^\times$
from \S2 in \cite{Dr} can be viewed as an element of $(\hat{\mathcal V}_{\mathbb C}^\DR)^\times$ 
via the isomorphism given by $A\mapsto e_0$, $B\mapsto e_1$. It is a generating 
series for the MZVs, namely 
$$
\varphi_\KZ=1+\sum_{m,k_1,\ldots,k_{m}>0,k_1>1}
(-1)^m\zeta(k_1,\ldots,k_m)w(k_1,\ldots,k_m),
\index{phiKZ@$\varphi_\KZ$}
$$
where $w(k_1,\ldots,k_m)$ is an explicit element in $e_0^{k_1-1}e_1\cdots e_0^{k_m-1}e_1+e_1\mathcal V^\DR_{\mathbb Q}+\mathcal V^\DR_{\mathbb Q} e_0$
(see \cite{LM} and \cite{Fu0}, Proposition 3.2.3). 

The double shuffle relations between MZVs can be formulated as follows (see \cite{Rac}): 
\begin{itemize}
\item[(1)] $\varphi_\KZ$ is group-like for the coproduct $\hat\Delta^{\mathcal V,\DR}$; 

\item[(2)] $\big(\Gamma_{\varphi_\KZ}^{-1}(-e_1)\cdot\varphi_\KZ\big)\cdot 1_\DR\in\hat{\mathcal M}^\DR_{\mathbb C}$
is group-like for the coproduct $\hat\Delta^{\mathcal M,\DR}$.  
\end{itemize}
The correspondence between the present formalism and that of \cite{Rac} is as follows. 
\begin{center}
\begin{tabular}{|l|l|l|l|l|l|l|l|l|}
\hline
 this paper & $e_0,e_1$ & $\mathcal V^\DR$, $\hat{\mathcal V}^\DR$ & $\Delta^{\mathcal V,\DR}$, $\hat\Delta^{\mathcal V,\DR}$ & $\mathcal W^\DR$, $\hat{\mathcal W}^\DR$ & $\Delta^{\mathcal W,\DR}$, $\hat\Delta^{\mathcal W,\DR}$ \\ \hline
\cite{Rac} & $x_0,-x_1$ & $\mathbf k\langle x_0,x_1\rangle$, &  $\Delta$, $\hat\Delta$&$\mathbf k\oplus\mathbf k\langle x_0,x_1\rangle x_1$,  & $\Delta_\star$, $\hat\Delta_\star$ \\ 
&  & $\mathbf k\langle\langle x_0,x_1\rangle\rangle$  &  &degree completion  & \\ \hline
\end{tabular}
\end{center}
\begin{center}
\begin{tabular}{|l|l|l|l|l|l|l|l|l|}
\hline
   $\mathcal M^\DR$, $\hat{\mathcal M}^\DR$ &
   $\Delta^{\mathcal M,\DR}$, $\hat\Delta^{\mathcal M,\DR}$ &
    $(-)\cdot 1_\DR:\mathcal V^\DR\to\mathcal M^\DR$, $(-)\cdot 1_\DR:\hat{\mathcal V}^\DR\to\hat{\mathcal M}^\DR$  
    \\ \hline
 $\mathbf k\langle x_0,x_1\rangle /\mathbf k\langle x_0,x_1\rangle x_0$,  &
  $\Delta_\star$, $\hat\Delta_\star$ &
    $\pi_Y :\mathbf k\langle x_0,x_1\rangle\to\mathbf k\langle x_0,x_1\rangle /\mathbf k\langle x_0,x_1\rangle x_0$, 
\\ 
degree completion  & & degree completion  \\ \hline
\end{tabular}
\end{center}

\section{The Betti framework of double shuffle theory}\label{sect:2:28oct}

We construct a Betti version of the double shuffle formalism from \S\ref{sect:1}. The analogues of $\mathcal V^{\DR}$, $\mathcal W^{\DR}$ and 
$\mathcal M^{\DR}$ are a pair of filtered $\mathbf k$-algebras $\mathcal V^{\B}$, $\mathcal W^{\B}$, and a filtered $\mathbf k$-module 
$\mathcal M^{\B}$ (\S\ref{sect:2:1:28oct}). An algebra presentation of $\mathcal W^{\B}$ is established in \S\ref{sect:pres:W:l:B}, and used in 
\S\ref{sect:2:3:28oct} to define a Hopf algebra coproduct $\Delta^{\mathcal W,\B}$ on $\mathcal W^{\B}$ as well as a coassociative 
coproduct $\Delta^{\mathcal M,\B}$ on $\mathcal M^\B$; a Hopf algebra coproduct $\Delta^{\mathcal V,\B}$ on $\mathcal V^\B$ is also 
defined. In \S\ref{sect:filt:28oct}, we show the compatibility of the coproducts with the filtrations, and we relate 
the resulting associated graded objects with the material of \S\ref{sect:1}. We define the corresponding topological 
Hopf algebras and coassociative coalgebra in \S\ref{sect:compl:28oct}. 

\subsection{The algebras $\mathcal V^{\B}$, $\mathcal W^{\B}$ and the module $\mathcal M^{\B}$}\label{sect:2:1:28oct}

Let $F_2$\index{F_2@$F_2$} be the free group with generators $X_0$, $X_1$\index{X_0, X_1@$X_0$, $X_1$}. Let 
$$
\mathcal V^\B:=\mathbf kF_2
\index{V^B@$\mathcal V^\B$}
$$
be its group algebra. It is equipped with a decreasing algebra filtration $\mathcal V^\B=F^0\mathcal V^\B\supset F^1\mathcal V^\B\supset\cdots$, where for $i\geq 0$, $F^i\mathcal V^\B$\index{F^iVB@$F^i\mathcal V^\B$} is the $i$-th power of the augmentation ideal $\mathcal V^\B_+$ of $\mathcal V^\B=\mathbf kF_2$.
Then 
$$
\mathcal W^\B:=\mathbf k\oplus\mathcal V^\B(X_1-1)\subset\mathcal V^\B
\index{W^B@$\mathcal W^\B$}
$$
is a subalgebra of $\mathcal V^\B$; it is equipped with the induced filtration given by 
$$
\forall i\geq0,\quad F^i\mathcal W^\B:=F^i\mathcal V^\B\cap\mathcal W^\B.
\index{F^iWB@$F^i\mathcal W^\B$}
$$ 
Then $\mathcal V^\B$ is an algebra in $\mathbf k\text{-mod}_{\fil,+}$ and $\mathcal W^\B$ is a subalgebra. 

The quotient $\mathbf k$-module 
$$
\mathcal M^\B:=\mathcal V^\B/\mathcal V^\B(X_0-1)
\index{M^B@$\mathcal M^\B$}
$$
is a left module over $\mathcal V^\B$, and by restriction over $\mathcal W^\B$. Let $1_\B$\index{1B@$1_\B$} be the 
class of $1\in\mathcal V^\B$ in $\mathcal M^\B$. The map  
$$
(-)\cdot 1_\B 
:\mathcal V^\B\to\mathcal M^\B, \quad a\mapsto a\cdot 1_\B
\index{-1_B@$(-)\cdot 1_\B$}
$$
is surjective, with kernel $(\mathbf kF_2)(X_0-1)$. The $\mathbf k$-module $\mathcal M^\B$ is equipped with a decreasing filtration 
given by $F^i\mathcal M^\B:=(F^i\mathcal V^\B)\cdot 1_\B$\index{F^iMB@$F^i\mathcal M^\B$}  for $i\geq 0$, compatible with the filtrations of $\mathcal V^\B$ and $\mathcal W^\B$. 
Therefore $\mathcal M^\B$ is a $\mathcal V^\B$-module and a $\mathcal W^\B$-module in $\mathbf k\text{-mod}_{\fil,+}$. 

\begin{lem} [Proposition 6.2.6  in \cite{Wei}] \label{lem:decomp:ZZG}
Let $G$ be the free group over generators $g_1,\dots,g_n$, let $\mathbb ZG$ be its group 
algebra with coefficients in $\mathbb Z$ and let $(\mathbb ZG)_0$ be the augmentation ideal of
this algebra. Then $(\mathbb ZG)_0$ is freely generated, as a left $\mathbb ZG$-module, by the family $(g_1-1,\ldots,g_n-1)$. 
\end{lem}
The proof in \cite{Wei} relies on the following statement (not explicitly stated there)
$$
\forall N\geq 1,\quad \#\{g\in G\mid g\neq1,\ell(g)<N\}=\#\{(g,i)\in G\times[\![1,n]\!]\mid\ell(g)<N,\ell(gg_i)<N\}
$$
which may be proved by counting arguments. Here is an alternative proof of this lemma. 

\medskip 
{\it Alternative proof of Lemma \ref{lem:decomp:ZZG}.} There is a unique morphism of left $\mathbb ZG$-modules
$$
a:(\mathbb ZG)^{\oplus n}\to(\mathbb ZG)_0, \quad (f_1,\ldots,f_n)\mapsto \sum_{i=1}^n f_i(g_i-1). 
$$
It follows from \cite{Fox}, equation (2.3) (see also the argument in Proposition 6.2.6 from \cite{Wei}) that the map $a$ is surjective. 

The injectivity of $a$ may then be established in two ways. 

(First proof of injectivity of $a$.) In \cite{Fox}, endomorphisms ${\partial\over\partial g_i}$ ($i\in[\![1,n]\!]$) of the 
$\mathbb Z$-module $\mathbb ZG$ are constructed. These endomorphisms 
are characterized by the following properties: ${\partial\over\partial g_i}(1)=0$,  ${\partial\over\partial g_i}(g_j)=\delta_{ij}$
($j\in[\![1,n]\!]$), and ${\partial\over\partial g_i}(uv)=u{\partial\over\partial g_i}(v)+{\partial\over\partial g_i}(u)\epsilon(v)$ for $u,v$
in $\mathbb ZG$, where $\epsilon:\mathbb ZG\to\mathbb Z$ is the counit map (augmentation). These properties 
imply the identities ${\partial\over\partial g_i}(f(g_j-1))=\delta_{ij}f$ for $f\in\mathbb ZG$, $i\in[\![1,n]\!]$. As a consequence, 
the $\mathbb Z$-module map 
$$
b:(\mathbb ZG)_0\to(\mathbb ZG)^{\oplus n}, \quad f\mapsto ({\partial\over\partial g_1}f,\ldots,
{\partial\over\partial g_n}f)
$$
is such that $b\circ a=\mathrm{id}$. It follows that $a$ is injective and therefore an isomorphism of $\mathbb ZG$-modules.  

(Second proof of injectivity of $a$.) Let $\mathbb Z\langle\langle x_1,\ldots,x_n\rangle\rangle$ be the ring of formal series in 
noncommutative variables $x_1,\ldots,x_n$, with coefficients in $\mathbb Z$. The group morphism 
$G\to\mathbb Z\langle\langle x_1,\ldots,x_n\rangle\rangle^\times$ given by $g_i\mapsto 1+x_i$  for $i\in[\![1,n]\!]$
gives rise to an algebra morphism $r:\mathbb ZG\to\mathbb Z\langle\langle x_1,\ldots,x_n\rangle\rangle$,  
which is injective (see \cite{Bbk}, Ch.\ 2, Sec.\ 5, no.\ 3, Thm.\ 1). 
The map $r\circ a:\mathbb ZG^{\oplus n}\to \mathbb Z\langle\langle x_1,\ldots,x_n\rangle\rangle$ is  
given by $(f_1,\ldots,f_n)\mapsto \sum_{i=1}^n r(f_i)x_i$ and is therefore equal to the composed map 
$\mathbb ZG^{\oplus n}\stackrel{r^{\oplus n}}{\to} \mathbb Z\langle\langle x_1,\ldots,x_n\rangle\rangle^{\oplus n}\stackrel{\varphi}{\to}
\mathbb Z\langle\langle x_1,\ldots,x_n\rangle\rangle$, where $\varphi$ is given by $(r_1,\ldots,r_n)\mapsto\sum_{i=1}^n r_ix_i$. 
This map is injective, as is $r^{\oplus n}$, which implies that $r\circ a$ is injective, showing the injectivity of $a$. 
\hfill\qed\medskip 

Lemma \ref{lem:decomp:ZZG} implies the direct sum decomposition 
\begin{equation}\label{decomp:kF2}
\mathcal V^\B=\mathbf k 1\oplus\mathcal V^\B(X_1-1)\oplus\mathcal V^\B(X_0-1)
=\mathcal W^\B \oplus \mathcal V^\B(X_0-1),  
\end{equation}
which implies that $\mathcal M^\B$ is free of rank one as a $\mathcal W^\B$-module, generated by the class 
$1_\B$ of $1\in \mathbf kF_2$. We denote by 
\begin{equation}\label{iso:W:M:B}
\mathcal W^\B\to\mathcal M^\B,\quad a\mapsto a\cdot 1_\B
\end{equation}
the corresponding isomorphism of $\mathcal W^\B$-modules. 

\subsection{Presentation of the algebra $\mathcal W^\B$}\label{sect:pres:W:l:B}

Set for $k\in\mathbb Z$, 
$$
\xi_k^+:=X_0^k(X_1-1), \quad \mathrm{and} \quad \xi_k^-:=X_0^k(X_1^{-1}-1)\index{xi @ $\xi_k^+$, $\xi_k^-$}.
$$ These are elements of $\mathcal W^{\B}\subset \mathbf k F_2$. 

\begin{lem}\label{lem:Wpresentation1}
The algebra $\mathcal W^{\B}$ is the algebra  whose
generators are $(\xi_k^+)_{k\in\mathbb Z}$, $(\xi_k^-)_{k\in\mathbb Z}$, and relations are 
\begin{equation}\label{relations1}
\forall k\in\mathbb Z, \quad\xi_k^+\xi_0^-=\xi_k^-\xi_0^+=-\xi_k^+-\xi_k^-. 
\end{equation}
\end{lem}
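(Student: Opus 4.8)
The plan is to identify $\mathcal W_l^{\B}$ with the abstract algebra $A$ given by the stated presentation. Writing $X_1^{+}:=X_1$ and $X_1^{-}:=X_1^{-1}$, so that $\xi_k^{\epsilon}=X_0^k(X_1^{\epsilon}-1)$ for $\epsilon\in\{+,-\}$, I would first record the direct computation
$$
\xi_k^{+}\xi_0^{-}=X_0^k(X_1-1)(X_1^{-1}-1)=X_0^k(2-X_1-X_1^{-1})=-\xi_k^{+}-\xi_k^{-},
$$
and symmetrically $\xi_k^{-}\xi_0^{+}=X_0^k(X_1^{-1}-1)(X_1-1)=X_0^k(2-X_1-X_1^{-1})=-\xi_k^{+}-\xi_k^{-}$; this shows the relations hold in $\mathcal W_l^{\B}$, so there is a well-defined unital algebra morphism $\phi\colon A\to\mathcal W_l^{\B}$ sending each abstract generator to the corresponding element. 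It then remains to prove that $\phi$ is bijective.

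For surjectivity I would show that the unital subalgebra generated by the $\xi_k^{\epsilon}$ is all of $\mathcal W_l^{\B}=\mathbf k 1\oplus\mathbf k F_2(X_1-1)$. The key step is the claim that for every $n\ge 0$, every $a_0,\dots,a_n\in\mathbb Z$ and all signs $\epsilon_1,\dots,\epsilon_n,\delta$, the element $X_0^{a_0}X_1^{\epsilon_1}X_0^{a_1}\cdots X_1^{\epsilon_n}X_0^{a_n}(X_1^{\delta}-1)$ lies in this subalgebra. This follows by induction on $n$: the case $n=0$ is $\xi_{a_0}^{\delta}$, and the inductive step uses
$$
\xi_{a_0}^{\epsilon_1}\cdot\bigl(X_0^{a_1}X_1^{\epsilon_2}\cdots(X_1^{\delta}-1)\bigr)=X_0^{a_0}X_1^{\epsilon_1}X_0^{a_1}\cdots(X_1^{\delta}-1)-X_0^{a_0+a_1}X_1^{\epsilon_2}\cdots(X_1^{\delta}-1),
$$
where the left-hand product and the subtracted term both lie in the subalgebra by the induction hypothesis, hence so does the remaining summand. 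Since every element of $F_2$ has the form $X_0^{a_0}X_1^{\epsilon_1}\cdots X_0^{a_n}$, taking $\delta=+$ shows $W(X_1-1)$ is in the subalgebra for every $W\in F_2$; as these elements span $\mathbf k F_2(X_1-1)$, surjectivity follows.

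For injectivity I would argue via leading terms. Reading the two relations as rewriting rules $\xi_k^{+}\xi_0^{-}\rightsquigarrow-\xi_k^{+}-\xi_k^{-}$ and $\xi_k^{-}\xi_0^{+}\rightsquigarrow-\xi_k^{+}-\xi_k^{-}$, each application replaces a length-$\ell$ monomial by a combination of length-$(\ell-1)$ monomials, so iterated rewriting terminates; hence the \emph{irreducible} monomials $m=\xi_{k_1}^{\epsilon_1}\cdots\xi_{k_n}^{\epsilon_n}$, those for which $k_{i+1}=0\Rightarrow\epsilon_{i+1}=\epsilon_i$ for all $i$, span $A$ over $\mathbf k$. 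To see that their images are independent, I would expand $\phi(m)=\prod_i X_0^{k_i}(X_1^{\epsilon_i}-1)$ in the basis of reduced words of $\mathbf k F_2$ and grade reduced words by their $X_1$-degree (the number of $X_1^{\pm1}$ letters). The term retaining every $X_1^{\epsilon_i}$ gives the reduced word $\ell(m):=X_0^{k_1}X_1^{\epsilon_1}\cdots X_0^{k_n}X_1^{\epsilon_n}$, which by irreducibility admits no $X_1X_1^{-1}$ cancellation and so has $X_1$-degree exactly $n$, whereas every other term omits at least one $X_1^{\epsilon_i}$ and has strictly smaller $X_1$-degree. The heart of the matter is that $m\mapsto\ell(m)$ is injective on irreducible monomials: from the reduced word $\ell(m)$ one recovers the signs $\epsilon_i$ as the signs of the successive $X_1$-letters, and the exponents $k_i$ as the intervening $X_0$-powers, with $k_i=0$ precisely when the $i$-th $X_1$-letter continues the current syllable. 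Granting this, a top-degree argument gives independence: in any relation $\sum c_m\phi(m)=0$, the coefficient of a word $\ell(m)$ of maximal $X_1$-degree among those with $c_m\ne0$ equals $c_m$, forcing all $c_m=0$. Since the irreducible monomials span $A$ and $\phi$ sends them to independent elements, any $x\in\ker\phi$ written as a combination of irreducible monomials must vanish, so $\phi$ is injective and $A\cong\mathcal W_l^{\B}$. The main obstacle is precisely this last combinatorial bookkeeping, matching irreducible monomials with reduced words terminating in an $X_1$-letter and controlling the $X_1$-degree; the verification of the relations and the surjectivity induction are routine.
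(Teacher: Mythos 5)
Your proof is correct, and while your surjectivity step is essentially the paper's (the paper inducts on the length of the word $w$ in $X_0^{\pm1},X_1^{\pm1}$ and records explicit recursions for polynomials $P_w$; you induct on the number of $X_1^{\pm1}$-letters, which is the same mechanism), your injectivity argument follows a genuinely different route. The paper proves injectivity structurally: it equips $\mathcal A[t^{\pm1}]$ with a crossed-product-type multiplication $*$ built from the shift automorphism $T(\tilde\xi_k^{\pm}a)=\tilde\xi_{k+1}^{\pm}a$, constructs mutually inverse algebra morphisms $F:\mathcal A[t^{\pm1}]\to\mathbf kF_2$ and $G:\mathbf kF_2\to\mathcal A[t^{\pm1}]$ (with $G(X_0^{\pm1})=1\otimes t^{\pm1}$ and $G(X_1^{\pm1})=(1+\xi_0^{\pm})\otimes1$), and deduces injectivity of $f$ from the commutative square in which $F$ extends $f$ along the injection $a\mapsto a\otimes1$. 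You instead argue combinatorially: reading the two relations as length-decreasing rewriting rules shows that the irreducible monomials span $\mathcal A$, and filtering $\mathbf kF_2$ by the $X_1$-degree of reduced words shows that their images are linearly independent, since the top term $\ell(m)$ of $\phi(m)$ is already reduced (irreducibility forbids exactly the adjacency $X_1^{\epsilon}X_1^{-\epsilon}$), the map $m\mapsto\ell(m)$ is injective by your syllable decoding, and free-group reduction never increases the $X_1$-degree, so all omitted-letter terms stay strictly below degree $n$. Both arguments are complete; yours is more elementary, needs no auxiliary algebra, and yields as a by-product an explicit $\mathbf k$-basis of $\mathcal W_l^{\B}$ (the images of the irreducible monomials), which the paper never extracts. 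The paper's route, in exchange, avoids all reduced-word bookkeeping and buys a structural statement --- the identification of $\mathbf kF_2$ with the crossed product $\mathcal A[t^{\pm1}]$ of $\mathcal W_l^{\B}$ by the shift --- which anticipates the semidirect-product Hopf-algebra description of $(\mathcal W_l^{\B},\underline\Delta_\sharp)$ given in the remark following Proposition \ref{lemma:tiauamsdt}.
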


\proof 
Let ${\mathcal A}$ be the associative non-commutative algebra generated by 
$(\xi_k^+)_{k\in\mathbb Z}$, $(\xi_k^-)_{k\in\mathbb Z}$
divided by the two-sided ideal generated by the relation given in \eqref{relations1}.

One checks that there is an algebra morphism
$$
f:{\mathcal A}\to\mathbf k\oplus\mathbf k F_2(X_1-1)=\mathcal W^\B,
$$ 
uniquely determined by 
$$\xi_k^+\mapsto X_0^k(X_1-1) \text{ and }\xi_k^-\mapsto X_0^k(X_1^{-1}-1).$$ 
We will prove that $f$ is an algebra isomorphism.

Let us first prove that $f$ is surjective. For this, it will suffice to prove that for any word $w$ in $X_0^{\pm1},X_1^{\pm1}$, there exists a 
polynomial without constant term $P_w$ over a set of noncommutative variables $\{(\tilde\xi_k^+)_{k\in\mathbb Z},\tilde\xi_0^-\}$ 
indexed by $\mathbb Z\sqcup\{0\}$, such that 
$$
w\cdot (X_1-1)=P_w((\xi_k^+)_{k\in\mathbb Z},\xi_0^-). 
$$
We argue by induction on the length $|w|$ of $w$. If $w=1$, then $P_w=\tilde\xi_0^+$. Assume the statement for $|w|<n$ 
and let $w$ be a word of length $n$. Then $w=gw'$, where $g\in\{X_0^{\pm1},X_1^{\pm1}\}$ and $w'$ has length $n-1$. 
If $g=X_1^{\pm1}$, then 
$$
w\cdot(X_1-1)=X_1^{\pm1}w'(X_1-1)=P_w((\xi_k^+)_{k\in\mathbb Z},\xi_0^-),
$$ 
where $P_w
:=(1+\tilde\xi_0^\pm)P_{w'}
$. 
If $g=X_0^{\pm1}$, then 
$$
w\cdot(X_1-1)=X_0^{\pm1}w'(X_1-1)=P_w((\xi_k^+)_{k\in\mathbb Z},\xi_0^-),
$$
where we set 
$$
P_w
:=\sum_{k\in\mathbb Z}\tilde\xi_{k\pm1}^+(P_{w'})_k^+
-\tilde\xi_{\pm1}^+(1+\tilde\xi_0^-)(P_{w'})_0^-
, 
$$
in which the expressions $(P_{w'})_k^+$ and $(P_{w'})_0^-$ have the following meaning: for $P$ a polynomial 
in the noncommutative variables $(\tilde\xi_k^+)_{k\in\mathbb Z},\tilde\xi_0^-$ without constant term, 
$(P)_k^+$ and $(P)_0^-$ are the polynomials in the same variables such that
$$
P=\sum_{k\in\mathbb Z}\tilde\xi_k^+(P)_k^++\tilde\xi_0^-(P)_0^-. 
$$ 
This proves that $f:{\mathcal A}\to\mathbf k\oplus\mathbf k F_2(X_1-1)$ is surjective. 

To prove that $f$ is injective, we will: 
\begin{itemize}
\item[a)] equip the vector space ${\mathcal A}[t^{\pm1}]$ with an algebra structure $*$, such that ${\mathcal A}\to
{\mathcal A}[t^{\pm1}]$, $a\mapsto a\otimes1$ is an algebra morphism; 
\item[b)] construct an algebra isomorphism $F:{\mathcal A}[t^{\pm1}]\to\mathbf k F_2$; 
\item[c)] check that $F$ extends $f$, in the sense that the following diagram commutes
\begin{equation}\label{diagram:f:F}
\xymatrix{
{\mathcal A} \ar^{\!\!\!\!\!\!\!\!\!\!\!\!\!\!\!\!\!\!\!\!\!\!\!\!\!f}[r] \ar@{^(->}_{-\otimes1}[d] &
\mathbf k\oplus\mathbf k F_2(X_1-1)
\ar@{^(->}[d]\\
{\mathcal A}[t^{\pm1}]\ar_F[r] & \mathbf k F_2}
\end{equation}
\end{itemize}
 The above diagram implies that $f$ is injective, which finally implies that it is an isomorphism. 

\underline{(a) Construction of an algebra structure on ${\mathcal A}[t^{\pm1}]$.} 
Let $\mathbf k\langle(\tilde\xi_k^{\pm})_{k\in\mathbb Z}\rangle_+$ 
be the ideal of $\mathbf k\langle(\tilde\xi_k^{\pm})_{k\in\mathbb Z}\rangle$ generated by the generators 
$(\tilde\xi_k^{\pm})_{k\in\mathbb Z}$. We have a direct sum decomposition 
$\mathbf k\langle(\tilde\xi_k^{\pm})_{k\in\mathbb Z}\rangle=\mathbf k\oplus\mathbf k\langle(\tilde\xi_k^{\pm})_{k\in\mathbb Z}\rangle_+$. 
Left multiplication induces a linear isomorphism 
$$
(\oplus_{k\in\mathbb Z}(\mathbf k\tilde\xi_k^+\oplus\mathbf k\tilde\xi_k^-))\otimes\mathbf k\langle(\tilde\xi_k^{\pm})_{k\in\mathbb Z}\rangle\stackrel{\sim}{\to}\mathbf k\langle(\tilde\xi_k^{\pm})_{k\in\mathbb Z}\rangle_+,  
$$
therefore there is a linear automorphism $T$ of $\mathbf k\langle(\tilde\xi_k^{\pm})_{k\in\mathbb Z}\rangle_+$, 
uniquely determined by 
\begin{equation}\label{rel:def:T}
\forall k\in\mathbb Z, \forall a\in\mathbf k\langle(\tilde\xi_k^{\pm})_{k\in\mathbb Z}\rangle, \quad 
T(\tilde\xi_k^\pm a)=\tilde\xi_{k+1}^\pm a.
\end{equation}
Let $I$ be the two-sided ideal of $\mathbf k\langle(\tilde\xi_k^{\pm})_{k\in\mathbb Z}\rangle$ generated by (\ref{relations1}). 
 Then $I\subset\mathbf k\langle(\tilde\xi_k^{\pm})_{k\in\mathbb Z}\rangle_+$. Set $\mathcal A^+:=
\mathbf k\langle(\tilde\xi_k^{\pm})_{k\in\mathbb Z}\rangle_+/I$, then 
$$
{\mathcal A}=\mathbf k\oplus {\mathcal A}^+. 
$$
Let $V\subset\mathbf k\langle(\tilde\xi_k^{\pm})_{k\in\mathbb Z}\rangle_+$ be the linear span of all 
$\tilde\xi_k^+\tilde\xi_0^-+\tilde\xi_k^++\tilde\xi_k^-$ 
and $\tilde\xi_k^-\tilde\xi_0^++\tilde\xi_k^++\tilde\xi_k^-$, for $k\in\mathbb Z$. Then 
$$
I=V\cdot \mathbf k\langle(\tilde\xi_k^{\pm})_{k\in\mathbb Z}\rangle+
\mathbf k\langle(\tilde\xi_k^{\pm})_{k\in\mathbb Z}\rangle_+\cdot V\cdot \mathbf k\langle(\tilde\xi_k^{\pm})_{k\in\mathbb Z}\rangle. 
$$
It follows from the definition of $T$ that $T^{\pm1}$ maps the subspace $\mathbf k\langle(\tilde\xi_k^{\pm})_{k\in\mathbb Z}\rangle_+\cdot V
\cdot \mathbf k\langle(\tilde\xi_k^{\pm})_{k\in\mathbb Z}\rangle$ to itself. Moreover, for any 
$a\in\mathbf k\langle(\tilde\xi_k^{\pm})_{k\in\mathbb Z}\rangle$, one has 
$$
T^{\pm1}((\tilde\xi_k^+\tilde\xi_0^-+\tilde\xi_k^++\tilde\xi_k^-)a)=(\tilde\xi_{k\pm1}^+\tilde\xi_0^-+\tilde\xi_{k\pm1}^++\tilde\xi_{k\pm1}^-)a,\quad
T^{\pm1}((\tilde\xi_k^-\tilde\xi_0^++\tilde\xi_k^++\tilde\xi_k^-)a)=(\tilde\xi_{k\pm1}^-\tilde\xi_0^++\tilde\xi_{k\pm1}^++\tilde\xi_{k\pm1}^-)a, 
$$
which implies that $T^{\pm1}$ maps $V\cdot\mathbf k\langle(\tilde\xi_k^{\pm})_{k\in\mathbb Z}\rangle$ to itself. All this implies that 
$T^{\pm1}(I)\subset I$, so $T(I)=I$. It follows that $T$ induces a linear automorphism of ${\mathcal A}^+$. Relation 
(\ref{rel:def:T}) then implies: 
\begin{equation}\label{special:property:T}
\text{for }P,P'\in {\mathcal A}^+\text{ and }a\in\mathbb Z,\text{ we have }T^a(PP')=T^a(P)P'. 
\end{equation}

We now define a bilinear map 
$$
*:{\mathcal A}[t^{\pm1}]^{\otimes2}\to {\mathcal A}[t^{\pm1}]
$$
as follows: 
\begin{itemize}
\item[1)] for $a,b\in\mathbb Z$, $(1\otimes t^a)*(1\otimes t^b)=1\otimes t^{a+b}$; 
\item[2)] for $a\in\mathbb Z$, $P\in {\mathcal A}^+$, 
$$
(1\otimes t^a)*(P\otimes t^b):=T^a(P)\otimes t^b, \quad
(P\otimes t^a)*(1\otimes t^b):=P\otimes t^{a+b}, 
$$
\item[3)] for $a,b\in\mathbb Z$, and $P,Q\in {\mathcal A}^+$, 
$$
(P\otimes t^a)*(Q\otimes t^b):=(P\cdot T^a(Q))\otimes t^b,  
$$
where $\cdot$ is the product in ${\mathcal A}^+$. 
\end{itemize}

It then follows from a case analysis, using (\ref{special:property:T}), that 
\begin{equation}
\text{the product }*\text{ defines an associative algebra structure on }{\mathcal A}[t^{\pm1}].  
\end{equation}

\underline{(b) Construction of an algebra isomorphism $({\mathcal A}[t^{\pm1}],*)\simeq\mathbf k F_2$.} The elements 
$1\otimes t$ and $1\otimes t^{-1}$ of $({\mathcal A}[t^{\pm1}],*)$ are mutually inverse; so are the elements 
$1+\xi_0^+$ and $1+\xi_0^-$ of ${\mathcal A}$, and therefore the elements $(1+\xi_0^+)\otimes1$ and $(1+\xi_0^-)\otimes1$ of 
${\mathcal A}[t^{\pm1}]$. All this implies that there is an algebra morphism 
$$
G:\mathbf k F_2\to({\mathcal A}[t^{\pm1}],*)
$$
uniquely determined by 
$$
X_0^{\pm1}\mapsto 1\otimes t^{\pm1},\quad X_1^{\pm1}\mapsto (1+\xi_0^\pm)\otimes1. 
$$
One checks that the algebra homomorphism $f: {\mathcal A}\to \mathbf k\oplus\mathbf k F_2(X_1-1)$ 
restricts to a morphism of algebras without unit 
$$
f^+:{\mathcal A}^+\to\mathbf k F_2(X_1-1), 
$$
and that the following diagram commutes
$$
\xymatrix{ {\mathcal A}^+\ar^{\!\!\!\!\!\!\!\!\!\!\!\!\!\!\!\!\!f^+}[r]\ar_T[d]&\mathbf k F_2(X_1-1)\ar^{X_0\cdot-}[d]
\\ {\mathcal A}^+\ar_{\!\!\!\!\!\!\!\!\!\!\!\!\!\!\!\!\!\!\!f^+}[r]& \mathbf k F_2(X_1-1) }
$$
Define now a linear map 
$$
F:{\mathcal A}[t^{\pm1}]\to\mathbf k F_2
$$
by 
$$
1\otimes t^a\mapsto X_0^a, \quad P\otimes t^b\mapsto f^+(P)X_0^b
$$
for $a,b\in\mathbb Z$, $P\in {\mathcal A}^+$. Then: 
\begin{itemize}
\item[1)] for $a,b\in\mathbb Z$, $F(1\otimes t^a)F(1\otimes t^b)=X_0^aX_0^b=X_0^{a+b}=F(1\otimes t^{a+b})=F((1\otimes t^a)*(1\otimes t^b))$; 
\item[2)] for $a\in\mathbb Z$, $P\in {\mathcal A}^+$, 
$$
F(1\otimes t^a)F(P\otimes1)=X_0^a f^+(P)= f^+(T^a(P))=F(T^a(P)\otimes1)=F((1\otimes t^a)*(P\otimes1)), 
$$
$$
F(P\otimes1)F(1\otimes t^a)=f^+(P)X_0^a=F(P\otimes t^a)=F((P\otimes1)*(1\otimes t^a));
$$
\item[3)]for $a,b\in\mathbb Z$, $P,Q\in {\mathcal A}^+$, 
\begin{align*}
& F(P\otimes t^a)F(Q\otimes t^b)=f^+(P)X_0^a f^+(Q)X_0^b= f^+(P) f^+(T^a(Q))X_0^b
\\ & = f^+(P T^a(Q))X_0^b=F(PT^a(Q)\otimes t^b)=F((P\otimes t^a)*(Q\otimes t^b)). 
\end{align*}
\end{itemize}
All this implies that {\it $F$ is an algebra morphism ${\mathcal A}[t^{\pm1}]\to\mathbf k F_2$.}
Let us now show that the algebra morphisms $F,G$ are mutually inverse. 

We have 
$$
F\circ G(X_0^{\pm1})=F(1\otimes t^{\pm1})=X_0^{\pm1}, 
$$
$$ 
F\circ G(X_1^{\pm1})=F((1+\xi_0^\pm)\otimes1)=1+F(\xi_0^\pm\otimes1)=1+f^+(\xi_0^\pm)=1+(X_1^{\pm1}-1)=X_1^{\pm1}, 
$$
so $F\circ G=\mathrm{id}_{\mathbf k F_2}$. 
 
Similarly, $G\circ F$ is an algebra endomorphism of $({\mathcal A}[t^{\pm1}],*)$. 

Given the structure of $*$, a collection of generators of this algebra is given by $\{g\otimes 1,1\otimes t^{\pm1}\}$, 
where $g$ runs over a collection of algebraic generators of ${\mathcal A}$, so such a collection of algebraic 
generators is $\{(\xi_k^\pm\otimes1)_{k\in\mathbb Z},1\otimes t^{\pm1}\}$. One has 
\begin{align*}
& G\circ F(\xi_k^\pm\otimes1)=G(f^+(\xi_k^\pm))=G(X_0^k(X_1^{\pm1}-1))=G(X_0)^{*k}*(G(X_1^{\pm1})-1)
\\ & =(1\otimes t^{\pm1})^{*k}*(\xi_0^\pm\otimes1) =\xi_k^\pm\otimes1, 
\end{align*}
so $G\circ F=\mathrm{id}_{{\mathcal A}[t^{\pm1}]}$. 

\underline{(c) Diagram involving $f$ and $F$.} The commutation of (\ref{diagram:f:F}) follows from the definition of $F$. 
\hfill\qed\medskip 

Lemma \ref{lem:Wpresentation1} leads to the following presentation:

\begin{prop}\label{new:prop:pres}
The associative $\mathbf k$-algebra $\mathcal W^{\B}$ is generated by the unity element $1$ and the elements
\begin{align}
& Y_n^+:=(X_0-1)^{n-1}X_0(1-X_1) \qquad\qquad (n>0), \\
& Y_n^-:=(X_0^{-1}-1)^{n-1}X_0^{-1}(1-X_1^{-1}) \qquad (n>0),  \notag \\
& X_1 \text{   and } X_1^{-1}, \notag
\index{Y_n @ $Y_n^+$, $ Y_n^-$}
\end{align}
with defining relations
\begin{equation}\label{equation for X1}
X_1X_1^{-1}=X_1^{-1}X_1=1.
\end{equation}
\end{prop}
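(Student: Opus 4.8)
The plan is to derive this presentation from the one established in Lemma \ref{lem:Wpresentation1}. Write $\mathcal C$ for the abstract $\mathbf k$-algebra generated by symbols $Y_n^+, Y_n^-$ ($n>0$) together with $X_1, X_1^{-1}$, subject only to $X_1 X_1^{-1}=X_1^{-1}X_1=1$. Sending each generator to the element of $\mathcal W_l^{\B}$ bearing the same name defines a surjective algebra morphism $\pi\colon\mathcal C\to\mathcal W_l^{\B}$, and the assertion is precisely that $\pi$ is injective. Let $\mathcal A$ be the algebra of Lemma \ref{lem:Wpresentation1}, with its isomorphism $f\colon\mathcal A\to\mathcal W_l^{\B}$. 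I would prove injectivity of $\pi$ by producing an algebra morphism $g\colon\mathcal A\to\mathcal C$ with $\pi\circ g=f$ and showing that $g$ is surjective; for then $g$ is bijective and $\pi=f\circ g^{-1}$ is an isomorphism.

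First I would record the change of generators. In $\mathbf k F_2$ one has $\xi_0^\pm=X_1^{\pm1}-1$, while expanding $(X_0-1)^{n-1}$ shows that $Y_n^+$ is an invertible, integer, upper-triangular linear combination of $\xi_1^+,\dots,\xi_n^+$, and symmetrically that $Y_n^-$ is such a combination of $\xi_{-1}^-,\dots,\xi_{-n}^-$. This lets me define $g$ on the generators of $\mathcal A$: set $g(\xi_0^\pm):=X_1^{\pm1}-1$; for $k\geq1$ let $g(\xi_k^+)$ be the combination of the $Y_j^+$ obtained by inverting the triangular relation, and put $g(\xi_k^-):=-g(\xi_k^+)-g(\xi_k^+)(X_1^{-1}-1)$; symmetrically, for $k\leq-1$ let $g(\xi_k^-)$ be the combination of the $Y_j^-$ and put $g(\xi_k^+):=-g(\xi_k^-)-g(\xi_k^-)(X_1-1)$.

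The crux is to verify that these images satisfy the relations \eqref{relations1} in $\mathcal C$, so that $g$ is well defined; this is the step I expect to require the most care, since one must check that no further relations among the $Y_n^\pm$ are forced. The two relations for $k=0$ read $(X_1-1)(X_1^{-1}-1)=-(X_1-1)-(X_1^{-1}-1)$ and its mirror image, which a direct expansion reduces to $X_1X_1^{-1}=1$ and $X_1^{-1}X_1=1$ respectively, and these hold in $\mathcal C$ by definition. For $k\geq1$ the relation $\xi_k^+\xi_0^-=-\xi_k^+-\xi_k^-$ holds by the very definition of $g(\xi_k^-)$, while the companion relation $\xi_k^-\xi_0^+=-\xi_k^+-\xi_k^-$ becomes, after substituting and factoring $g(\xi_k^+)$ on the left, exactly $g(\xi_k^+)$ times the $k=0$ identity $\xi_0^-\xi_0^+=-\xi_0^+-\xi_0^-$, hence holds; the case $k\leq-1$ is handled the same way, factoring $g(\xi_k^-)$ on the left and using the other $k=0$ identity. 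By construction $\pi\circ g$ sends each $\xi_k^\pm$ to itself, so $\pi\circ g=f$.

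Finally, $g$ is surjective: inverting the triangular relations expresses each $Y_n^\pm$ as a combination of the elements $g(\xi_k^\pm)$, and $X_1^{\pm1}=1+g(\xi_0^\pm)$, so every generator of $\mathcal C$ lies in the image of $g$. Since $f=\pi\circ g$ is injective, so is $g$; being also surjective, $g$ is an isomorphism, whence $\pi=f\circ g^{-1}$ is an isomorphism. This identifies $\mathcal W_l^{\B}$ with $\mathcal C$ and yields the claimed presentation.
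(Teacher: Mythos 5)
Your proposal is correct and takes essentially the same route as the paper: both arguments start from the presentation of $\mathcal A$ in Lemma \ref{lem:Wpresentation1}, eliminate the redundant generators ($\xi_k^-$ for $k>0$ and $\xi_k^+$ for $k<0$) via the relations \eqref{relations1} so that only the $k=0$ relation survives, pass through the invertible triangular linear change of generators between $(Y_n^+)_{n>0}$, $(Y_n^-)_{n>0}$ and $(\xi_n^+)_{n>0}$, $(\xi_{-n}^-)_{n>0}$, and identify the $k=0$ relation with \eqref{equation for X1}. Your construction of the explicit morphism $g\colon\mathcal A\to\mathcal C$ with its relation checks simply makes rigorous the elimination step that the paper's terse proof asserts without detail.
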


\begin{proof}
By the presentations in the previous lemma, 
we have $\xi_k^-=-\xi_k^+(1+\xi_0^-)$ 
and $\xi_k^+=-\xi^-_k(1+\xi_0^+)$, 
from which we can deduce that 
$\mathcal A$ is generated by $\xi_k^+$ and $\xi_{-k}^-$ for $k\geqslant 0$
with defining equation \eqref{relations1} only for $k=0$.

It is immediate to see that $(Y_n^+)_{n>0}$ (resp. $(Y_n^-)_{n>0}$)
can be expressed by 
linear combinations of  $(\xi_{n}^+)_{n>0}$ (resp. $(\xi_{n}^-)_{n>0}$) and vice versa
and that the equation \eqref{relations1} for $k=0$ is equivalent to 
\eqref{equation for X1},
from which our claim follows.
\end{proof}

\subsection{The coproducts $\Delta^{\mathcal V,\B},\Delta^{\mathcal W,\B}$ and $\Delta^{\mathcal M,\B}$}\label{sect:2:3:28oct}

We denote by $\Delta^{\mathcal V,\B}:\mathcal V^\B\to(\mathcal V^\B)^{\otimes2}$
\index{DeltaVB@$\Delta^{\mathcal V,\B}$}
the group algebra coproduct; it is determined by the  
condition that the elements $X_0^{\pm1}$, $X_1^{\pm1}$ are group-like. 

\begin{prop}\label{lemma:tiauamsdt}
There is a unique algebra morphism 
$$
\Delta^{\mathcal W,\B}:\mathcal W^\B\to(\mathcal W^\B)^{\otimes2}
\index{DeltaWB@$\Delta^{\mathcal W,\B}$}
$$
such that 
\begin{equation}\label{formula:Delta:W:B}
\Delta^{\mathcal W,\B}(X_1^{\pm1})=X_1^{\pm1}\otimes X_1^{\pm1}, \quad
\Delta^{\mathcal W,\B}(Y_k^\pm)=Y_k^\pm\otimes1+1\otimes Y_k^\pm+\sum_{k',k''>0,k'+k"=k} 
Y^\pm_{k'}\otimes Y^\pm_{k''}\quad\text{for any }k\geq1.
\end{equation}
 It equips $\mathcal W^\B$ with a cocommutative Hopf algebra structure with antipode given by 
$X_1^{\pm1}\mapsto X_1^{\mp1}$, $Y_k^\pm\mapsto \sum_{a\geq1}(-1)^a\sum_{k_1+\cdots+k_a=k}
Y^\pm_{k_1}\cdots Y^\pm_{k_a}$.  
\end{prop}

\proof The relations between the generators (see Proposition \ref{new:prop:pres}) are obviously 
preserved, therefore $\Delta^{\mathcal W,\B}$ is well-defined. One directly checks the 
cocommutativity and coassociativity of $\Delta^{\mathcal W,\B}$.  One also checks that for $t$ a 
formal parameter, the series $1+\sum_{k\geq1}t^kY_k^{\pm}$ are group-like. It follows that 
$\Delta^{\mathcal W,\B}$ admits the announced antipode. \hfill\qed\medskip 

We denote by 
$$
\Delta^{\mathcal M,\B}:\mathcal M^{\B}\to(\mathcal M^{\B})^{\otimes2}
\index{DeltaMB@$\Delta^{\mathcal M,\B}$}
$$
the $\mathbf k$-module morphism determined by the conditions that $\Delta^{\mathcal M,\B}(1_\B)=(1_\B)^{\otimes2}$, and that it 
is compatible with the module structures on both sides and with the algebra morphism $\Delta^{\mathcal W,\B}$.  
Then $(\mathcal M^{\B},\Delta^{\mathcal M,\B})$ is a cocommutative and coassociative coalgebra. 

\begin{rem}
If $\mathfrak g$ is a Lie algebra and $\Gamma$ is a group acting on $\mathfrak g$ by Lie algebra 
automorphisms, then the semidirect product $U(\mathfrak g)\rtimes\Gamma$ is 
equipped with a cocommutative Hopf algebra structure, defined by the conditions that the elements 
of $\mathfrak g$ are primitive and the elements of $\Gamma$ are group-like. The Hopf algebra 
$(\mathcal W^\B,\Delta^{\mathcal W,\B})$ is isomorphic to $U(\mathfrak g^\B)\rtimes\mathbb Z$, 
with $\mathfrak g^\B$ the free Lie algebra over generators 
$(U_k^{\pm,(\ell)})_{k\geq 1,\ell\in\mathbb Z}$ and the action of $\mathbb Z$ on 
$\mathfrak g^\B$ being given by $1\cdot U_k^{\pm,(\ell)}=U_k^{\pm,(\ell+1)}$. The isomorphism 
is given by 
$\mathfrak g^\B[[t]]\ni\sum_{k\geq1}t^kU_k^{\pm,(\ell)}\mapsto X_1^\ell\cdot\mathrm{log}(1+\sum_{k\geq1}t^kY_k^\pm)\cdot X_1^{-\ell}\in\mathcal W^\B[[t]]$, $\mathbb Z\ni\pm 1\mapsto 
X_1^{\pm1}\in\mathcal W^\B$. 
\end{rem}

\begin{rem} The automorphism of $\mathcal V^\B$ induced by $X_i\mapsto X_i^{-1}$, $i=0,1$ restricts to a Hopf 
algebra automorphism $\theta$ of $(\mathcal W^\B,\Delta^{\mathcal W,\B})$ such that $X_1^\pm\mapsto X_1^\mp$ 
and $Y_k^\pm\mapsto Y_k^\mp$ for $k>0$. \end{rem}

\subsection{Filtrations and associated graded objects}\label{sect:filt:28oct}

\subsubsection{Associated graded object of $(\mathcal V^\B,\Delta^{\mathcal V,\B})$}\label{sect:241:28oct}

Recall the decreasing algebra filtration of the algebra $\mathcal V^\B$ (see \S\ref{sect:2:1:28oct}). 
The associated graded algebra $\mathrm{gr}(\mathcal V^\B)=\oplus_{n\geq0}\mathrm{gr}_n(\mathcal V^\B)
=\oplus_{n\geq0}F^n\mathcal V^\B/F^{n+1}\mathcal V^\B$ is generated in degree 1; according to \cite{Bbk}, chap. 2, \S5, no. 4, Theorem 2, 
there is an isomorphism of graded algebras
$$
\mathrm{gr}(\mathcal V^\B)\stackrel{\sim}{\to}\mathcal V^\DR, \quad 
\mathrm{gr}_1(\mathcal V^\B)\ni [X_i-1]\mapsto e_i\in (\text{degree 1 part of }\mathcal V^\DR)
\subset\mathcal V^\DR. 
$$
The algebra morphism $\Delta^{\mathcal V,\B}:\mathcal V^\B\to(\mathcal V^\B)^{\otimes2}$ is then compatible with the filtrations on both sides, 
and the associated graded algebra morphism coincides with $\Delta^{\mathcal V,\DR}:\mathcal V^\DR\to(\mathcal V^\DR)^{\otimes2}$. 

\subsubsection{Filtration on $(\mathcal W^\B,\Delta^{\mathcal W,\B})$ and associated graded object}

Recall the decreasing algebra filtration of the algebra $\mathcal W^\B$ (see \S\ref{sect:2:1:28oct}). 

\begin{lem}\label{lemma:20190926}
For $n\geq1$, one has 
$$
F^n\mathcal W^\B=F^{n-1}\mathcal V^\B\cdot(X_1-1)
$$
(equality of subspaces of $\mathcal V^\B$). 
\end{lem}

\proof Set $\underline e_i^\pm:=X_i^{\pm1}-1\in\mathcal V^\B$ for $i=0,1$. 
By (\ref{decomp:kF2}), one has 
\begin{equation}\label{decomp:VB+}
\mathcal V^\B_+=\mathcal V^\B\cdot\underline e_0^+\oplus\mathcal V^\B\cdot\underline e_1^+,
\end{equation} which implies
\begin{equation}\label{id:FnV}
\forall n\geq 0,\quad F^n\mathcal V^\B=(\mathcal V^\B_+)^n=\sum_{(a_1,\ldots,a_n)\in\{0,1\}^n}\mathcal V^\B\underline e_{a_1}^+\mathcal V^\B
\cdots\mathcal V^\B\underline e_{a_n}^+. 
\end{equation}
Since $F^n\mathcal V^\B$ is a right ideal in $\mathcal V^\B$, one has $F^n\mathcal V^\B=F^n\mathcal V^\B\cdot \mathcal V^\B$, which together with 
\eqref{id:FnV} implies 
\begin{equation}\label{id:FnV:bis}
\forall n\geq 0,\quad F^n\mathcal V^\B=\sum_{(a_1,\ldots,a_n)\in\{0,1\}^n}\mathcal V^\B\underline e_{a_1}^+\mathcal V^\B
\cdots\mathcal V^\B\underline e_{a_n}^+\mathcal V^\B. 
\end{equation}
Let $n\geq 1$. \eqref{id:FnV} implies 
$$
F^n\mathcal V^\B=\sum_{
\begin{matrix}
\scriptstyle{(a_1\ldots,a_{n-1})} \\  \ \ \scriptstyle{\in\{0,1\}^{n-1}}
\end{matrix}
}\mathcal V^\B\underline e_{a_1}^+\mathcal V^\B
\cdots\mathcal V^\B\underline e_{a_{n-1}}^+\mathcal V^\B\underline e_0^+
+
\sum_{
\begin{matrix}
\scriptstyle{(a_1\ldots,a_{n-1})} \\  \ \ \scriptstyle{\in\{0,1\}^{n-1}}
\end{matrix}
}
\mathcal V^\B\underline e_{a_1}^+\mathcal V^\B
\cdots\mathcal V^\B\underline e_{a_{n-1}}^+\mathcal V^\B\underline e_1^+, 
$$
which together with \eqref{decomp:VB+} and  \eqref{id:FnV:bis} for $n$ replaced by $n-1$ implies
\begin{equation}\label{useful:for:later:13jan2020}
F^n\mathcal V^\B=0\oplus F^{n-1}\mathcal V^\B\cdot \underline e_0^+\oplus F^{n-1}\mathcal V^\B\cdot \underline e_1^+\subset 
\mathbf k 1\oplus \mathcal V^\B\cdot \underline e_0^+\oplus \mathcal V^\B\cdot \underline e_1^+=\mathcal V^\B. 
\end{equation}
As $F^n\mathcal W^\B=F^n\mathcal V^\B\cap (\mathbf k 1\oplus 0\oplus \mathcal V^\B\cdot \underline e_1^+)$, one obtains 
$F^n\mathcal W^\B=F^{n-1}\mathcal V^\B\cdot \underline e_1^+$ 
\hfill\qed\medskip 

\begin{prop}\label{prop:computation:grVB:grWB}
The morphism of graded algebras $\mathrm{gr}(\mathcal W^\B)\to\mathrm{gr}(\mathcal V^\B)$ induced by the compatibility of the inclusion 
$\mathcal W^\B\subset\mathcal V^\B$ with the filtrations is injective. 
The image of $\mathrm{gr}(\mathcal W^\B)$ under the isomorphism $\mathrm{gr}(\mathcal V^\B)\simeq\mathcal V^\DR$ is equal to $\mathcal W^\DR\subset\mathcal V^\DR$.    
\end{prop}

\proof The injectivity of the map $\mathrm{gr}(\mathcal W^\B)\to\mathrm{gr}(\mathcal V^\B)$ follows from the fact that the filtration of 
$\mathcal W^\B$ is induced by  that of $\mathcal V^\B$. We have $\mathcal W^\B=\mathbf k\oplus F^1\mathcal W^\B$, which implies that 
\begin{equation}\label{id:decomp:grW}
\mathrm{gr}(\mathcal W^\B)=\mathbf k1\oplus\mathrm{gr}(F^1\mathcal W^\B)\subset\mathcal V^\DR, 
\end{equation}
where 
$$
\mathrm{gr}(F^1\mathcal W^\B)\subset\oplus_{n>0}\mathcal V^\DR_n. 
$$
The sequence of linear maps $\mathcal V^\B\stackrel{-\cdot(X_1-1)}{\longrightarrow}F^1\mathcal W^\B\subset\mathcal V^\B$ is compatible with the  
filtrations whose $n$th steps are given by $F^n\mathcal V^\B$, $F^{n+1}\mathcal W^\B$,  and $F^{n+1}\mathcal V^\B$. The 
associated graded sequence of maps $\mathcal V^\DR\simeq
\mathrm{gr}(\mathcal V^\B)\to\mathrm{gr}(F^1\mathcal W^\B)\to\mathrm{gr}(\mathcal V^\B)\simeq\mathcal V^\DR$ is given by 
$\mathcal V^\DR\to\mathcal V^\DR$, $x\mapsto xe_1$. It follows from Lemma \ref{lemma:20190926} that the first map 
$\mathrm{gr}(\mathcal V^\B)\to\mathrm{gr}(F^1\mathcal W^\B)$ is surjective. All this implies the identification 
$\mathrm{gr}(F^1\mathcal W^\B)\simeq\mathcal V^\DR e_1$, which together with \eqref{id:decomp:grW} implies the claim. 
\hfill\qed\medskip 

We now describe explicitly the filtration of $\mathcal W^\B$. Set $\underline e_i^\pm:=X_i^{\pm1}-1\in\mathcal V^\B$ for $i=0,1$,  
$$
\forall a_+,a_-\geq 0,\quad \underline y_{a_+,a_-}^\pm:=(\underline e_0^+)^{a_+}(\underline e_0^-)^{a_-}\underline e_1^\pm\in\mathcal W^\B, 
$$
and 
$$
\forall a\geq 1,\quad V_a:=\mathrm{Span}_{\mathbf k}( \underline y_{a_+,a_-}^\epsilon|a_++a_-\geq a-1, \quad \epsilon\in\{+,-\})\subset  \mathcal W^\B. 
$$
 
 \begin{lem}\label{lemma:2:9:20190926}
 1) For $n\geq1$, $F^n\mathcal W^\B$ is equal to the sum $\sum_{k\geq1}\sum_{a_1,\ldots,a_k|a_1+\cdots+a_k\geq n}V_{a_1}\cdots V_{a_k}$ of products (for the algebra structure of $\mathcal W^\B$) of subspaces of $\mathcal W^\B$. 
 
 2) $F^0\mathcal W^\B=\mathcal W^\B=\mathbf k1\oplus F^1\mathcal W^\B$. 
 \end{lem}

\proof One has $\mathcal V^\B=\mathbf k1\oplus F^1\mathcal V^\B$ and $\mathcal W^\B\supset\mathbf k1$, which implies 2). Let $n\geq1$. By Lemma \ref{lemma:20190926}, $F^n\mathcal W^\B=F^{n-1}\mathcal V^\B\cdot \underline e_1^+$. Since 
$X_1$ is invertible in $\mathcal V^\B$, one has $\mathcal V^\B\cdot X_1^{-1}=\mathcal V^\B$, which together with \eqref{id:FnV:bis} implies $F^{n-1}\mathcal V^\B\cdot X_1^{-1}=F^{n-1}\mathcal V^\B$. All this together with 
$X_1^{-1}\cdot\underline e_1^+=- \underline e_1^-$ implies 
\begin{equation}\label{id:FnW}
F^n\mathcal W^\B=F^{n-1}\mathcal V^\B\cdot \underline e_1^+ + F^{n-1}\mathcal V^\B\cdot \underline e_1^-.
\end{equation}
One has $\mathcal V^\B_+=\sum_{s\geq1}\sum_{((a_1,\epsilon_1),\ldots,(a_s,\epsilon_s))\in(\{0,1\}\times\{+,-\})^s}\mathbf k\cdot 
\underline e_{a_1}^{\epsilon_1}\cdots\underline e_{a_s}^{\epsilon_s}$, therefore 
$$
\forall n\geq 0,\quad F^n\mathcal V^\B=\sum_{s\geq n}\sum_{((a_1,\epsilon_1),\ldots,(a_s,\epsilon_s))\in(\{0,1\}\times\{+,-\})^s}\mathbf k\cdot 
\underline e_{a_1}^{\epsilon_1}\cdots\underline e_{a_s}^{\epsilon_s}, 
$$
which together with \eqref{id:FnW} implies 
$$
\forall n\geq 1,\quad F^n\mathcal W^\B=\sum_{s\geq n}\sum_{((a_1,\epsilon_1),\ldots,(a_{s-1},\epsilon_{s-1}))\in(\{0,1\}\times\{+,-\})^{s-1},
\epsilon_s\in\{+,-\}}\mathbf k\cdot 
\underline e_{a_1}^{\epsilon_1}\cdots\underline e_{a_{s-1}}^{\epsilon_{s-1}}\underline e_{1}^{\epsilon_{s}}, 
$$
The fact that $\underline e_0^+$ and $\underline e_0^-$ commute implies that the right-hand side of this equality can be 
identified with the sum of subspaces announced in 1). 
\hfill\qed\medskip 

Define linear maps 
$$
\underline\xi^\pm:\mathbf k[t,t^{-1}]\to\mathcal W^\B, \quad f\mapsto\underline\xi^\pm(f):=f(X_0)(X_1^{\pm1}-1).  
$$
\begin{lem}\label{lem:what:is:Va}
For $a\geq1$, one has 
$$
V_a=\underline\xi^+(((t-1)^{a-1}))+\underline\xi^-(((t-1)^{a-1}))\subset\mathcal W^\B, 
$$
where $((t-1)^{a-1}):=(t-1)^{a-1}\mathbf k[t,t^{-1}]\subset\mathbf k[t,t^{-1}]$ is the ideal of $\mathbf k[t,t^{-1}]$
generated by $(t-1)^{a-1}$. 
\end{lem}

\proof For $a',a''\geq0$, one has $\underline y_{a',a''}^\pm=\underline\xi^\pm((-t^{-1})^{a''}(t-1)^{a'+a''})$, therefore $V_a=\underline\xi^+(W_a)
+\underline\xi^-(W_a)$, where 
$$
W_a:=\mathrm{Span}\{t^{-a''}(t-1)^{a'+a''}|a',a''\geq0,a'+a''\geq a-1\}\subset\mathbf k[t,t^{-1}]. 
$$
One has 
\begin{align*}
& W_a=(t-1)^{a-1}\cdot\Big(\mathrm{Span}(1,\ldots,t^{1-a})+(t-1)\mathrm{Span}(1,\ldots,t^{-a})
+(t-1)^2\mathrm{Span}(1,\ldots,t^{-a-1})+\cdots\Big)
\\ & =(t-1)^{a-1}\cdot\Big(\mathrm{Span}(1,\ldots,t^{1-a})+\mathrm{Span}(t,t^{-a})
+\mathrm{Span}(t^2,\ldots,t^{-a-1})+\cdots\Big)=((t-1)^{a-1}). 
\end{align*}
\hfill\qed\medskip 

\begin{lem}\label{lemma:Delta:sharp:23012018} 
For any $k\in\mathbb Z$, one has 
\begin{equation}\label{cop:+}
\Delta^{\mathcal W,\B}(X_0^k(1-X_1))
=X_0^k(1-X_1)\otimes 1+1\otimes X_0^k(1-X_1)
+\sum_{i=1}^{k-1} X_0^{i}(1-X_1) \otimes X_0^{k-i}(1-X_1),  
\end{equation} 
\begin{equation}\label{cop:-}
\Delta^{\mathcal W,\B}(X_0^k(1-X_1^{-1})) 
=X_0^k(1-X_1^{-1})\otimes 1+1\otimes X_0^k(1-X_1^{-1})
-\sum_{i=0}^{k} X_0^{i}(1-X_1^{-1}) \otimes X_0^{k-i}(1-X_1^{-1}), 
\end{equation}
where for $A$ an abelian group, for $f:\mathbb Z\to A$ be a function and for $a,b\in\mathbb Z$, we set 
\begin{equation}\label{convention:sums}
\sum_{k=a}^b f(k):=\begin{cases} f(a)+\cdots+f(b) & \text{ if }b\geq a, \\ 0 & \text{ if }b=a-1, \\ 
-f(a-1)-\cdots-f(b+1)& \text{ if }b<a-1\end{cases}
\end{equation}
\end{lem}

\proof Let $t$ be a formal parameter. As we have seen, the series 
$s_\pm(t) :=1+\sum_{k\geq1}t^kY_k^\pm$ are group-like for $\Delta^{\mathcal W,\B}$. 
One computes these series as follows 
$$s_\pm(t)=1+tX_0^{\pm1}\{1-t(X_0^{\pm1}-1)\}^{-1}(1-X_1^{\pm1})=(1+t-tX_0^{\pm1})^{-1}
(1+t-tX_0^{\pm1}X_1^{\pm1})=\tilde s_\pm(u), $$
where $u :=t/(1+t)$ and $\tilde s_\pm(u ) :=(1-uX_0^{\pm1})^{-1}(1-uX_0^{\pm1}X_1^{\pm1})$. 
It follows that the series $\tilde s_\pm(u)$ are group-like. Together with their expansions 
$\tilde s_\pm(u)=1+\sum_{k\geq1}u^k X_0^{\pm k}(1-X_1^{\pm1})$, this implies \eqref{cop:+} for 
$k\geq1$ and \eqref{cop:-} for $k\leq-1$. Since $X_1^{\pm1}$ are group-like, \eqref{cop:+} for $k$
is equivalent to \eqref{cop:-} for $-k$, which implies \eqref{cop:+} for $k\leq-1$ and \eqref{cop:+} for 
$k\geq1$ . Finally, \eqref{cop:+} and \eqref{cop:-} for $k=0$ are direct consequences of the fact that 
$X_1^{\pm1}$ are group-like.   \hfill \qed\medskip 

\begin{lemdef}\label{tswdbtcotlmwti}
The assignments $f\mapsto {{t'f(t'')-t''f(t')}\over{t'-t''}}$, $f\mapsto {{t'f(t')-t''f(t'')}\over{t'-t''}}$  define linear maps $\mathbf k[t,t^{-1}]\to
\mathbf k[t',(t')^{-1},t'',(t'')^{-1}]$.  We denote by 
$$
\mathrm{Op}^\pm:\mathbf k[t,t^{-1}]\mapsto\mathbf k[t,t^{-1}]^{\otimes 2}
$$
the composition of this linear map with the isomorphism of their target algebra with $\mathbf k[t,t^{-1}]^{\otimes 2}$ 
given by $t'\mapsto t\otimes 1$, $t''\mapsto1\otimes t$. 
\end{lemdef}

\proof For $a\in\mathbb Z$, the image of $t^a$ by the first map is $-t't''(t^{\prime a-2}+\cdots
+t^{\prime\prime a-2})$ if $a\geq 1$, and $(t't'')^a(t^{\prime -a}+\cdots+t^{\prime\prime -a})$ if $a\leq 0$, which implies the result
in this case. The result in the second case follows from the identity $\forall f\in\mathbf k[t,t^{-1}]$, 
$\mathrm{Op}^+(f)+\mathrm{Op}^-(f)=f\otimes1+1\otimes f$.  \hfill\qed\medskip  

\begin{lem}\label{lemma:special:form:of:image:of:underline:X}
The map $\Delta^{\mathcal W,\B}:\mathcal W^{\B}\to(\mathcal W^{\B})^{\otimes2}$ is such that 
$$
\forall f\in\mathbf k[t,t^{-1}],\quad \underline\xi^\pm(f)\mapsto\underline\xi^\pm(f)\otimes1+1\otimes\underline{\xi}^\pm(f)
+(\underline{\xi}^\pm)^{\otimes2}(\mathrm{Op}^\pm(f)). 
$$
\end{lem}
\proof Let $k\in\mathbb Z$ and $f:=t^k$. Then  
\begin{align*}
&\underline{\xi}^+(f)=X_0^k(X_1-1)\mapsto \underline{\xi}^+(f)\otimes1+1\otimes\underline{\xi}^+(f)-\sum_{i=1}^{k-1}
X_0^{k-i}(X_1-1)\otimes X_0^i(X_1-1)
= \underline{\xi}^+(f)\otimes1
\\ & +1\otimes\underline{\xi}^+(f)+(\underline{{\xi}}^+)^{\otimes2}(-\sum_{i=1}^{k-1}t^{\prime k-i} t^{\prime\prime i})
=\underline{\xi}^+(f)\otimes1+1\otimes\underline{{\xi}}^+(f)+(\underline{{\xi}}^+)^{\otimes2}({{t'f(t'')-t''f(t')}\over{t'-t''}}),  
\end{align*}
which implies the result in the case of $\underline{\xi}^+$. The result for $\underline{\xi}^-$ follows from this, from 
$\underline{\xi}^-(f)=-\underline{\xi}^+(f)X_1^{-1}$, and from the relation between  $\mathrm{Op}^+$ and $\mathrm{Op}^-$
(see proof of Lemma \ref{tswdbtcotlmwti}). 
\hfill \qed\medskip 

\begin{lem}\label{lemma:image:of:ideal:t-1:to:the:n-1}
For $n\geq 2$, each of the maps $\mathrm{Op}^\pm$ defined in 
Lemma-Definition \ref{tswdbtcotlmwti}
takes $((t-1)^{n-1})$ (see Lemma \ref{lem:what:is:Va}) to $\sum_{a=1}^{n-1}((t-1)^{a-1})\otimes((t-1)^{n-a-1})$. 
\end{lem}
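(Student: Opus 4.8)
The plan is to reduce the statement—which I read as the inclusion $\mathrm{Op}\big(((t-1)^{n-1})\big)\subseteq R_n$, where I abbreviate $R_n:=\sum_{a=1}^{n-1}((t-1)^{a-1})\otimes((t-1)^{n-a-1})$—to a computation with the divided difference operator. Writing $t'=t\otimes1$ and $t''=1\otimes t$ inside $\mathbf k[t^{\pm1}]^{\otimes2}\cong\mathbf k[t^{\prime\pm1},t^{\prime\prime\pm1}]$, I would first record the identity
\begin{equation*}
\mathrm{Op}(f)=1\otimes f-(1\otimes t)\,D(f),\qquad D(f):=\frac{f(t')-f(t'')}{t'-t''},
\end{equation*}
which follows by writing $t'=t''+(t'-t'')$ in the numerator $t'f(t'')-t''f(t')$ defining $\mathrm{Op}(f)$. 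Here $D(f)$ lies in $\mathbf k[t^{\pm1}]^{\otimes2}$ for the same reason $\mathrm{Op}(f)$ does (the numerator vanishes on the diagonal $t'=t''$, so $t'-t''$ divides it), so $D$ is a well-defined $\mathbf k$-linear map.

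Since $1\otimes t$ is a unit and $R_n$ is an ideal of $\mathbf k[t^{\pm1}]^{\otimes2}$ (a finite sum of the ideals $((t-1)^{p})\otimes((t-1)^{q})$), the displayed identity shows it suffices to check, for every $f\in((t-1)^{n-1})$, that $1\otimes f\in R_n$ and that $D(f)\in R_n$. The first is immediate: $1\otimes f\in((t-1)^{0})\otimes((t-1)^{n-1})\subseteq((t-1)^{0})\otimes((t-1)^{n-2})\subseteq R_n$, using $n\geq2$ and the nesting $((t-1)^{n-1})\subseteq((t-1)^{n-2})$. The substance is the second point.

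To control $D$ on the ideal $((t-1)^{n-1})$, I would use the Leibniz rule
\begin{equation*}
D(gh)=D(g)\,(h\otimes1)+(1\otimes g)\,D(h),
\end{equation*}
verified directly from the definition, together with the explicit value
\begin{equation*}
D\big((t-1)^{n-1}\big)=\sum_{i=0}^{n-2}(t-1)^{i}\otimes(t-1)^{n-2-i},
\end{equation*}
obtained from $\frac{u^{n-1}-v^{n-1}}{u-v}=\sum_{i=0}^{n-2}u^{i}v^{n-2-i}$ with $u=t'-1$, $v=t''-1$. Writing a general element of $((t-1)^{n-1})$ as $h\,(t-1)^{n-1}$ and applying Leibniz gives $D\big(h(t-1)^{n-1}\big)=D(h)\big((t-1)^{n-1}\otimes1\big)+(1\otimes h)\,D\big((t-1)^{n-1}\big)$. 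The first summand lies in $R_n$ because $(t-1)^{n-1}\otimes1\in((t-1)^{n-2})\otimes((t-1)^{0})\subseteq R_n$ and $R_n$ is an ideal; the second lies in $R_n$ because $D\big((t-1)^{n-1}\big)$ does (each term sits in $((t-1)^{i})\otimes((t-1)^{n-2-i})$ with $i+(n-2-i)=n-2$) and $R_n$ is an ideal. Hence $D(f)\in R_n$ for all $f\in((t-1)^{n-1})$, which completes the proof.

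The only genuine subtlety is conceptual rather than computational: $\mathrm{Op}$ is merely $\mathbf k$-linear, not an algebra map, so one cannot reduce the claim to its value on the single ideal generator $(t-1)^{n-1}$. The role of the Leibniz rule for $D$ is precisely to promote the computation on that generator to a statement about the whole ideal by exploiting the bimodule structure. I would also flag that the statement must be read as an inclusion and not an equality: $\mathrm{Op}(f)$ is invariant under $t'\leftrightarrow t''$, so the image of $\mathrm{Op}$ lands in the symmetric part of $\mathbf k[t^{\pm1}]^{\otimes2}$ and cannot exhaust $R_n$ (already for $n=2$, where $R_2=\mathbf k[t^{\pm1}]^{\otimes2}$).
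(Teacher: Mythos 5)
Your proof is correct and is in substance the paper's own argument: the paper writes $f(t)=(t-1)^{n-1}a(t)$ and performs the same add-and-subtract splitting of ${t'f(t'')-t''f(t')\over t'-t''}$ into the term $(t''-1)^{n-1}{t'a(t'')-t''a(t')\over t'-t''}$, which lands in $\mathbf k[t^{\pm1}]\otimes((t-1)^{n-1})$, and the term $t''a(t'){(t''-1)^{n-1}-(t'-1)^{n-1}\over t'-t''}$, handled by the geometric-sum expansion ${u^{n-1}-v^{n-1}\over u-v}=\sum_{i=0}^{n-2}u^iv^{n-2-i}$ — precisely the Leibniz decomposition you package via the operator $D$. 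Your reduction $\mathrm{Op}(f)=1\otimes f-(1\otimes t)D(f)$ and the explicit Leibniz rule for $D$ are a clean systematization, but the key identity and the use of the ideal structure of the target space coincide with the paper's proof.
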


\proof Assume that $f\in((t-1)^{n-1})$, that is $f(t)=(t-1)^{n-1}a(t)$, with $a(t)\in\mathbf k[t^{\pm1}]$. Then 
\begin{align*}
& {{t'f(t'')-t''f(t')}\over{t'-t''}}={{t'(t''-1)^{n-1}a(t'')-t''(t'-1)^{n-1}a(t')}\over{t'-t''}}
\\ & =(t''-1)^{n-1}{t'{a(t'')}-t''{a(t')}\over t'-t''}+t''{a(t')}{(t''-1)^{n-1}-(t'-1)^{n-1}\over t'-t''}
\end{align*}
The second term of the last line belongs to the announced space $\sum_{a=1}^{n-1}((t-1)^{a-1})\otimes((t-1)^{n-a-1})$, 
while the first term belongs to $\mathbf k[t,t^{-1}]\otimes ((t-1)^{n-1})$, which is contained in this space. This proves the result in the case 
of $\mathrm{Op}^+$. This and the  relation between  $\mathrm{Op}^+$ and $\mathrm{Op}^-$ (see proof of Lemma \ref{tswdbtcotlmwti}) imply 
the result in the case of $\mathrm{Op}^-$. \hfill \qed\medskip 

\begin{prop}\label{lemma:compatibility:Delta:l/r:ideals}
The morphism $\Delta^{\mathcal W,\B}$ is compatible with the filtration of $\mathcal W^\B$, in other words, if 
$n\geq 0$, then $\Delta^{\mathcal W,\B}(F^n\mathcal W^\B)\subset\sum_{a,b\geq0|a+b=n}F^a\mathcal W^\B\otimes F^b\mathcal W^\B$. 
\end{prop}

\proof Lemmas \ref{lem:what:is:Va}, \ref{lemma:special:form:of:image:of:underline:X} and \ref{lemma:image:of:ideal:t-1:to:the:n-1} imply 
that $\forall a\geq1$, $\Delta^{\mathcal W,\B}(V_a)\subset V_a\otimes1+1\otimes V_a+\sum_{a',a''\geq1,a'+a''=a}V_{a'}\otimes V_{a''}$. 
The result then follows from Lemma \ref{lemma:2:9:20190926}. 
\hfill \qed\medskip 

\begin{prop}\label{lem:49:23:08:2017}
The associated graded morphism of $\Delta^{\mathcal W,\B}:\mathcal W^{\B}\to(\mathcal W^{\B})^{\otimes2}$ 
with respect to the filtrations of both sides identifies with $\Delta^{\mathcal W,\DR}:\mathcal W^{\DR}\to(\mathcal W^{\DR})^{\otimes2}$ 
under the isomorphisms $\mathrm{gr}(\mathcal W^{\B})\simeq\mathcal W^{\DR}$. 
\end{prop}

\proof This follows from the fact that the elements $y_n,n\geq1$ generate $\mathcal W^{\DR}$, from the second part of \eqref{formula:Delta:W:B} 
for $\pm=+$, and from the fact that for $n\geq1$, the class of $Y_n^+=X_0(X_0-1)^{n-1}(1-X_1)\in F^1\mathcal W^{\B}$ in 
$\mathrm{gr}_1(\mathcal W^{\B})$ is $e_0^{n-1}\cdot(-e_1)=y_n$.  \hfill\qed\medskip 

\subsubsection{Associated graded object of $(\mathcal M^\B,\Delta^{\mathcal M,\B})$}\label{sect:243:28oct}

Recall the decreasing filtration $\mathcal M^\B=F^0\mathcal M^\B\supset F^1\mathcal M^\B\supset\cdots$ on the $\mathbf k$-module 
$\mathcal M^\B$, where for $i\geq 0$, $F^i\mathcal M^\B=F^i\mathcal V^\B\cdot 1_\B$ (see \S\ref{sect:2:1:28oct}). 

\begin{lem}\label{lem:eq:two:filtrations:13jan2020}
For $i\geq0$, one has $F^i\mathcal M^\B=F^i\mathcal W^\B\cdot 1_\B$. 
\end{lem}

\proof For $i\geq1$, one has $F^i\mathcal V^\B\cdot 1_\B=\Big(F^{i-1}\mathcal V^\B(X_0-1)+F^{i-1}\mathcal V^\B(X_1-1)\Big)\cdot 1_\B
=F^{i-1}\mathcal V^\B(X_0-1)\cdot 1_\B+F^{i-1}\mathcal V^\B(X_1-1)\cdot 1_\B=F^{i-1}\mathcal V^\B(X_1-1)\cdot 1_\B=F^i\mathcal W^\B
\cdot 1_\B$, where the first (resp. last) equality follows from \eqref{useful:for:later:13jan2020} (resp. Lemma \ref{lemma:20190926}). 
\hfill\qed\medskip 

\begin{lem}\label{lem:sect:243:28oct}
$(\mathcal M^\B,\Delta^{\mathcal M,\B})$ is a coalgebra in $\mathbf k\text{-mod}_{\mathrm{fil,+}}$; its image under $\mathrm{gr}$
is the coalgebra $(\mathcal M^\DR,\Delta^{\mathcal M,\DR})$ in $\mathbf k\text{-mod}_{\mathrm{gr,+}}$. 
\end{lem}

\proof The first statement follows from Proposition \ref{lemma:compatibility:Delta:l/r:ideals} and from 
$\Delta^{\mathcal M,\B}(1_\B)=(1_\B)^{\otimes2}$. The morphism $(-)\cdot 1_\B:\mathcal V^\B\to\mathcal M^\B$ in 
$\mathbf k\text{-mod}_{\mathrm{fil,+}}$ induces a morphism $\mathcal V^\DR=\mathrm{gr}(\mathcal V^\B)
\stackrel{\mathrm{gr}((-)\cdot 1_\B)}{\to}\mathrm{gr}(\mathcal M^\B)$. If one equips $\mathcal V^\B$ with the shifted 
filtration $F^n(\mathcal V^\B[1]):=F^{n-1}(\mathcal V^\B)$, then $(-)\cdot(X_0-1):\mathcal V^\B[1]\to\mathcal V^\B$
is a morphism in $\mathbf k\text{-mod}_{\mathrm{fil,+}}$, such that the composed morphism 
$\mathcal V^\B[1]\stackrel{(-)\cdot(X_0-1)}{\to}\mathcal V^\B\stackrel{(-)\cdot 1_\B}{\to}\mathcal M^\B$ is zero. 
Applying $\mathrm{gr}$, one obtains that the composed morphism $\mathcal V^\DR[1]\stackrel{(-)\cdot e_0}{\to}
\mathcal V^\DR\to\mathrm{gr}(\mathcal M^\B)$ is zero, therefore a morphism $\mathcal M^\DR\to\mathrm{gr}(\mathcal M^\B)$.  

By Lemma \ref{lem:eq:two:filtrations:13jan2020}, the map $\mathcal W^\B\to\mathcal M^\B$, $x\mapsto x\cdot 1^\B$ is an 
isomorphism in $\mathbf k\text{-mod}_{\mathrm{fil,+}}$, therefore it gives rise to an isomorphism 
$\mathrm{gr}(\mathcal W^\B)\to\mathrm{gr}(\mathcal M^\B)$. The following diagram 
commutes
\begin{equation}\label{comm:diag:W:M:23dec2019}
\xymatrix{ \mathrm{gr}(\mathcal W^\B)\ar^{\mathrm{gr}((-)\cdot 1_\B)}_{\simeq}[rr]&& \mathrm{gr}(\mathcal M^\B)\\ \mathcal W^\DR\ar^\simeq_{(-)\cdot 1_\DR}[rr]\ar^\simeq[u]&& \mathcal M^\DR\ar[u]}
\end{equation}
which implies that $\mathcal M^\DR\to\mathrm{gr}(\mathcal M^\B)$ is an isomorphism. In the following diagram 
$$
\xymatrix{
\mathrm{gr}(\mathcal W^\B)^{\otimes2}\ar^{(\mathrm{gr}((-)\cdot 1_\B))^{\otimes2}}_{\simeq}[rrr]&&&\mathrm{gr}(\mathcal M^\B)^{\otimes2} \\ 
&\mathrm{gr}(\mathcal W^\B)\ar^{\mathrm{gr}((-)\cdot 1_\B)}_{\simeq}[r]\ar^{\mathrm{gr}(\Delta^{\mathcal W,\B})}[ul]&
\mathrm{gr}(\mathcal M^\B)\ar_{\mathrm{gr}(\Delta^{\mathcal M,\B})}[ur]& \\ 
&\mathcal W^\DR\ar_{(-)\cdot 1_\DR}^{\simeq}[r]\ar^{\Delta^{\mathcal W,\DR}}[dl]\ar^\simeq[u]&
\mathcal M^\DR\ar_{\Delta^{\mathcal M,\DR}}[dr]\ar_\simeq[u]& \\ 
(\mathcal W^\DR)^{\otimes2}\ar_{((-)\cdot 1_\DR)^{\otimes2}}^\simeq[rrr]\ar^\simeq[uuu]&&&(\mathcal M^\DR)^{\otimes2}\ar_\simeq[uuu]
}
$$
the inner square commutes by \eqref{comm:diag:W:M:23dec2019}, the external square commutes as it is the tensor square of 
\eqref{comm:diag:W:M:23dec2019}, the lower quadrangle commutes by the definition of $\Delta^{\mathcal M,\DR}$, the left quadrangle 
commutes by Proposition \ref{lem:49:23:08:2017}, the upper quadrangle commutes because $(-)\cdot 1_\B$ induces an isomorphism 
of the coalgebras $(\mathcal W^\B,\Delta^{\mathcal W,\B})$ and $(\mathcal M^\B,\Delta^{\mathcal M,\B})$ in 
$\mathbf k\text{-mod}_{\mathrm{fil,+}}$.  It follows that the right quadrangle commutes. 
\hfill\qed\medskip 

One checks that $\mathcal M^\B$ is a module in $\mathbf k\text{-mod}_{\mathrm{fil,+}}$ over $\mathcal V^\B$, therefore over $\mathcal W^\B$, 
and that the image of this module structure under $\mathrm{gr}$
is the module structure of $\mathcal M^\DR$ over $\mathcal V^\DR$ and $\mathcal W^\DR$ in 
$\mathbf k\text{-mod}_{\mathrm{gr,+}}$.

\subsection{Completions}\label{sect:compl:28oct}

Extending the above definitions of $F^k\mathcal X^\B$ by $F^k\mathcal X^\B:=\mathcal X^\B$ for $k<0$ and 
$\mathcal X\in\{\mathcal V,\mathcal W,\mathcal M\}$, the pairs $({\mathcal V}^\B,\Delta^{\mathcal V,\B})$, 
$({\mathcal W}^\B,\Delta^{\mathcal W,\B})$ (resp. $({\mathcal M}^\B,\Delta^{\mathcal M,\B})$) are Hopf algebras (resp. is a 
coassociative coalgebra) in $\mathbf k\text{-mod}_{\fil,+}$. Applying the functor $(-)^\wedge:\mathbf k\text{-mod}_{\fil,+}
\to\mathbf k\text{-mod}_{\topo}$ (see \S\ref{sect:functors}), one obtains topological Hopf algebras
$(\hat{\mathcal V}^\B\index{V^B^@$\hat{\mathcal V}^\B$},\hat\Delta^{\mathcal V,\B}\index{DeltaVB^@$\hat\Delta^{\mathcal V,\B}$})$ and 
$(\hat{\mathcal W}^\B\index{W^B^@$\hat{\mathcal W}^\B$},\hat\Delta^{\mathcal W,\B}\index{DeltaWB^@$\hat\Delta^{\mathcal W,\B}$})$, and a topological coalgebra 
$(\hat{\mathcal M}^\B\index{M^B^@$\hat{\mathcal M}^\B$},\hat\Delta^{\mathcal M,\B}\index{DeltaMB^@$\hat\Delta^{\mathcal M,\B}$})$.
It follows from the compatibility of the decomposition \eqref{decomp:kF2} 
with filtrations that $\hat{\mathcal M}^\B$ is free of rank one as a $\hat{\mathcal W}^\B$-module. 

It follows from \S\ref{sect:241:28oct}, Proposition \ref{lem:49:23:08:2017}, and Lemma \ref{lem:sect:243:28oct} that  
the image of the pair $({\mathcal X}^\B,\Delta^{\mathcal X,\B})$ by the functor $\mathrm{gr}:\mathbf k\text{-mod}_{\fil,+}
\to\mathbf k\text{-mod}_{\gr,+}$ (see \S\ref{sect:functors}) is $({\mathcal X}^\DR,\Delta^{\mathcal X,\DR})$ 
for $\mathcal X\in\{\mathcal V,\mathcal W,\mathcal M\}$. 

\section{Fake comparison isomorphisms}\label{sect:fci:24dec2019}

In \S\S\ref{sect:1} and \ref{sect:2:28oct}, we introduced parallel `de Rham' and `Betti' structures, namely an inclusion 
of topological algebras $\hat{\mathcal W}^\DR\hookrightarrow\hat{\mathcal V}^\DR$ (resp. $\hat{\mathcal W}^\B\hookrightarrow
\hat{\mathcal V}^\B$), and a surjective $\hat{\mathcal V}^\DR$-module (resp. $\hat{\mathcal V}^\B$-module) morphism 
$(-)\cdot 1_\DR:\hat{\mathcal V}^\DR\twoheadrightarrow\hat{\mathcal M}^\DR$ (resp. $(-)\cdot 1_\B:\hat{\mathcal V}^\B
\twoheadrightarrow\hat{\mathcal M}^\B$), in which $\hat{\mathcal V}^\DR$ (resp. $\hat{\mathcal V}^\B$) is the left regular module 
over itself. We introduce automorphisms of the de Rham algebra structures in \S\ref{sect:3:1:28oct}, and of the de Rham module 
structures in \S\ref{sect:3:2:28oct}. In \S\ref{sect:3:3:28oct}, we define isomorphisms between the various Betti and 
de Rham structures. In \S\ref{sect:3:4:28oct}, we relate these isomorphisms with the Betti-de Rham comparison isomorphisms, 
either of the fundamental group of $\mathfrak M_{0,4}$ with basepoint $\vec 1$, or of the fundamental groupoid of the same space 
with respect to the pair of basepoints $(\vec 1,\vec 0)$. 

\subsection{The algebra automorphisms $\mathrm{aut}_{(\mu,g)}^{\mathcal V,(1),\DR}$ and 
$\mathrm{aut}_{(\mu,g)}^{\mathcal W,(1),\DR}$}\label{sect:3:1:28oct}

For $(\mu,g)\in\mathbf k^\times\times\mathcal G(\hat{\mathcal V}^\DR)$, we denote by 
$\mathrm{aut}_{(\mu,g)}^{\mathcal V,(1),\DR}$ the automorphism of the topological $\mathbf k$-algebra 
$\hat{\mathcal V}^\DR$ given by 
$$
\mathrm{aut}_{(\mu,g)}^{\mathcal V,(1),\DR} : e_0\mapsto g\cdot \mu e_0\cdot g^{-1},\quad 
e_1\mapsto\mu e_1. \index{autV1@$\mathrm{aut}_{(\mu,g)}^{\mathcal V,(1),\DR}$} 
$$
One checks that this automorphism restricts to an automorphism of the subalgebra 
$\hat{\mathcal W}^\DR\subset\hat{\mathcal V}^\DR$, which we denote 
$\mathrm{aut}_{(\mu,g)}^{\mathcal W,(1),\DR}$\index{autW1@$\mathrm{aut}_{(\mu,g)}^{\mathcal W,(1),\DR}$}. 

\subsection{The module automorphisms $\mathrm{aut}_{(\mu,g)}^{\mathcal V,(10),\DR}$ and 
$\mathrm{aut}_{(\mu,g)}^{\mathcal M,(10),\DR}$}\label{sect:3:2:28oct}

For $(\mu,g)\in\mathbf k^\times\times\mathcal G(\hat{\mathcal V}^\DR)$. Denote by 
$\mathrm{aut}_{(\mu,g)}^{\mathcal V,(10),\DR}$ the automorphism of the topological $\mathbf k$-module 
$\hat{\mathcal V}^\DR$ given by 
\begin{equation}\label{relation:aut1:aut10}
\forall v\in\hat{\mathcal V}^\DR,\quad \mathrm{aut}_{(\mu,g)}^{\mathcal V,(10),\DR}(v):= 
\mathrm{aut}_{(\mu,g)}^{\mathcal V,(1),\DR}(v)\cdot g. 
\index{autV10@$\mathrm{aut}_{(\mu,g)}^{\mathcal V,(10),\DR}$}  
\end{equation}
One checks that $\mathrm{aut}_{(\mu,g)}^{\mathcal V,(10),\DR}$ leaves the $\mathbf k$-submodule 
$\hat{\mathcal V}^\DR e_0\subset\hat{\mathcal V}^\DR$ invariant, and therefore induces an automorphism 
of the topological $\mathbf k$-module $\hat{\mathcal M}^\DR=\hat{\mathcal V}^\DR/\hat{\mathcal V}^\DR e_0$, 
which will be denoted $\mathrm{aut}_{(\mu,g)}^{\mathcal M,(10),\DR}$\index{autM10@$\mathrm{aut}_{
(\mu,g)}^{\mathcal M,(10),\DR}$}. 

One checks that for $\mathcal X\in\{\mathcal V,\mathcal M\}$ and $\mathcal A\in\{\mathcal V,\mathcal W\}$, the $\mathbf k$-module
automorphism $\mathrm{aut}_{(\mu,g)}^{\mathcal X,(10),\DR}$ is compatible (in the sense of \S\ref{conventions:0703}) with the 
module structure of $\hat{\mathcal X}^\DR$ over the algebra $\hat{\mathcal A}^\DR$ and with the algebra automorphism 
$\mathrm{aut}_{(\mu,g)}^{\mathcal A,(1),\DR}$. 

\subsection{The isomorphisms $\mathrm{comp}_{(\mu,\Phi)}^{\mathcal V,(1)}$, $\mathrm{comp}_{(\mu,\Phi)}^{\mathcal W,(1)}$, 
$\mathrm{comp}_{(\mu,\Phi)}^{\mathcal V,(10)}$ and $\mathrm{comp}_{(\mu,\Phi)}^{\mathcal M,(10)}$}\label{sect:3:3:28oct}

There is a unique isomorphism 
$$
\mathrm{iso}^{\mathcal V}:\hat{\mathcal V}^\B\to\hat{\mathcal V}^\DR
\index{isoV@$\mathrm{iso}^{\mathcal V}$}
$$
of topological $\mathbf k$-algebras, defined by the condition $X_i\mapsto\mathrm{exp}(e_i)$ for $i=0,1$. This isomorphism 
restricts to an isomorphism of topological $\mathbf k$-algebras
$$
\mathrm{iso}^{\mathcal W}:\hat{\mathcal W}^\B\to\hat{\mathcal W}^\DR.
\index{isoW@$\mathrm{iso}^{\mathcal W}$}  
$$
There is a commutative diagram 
\begin{equation}\label{diag:M:12032019}
\xymatrix{
\hat{\mathcal V}^\B\ar^{\mathrm{iso}^{\mathcal V}}[r]\ar_{-\cdot(X_0-1)}[d]& 
\hat{\mathcal V}^\DR \ar^{-\cdot(\mathrm{exp}(e_0)-1)}[d]\\
\hat{\mathcal V}^\B \ar^{\mathrm{iso}^{\mathcal V}}[r]& \hat{\mathcal V}^\DR }
\end{equation}
The cokernels of the endomorphisms $-\cdot (\mathrm{exp}(e_0)-1)$ and $-\cdot e_0$ of $\hat{\mathcal V}^\DR$ coincide as 
$-\cdot (\mathrm{exp}(e_0)-1)=(-\cdot e_0)\circ(-\cdot {{\mathrm{exp}(e_0)-1}\over{e_0}})$ and 
$-\cdot {{\mathrm{exp}(e_0)-1}\over{e_0}}$ is a linear automorphism. 
Taking vertical cokernels of the above diagram, we obtain a $\mathbf k$-module isomorphism 
$$
{\mathrm{iso}}^{\mathcal M}:\hat{\mathcal M}^\B\to\hat{\mathcal M}^\DR
\index{isoM@$\mathrm{iso}^{\mathcal M}$} 
$$
which is moreover compatible with the algebra isomorphism ${\mathrm{iso}}^{\mathcal V}$. 

Let $(\mu,g)\in\mathbf k^\times\times\mathcal G(\hat{\mathcal V}^\DR)$. 
Composing the algebra automorphism $\mathrm{aut}^{\mathcal W,(1),\DR}_{(\mu,g)}$ 
(resp. $\mathrm{aut}^{\mathcal V,(1),\DR}_{(\mu,g)}$) of 
$\hat{\mathcal W}^\DR$ (resp. $\hat{\mathcal V}^\DR$) with the algebra
isomorphism $\mathrm{iso}^{\mathcal W}$ (resp. $\mathrm{iso}^{\mathcal V}$), 
we obtain algebra isomorphisms 
\begin{equation}\label{def:comp:1:16dec2019}
\mathrm{comp}_{(\mu,g)}^{\mathcal W,(1)}=\mathrm{aut}^{\mathcal W,(1),\DR}_{(\mu,g)}\circ\mathrm{iso}^{\mathcal W} : 
\hat{\mathcal W}^\B\to\hat{\mathcal W}^\DR,
\index{compW1@$\mathrm{comp}_{(\mu,g)}^{\mathcal W,(1)}$}
\quad
\mathrm{comp}_{(\mu,g)}^{\mathcal V,(1)}=\mathrm{aut}^{\mathcal V,(1),\DR}_{(\mu,g)}\circ\mathrm{iso}^{\mathcal V} : 
\hat{\mathcal V}^\B\to\hat{\mathcal V}^\DR.
\index{compV1@$\mathrm{comp}_{(\mu,g)}^{\mathcal V,(1)}$}
\end{equation}
Composing the $\mathbf k$-module automorphism $\mathrm{aut}_{(\mu,g)}^{\mathcal V,(10),\DR}$ (resp. 
$\mathrm{aut}_{(\mu,g)}^{\mathcal M,(10),\DR}$) of $\hat{\mathcal V}^\DR$ (resp. $\hat{\mathcal M}^\DR$)
with the $\mathbf k$-module isomorphism ${\mathrm{iso}}^{\mathcal V}$ (resp. ${\mathrm{iso}}^{\mathcal M}$), one obtains 
$\mathbf k$-module isomorphisms  
\begin{equation}\label{def:comp:10:16dec2019}
\mathrm{comp}_{(\mu,g)}^{\mathcal V,(10)}:=\mathrm{aut}_{(\mu,g)}^{\mathcal V,(10),\DR}\circ 
{\mathrm{iso}}^{\mathcal V}:\hat{\mathcal V}^\B\to\hat{\mathcal V}^\DR,
\index{compV10@$\mathrm{comp}_{(\mu,g)}^{\mathcal V,(10)}$}
\quad
\mathrm{comp}_{(\mu,g)}^{\mathcal M,(10)}:=\mathrm{aut}_{(\mu,g)}^{\mathcal M,(10),\DR}\circ 
{\mathrm{iso}}^{\mathcal M}:\hat{\mathcal M}^\B\to\hat{\mathcal M}^\DR,
\index{compM10@$\mathrm{comp}_{(\mu,g)}^{\mathcal M,(10)}$}  
\end{equation} 
which are compatible with the previous algebra isomorphisms. 

\begin{lem}\label{lemma:3:1:23dec2019} Let $(\mu,g)\in\mathbf k^\times\times\mathcal G(\hat{\mathcal V}^\DR)$. Then 
\begin{equation}\label{relation:comp10:comp1:16dec2019}
\forall v^\B\in\hat{\mathcal V}^\B,\quad \mathrm{comp}_{(\mu,g)}^{\mathcal V,(10)}(v^\B)=
\mathrm{comp}_{(\mu,g)}^{\mathcal V,(1)}(v^\B)\cdot g. 
\end{equation} 
\end{lem}

\proof This follows from the definitions of $\mathrm{comp}_{(\mu,g)}^{\mathcal V,(1)}$ (see second part of 
\eqref{def:comp:1:16dec2019}), of $\mathrm{comp}_{(\mu,g)}^{\mathcal V,(10)}$ (see first part of 
\eqref{def:comp:10:16dec2019}), and from \eqref{relation:aut1:aut10}, with $v$ being replaced by 
$\mathrm{iso}_{\mathcal V}(v^\B)$. \hfill\qed\medskip 

\begin{lem}\label{lem:com:pi:V:M:comp:17dec2019}
Let $(\mu,g)\in\mathbf k^\times\times\mathcal G(\hat{\mathcal V}^\DR)$. Then the following diagram commutes
$$
\xymatrix{ \hat{\mathcal V}^\B\ar^{\mathrm{comp}_{(\mu,g)}^{\mathcal V,(10)}}[rr]\ar_{(-)\cdot 1_\B}[d]&& \hat{\mathcal V}^\DR
\ar^{(-)\cdot 1_\DR}[d]\\ 
\hat{\mathcal M}^\B\ar_{\mathrm{comp}_{(\mu,g)}^{\mathcal M,(10)}}[rr]&&\hat{\mathcal M}^\DR}
$$
\end{lem}

\proof This follows from $\mathrm{comp}_{(\mu,g)}^{\mathcal X,(10)}=\mathrm{aut}_{(\mu,g)}^{\mathcal X,(10),\DR}\circ
\mathrm{iso}^{\mathcal X}$ for $\mathcal X\in\{\mathcal V,\mathcal M\}$, from $\mathrm{iso}^{\mathcal M}\circ((-)\cdot 1_\B)=
((-)\cdot 1_\DR)\circ\mathrm{iso}^{\mathcal V}$, and from $\mathrm{aut}^{\mathcal M,(10),\DR}_{(\mu,g)}\circ((-)\cdot 1_\DR)
=((-)\cdot 1_\DR)\circ\mathrm{aut}^{\mathcal V,(10),\DR}_{(\mu,g)}$ (see \S\ref{sect:3:2:28oct}). \hfill\qed\medskip 

\subsection{Relation with comparison isomorphisms}\label{sect:3:4:28oct}

\subsubsection{} Recall from \S0.1.1 the notation $\mathcal T=\mathrm{MT}(\mathbb Z)_{\mathbb Q}$ and the motivic Galois group 
schemes $\mathsf{Aut}_{\mathcal T}^\otimes(\omega_?)$  for $?\in\{\B,\DR\}$ (\cite{DG}). By \cite{An}, there is an inclusion of 
$\mathbb Q$-group schemes $\mathsf{Aut}_{\mathcal T}^\otimes(\omega_?)\subset \mathsf{G}^?$, where 
$\mathsf{G}^?$ is the $\mathbb Q$-group scheme defined in \cite{EF2}, \S1.6 for ?=DR and \cite{EF3}, \S2.1 for ?=B.

\subsubsection{}  For $\mathsf H$ a $\mathbb Q$-group scheme, denote by $\mathsf H$-mod the tensor category of its finite-dimensional linear representations. An object of $\mathsf H$-mod is then a pair $(V,\rho)$ where $V$ is a finite dimensional $\mathbb Q$-vector 
space and $\rho : \mathsf H\to\mathsf{GL}(V)$ is a morphism of $\mathbb Q$-group schemes. For $?\in\{\B,\DR\}$, there is a 
sequence of tensor functors $\mathsf{G}^?$-mod$\stackrel{\mathrm{res}_?}{\to}\mathsf{Aut}_{\mathcal T}^\otimes(\omega_?)
$-mod$\simeq\mathcal T$ where the first functor is induced by restriction and the second is the Tannaka equivalence (see \cite{DG}, 
\S2.1). Its composition with $\omega_?$ is the forgetful functor $\mathrm{forget}:\mathsf{G}^?$-mod$\to
\mathrm{Vec}_{\mathbb Q}$.  

\subsubsection{}  
For $?\in\{\B,\DR\}$, set $\hat{\mathcal V}^{?,(1)}_{\mathbb Q}:=(\hat{\mathcal V}^?_{\mathbb Q},(\mu,g)\mapsto
\mathrm{aut}^{\mathcal V,(1),?}_{(\mu,g)})$ and $\hat{\mathcal V}^{?,(10)}_{\mathbb Q}:=
(\hat{\mathcal V}^?_{\mathbb Q},(\mu,g)\mapsto\mathrm{aut}^{\mathcal V,(10),?}_{(\mu,g)})$ (see \S\S\ref{sect:3:1:28oct}
and \ref{sect:3:2:28oct} for ?=DR and \cite{EF3}, \S2.2 for ?=B). By \cite{EF2}, \S1.6 (?=DR), and \cite{EF3}, \S2.2 (?=B), 
these are pro-objects in $\mathsf{G}^?$-mod. Denote by $m_?$ the product of $\hat{\mathcal V}^?_{\mathbb Q}$ and 
recall that $\hat\Delta^{\mathcal V,?}$ denotes its 
coproduct. The category Pro($\mathsf{G}^?$-mod) of pro-objects in $\mathsf{G}^?$-mod is equipped with 
a tensor product, and $(m_?,\hat\Delta^{\mathcal V,?})$ defines a Hopf algebra structure on 
$\hat{\mathcal V}^{?,(1)}_{\mathbb Q}$ in Pro($\mathsf{G}^?$-mod), as well as a module-coalgebra 
structure over it on $\hat{\mathcal V}^{?,(10)}_{\mathbb Q}$ in the same category (this meaning that 
$\hat{\mathcal V}^{?,(10)}_{\mathbb Q}$ is a left $\hat{\mathcal V}^{?,(1)}_{\mathbb Q}$-module and 
a coassociative coalgebra, the left action morphism being a coalgebra morphism), both Hopf algebra and coalgebra structures
being cocommutative. 

\subsubsection{} Denote by $M^\vee$ the dual of an object $M$ of Pro($\mathsf{G}^?$-mod); this is an object in 
the category Ind($\mathsf{G}^?$-mod) of ind-objects in $\mathsf{G}^?$-mod. Then 
$(\hat{\mathcal V}^{?,(1)}_{\mathbb Q})^\vee$ and $(\hat{\mathcal V}^{?,(10)}_{\mathbb Q})^\vee$ 
are objects in Ind($\mathsf{G}^?$-mod); in this category, $(\hat{\mathcal V}^{?,(1)}_{\mathbb Q})^\vee$ is a commutative 
Hopf algebra and $(\hat{\mathcal V}^{?,(10)}_{\mathbb Q})^\vee$ is a commutative algebra-comodule over it. 

\subsubsection{} 
In the rest of \S3.4, we set $X:=\mathbb P^1-\{0,1,\infty\}$ and denote by $0,1$ the vectors $d/dx$ at points $0,1$ of 
$\mathbb P^1$ (with coordinate $z=x+iy$). These are base points at infinity in $X$.  For $x,y\in\{0,1\}$, let $P_{y,x}(X)$ be 
the motivic path space from $x$ to $y$ (see [DG], \S4.3). 

\subsubsection{} 
The images of $(\hat{\mathcal V}^{?,(1)}_{\mathbb Q})^\vee$ and $(\hat{\mathcal V}^{?,(10)}_{\mathbb Q})^\vee$ 
in Ind($\mathcal T$) by $\mathrm{Ind}(\mathrm{res}_?)$ are respectively isomorphic to $\mathcal O(P_{1,1}(X))$ and 
$\mathcal O(P_{1,0}(X))$ in the notation of [DG], Rem. 4.23, the former being equipped with its commutative Hopf algebra 
structure in $\mathrm{Ind}(\mathcal T)$ and the latter  with its commutative algebra-module structure over it, both structures
corresponding to the $\mathcal T$-group scheme structure of $P_{1,1}(X)$ and to the $\mathcal T$-torsor structure over it of 
$P_{1,0}(X)$ (see [DG], \S5). 

\subsubsection{} 
For $x,y\in\{0,1\}$, prounipotent $\mathbb Q$-group schemes $\pi_1^\DR(X,x)$ and 
$\pi_1^{\mathrm{alg,unip}}(X,x)$ are defined in [De], \S10.5, as well as 
torsors $\pi_1^\DR(X,y,x)$ and $\pi_1^{\mathrm{alg,unip}}(X,y,x)$;  
we use the notation $\pi_1^\B$ instead of $\pi_1^{\mathrm{alg,unip}}$. If $?\in\{\B,\DR\}$, then the functor 
$\omega_? :\mathcal T\to\mathrm{Vec}_{\mathbb Q}$ takes the  $\mathcal T$-group scheme (resp. $\mathcal T$-torsor) 
$P_{x,x}(X)$ (resp. $P_{y,x}(X)$) to $\pi_1^?(X,x)$ (resp. $\pi_1^?(X,y,x)$), so that forget takes the Hopf algebra and 
comodule algebra $\hat{\mathcal V}^{?,(1)}$ and $\hat{\mathcal V}^{?,(10)}$ in Pro($\mathsf{G}^?$-mod) to 
the objects with the similar structures $\mathcal O(\pi_1^?(X,1))^\vee$ and $\mathcal O(\pi_1^?(X,1,0))^\vee$ in Pro($\mathrm{Vec}_{\mathbb Q}$), $\mathcal O(-)$ being the ring of regular functions over a scheme. Then 
$\mathrm{comp}^{\mathcal V,(1)}_{(2\pi\mathrm{i},\varphi_\mathrm{KZ})}$ and 
$\mathrm{comp}^{\mathcal V,(10)}_{(2\pi\mathrm{i},\varphi_\mathrm{KZ})}$ can respectively be identified with the Betti-de Rham 
comparison algebra and module isomorphisms  $\mathrm{ULie}\pi_1^\B(X,1)=\mathcal O(\pi_1^\B(X,1))^\vee
\stackrel{\sim}{\to}\mathcal O(\pi_1^\DR(X,1))^\vee=\mathrm{ULie}\pi_1^\DR(X,1)$
and $\mathcal O(\pi_1^\B(X,1,0))^\vee\stackrel{\sim}{\to}\mathcal O(\pi_1^\DR(X,1,0))^\vee$ given by [De], \S12.16
(here $\mathrm{ULie}(-)$ is the cocommutative topological Hopf algebra attached to a $\mathbb Q$-group scheme). The 
isomorphisms $\mathrm{comp}^{\mathcal V,(1)}_{(\mu,g)}$ and $\mathrm{comp}^{\mathcal V,(10)}_{(\mu,g)}$, for
$(\mu,g)\in\mathbf k^\times\times\mathcal G(\hat{\mathcal V}^\DR)$ can therefore be viewed as `fake comparison isomorphisms'.

\part{Geometric interpretation of double shuffle theory}\label{part:geom:int}

The purpose of this part is to construct geometric interpretations of the `de Rham' and `Betti' DSR formalisms introduced in Part 
\ref{part:2:2jan2020}. These interpretations rely on general algebraic constructions which are explained in \S\ref{section:algebraic:lemmas}. 
Then \S\S\ref{sect:ibla:2jan2020} and \ref{section:5:08012018} are devoted to the de Rham DSR formalism: we introduce geometric objects 
related with the de Rham fundamental groups in \S\ref{sect:ibla:2jan2020}, namely infinitesimal braid Lie algebras and morphisms relating 
them; they are used in \S\ref{section:5:08012018} to give a geometric interpretation of the de Rham harmonic algebra coproduct following 
\cite{DT}, as well as of its module counterpart. \S\S\ref{sect:braids} and \ref{sect:geom:betti} follow the same pattern on the Betti side: 
introduction of geometric objects related with the Betti fundamental groups in \S\ref{sect:braids}, namely braid groups and morphisms 
relating them, and geometric interpretations of the  Betti harmonic algebra and module coproducts in \S\ref{sect:geom:betti}. 

\section{Algebraic preliminaries for part \ref{part:geom:int}}\label{section:algebraic:lemmas}

In this section, we prove algebraic results that will be used in \S\S\ref{section:5:08012018} and \ref{sect:geom:betti}. 
The first result allows for the construction of an algebra morphism starting from an algebra containing an ideal with 
freeness properties (\S\ref{subsect:4:1:19032018}, Lemma \ref{algebraic:lemma:1}). The second result describes the behavior of 
this construction under isomorphisms (\S\ref{subsect:4:1:19032018}, Lemma \ref{lemma:ideals:and:morphisms}). The third result 
shows how an algebra morphism may be constructed starting from an algebra morphism to a matrix algebra satisfying some conditions
(\S\ref{section:background:2:08012018}, Lemma \ref{algebraic:lemma:2}). 
 
\subsection{Construction of algebra morphisms based on ideals with freeness properties}
\label{subsect:4:1:19032018}

\begin{lem}\label{algebraic:lemma:1}
Let $R$ be an associative algebra 
and let $J\subset R$ be a two-sided ideal. Assume that
$(j_a)_{a\in[\![1,d]\!]}$ is a family of elements of $J$, which constitutes a basis of $J$ for its 
left $R$-module structure. For $r\in R$, let $(m_{ab}(r))_{a,b\in[\![1,d]\!]}$ 
be the collection of elements of $R$ defined by $j_ar=\sum_{b=1}^d m_{ab}(r)j_b$. 

Then the map $\varpi_{R,J} : R\to M_d(R)$\index{piRJ@$\varpi_{R,J}$}, 
$r\mapsto (m_{ab}(r))_{a,b\in[\![1,d]\!]}$ is an algebra morphism.  
\end{lem}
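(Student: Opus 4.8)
The plan is to verify the three defining properties of an algebra morphism directly, the whole argument being driven by the uniqueness of coefficients that freeness provides. First I would record that the map is well-defined. Since $J$ is a two-sided ideal, for every $r\in R$ and every $a\in[\![1,d]\!]$ the element $j_a r$ lies in $J$; because $(j_b)_{b\in[\![1,d]\!]}$ is a \emph{basis} of $J$ as a left $R$-module, the expansion $j_a r=\sum_{b=1}^d m_{ab}(r)j_b$ exists and, crucially, its coefficients $m_{ab}(r)\in R$ are \emph{uniquely} determined. This uniqueness is what I would invoke at every subsequent step. From it, the $\mathbf k$-linearity of $r\mapsto(m_{ab}(r))$ is immediate: applying $j_a(\cdot)$ to $r+\lambda r'$ and comparing expansions forces $m_{ab}(r+\lambda r')=m_{ab}(r)+\lambda\,m_{ab}(r')$. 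Unitality is equally quick, since $j_a\cdot 1=j_a=\sum_b\delta_{ab}\,j_b$ yields $m_{ab}(1)=\delta_{ab}$, so the unit of $R$ maps to the identity matrix $I_d$.

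The heart of the proof is multiplicativity, which I would establish by computing $j_a(rr')$ in two ways. On one hand, by the very definition of the coefficients,
\[
j_a(rr')=\sum_{c=1}^d m_{ac}(rr')\,j_c .
\]
On the other hand, using associativity in $R$ and applying the defining relation twice,
\[
j_a(rr')=(j_a r)r'=\sum_{b=1}^d m_{ab}(r)\,(j_b r')=\sum_{b=1}^d\sum_{c=1}^d m_{ab}(r)\,m_{bc}(r')\,j_c .
\]
Matching the coefficients of the $j_c$ by uniqueness then gives $m_{ac}(rr')=\sum_{b=1}^d m_{ab}(r)\,m_{bc}(r')$, which is exactly the $(a,c)$ entry of the matrix product $\bigl(m_{ab}(r)\bigr)\bigl(m_{bc}(r')\bigr)$ in $M_d(R)$. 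Together with linearity and unitality, this shows that $r\mapsto\bigl(m_{ab}(r)\bigr)$ is an algebra morphism.

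There is no serious obstacle here; the one point requiring care is bookkeeping of left versus right multiplication. Because the basis is a \emph{left} $R$-module basis while $r$ acts on the \emph{right} of $j_a$, one must check that the products $m_{ab}(r)\,m_{bc}(r')$ come out in precisely the order dictated by standard matrix multiplication, rather than in that of the opposite algebra. The computation above confirms this: the scalars $m_{ab}(r)$ remain on the left throughout, so no transpose or opposite-ring twist is introduced, and the map lands in $M_d(R)$ with its usual product.
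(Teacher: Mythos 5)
Your proof is correct, but it takes a different route from the paper's. You verify the morphism property by a direct coefficient computation: expand $j_a(rr')$ once via the definition and once via associativity as $(j_ar)r'$, then invoke the uniqueness of coefficients (which freeness of $J$ as a left $R$-module guarantees) to match the two expansions, together with the easy checks of linearity and unitality. The paper instead argues structurally: the freeness hypothesis gives an isomorphism of left $R$-modules $R^{\oplus d}\to J$, $(r_1,\ldots,r_d)\mapsto\sum_a r_aj_a$, hence an isomorphism of endomorphism algebras $\mathrm{End}_{R\text{-left}}(J)\simeq\mathrm{End}_{R\text{-left}}(R^{\oplus d})$; right multiplication gives an algebra morphism $R^{op}\to\mathrm{End}_{R\text{-left}}(J)$, and identifying $\mathrm{End}_{R\text{-left}}(R^{\oplus d})\simeq M_d(R)^{op}$, the two opposites cancel and one obtains a morphism $R\to M_d(R)$, which is then checked to be given by the announced formula. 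The trade-off is instructive: your computation is more elementary and entirely self-contained, and your closing remark about the left/right bookkeeping is exactly the subtle point --- your calculation confirms by hand that no opposite-ring twist appears, since the coefficients $m_{ab}(r)$ stay on the left throughout. The paper's argument makes the same phenomenon visible conceptually (the two $op$'s cancel against each other) and explains \emph{why} the formula defines a morphism rather than verifying that it does, at the cost of a final unwinding step to recover the explicit matrix entries. Both proofs use the freeness hypothesis in the same essential way --- yours as uniqueness of expansion coefficients, the paper's as the module isomorphism $R^{\oplus d}\simeq J$ --- and either is a complete proof of the lemma.
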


\proof Let $R^{\oplus d}\to J$ be the map $(r_1,\ldots,r_d)\mapsto \sum_{a=1}^d r_aj_a$. This is an isomorphism of left 
$R$-modules. It sets up an isomorphism of algebras $\mathrm{End}_{R\text{-left}}(J)\simeq \mathrm{End}_{R\text{-left}}(R^{\oplus d})$, 
where the index "$R$-left" means endomorphisms of left $R$-modules. 
The map $R\to\mathrm{End}_{R\text{-left}}(J)$, $r\mapsto(j\mapsto jr)$ is an algebra morphism when $R$ is equipped 
with the opposite algebra structure. Similarly, the map $M_d(R)\to\mathrm{End}_{R\text{-left}}(R^{\oplus d})$ taking $M\in M_d(R)$ 
to the image under the canonical isomorphism $R^{\oplus d}\simeq M_{1\times d}(R)$ of the endomorphism $X\mapsto XM$ 
is an algebra isomorphism when $M_d(R)$ is equipped with the opposite algebra structure. We then obtain a sequence of algebra 
morphisms 
$$
R^{\mathrm{op}}\to\mathrm{End}_{R\text{-left}}(J)\simeq\mathrm{End}_{R\text{-left}}(R^{\oplus d})\simeq M_d(R)^{\mathrm{op}}, 
$$ 
which yields an algebra morphism $R\to M_d(R)$. One checks that this morphism is given by the announced formula. \hfill\qed\medskip 

\begin{lem}\label{lemma:ideals:and:morphisms}
Let $R$ be an associative algebra, 
let $J\subset R$ be a two-sided ideal, free as a left 
$R$-module with basis $(j_a)_{a\in[\![1,d]\!]}$. 
Let $f:R\to R'$ be an algebra isomorphism, and let $(j'_a)_{a\in[\![1,d]\!]}$ be a basis of $f(J)\subset R'$ as a left $R'$-module. 

Then there exists a unique element $P\in\mathrm{GL}_d(R')$, such that 
$\begin{pmatrix} f(j_1) \\\vdots\\f(j_d)\end{pmatrix}=P\begin{pmatrix} j'_1 \\\vdots\\j'_d\end{pmatrix}$
(equality in $M_{d\times 1}(R')$). 
The morphisms $\varpi_{R,J}:R\to M_d(R)$ 
and $\varpi_{R',J'}:R'\to M_d(R')$
respectively attached to the data $(J\subset R,(j_a)_{a\in[\![1,d]\!]})$ and $(J'\subset R',(j'_a)_{a\in[\![1,d]\!]})$
as in Lemma \ref{algebraic:lemma:1} are related by the following diagram 
$$
\xymatrix{ R\ar^{\varpi_{R,J}}[rr]\ar_f[d]& & M_d(R)\ar^{M_d(f)}[d]\\ R'\ar_{\varpi_{R',J'}}[r]& M_d(R')\ar_{\mathrm{Ad}(P)}[r]& M_d(R')}
$$
\end{lem}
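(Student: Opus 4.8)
The plan is to construct $P$ first, verify its invertibility, and then establish the commutativity of the diagram by a direct coefficient computation, the whole argument being a matter of careful bookkeeping in a noncommutative setting.

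First I would produce $P$. Since $f$ is an algebra isomorphism, $f(J)$ is a two-sided ideal of $R'$, and each $f(j_a)$ lies in $f(J)$. As $(j'_b)_{b\in[\![1,d]\!]}$ is a basis of $f(J)$ as a left $R'$-module, there are unique elements $P_{ab}\in R'$ with $f(j_a)=\sum_{b=1}^d P_{ab}j'_b$; this is exactly the asserted matrix equation, together with the uniqueness of $P:=(P_{ab})_{a,b}\in M_d(R')$. To see that $P\in\mathrm{GL}_d(R')$, I would first check that $(f(j_a))_a$ is itself a left $R'$-basis of $f(J)$: any element of $f(J)$ has the form $f(x)$ with $x\in J$, and writing $x=\sum_a r_aj_a$ uniquely (since $(j_a)_a$ is an $R$-basis of $J$) gives $f(x)=\sum_a f(r_a)f(j_a)$ with the coefficients $f(r_a)\in R'$ uniquely determined, using the bijectivity of $f$. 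Thus $P$ is the change-of-basis matrix between the two left $R'$-bases $(f(j_a))_a$ and $(j'_b)_b$ of the same free module. Writing the reverse change of basis $j'_a=\sum_b Q_{ab}f(j_b)$ and substituting one expression into the other, uniqueness of coordinates forces $PQ=QP=I_d$, so $P$ is invertible.

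For the commutativity of the diagram I would start from the defining relation of $\AAA$ supplied by Lemma \ref{algebraic:lemma:1}, namely $j_a r=\sum_b \AAA(r)_{ab}\,j_b$ (so $\AAA(r)_{ab}=m_{ab}(r)$), and apply the algebra morphism $f$ to obtain $f(j_a)f(r)=\sum_b f(\AAA(r)_{ab})\,f(j_b)$. Expanding both $f(j_a)$ and $f(j_b)$ through $P$, and using the defining relation $j'_c f(r)=\sum_e \AAA'(f(r))_{ce}\,j'_e$ of $\AAA'$ on the left-hand side, I get
$$
\sum_e\Big(\sum_c P_{ac}\,\AAA'(f(r))_{ce}\Big)j'_e=\sum_e\Big(\sum_b f(\AAA(r)_{ab})\,P_{be}\Big)j'_e.
$$
Comparing coefficients of $j'_e$, which is legitimate since $(j'_e)_e$ is a left $R'$-basis of $f(J)$, yields the entrywise identity $\sum_c P_{ac}\AAA'(f(r))_{ce}=\sum_b f(\AAA(r)_{ab})P_{be}$, that is, the matrix equation $P\cdot\AAA'(f(r))=M_d(f)(\AAA(r))\cdot P$. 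Right-multiplying by $P^{-1}$ gives $M_d(f)(\AAA(r))=\mathrm{Ad}(P)\big(\AAA'(f(r))\big)$, which is precisely the equality $M_d(f)\circ\AAA=\mathrm{Ad}(P)\circ\AAA'\circ f$ expressed by the diagram.

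The computation is routine; the only point demanding care is the noncommutative bookkeeping — keeping left and right multiplications and the order of the matrix products straight, and invoking uniqueness of coordinates with respect to a \emph{left}-module basis at the correct moments. I do not expect any genuine obstacle beyond this.
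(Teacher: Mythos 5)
Your proof is correct and follows essentially the same route as the paper: applying $f$ to the defining relation $j_a r=\sum_b \AAA(r)_{ab}j_b$, substituting $f(j_a)=\sum_b P_{ab}j'_b$, and comparing coefficients with respect to the left $R'$-basis $(j'_e)$ to obtain $P\cdot\AAA'(f(r))=M_d(f)(\AAA(r))\cdot P$. You additionally spell out the existence, uniqueness and invertibility of $P$ via the change-of-basis argument between $(f(j_a))_a$ and $(j'_a)_a$, which the paper leaves implicit; this is a welcome completion rather than a different method.
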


\proof For $r\in R$, one has $\begin{pmatrix} j_1 \\\vdots\\j_d\end{pmatrix}r
=\AAA_{R,J}(r)\begin{pmatrix} j_1 \\\vdots\\j_d\end{pmatrix}$, therefore  
$\begin{pmatrix} f(j_1) \\\vdots\\f(j_d)\end{pmatrix}f(r)
=f(\AAA_{R,J}(r))\begin{pmatrix} f(j_1) \\\vdots\\f(j_d)\end{pmatrix}$, that is 
$P\begin{pmatrix} j'_1 \\\vdots\\j'_d\end{pmatrix}f(r)
=f(\AAA_{R,J}(r))P\begin{pmatrix} j'_1 \\\vdots\\j'_d\end{pmatrix}$. Comparing this with the identity 
$\begin{pmatrix} j'_1 \\ \vdots\\ j'_d\end{pmatrix} r'
=\AAA_{R',J'}(r')\begin{pmatrix} j'_1 \\\vdots\\j'_d\end{pmatrix}$ for $r'\in R'$, one obtains $\AAA_{R',J'}(r')=P^{-1}\cdot 
f(\AAA_{R,J}(f^{-1}(r')))\cdot P$. \hfill\qed\medskip 
 
\subsection{Construction of algebra morphisms based on morphisms to matrix algebras}
\label{section:background:2:08012018}

Let $R$ be an associative algebra 
and let $e\in R$. Define $\cdot_e:R\times R\to R$
by $r\cdot_e r':=rer'$. Then $(R,\cdot_e)$ is an associative algebra. 
The subspaces $Re$ and $eR$ of $R$ are subalgebras. There is an algebra morphism 
$\mathrm{mor}_{R,e}:(R,\cdot_e)\to Re$\index{morRe@$\mathrm{mor}_{R,e}$}, 
given by $r\mapsto re$.

\begin{lem}\label{algebraic:lemma:2}
Let $R,S$ be associative algebras, 
let $e\in R$, let $n\geq 1$, and let $f:R\to M_n(S)$ be an algebra 
morphism. 
Assume that there exist elements $\mathrm{row}\in M_{1\times n}(S)\index{row @ $\mathrm{row}$}$ and $\mathrm{col}\in M_{n\times 1}(S)\index{col @ $\mathrm{col}$}$, such that 
$f(e)=\mathrm{col}\cdot \mathrm{row}$. 

 Then the map $\tilde f:(R,\cdot_e)\to S$, defined by $r\mapsto \mathrm{row}\cdot f(r)\cdot \mathrm{col}$
is an algebra morphism. One then has a commutative diagram
\begin{equation}\label{CD:vs:alg}
\xymatrix{ R\ar_{\simeq}[d]\ar^{f}[r]&M_n(S)\ar^{\mathrm{row}\cdot(-)\cdot\mathrm{col}}[d] \\ (R,\cdot_e)\ar_{\tilde f}[r]&S }
\end{equation} 
where 
\begin{equation}\label{general:def:row:col}
\mathrm{row}\cdot(-)\cdot\mathrm{col}: M_n(S)\to S
\end{equation}
is the linear map defined by $m\mapsto \mathrm{row}\cdot m\cdot\mathrm{col}$; in diagram \eqref{CD:vs:alg}, 
the horizontal arrows are algebra morphisms and the vertical arrows are vector space morphisms. 
\end{lem}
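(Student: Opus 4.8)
The plan is to verify directly that $\tilde f$ is multiplicative, interpreting "algebra morphism" in the non-unital sense (a linear map respecting products), since $(R,\cdot_e)$ need not possess a unit — just as with the maps $\mathrm{mor}_{R,e}^l$, $\mathrm{mor}_{R,e}^r$ introduced just above. Linearity of $\tilde f$ is clear, being the composite of the linear map $f$ with the linear map $\mathrm{row}\cdot(-)\cdot\mathrm{col}$, so the entire content lies in checking compatibility with products.

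First I would compute, for $r,r'\in R$, using $r\cdot_e r'=rer'$ together with the fact that $f$ is an algebra morphism,
$$
\tilde f(r\cdot_e r')=\mathrm{row}\cdot f(rer')\cdot\mathrm{col}=\mathrm{row}\cdot f(r)\,f(e)\,f(r')\cdot\mathrm{col}.
$$
The single hypothesis of the lemma, namely the factorization $f(e)=\mathrm{col}\cdot\mathrm{row}$, is exactly what is needed to insert a break in the middle: substituting it and regrouping gives
$$
\tilde f(r\cdot_e r')=\big(\mathrm{row}\cdot f(r)\cdot\mathrm{col}\big)\big(\mathrm{row}\cdot f(r')\cdot\mathrm{col}\big)=\tilde f(r)\,\tilde f(r').
$$
Here I would stress that $\mathrm{row}\cdot f(r)\cdot\mathrm{col}$ lies in $M_{1\times1}(S)\simeq S$, so that the two bracketed factors multiply as elements of $S$ and the regrouping is merely the associativity of matrix multiplication; this is the only place where the shapes $1\times n$ and $n\times 1$ of $\mathrm{row}$ and $\mathrm{col}$ enter.

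Finally, the commutativity of \eqref{CD:vs:alg} is immediate: the left vertical arrow is the identity on underlying vector spaces, the right vertical arrow is the linear map \eqref{general:def:row:col}, and both composites $R\to S$ send $r$ to $\mathrm{row}\cdot f(r)\cdot\mathrm{col}$, which is precisely the definition of $\tilde f$. The horizontal arrows are then algebra morphisms ($f$ by hypothesis, $\tilde f$ by the previous step) and the vertical arrows are vector-space morphisms, as asserted. There is essentially no obstacle here: the whole argument is a one-line manipulation resting entirely on the factorization of $f(e)$, and the only point requiring a moment's attention is the bookkeeping of matrix shapes — checking that every displayed product is defined and that $\mathrm{row}\cdot(-)\cdot\mathrm{col}$ genuinely takes values in $S=M_{1\times1}(S)$.
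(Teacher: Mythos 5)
Your proof is correct and follows exactly the paper's own argument: expand $\tilde f(r\cdot_e r')$ via the multiplicativity of $f$, substitute the factorization $f(e)=\mathrm{col}\cdot\mathrm{row}$, and regroup by associativity, the diagram's commutativity being definitional. Your additional remarks on matrix shapes and the non-unital sense of ``algebra morphism'' are sound clarifications but add nothing beyond what the paper's one-line computation already contains.
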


\proof For $r,r'\in R$, one has 
\begin{align*}
& \tilde f(r\cdot_e r')=\tilde f(rer')=\mathrm{row}\cdot f(rer')\cdot \mathrm{col}=\mathrm{row}\cdot f(r)f(e)f(r')\cdot \mathrm{col}
\\ & = \mathrm{row}\cdot f(r)\cdot \mathrm{col}\cdot \mathrm{row}\cdot f(r')\cdot \mathrm{col}=\tilde f(r)\tilde f(r').  
\end{align*}
\hfill \qed\medskip 

\section{Infinitesimal braid Lie algebras}\label{sect:ibla:2jan2020}

In \S\ref{sect:material:19032018}, we recall the definition of the infinitesimal braid Lie algebras 
$\mathfrak t_n$, $\mathfrak p_n$ ($n\geq1$), as well as the morphisms $\ell:\mathfrak p_4\to\mathfrak p_5$, 
$\mathrm{pr}_i:\mathfrak p_5\to\mathfrak p_4$ ($i\in[\![1,5]\!]$) and $\mathrm{pr}_{12}:\mathfrak p_5\to(\mathfrak p_4)^{\oplus2}$
relating them. 

In \S\ref{sect:alg:constr:19032018}, we introduce an ideal $J(\mathrm{pr}_5)$ of the universal enveloping algebra $
U\mathfrak p_5$ arising from 
$\mathrm{pr}_5$, and show its freeness as a left $U\mathfrak p_5\index{UP_5@$U(\mathfrak p_5)$}$-module (Lemma \ref{lemma:semidirect:product:LAs}). 
Lemma \ref{algebraic:lemma:1} then gives rise to an algebra morphism $\varpi:U\mathfrak p_5\to M_3(U\mathfrak p_5)$. 
By composing $\varpi$ with the morphisms $\ell$ and $\mathrm{pr}_{12}$, we construct a morphism 
$\rho:\mathcal V^\DR\to M_3((\mathcal V^\DR)^{\otimes2})$. In Lemma \ref{lemma:decomp:A:e_1}, by introducing suitable 
elements $\mathrm{row}_1\in M_{1\times3}((\mathcal V^\DR)^{\otimes2})$ and 
$\mathrm{col}_1\in M_{3\times1}((\mathcal V^\DR)^{\otimes2})$, we show that 
the morphism $\rho$ satisfies the hypothesis of Lemma \ref{algebraic:lemma:2} (the index $1$ in 
the notation $\mathrm{row}_1$ and $\mathrm{col}_1$ is motivated by the fact that these elements should be thought of as associated to the tangential base point $\vec 1$ in $\mathfrak M_{0,4}$). Applying this lemma, we obtain in 
\S\ref{subsection:tilde:B} a morphism $\tilde{\BB}:(\mathcal V^\DR,\cdot_{e_1})\to (\mathcal V^\DR)^{\otimes2}$, which we
compute in Lemma \ref{lemma:53:15:11:2017}. 

\subsection{Material on infinitesimal braid Lie algebras}\label{sect:material:19032018}

\subsubsection{The Lie algebras $\mathfrak t_n,\mathfrak p_n$}\label{sect:the:Lie:algebras:pn}
\label{subsect:def:LA:morphisms}

For $n\geq2$, let $\mathfrak t_n\index{t_n @$\mathfrak t_n$}$ be the graded Lie $\mathbf k$-algebra with generators $t_{ij}\index{t_ij @$t_{ij}$}$, $i\neq j\in[\![1,n]\!]$ of degree 1, 
and relations $t_{ji}=t_{ij}$, $[t_{ij},t_{ik}+t_{jk}]=0$ for $i,j,k$ all different in $[\![1,n]\!]$, and $[t_{ij},t_{kl}]=0$ for $i,j,k,l$ 
all different in $[\![1,n]\!]$. The Lie algebra $\mathfrak t_n$ is called the {\it Drinfeld-Kohno}, or {\it infinitesimal braid, Lie algebra}. 

For $n\geq 4$, we denote by $\mathfrak p_n\index{p_nfrak@$\mathfrak p_n$}$ the graded Lie $\mathbf k$-algebra with generators $e_{ij}\index{e_ij @$e_{ij}$}$ of degree 1, where $i\neq j\in[\![1,n]\!]$
and relations $e_{ji}=e_{ij}$, $\sum_{j\in[\![1,n]\!]-\{i\}}e_{ij}=0$, $[e_{ij},e_{kl}]=0$ for $i,j,k,l$ all distinct in $[\![1,n]\!]$. 
The Lie algebra $\mathfrak p_n$ is called the {\it sphere infinitesimal braid Lie algebra}. 

For $n\geq 3$, there are surjective morphisms of graded Lie algebras $\mathfrak t_n\twoheadrightarrow\mathfrak p_{n+1}$ given by 
$t_{ij}\mapsto e_{ij}$ for $i\neq j\in[\![1,n]\!]$,  $\mathfrak t_{n+1}\twoheadrightarrow\mathfrak p_{n+1}$ given by $t_{ij}\mapsto e_{ij}$ 
for $i\neq j\in[\![1,n+1]\!]$, and an injective morphism $\mathfrak t_n\hookrightarrow\mathfrak t_{n+1}$ given by $t_{ij}\mapsto t_{ij}$ 
for $i\neq j\in[\![1,n]\!]$; they fit in a commutative diagram 
$$
\xymatrix{ \mathfrak t_n\ar@{->>}[r]\ar@{^(->}[d]& \mathfrak p_{n+1}\\ \mathfrak t_{n+1}\ar@{->>}[ur]& }
$$
Moreover, the morphism $\mathfrak t_n\to\mathfrak p_{n+1}$ factorizes as $\mathfrak t_n\twoheadrightarrow 
\mathfrak t_n/Z(\mathfrak t_n)\simeq\mathfrak p_{n+1}$, where $Z(\mathfrak t_n)$ is the 1-dimensional
center of $\mathfrak t_n$ (it is concentrated in degree 1 and spanned by $\sum_{i<j\in[\![1,n]\!]}t_{ij}$). 

\begin{rem}\label{rem:27122017}
Let $P_n\index{P_n@$P_n$}$ (resp.\ $K_n\index{K_n @$K_n$}$) be the {\it pure sphere (resp.\ Artin pure) braid group} with $n$ strands (see \S\ref{subsect:def:gp:morphisms} and 
\cite{Bir}).  Its lower central series defines a descending group filtration. The associated graded $\mathbb Z$-module is a 
$\mathbb Z$-Lie algebra. Then $\mathrm{gr}(P_n)\otimes\mathbf k\simeq\mathfrak p_n$, and 
$\mathrm{gr}(K_n)\otimes\mathbf k\simeq\mathfrak t_n$. 
\end{rem}

\subsubsection{The morphisms $\ell,\mathrm{pr}_i$ and $\mathrm{pr}_{12}$ between infinitesimal braid Lie algebras}
\label{subsect:def:LA:morphisms:2}

There is a graded 
Lie algebra isomorphism
$\mathfrak p_4\simeq\mathfrak f_2$, where $e_0=e_{14}=e_{23}$, $e_1=e_{12}=e_{34}$. One also sets  
$e_\infty:=-e_0-e_1$\index{e_infty@$e_\infty$}, so $e_\infty=e_{13}=e_{24}$. 
By abuse of notation, and following \cite{DT}, we set  
$$
\forall i\in\{0,1\}, e_i:=(e_i,0)\in(\mathfrak f_2)^2,   \quad f_i:=(0,e_i)\in(\mathfrak f_2)^2. \index{f _0, f_1@ $f_0$, $f_1$} 
$$ 
We then have $e_i=e_i\otimes1$, $f_i=1\otimes e_i$ in $(\mathcal V^\DR)^{\otimes2}$. 

One checks that there are Lie algebra morphisms $\mathrm{pr}_i:\mathfrak p_5\to\mathfrak p_4\index{pr 125@$\mathrm{pr}_1$, $\mathrm{pr}_2$, $\mathrm{pr}_5$}$ for $i=1,2,5$, given by 
\begin{center}
\begin{tabular}{|c|c|c|c|c|c|c|c|c|c|c|}
  \hline
  elt $x\in\mathfrak p_5$ & $e_{12}$ & $e_{13}$ & $e_{14}$ & $e_{15}$ & $e_{23}$& $e_{24}$& $e_{25}$& $e_{34}$& $e_{35}$& $e_{45}$ \\
  \hline
  $\mathrm{pr}_1(x)$ & 0 & 0 & 0 & 0 & $e_0$ & $e_\infty$ & $e_1$ & $e_1$ & $e_\infty$ & $e_0$ \\ 
\hline   
$\mathrm{pr}_2(x)$ & 0 & $e_\infty$ & $e_0$ & $e_1$ & 0 & 0 & 0  & $e_1$  & $e_0$ & $e_\infty$ \\
  \hline $\mathrm{pr}_5(x)$ & $e_1$ & $e_\infty$ & $e_0$ & 0 & $e_0$ & $e_\infty$ & 0 & $e_1$ & 0 & 0 \\ \hline
\end{tabular}
\end{center}

\medskip 
These morphisms give rise to the Lie algebra morphism 
$\mathrm{pr}_{12}:\mathfrak p_5\to\mathfrak p_4^{\oplus2}\index{pr 12t@ $\mathrm{pr}_{12}$}$ defined 
by $\mathrm{pr}_{12}(x):=(\mathrm{pr}_1(x),\mathrm{pr}_2(x))$. 

There is a Lie algebra morphism $\ell:\mathfrak p_4\to\mathfrak p_5 \index{l @$\ell$}$, given by 
$e_0\mapsto e_{23}$, $e_1\mapsto e_{12}$. It is such that $\mathrm{pr}_5\circ\ell$
is the identity of $\mathfrak p_4$.

The isomorphism $\mathfrak p_4\simeq\mathfrak f_2$ gives rise to an isomorphism 
$U\mathfrak p_4\simeq U\mathfrak f_2=\mathcal V^\DR$. 

The Lie algebra morphisms $\mathrm{pr}_{12}$, $\mathrm{pr}_i$ and $\ell$ induce algebra morphisms between 
universal enveloping algebras, which will be denoted  
$\mathrm{pr}_{12} :U\mathfrak p_5
\to (\mathcal V^\DR)^{\otimes2}$, 
 $\mathrm{pr}_i : U\mathfrak p_5
 \to \mathcal V^\DR$ and $\ell : \mathcal V^\DR\to U\mathfrak p_5
 $. 

\subsection{Constructions related to an ideal of $U\mathfrak p_5
$}\label{sect:alg:constr:19032018}

\subsubsection{The structure of $J(\mathrm{pr}_5)$}

\begin{defn} {\it We denote by $J(\mathrm{pr}_5)\index{Jpr_5 @ $J(\mathrm{pr}_5)$}$ the kernel 
$\mathrm{Ker}(U\mathfrak p_5
\stackrel{\mathrm{pr}_5}{\to}\mathcal V^\DR)$. 
This is a two-sided ideal of $U\mathfrak p_5
$.}
\end{defn}

In order to study the structure of $J(\mathrm{pr}_5)$, we prove the following Lemma \ref{lemma:semidirect:product:LAs}. 

\begin{lem}\label{lemma:semidirect:product:LAs}
1) The Lie subalgebra of $\mathfrak p_5$ generated by the $e_{i5}$, $i\in[\![1,4]\!]$ is freely generated by the 
$e_{i5}$, $i\in[\![1,3]\!]$, and coincides with the ideal $\mathrm{Ker}(\mathrm{pr}_5)$ of $\mathfrak p_5$; it will be denoted as
$\mathfrak f_3$. 

2) Set $\tilde{\mathfrak p}_4:=\mathrm{im}(\ell:\mathfrak p_4\to\mathfrak p_5)$.
There is a direct sum decomposition $\mathfrak p_5=\tilde{\mathfrak p}_4\oplus\mathfrak f_3$.  
\end{lem}

\begin{proof}  
1) follows from \cite{Ih}, \S1.1 with $n=i=5$ (the notation for 
$\mathfrak f_3$ in {\it loc. cit.} is $N_n$). 
2) follows  from the facts that $\mathfrak f_3=\mathrm{Ker}(\mathrm{pr}_5)$ and that 
$\mathrm{pr}_5\circ\ell=\mathrm{id}_{\mathfrak p_4}$. \end{proof} 

\begin{rem}
Using \cite{Ih}, one can prove that the centralizer of $e_{45}$ in $\mathfrak p_5$
decomposes as a direct sum $\mathbf k e_{45}\oplus \tilde{\mathfrak p}_4$.
\end{rem}

\begin{lem} \label{lemma:decomp:J:pr5}
The map $(U\mathfrak p_5)^{\oplus3}\to J(\mathrm{pr}_5)$, $(p_i)_{i\in[\![1,3]\!]}\mapsto\sum_{i\in[\![1,3]\!]}p_i\cdot e_{i5}$ 
is an isomorphism of left $U\mathfrak p_5$-modules. 
\end{lem}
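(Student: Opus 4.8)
The plan is to exploit the semidirect-product structure of $\mathfrak p_5$ provided by Lemma \ref{lemma:semidirect:product:LAs}, namely the decomposition $\mathfrak p_5=\mathrm{inj}(\mathfrak p_4)\oplus\mathfrak f_3$ with $\mathfrak f_3=\mathrm{Ker}(\mathrm{pr}_5)$ a free Lie ideal on $e_{15},e_{25},e_{35}$, together with two standard consequences of the Poincar\'e--Birkhoff--Witt theorem. First I would record the module structure of $U(\mathfrak p_5)$ over $U(\mathfrak f_3)$: since $\mathrm{inj}(\mathfrak p_4)$ is a Lie subalgebra, $\mathfrak f_3$ an ideal, and $\mathfrak p_5$ their vector-space direct sum, PBW shows that the multiplication map $\mathrm{inj}(U(\mathfrak p_4))\otimes U(\mathfrak f_3)\to U(\mathfrak p_5)$ is a linear isomorphism; fixing a PBW basis $(w_\gamma)_\gamma$ of $U(\mathfrak p_4)$ (with $w_0=1$) this yields a free right $U(\mathfrak f_3)$-module decomposition $U(\mathfrak p_5)=\bigoplus_\gamma \mathrm{inj}(w_\gamma)\,U(\mathfrak f_3)$. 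Second, because $\mathfrak f_3$ is free as a Lie algebra on $e_{15},e_{25},e_{35}$, the enveloping algebra $U(\mathfrak f_3)$ is the free associative algebra on these three elements, so its augmentation ideal $\bar U(\mathfrak f_3):=\mathrm{Ker}(\varepsilon)$ is free as a left $U(\mathfrak f_3)$-module with basis $e_{15},e_{25},e_{35}$, i.e.\ $\bar U(\mathfrak f_3)=\bigoplus_{i\in[\![1,3]\!]}U(\mathfrak f_3)\,e_{i5}$.

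Next I would identify $J(\mathrm{pr}_5)$ inside this decomposition. The restriction of $\mathrm{pr}_5$ to $U(\mathfrak f_3)$ is the augmentation $\varepsilon$ (both are algebra morphisms into $\mathbf k$ agreeing on the generators $e_{i5}$, $i\in[\![1,3]\!]$), while $\mathrm{pr}_5\circ\mathrm{inj}=\mathrm{id}$; hence for $u=\sum_\gamma \mathrm{inj}(w_\gamma)\xi_\gamma$ one has $\mathrm{pr}_5(u)=\sum_\gamma w_\gamma\,\varepsilon(\xi_\gamma)$, and the linear independence of the $w_\gamma$ forces $J(\mathrm{pr}_5)=\bigoplus_\gamma \mathrm{inj}(w_\gamma)\,\bar U(\mathfrak f_3)$. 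Substituting the basis of $\bar U(\mathfrak f_3)$ and regrouping the (direct) sums gives
\[
J(\mathrm{pr}_5)=\bigoplus_\gamma\bigoplus_{i\in[\![1,3]\!]}\mathrm{inj}(w_\gamma)\,U(\mathfrak f_3)\,e_{i5}=\bigoplus_{i\in[\![1,3]\!]}\Big(\bigoplus_\gamma \mathrm{inj}(w_\gamma)\,U(\mathfrak f_3)\Big)e_{i5}=\bigoplus_{i\in[\![1,3]\!]}U(\mathfrak p_5)\,e_{i5},
\]
which is exactly the assertion that $(p_i)_{i\in[\![1,3]\!]}\mapsto\sum_i p_i\,e_{i5}$ is a bijection onto $J(\mathrm{pr}_5)$: the directness of the sum encodes injectivity, the equality encodes surjectivity. (If one prefers to argue the two separately, injectivity comes from writing $p_i=\sum_\gamma \mathrm{inj}(w_\gamma)\xi_{i\gamma}$, so that $\sum_i p_i e_{i5}=\sum_\gamma \mathrm{inj}(w_\gamma)\big(\sum_i\xi_{i\gamma}e_{i5}\big)$ with $\sum_i\xi_{i\gamma}e_{i5}\in U(\mathfrak f_3)$; freeness over $U(\mathfrak f_3)$ forces each $\sum_i\xi_{i\gamma}e_{i5}=0$, and freeness of $\bar U(\mathfrak f_3)$ then forces $\xi_{i\gamma}=0$.)

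I do not expect a genuine obstacle, since all the substance is already packaged in Lemma \ref{lemma:semidirect:product:LAs}. The one point that requires care is the justification that $U(\mathfrak p_5)$ is free as a \emph{right} $U(\mathfrak f_3)$-module with basis $\mathrm{inj}(w_\gamma)$ --- that is, the right-handed PBW decomposition for the split ideal $\mathfrak f_3$ --- because the map under study multiplies the $e_{i5}$ on the right; this is where the ideal property $[\mathfrak p_5,\mathfrak f_3]\subseteq\mathfrak f_3$ is used to reorder monomials. Once this decomposition is in place, the computation above is purely formal.
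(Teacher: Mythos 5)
Your proof is correct and is essentially the paper's own argument: the paper packages your PBW-basis computation into the linear isomorphism $\mathrm{codec}:U(\mathfrak p_4)\otimes U(\mathfrak f_3)\to U(\mathfrak p_5)$, $p\otimes f\mapsto \mathrm{inj}(p)\cdot f$ (with $U(\mathfrak f_3)$ on the right, exactly your right-handed decomposition), identifies $J(\mathrm{pr}_5)$ with the image of $U(\mathfrak p_4)\otimes U(\mathfrak f_3)_+$ via the relation $\mathrm{pr}_5\circ\mathrm{codec}=(p\otimes f)\mapsto p\,\varepsilon(f)$, and concludes by the freeness of the augmentation ideal of the free associative algebra $U(\mathfrak f_3)$ on $e_{15},e_{25},e_{35}$, just as you do. The only minor quibble is your closing caveat: the factorization $U(\mathfrak p_5)\simeq \mathrm{inj}(U(\mathfrak p_4))\cdot U(\mathfrak f_3)$ follows from PBW for \emph{any} vector-space splitting of $\mathfrak p_5$ into two Lie subalgebras (order a basis of $\mathrm{inj}(\mathfrak p_4)$ before one of $\mathfrak f_3$), so the ideal property $[\mathfrak p_5,\mathfrak f_3]\subseteq\mathfrak f_3$ is not actually needed at that step.
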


\proof The equality $\mathfrak p_5=\tilde{\mathfrak p}_4\oplus \mathfrak f_3$ is a decomposition of the Lie algebra $\mathfrak p_5$ 
as a direct sum of two Lie subalgebras. The tensor product of $\ell:U\mathfrak p_4\to U\mathfrak p_5$ with the injection
$U\mathfrak f_3\to U\mathfrak p_5$, followed by the product in $U\mathfrak p_5$, 
induces a linear map 
$$
\mathrm{codec} : U{\mathfrak p}_4\otimes U\mathfrak f_3\to U\mathfrak p_5. 
$$
This map is compatible with the PBW filtrations on both sides, and its associated graded map is the linear map 
$S{\mathfrak p}_4\otimes S\mathfrak f_3\to S\mathfrak p_5$, which is an isomorphism of graded vector spaces, 
so that $\mathrm{codec}$ is an isomorphism of filtered vector spaces. 

The following diagram 
\begin{equation}\label{diagram:Up5:times:Uf3}
\xymatrix{
U{\mathfrak p}_4\otimes U\mathfrak f_3\ar^{\mathrm{codec}}[rr]\ar_{\mathrm{id}\otimes\varepsilon}[rrd] && 
U\mathfrak p_5\ar^{\mathrm{pr}_5}[d]
\\ && U\mathfrak p_4
}
\end{equation}
is commutative, where $\varepsilon$ is the counit of $U\mathfrak f_3$. Indeed, for $p\in U{\mathfrak p}_4$
and $f\in U\mathfrak f_3$, 
$$
\mathrm{pr}_5\circ \mathrm{codec}(p\otimes f)=\mathrm{pr}_5\Big(\ell(p)f\Big)
=\Big(\mathrm{pr}_5( \ell(p))\Big)\cdot \mathrm{pr}_5(f)=p\cdot \varepsilon(f), 
$$
where the first equality follows from the definition of $\mathrm{codec}$, 
the second equality follows from the algebra morphism property of 
$\mathrm{pr}_5$, and the third equality follows from the facts that $\mathrm{pr}_5\circ\ell=\mathrm{id}_{\mathfrak p_4}$ 
and that 
$\mathfrak f_3=\mathrm{Ker}(\mathrm{pr}_5:\mathfrak p_5\to\mathfrak p_4)$. 
This diagram implies that $J(\mathrm{pr}_5)$ is equal to the isomorphic image by $\mathrm{codec}$
of the subspace $U\mathfrak p_4\otimes (U\mathfrak f_3)_+$ of $U\mathfrak p_4\otimes U\mathfrak f_3$, where 
$(U\mathfrak f_3)_+:=\mathrm{Ker}(\varepsilon : U\mathfrak f_3\to\mathbf k)$ is the augmentation ideal of $U\mathfrak f_3$, 
that is 
\begin{equation}\label{equality:J(pr5)}
J(\mathrm{pr}_5)
=\mathrm{im}(U\mathfrak p_4\otimes (U\mathfrak f_3)_+\stackrel{\mathrm{codec}}{\to} U\mathfrak p_5).  
\end{equation}

Since the $e_{i5}$, $i\in[\![1,3]\!]$, belong to $\mathfrak f_3$, the following diagram commutes
\begin{equation}\label{CD:17:11:2017}
\xymatrix{
(U\mathfrak p_4\otimes U\mathfrak f_3)^{\oplus 3} 
\ar_{\mathrm{codec}^{\oplus3}}[d]\ar^{\sim}[r]
&U\mathfrak p_4\otimes (U\mathfrak f_3)^{\oplus 3} \ar[r]& U\mathfrak p_4\otimes U\mathfrak f_3\ar^{\mathrm{codec}}[d]\\ 
(U\mathfrak p_5)^{\oplus 3}\ar[rr]&& U\mathfrak p_5
}
\end{equation} 
where the lower horizontal map is given by $(p_i)_{i\in[\![1,3]\!]}\mapsto\sum_{i\in[\![1,3]\!]}p_ie_{i5}$, and the upper horizontal map
is given by the tensor product of the identity in $U\mathfrak p_4$ with the map $(U\mathfrak f_3)^{\oplus 3}\to U\mathfrak f_3$, 
$(\varphi_i)_{i\in[1,3]}\mapsto\sum_{i\in[\![1,3]\!]}\varphi_ie_{i5}$. 

Since $U\mathfrak f_3$ is freely generated, as an associative algebra, by the $e_{i5}$, $i\in[\![1,3]\!]$, the latter map 
corestricts to a $\mathbf k$-module isomorphism $(U\mathfrak f_3)^{\oplus 3}\to (U\mathfrak f_3)_+$. It follows
that the upper horizontal map of (\ref{CD:17:11:2017}) corestricts to an isomorphism 
$U\mathfrak p_4\otimes (U\mathfrak f_3)^{\oplus 3} \to U\mathfrak p_4\otimes (U\mathfrak f_3)_+$. 

Since the vertical maps of (\ref{CD:17:11:2017}) are isomorphisms, this implies that the lower 
horizontal map of (\ref{CD:17:11:2017}) corestricts to an isomorphism from $(U\mathfrak p_5)^{\oplus 3}$
to the image by $\mathrm{codec}$ of $U\mathfrak p_4\otimes (U\mathfrak f_3)_+$, which is $J(\mathrm{pr}_5)$
according to (\ref{equality:J(pr5)}). \hfill\qed\medskip 

\subsubsection{A morphism $\AAA:U\mathfrak p_5\to M_3(U\mathfrak p_5)$}\label{subsect:def:A}

Lemma \ref{lemma:decomp:J:pr5} says that the hypothesis of Lemma \ref{algebraic:lemma:1} is satisfied 
in the following situation: $R=U\mathfrak p_5$, $J=J(\mathrm{pr}_5)$, $d=3$, $(j_a)_{a\in[\![1,d]\!]}=(e_{i5})_{i\in[\![1,3]\!]}$. 
We denote by 
$$
\AAA:U\mathfrak p_5\to M_3(U\mathfrak p_5)
\index{pivar@$\AAA$}
$$
the algebra morphism given in this situation by Lemma \ref{algebraic:lemma:1}. Then for $p\in U\mathfrak p_5$, 
$\AAA(p)=(a_{ij}(p))_{i,j\in[\![1,3]\!]}$, and 
\begin{equation}\label{def:varpi:16dec2019}
\forall i\in[\![1,3]\!],\quad e_{i5}p=\sum_{j\in[\![1,3]\!]}a_{ij}(p)e_{j5} 
\end{equation}
(equalities in $U\mathfrak p_5$). 

\subsubsection{Construction and properties of a morphism $\BB:\mathcal V^\DR\to M_3((\mathcal V^\DR)^{\otimes2})$}
\label{sect:514:12122017}

Define the algebra morphism 
\begin{equation}\label{equation:B:05012017}
\BB:\mathcal V^\DR\to M_3((\mathcal V^\DR)^{\otimes2})
\index{rhovar@$\BB$}
\end{equation}
to be the composition 
$$
\mathcal V^\DR\simeq U\mathfrak p_4\stackrel{\ell}{\to} U\mathfrak p_5\stackrel{\AAA}{\to}
M_3(U\mathfrak p_5)\stackrel{M_3(\mathrm{pr}_{12})}{\to}
M_3((U\mathfrak p_4)^{\otimes2})\simeq M_3((\mathcal V^\DR)^{\otimes2}), 
$$
where $\ell$ is as in \S\ref{subsect:def:LA:morphisms:2}, $\AAA$ is as in \S\ref{subsect:def:A}, 
and $M_3(\mathrm{pr}_{12})\index{M_3pr_12@$M_3(\mathrm{pr}_{12})$}$ is the  
morphism induced by $\mathrm{pr}_{12}$, i.e., taking $(p_{ij})_{i,j\in[\![1,3]\!]}$ to $(\mathrm{pr}_{12}(p_{ij}))_{i,j\in[\![1,3]\!]}$. 

\begin{lem}\label{lemma:decomp:A:e_1}
Set 
\begin{equation}\label{def:row:col:04012018}
\mathrm{row}_1:=\begin{pmatrix} e_1 & -f_1 & 0\end{pmatrix}\in M_{1\times 3}((\mathcal V^\DR)^{\otimes2}),\quad  
\mathrm{col}_1:=\begin{pmatrix} 1 \\ -1 \\ 0\end{pmatrix}\in M_{3\times 1}((\mathcal V^\DR)^{\otimes2})
\index{row1 @ $\mathrm{row}_1$}\index{col1 @ $\mathrm{col}_1$}
\end{equation}
(recall that 
$e_1,f_1\in (\mathcal V^\DR)^{\otimes2}$ are $e_1\otimes1,1\otimes e_1$), then 
\begin{equation}\label{525bis}
\BB(e_1)=\mathrm{col}_1\cdot\mathrm{row}_1
\end{equation}
(equality in $M_3((\mathcal V^\DR)^{\otimes2})$). 
\end{lem}

\proof One has $\ell(e_1)=e_{12}$. Let us compute $\AAA(e_{12})$. One has 
\begin{align*}
& e_{15}e_{12}=e_{12}e_{15}+[e_{15},e_{12}]=e_{12}e_{15}+[e_{25},e_{15}]=(e_{12}+e_{25})e_{15}-e_{15}e_{25}, \\  
& e_{25}e_{12}=-e_{25}e_{15}+(e_{12}+e_{15})e_{25} 
\intertext{
(applying the permutation of indices 1 and 2 to the previous equality), }
& e_{35}e_{12}=e_{12}e_{35},  
\end{align*}
which implies that 
$$
\AAA(e_{12})=\begin{pmatrix} e_{12}+e_{25}&-e_{15} & 0\\ -e_{25}& e_{12}+e_{15}& 0\\ 0& 0& e_{12}\end{pmatrix}\in M_3(U\mathfrak p_5). 
$$
The image of this matrix in $M_3((\mathcal V^\DR)^{\otimes2})$ is 
$$
\BB(e_1)=\begin{pmatrix} e_1&-f_1 & 0\\ -e_1& f_1& 0\\ 0& 0& 0\end{pmatrix}=\begin{pmatrix} 1 \\ -1 \\ 0\end{pmatrix}
\begin{pmatrix} e_1 & -f_1 & 0\end{pmatrix}=\mathrm{col}_1\cdot\mathrm{row}_1. 
$$
Therefore $\BB(e_1)=\mathrm{col}_1\cdot\mathrm{row}_1$. \hfill\qed\medskip

\subsubsection{Construction and properties of a morphism $\tilde{\BB}:(\mathcal V^\DR,\cdot_{e_1})\to(\mathcal V^\DR)^{\otimes2}$}
\label{subsection:tilde:B}

Lemma \ref{lemma:decomp:A:e_1} shows that the hypothesis of Lemma \ref{algebraic:lemma:2} is satisfied in the following situation: 
$R=\mathcal V^\DR$, $S=(\mathcal V^\DR)^{\otimes2}$, $e=e_1$, $n=3$, $f=\BB$, $\mathrm{row}_1$ and $\mathrm{col}_1$ are as in Lemma 
\ref{lemma:decomp:A:e_1}. We denote by
$$
\tilde{\BB}:(\mathcal V^\DR,\cdot_{e_1})\to (\mathcal V^\DR)^{\otimes2}
\index{rhovar^tilde@$\tilde{\BB}$}
$$
the algebra morphism given in this situation by Lemma \ref{algebraic:lemma:2}. 

Then for any $f\in \mathcal V^\DR$, one has 
\begin{equation}\label{formula:tilde:B:f}
\tilde{\BB}(f)=\mathrm{row}_1\cdot \BB(f)\cdot \mathrm{col}_1=\mathrm{row}_1\cdot 
\{M_3(\mathrm{pr}_{12})\circ \AAA\circ \ell(f)\}\cdot \mathrm{col}_1
\in (\mathcal V^\DR)^{\otimes2}. 
\end{equation}

\begin{lem}\label{lemma:53:15:11:2017}
For any $n\geq 0$, 
$$
\tilde{\BB}(e_0^n)=e_1e_0^n+f_1f_0^n-\sum_{i=0}^{n-1}(e_1e_0^i)\cdot(f_1f_0^{n-1-i})
$$
(equality in $(\mathcal V^\DR)^{\otimes2}=(U\mathfrak f_2)^{\otimes2}$). 
\end{lem}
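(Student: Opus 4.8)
The plan is to reduce the statement to a computation of powers of an explicit $3\times3$ matrix. Since $\ell$, $\AAA$ and $M_3(\mathrm{pr}_{12})$ are all algebra morphisms, so is $\BB$; hence $\BB(e_0^n)=\BB(e_0)^n$, and by \eqref{formula:tilde:B:f},
$$
\tilde{\BB}(e_0^n)=\mathrm{row}\cdot\BB(e_0)^n\cdot\mathrm{col}.
$$
Thus everything comes down to identifying the matrix $\BB(e_0)\in M_3(U(\mathfrak f_2)^{\otimes2})$ and controlling its powers applied to $\mathrm{col}$.

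First I would compute $\AAA(\ell(e_0))=\AAA(e_{23})$ from the defining relations $e_{i5}e_{23}=\sum_{j}a_{ij}(e_{23})e_{j5}$ (see \S\ref{subsect:def:A}). Using $[e_{15},e_{23}]=0$ and the $\mathfrak p_5$-relations within the triple $\{2,3,5\}$ (namely $[e_{25},e_{23}+e_{35}]=0$ and its image under the transposition of $2$ and $3$), one gets
$$
\AAA(e_{23})=\begin{pmatrix} e_{23} & 0 & 0 \\ 0 & e_{23}+e_{35} & -e_{25} \\ 0 & -e_{35} & e_{23}+e_{25}\end{pmatrix}.
$$
Applying $M_3(\mathrm{pr}_{12})$ entrywise, using the table of \S\ref{subsect:def:LA:morphisms:2} (so that $\mathrm{pr}_{12}(e_{23})=e_0$, $\mathrm{pr}_{12}(e_{25})=e_1$, $\mathrm{pr}_{12}(e_{35})=e_\infty\otimes1+1\otimes e_0=-e_0-e_1+f_0$), together with $e_\infty=-e_0-e_1$ and the conventions $e_i=e_i\otimes1$, $f_i=1\otimes e_i$, yields
$$
\BB(e_0)=\begin{pmatrix} e_0 & 0 & 0 \\ 0 & f_0-e_1 & -e_1 \\ 0 & e_0+e_1-f_0 & e_0+e_1\end{pmatrix}.
$$
The crucial feature is that this matrix is block-diagonal, with a $1\times1$ block $[e_0]$ and a $2\times2$ block $M$.

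The block-diagonal structure makes the powers tractable. Writing $\BB(e_0)^n\cdot\mathrm{col}=(e_0^n,p_n,q_n)^{\mathrm t}$ with $(p_0,q_0)=(-1,0)$, the recursion $\begin{pmatrix}p_n\\q_n\end{pmatrix}=M\begin{pmatrix}p_{n-1}\\q_{n-1}\end{pmatrix}$ gives, by adding the two rows of $M$, the invariant $p_n+q_n=e_0(p_{n-1}+q_{n-1})$, hence $p_n+q_n=-e_0^n$. Substituting $q_{n-1}=-e_0^{n-1}-p_{n-1}$ into the first row collapses the recursion to $p_n=f_0\,p_{n-1}+e_1e_0^{n-1}$, which solves (using that $f_0$ commutes with both $e_0$ and $e_1$) to $p_n=-f_0^n+\sum_{i=0}^{n-1}e_1e_0^if_0^{n-1-i}$. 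Finally $\tilde{\BB}(e_0^n)=\mathrm{row}\cdot(e_0^n,p_n,q_n)^{\mathrm t}=e_1e_0^n-f_1p_n$, and since $f_1$ commutes with $e_1e_0^i$, each term $f_1e_1e_0^if_0^{n-1-i}$ rewrites as $(e_1e_0^i)(f_1f_0^{n-1-i})$, producing exactly the claimed formula; the case $n=0$ is the direct check $\tilde{\BB}(1)=\mathrm{row}\cdot\mathrm{col}=e_1+f_1$.

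The main obstacle is the first computation, namely obtaining $\AAA(e_{23})$: this requires correctly manipulating the sphere braid relations in $U(\mathfrak p_5)$ to rewrite each $e_{i5}e_{23}$ as a left $U(\mathfrak p_5)$-combination of $e_{15},e_{25},e_{35}$, and then keeping careful track of the identification $U(\mathfrak p_4)^{\otimes2}\cong U(\mathfrak f_2)^{\otimes2}$ and of which generators commute (first versus second tensor factor) in the subsequent simplifications. Once the matrix $\BB(e_0)$ is correct, the remaining steps are a routine linear recursion.
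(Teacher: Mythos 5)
Your proposal is correct and takes essentially the same route as the paper's proof: the same reduction to $\mathrm{row}\cdot\BB(e_0)^n\cdot\mathrm{col}$, the same computation of $\AAA(e_{23})$ from the sphere-braid relations, and the same block matrix $\BB(e_0)$, whose $2\times2$ block the paper denotes $T$. The only deviation is how $T^n$ is controlled: the paper conjugates $T$ to an upper-triangular matrix and computes the power explicitly, whereas you use the equivalent observation that $(1\;1)$ is a left eigenvector of $T$ with eigenvalue $e_0$ (your invariant $p_n+q_n=-e_0^n$), which collapses the problem to the same first-order recursion $p_n=f_0p_{n-1}+e_1e_0^{n-1}$ and yields the identical answer.
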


\proof According to (\ref{formula:tilde:B:f}), $\tilde{\BB}(e_0^n)=\mathrm{row}_1\cdot \BB(e_0^n)\cdot \mathrm{col}_1$. 
As $\BB$ is an algebra morphism, $\BB(e_0^n)=\BB(e_0)^n$. Then $\BB(e_0)=M_3(\mathrm{pr}_{12})\circ \AAA\circ \ell(e_0)
=M_3(\mathrm{pr}_{12})(\AAA(e_{23}))$. 

Let us compute $\AAA(e_{23})$. One has 
\begin{align*}
& e_{15}e_{23}=e_{23}e_{15}, \\  
& e_{25}e_{23}=e_{23}e_{25}+[e_{25},e_{23}]=e_{23}e_{25}+[e_{35},e_{25}]=(e_{23}+e_{35})e_{25}-e_{25}e_{35}, \\
& e_{35}e_{23}=-e_{35}e_{25}+(e_{23}+e_{25})e_{35}\\ \intertext{
(applying the permutation of indices 2 and 3 to the previous equality), }  
\end{align*}
which implies that 
$$
\AAA(e_{23})=\begin{pmatrix}  e_{23}& 0&0  \\   0&e_{23}+e_{35}&-e_{25} \\  0&-e_{35}& e_{23}+e_{25}\end{pmatrix}\in M_3(U\mathfrak p_5). 
$$
Then  
\begin{equation}\label{expression:B(e_0):25oct}
\BB(e_0)=M_3(\mathrm{pr}_{12})(\AAA(e_{23}))=
\begin{pmatrix} e_0&0 & 0\\ 0& -e_1+f_0& -e_1\\ 0& e_0+e_1-f_0& e_0+e_1\end{pmatrix}\in M_3((\mathcal V^\DR)^{\otimes2}). 
\end{equation}
Set $T\index{T@$T$}:=\begin{pmatrix} -e_1+f_0& -e_1\\e_0+e_1-f_0& e_0+e_1\end{pmatrix}\in M_2((\mathcal V^\DR)^{\otimes2})$, 
then $\BB(e_0^n)=\begin{pmatrix} e_0^n& 0\\0& T^n\end{pmatrix}$, therefore 
$$
\tilde{\BB}(e_0^n)=\mathrm{row}_1\cdot\BB(e_0^n)\cdot \mathrm{col}_1=e_1e_0^n+\begin{pmatrix} -f_1&0 \end{pmatrix} T^n
\begin{pmatrix} -1\\0 \end{pmatrix}, 
$$
where the last equality follows from the form of $\mathrm{row}_1$ and $\mathrm{col}_1$. 

One checks that $T=\begin{pmatrix} 1&0\\-1&1 \end{pmatrix} \begin{pmatrix}f_0&-e_1\\0&e_0 \end{pmatrix} 
\begin{pmatrix} 1&0\\-1&1 \end{pmatrix}^{-1}$, therefore 
$$
T^n=\begin{pmatrix} 1&0\\-1&1 \end{pmatrix} \begin{pmatrix}f_0&-e_1\\0&e_0 \end{pmatrix}^n 
\begin{pmatrix} 1&0\\1&1 \end{pmatrix}=\begin{pmatrix} 1&0\\-1&1 \end{pmatrix} \begin{pmatrix}f_0^n&-\sum_{i=0}^{n-1}
f_0^ie_1e_0^{n-1-i}\\0&e_0^n \end{pmatrix} 
\begin{pmatrix} 1&0\\1&1 \end{pmatrix}, 
$$
so 
\begin{align*}
& \tilde{\BB}(e_0^n)=e_1e_0^n+\begin{pmatrix} -f_1&0 \end{pmatrix} \begin{pmatrix}f_0^n&-\sum_{i=0}^{n-1}
f_0^ie_1e_0^{n-1-i}\\0&e_0^n \end{pmatrix} 
\begin{pmatrix} -1\\-1 \end{pmatrix}=e_1e_0^n+f_1f_0^n-\sum_{i=0}^{n-1}
f_1f_0^ie_1e_0^{n-1-i}; 
\end{align*}
the result then follows from the commutativity of $e_s$ with $f_t$ for $s,t\in\{0,1\}$.  
\hfill\qed\medskip 

\begin{rem}\label{remark:alternative:decomp:DR}
If $\mathcal A$ is a unital associative algebra, then the following identity holds in $M_2(\mathcal A)$
\begin{equation}\label{identity:28:11:2017}
\mathrm{Ad}(\begin{pmatrix} 1&0\\a&1 \end{pmatrix})(\begin{pmatrix} u&v\\0&w\end{pmatrix})
=\mathrm{Ad}(\begin{pmatrix} 1&a^{-1}\\0&1 \end{pmatrix})(\begin{pmatrix} a^{-1}wa&0\\au-wa-ava&aua^{-1} \end{pmatrix})
\end{equation}
provided 
$u,v,w\in\mathcal A$ and $a\in\mathcal A^\times$. This implies 
the identity 
$$
T=\begin{pmatrix} 1&-1\\0&1 \end{pmatrix} \begin{pmatrix}e_0&0\\e_0+e_1-f_0&f_0 \end{pmatrix} 
\begin{pmatrix} 1&-1\\0&1 \end{pmatrix}^{-1},
$$ allowing for an alternative computation of $T^n$.  
\end{rem}

\section{Geometric interpretation of the de Rham harmonic coproducts}\label{section:5:08012018}

In this section, we construct commutative diagrams relating the de Rham harmonic algebra and module coproducts
$\Delta^{\mathcal W,\DR}$ and $\Delta^{\mathcal M,\DR}$ with infinitesimal braid Lie algebras (diagrams 
\eqref{diagram:prop:59} and \eqref{diagram:1402}). 

These diagrams involve a localization $\mathcal V^\DR[{1\over e_1}]$ of the algebra $\mathcal V^\DR$, and 
a module $\mathcal M^\DR[{1\over e_1}]$ over this algebra, which are introduced in 
\S\ref{sect:def:loc:VDR}. 

We prove the commutativity of the first diagram in \S\ref{section:5:3:19:03:2018}: we first construct a
diagram relating $\Delta^{\mathcal W,\DR}$ and the morphism $\tilde\rho$ from \S\ref{subsection:tilde:B}
(Lemma \ref{CD:tildeB:Delta*}), and derive from there the commutativity of diagram \eqref{diagram:prop:59} relating 
$\Delta^{\mathcal W,\DR}$ and $\rho$ and $\mathrm{row}_1,\mathrm{col}_1$ (Proposition \ref{prop:CD:DR:28/11/2017}). 
The material in \S\ref{section:5:3:19:03:2018} is inspired by \cite{DT}, more specifically by \S6.3 and Proposition 6.2 in that paper. 

In \S\ref{section:proof:T5}, we introduce a column vector $\mathrm{col}_0$ (the index $0$ indicates that it should be thought of as 
associated to the tangential base point $\vec 0$ of $\mathfrak M_{0,4}$). We then prove the existence of a 
map $\delta:\mathcal M^\DR\to(\mathcal M^\DR[{1\over e_1}])^{\otimes2}$ fitting in a commutative 
diagram involving $\mathrm{col}_0$; using diagram \eqref{diagram:prop:59}, we then identify $\delta$ with 
$\Delta^{\mathcal M,\DR}$, which establishes diagram \eqref{diagram:1402} (Proposition \ref{prop:22:14022019}).  

We finally construct completions of the diagrams \eqref{diagram:prop:59} and  \eqref{diagram:1402} 
in \S\ref{sect:completion:giadrhc}.  

\subsection{Localizations}\label{sect:def:loc:VDR} 

Define ${\mathcal V}^\DR[\frac{1}{e_1}]$\index{V^DR/e_1@${\mathcal V}^\DR[\frac{1}{e_1}]$} to be the localization of ${\mathcal V}^\DR$ with respect to $e_1$, i.e. the unital $\mathbf k$-algebra with generators 
$e_0$ and $e_1^{\pm1}$ and relations $e_1e_1^{-1}=e_1^{-1}e_1=1$; 
${\mathcal V}^\DR[\frac{1}{e_1}]$  is then equipped with a $\mathbb Z$-grading given by $\mathrm{deg}(e_i)=1$ for $i=0,1$. 
It follows that ${\mathcal V}^\DR[\frac{1}{e_1}]$ in an algebra in $\mathbf k\text{-mod}_{\gr}$ (see Definition \ref{def:cat}). 

Set also ${\mathcal M}^\DR[\frac{1}{e_1}]:={\mathcal V}^\DR[\frac{1}{e_1}]/{\mathcal V}^\DR[\frac{1}{e_1}] e_0$\index{M^DR/e_1@${\mathcal M}^\DR[\frac{1}{e_1}]$}. This is a 
$\mathbb Z$-graded ${\mathcal V}^\DR[\frac{1}{e_1}]$-module, therefore ${\mathcal M}^\DR[\frac{1}{e_1}]$ is a 
${\mathcal V}^\DR[\frac{1}{e_1}]$-module in $\mathbf k\text{-mod}_{\gr}$. 

One checks that the natural $\mathbb Z$-graded $\mathbf k$-algebra morphism $\mathcal V^\DR\to{\mathcal V}^\DR[\frac{1}{e_1}]$ 
and $\mathbf k$-module morphism $\mathcal M^\DR\to{\mathcal M}^\DR[\frac{1}{e_1}]$ are injective. 

\subsection{Relationship between infinitesimal braid Lie algebras and $\Delta^{\mathcal W,\DR}$}\label{section:5:3:19:03:2018}

Denote by $\mathbf k[e_0]$ the linear span in $\mathcal V^\DR$ of the elements $e_0^n$, $n\geq0$. 

\begin{lem}\label{lemma:e0n:generating}
$(\mathcal V^\DR,\cdot_{e_1})$
\index{V^DR, e_1@$(\mathcal V^\DR,\cdot_{e_1})$}
is generated, as an associative (non-unital) algebra, by 
$\mathbf k[e_0]$.  
\end{lem}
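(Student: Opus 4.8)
The plan is to exploit the freeness of $U(\mathfrak f_2)$ together with a normal-form description of its monomials. Under the isomorphism $U(\mathfrak f_2)\simeq\mathbf k\langle X\rangle$ of \eqref{isos:Uf2:kX} (with $e_0=x_0$ and $e_1=-x_1$), the algebra $U(\mathfrak f_2)$ is the free associative algebra $\mathbf k\langle e_0,e_1\rangle$, so the words in the letters $e_0,e_1$ form a $\mathbf k$-basis. Since the $\cdot_{e_1}$-subalgebra generated by $\mathbf k[e_0]$ is in particular a linear subspace, it suffices to show that every such word lies in it.

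First I would recall from \S\ref{section:background:2:08012018} that $r\cdot_{e_1}r'=re_1r'$ and that this product is associative, so that for any $u_1,\dots,u_k\in U(\mathfrak f_2)$ the $k$-fold product satisfies $u_1\cdot_{e_1}\cdots\cdot_{e_1}u_k=u_1e_1u_2e_1\cdots e_1u_k$; note in particular that forming a $k$-fold product inserts exactly $k-1$ separating copies of $e_1$. I would also record that $1=e_0^0\in\mathbf k[e_0]$ (even though $1$ is not a unit for $\cdot_{e_1}$). The key step is then the normal form: an arbitrary word $w$ in $e_0,e_1$ can be written uniquely as
$$
w=e_0^{a_0}e_1e_0^{a_1}e_1\cdots e_1e_0^{a_m},
$$
where $m\geq0$ is the number of occurrences of $e_1$ in $w$ and $a_0,\dots,a_m\geq0$ record the lengths of the (possibly empty) maximal runs of $e_0$ before, between, and after these occurrences. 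Comparing with the previous identity, this exhibits $w$ as the $(m+1)$-fold product $e_0^{a_0}\cdot_{e_1}e_0^{a_1}\cdot_{e_1}\cdots\cdot_{e_1}e_0^{a_m}$ of elements of $\mathbf k[e_0]$ (the case $m=0$ being the tautology $w=e_0^{a_0}\in\mathbf k[e_0]$). Hence every word, and therefore every element of $U(\mathfrak f_2)$, lies in the $\cdot_{e_1}$-subalgebra generated by $\mathbf k[e_0]$, which proves the lemma.

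There is essentially no genuine obstacle here; the only point requiring care is the elementary bookkeeping that a $k$-fold $\cdot_{e_1}$-product contributes precisely $k-1$ copies of $e_1$, so that the factor count $m+1$ matches the number $m$ of $e_1$'s in $w$. This is exactly what makes the normal form of $w$ coincide on the nose with a $\cdot_{e_1}$-product of powers of $e_0$, with no leftover letters, and it is the whole content of the argument.
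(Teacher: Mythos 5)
Your proof is correct and takes essentially the same route as the paper: both rest on the freeness of $U(\mathfrak f_2)$ on $e_0,e_1$ and on the unique normal form $e_0^{a_0}e_1e_0^{a_1}\cdots e_1e_0^{a_m}$ of a word with $m$ occurrences of $e_1$, which identifies it with an $(m+1)$-fold $\cdot_{e_1}$-product of powers of $e_0$. The only cosmetic difference is that the paper packages this as the bijectivity of the map $\oplus_{k\geq1}\mathbf k[e_0]^{\otimes k}\to U(\mathfrak f_2)$ given by iterated $\cdot_{e_1}$-products, whereas you extract just the surjectivity, which is all the generation statement requires.
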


\proof For $k\geq1$, the $k$-th power of the canonical injection, followed by the $k$-th fold product $\cdot_{e_1}$, sets up a linear
map $\mathbf k[e_0]^{\otimes k}\to \mathcal V^\DR$.  One checks that this map is injective and that its image coincides with 
the part of $\mathcal V^\DR$ of $e_1$-degree equal to $k-1$. So the composition 
$\oplus_{k\geq1}\mathbf k[e_0]^{\otimes k}\to\oplus_{k\geq1}(\mathcal V^\DR)^{\otimes k}\to \mathcal V^\DR$, 
where the first map is the canonical injection and the second map is the iteration of the product $\cdot_{e_1}$, maps 
$\oplus_{k\geq1}\mathbf k[e_0]^{\otimes k}$ injectively (in fact, bijectively) to $\mathcal V^\DR$. \hfill\qed\medskip 

Recall that $\mathcal W^{\DR}$ is the subalgebra of $\mathcal V^\DR$ equal to $\mathbf k\oplus \mathcal V^\DR e_1$
(see \S\ref{sect:1:1:1:crm}). We set 
\begin{equation}\label{del:WlDR:+:04012018}
\mathcal W^{\DR}_+:=\mathcal V^\DR e_1.
\index{W^DR+@$\mathcal W^{\DR}_+$}
\end{equation}
This is a (non-unital) subalgebra of $\mathcal W^{\DR}$. 

Since the right multiplication by $e_1$ is injective in $\mathcal V^\DR$, the algebra morphism 
\begin{equation}\label{def:mor:DR:04012018}
\mathrm{mor}_{\mathcal V^\DR,e_1}:(\mathcal V^\DR,\cdot_{e_1})\to \mathcal W^{\DR}_+
\index{morVDRe1@ $\mathrm{mor}_{\mathcal V^\DR,e_1}$}
\end{equation}
(see \S\ref{section:algebraic:lemmas}) is an algebra isomorphism. 
One also checks that the algebra automorphism $\mathrm{Ad}(e_1^{-1})$ of $\mathcal V^\DR[{1\over e_1}]$ restricts to an algebra morphism 
$\mathcal V^\DR\to\mathcal V^\DR[{1\over e_1}]_{\geq0}$  . 

\begin{lem}\label{CD:tildeB:Delta*}
The following diagram is commutative
$$
\xymatrix{
(\mathcal V^\DR,\cdot_{e_1}) \ar^{\mathrm{Ad}(e_1f_1)^{-1}\circ\tilde{\BB}}[rr]\ar_{\mathrm{mor}_{\mathcal V^\DR,e_1}}^{\simeq}[d]&& 
(\mathcal V^\DR[{1\over e_1}]_{\geq0})^{\otimes2}\\
\mathcal W^{\DR}_+\ar_{\Delta^{\mathcal W,\DR}}[rr]&&  (\mathcal W^{\DR})^{\otimes2}\ar@{^{(}->}[u]
}
$$
where the top horizontal map is the composition of $\tilde\rho$ with the tensor square of the morphism
$\mathrm{Ad}(e_1^{-1}):\mathcal V^\DR\to\mathcal V^\DR[{1\over e_1}]_{\geq0}$  
and the right vertical map is the tensor square of the injection $\mathcal W^{\DR}\hookrightarrow\mathcal V^{\DR}\hookrightarrow
\mathcal V^\DR[{1\over e_1}]_{\geq0}$. 
\end{lem}

\proof For $n\geq1$, 
\begin{align*}
&\Delta^{\mathcal W,\DR}\circ\mathrm{mor}_{\mathcal V^\DR,\cdot_{e_1}}(e_0^n)
=\Delta^{\mathcal W,\DR}(e_0^ne_1)=\Delta^{\mathcal W,\DR}(-y_{n+1})\\ & =-y_{n+1}\otimes1-1\otimes y_{n+1}-\sum_{i=1}^n y_i\otimes y_{n+1-i}
\\ & =e_0^ne_1\otimes1+1\otimes e_0^ne_1-\sum_{i=1}^n e_0^{i-1}e_1\otimes e_0^{n-i}e_1=\mathrm{Ad}(e_1f_1)^{-1}\circ\tilde{\BB}(e_0^n),  
\end{align*}
where the third equality follows from (\ref{image:yn}), 
and the last equality follows from Lemma \ref{lemma:53:15:11:2017}.  

It follows that the two maps of the above diagram agree on $e_0^n$, $n\geq1$. Since these maps are algebra morphisms, 
and since the family $e_0^n$, $n\geq0$ generates $(\mathcal V^\DR,\cdot_{e_1})$ (see Lemma \ref{lemma:e0n:generating}), 
this diagram commutes. \hfill\qed\medskip

\begin{prop}\label{prop:CD:DR:28/11/2017}
The following diagram commutes
\begin{equation}\label{diagram:prop:59}
\xymatrix
{
\mathcal V^\DR \ar_{\simeq}^{\diamond}[d]\ar^\rho[r]& 
M_3((\mathcal V^\DR)^{\otimes2})\ar^{(e_1f_1)^{-1}\mathrm{row}_1\cdot(-)\cdot\mathrm{col}_1e_1f_1}_{\diamond\sharp}[rrrr]&&&&
(\mathcal V^\DR[{1\over e_1}]_{\geq0})^{\otimes2}\\
(\mathcal V^\DR,\cdot_{e_1})\ar_{\mathrm{mor}_{\mathcal V^\DR,e_1}}^{\simeq\sharp}[d]
& &&&&\\ 
\mathcal W^{\DR}_+\ar_{\Delta^{\mathcal W,\DR}}[rrrrr]&& &&& (\mathcal W^{\DR})^{\otimes2}\ar@{^{(}->}[uu]}
\end{equation}
where $\rho$ is as in \eqref{equation:B:05012017}, 
$\mathrm{row}_1,\mathrm{col}_1$ are as in \eqref{def:row:col:04012018}, and $\Delta^{\mathcal W,\DR}$ is as in \S\ref{sect:tcDaD}; 
in this diagram, all the maps are algebra morphisms (resp. degree $0$ maps), except for the maps marked with $\diamond$ (resp. 
$\sharp$), which are only $\mathbf k$-module morphisms (degree $1$ maps).  
\end{prop}

\proof This follows from the combination the commutative diagram from Lemma \ref{CD:tildeB:Delta*} with the commutative diagram
$$
\xymatrix{
\mathcal V^\DR \ar^\rho[rr]\ar_{\simeq}[d]&& M_3((\mathcal V^\DR)^{\otimes2})
\ar^{(e_1f_1)^{-1}\mathrm{row}_1\cdot-\cdot\mathrm{col}_1e_1f_1}[d]\\ (\mathcal V^\DR,\cdot_{e_1})
\ar_{\mathrm{Ad}(e_1f_1)^{-1}\circ\tilde\rho}[rr]
&& (\mathcal V^\DR[{1\over e_1}]_{\geq0})^{\otimes2}
}
$$
which follows from the specialization, based on \eqref{525bis}, of the commutative diagram from Lemma \ref{algebraic:lemma:2} 
to $R=\mathcal V^\DR$, $S=\mathcal V^\DR[{1\over e_1}]^{\otimes2}$, $f=\rho$, $e=e_1$, $\mathrm{row}=(e_1f_1)^{-1}\mathrm{row}_1$, 
$\mathrm{col}=\mathrm{col}_1e_1f_1$. 

The map $\rho$ has degree 0 as it is a composition of maps of degree 0. The other degree statements follow from inspection of the degrees of 
$e_1$, $\mathrm{row}_1$, $\mathrm{col}_1$.  \hfill \qed\medskip 

\subsection{Relationship between infinitesimal braid Lie algebras and $\Delta^{\mathcal M,\DR}$}\label{section:proof:T5} 

\begin{defn}\label{def:col0:21012021}
Set 
$$
\mathrm{col}_0:=\begin{pmatrix} 0 \\ -e_1\cdot 1_\DR^{\otimes2} \\ e_1\cdot 1_\DR^{\otimes2} \end{pmatrix}\in 
M_{3\times1}((\mathcal M^\DR)^{\otimes2}).
\index{col_0@$\mathrm{col}_0$} 
$$
\end{defn}

\begin{lem}
Denote by $(a,x)\mapsto ax$ the action of $M_3((\mathcal V^\DR)^{\otimes2})$ on 
$M_{3\times 1}((\mathcal M^\DR)^{\otimes2})$. Then 
\begin{equation}\label{eq:10022019}
\rho(e_0)\mathrm{col}_0=0.
\end{equation}
\end{lem}

\proof Using \eqref{expression:B(e_0):25oct}, one computes $\rho(e_0)\mathrm{col}_0=
\begin{pmatrix} 0\\ -e_1f_0\cdot 1_\DR^{\otimes2} \\ e_1f_0\cdot 1_\DR^{\otimes2} \end{pmatrix}
=0$ in $M_{3\times 1}((\mathcal V^\DR/\mathcal V^\DR e_0)^{\otimes2})
=M_{3\times 1}((\mathcal M^\DR)^{\otimes2})$. 
\hfill\qed\medskip 

\begin{lem}\label{lemma:6:6}
The map $(e_1f_1)^{-1}\mathrm{row}_1\cdot(-)\cdot\mathrm{col}_0 : M_3((\mathcal V^\DR)^{\otimes2})
\to{\mathcal M}^\DR[\frac{1}{e_1}]^{\otimes2}$ constructed out of matrix multiplication and of 
the tensor square of the map ${\mathcal V}^\DR[\frac{1}{e_1}]\otimes\mathcal V^\DR\otimes\mathcal M^\DR\to
{\mathcal M}^\DR[\frac{1}{e_1}]$ induced by the inclusions $\mathcal V^\DR\subset{\mathcal V}^\DR[\frac{1}{e_1}]$ and 
$\mathcal M^\DR\subset{\mathcal M}^\DR[\frac{1}{e_1}]$, the product on ${\mathcal V}^\DR[\frac{1}{e_1}]$, and the 
action of ${\mathcal V}^\DR[\frac{1}{e_1}]$ on ${\mathcal M}^\DR[\frac{1}{e_1}]$, has image contained in 
$({\mathcal M}^\DR[\frac{1}{e_1}]_{\geq-1})^{\otimes2}$. 
\end{lem}

\proof This map is given by $(m_{ij})_{i,j\in[\![1,3]\!]}\mapsto \{f_1^{-1}(m_{13}-m_{12})e_1+e_1^{-1}(m_{22}-m_{23})e_1\}
\cdot 1_\DR^{\otimes2}$.  One has $f_1^{-1}(m_{13}-m_{12})e_1\cdot 1_\DR^{\otimes2}\in
{\mathcal M}^\DR[\frac{1}{e_1}]_{\geq1}\otimes{\mathcal M}^\DR[\frac{1}{e_1}]_{\geq-1}$ and 
$e_1^{-1}(m_{22}-m_{23})e_1\cdot 1_\DR^{\otimes2}\in({\mathcal M}^\DR[\frac{1}{e_1}]_{\geq0})^{\otimes2}$, 
therefore the sum of these elements belongs to the announced space. \hfill\qed\medskip 

\begin{lem}
There is a unique map $\delta:\mathcal M^\DR\to ({\mathcal M}^\DR[\frac{1}{e_1}]_{\geq-1})^{\otimes2}$\index{delta@$\delta$}, 
such that the following diagram commutes 
\begin{equation}\label{diag:delta:10022019}
\xymatrix{
\mathcal V^\DR\ar^{\rho}[rr]\ar_{(-)\cdot 1_\DR}[d] && M_3((\mathcal V^\DR)^{\otimes2})
\ar^{(e_1f_1)^{-1}\mathrm{row}_1\cdot(-)\cdot\mathrm{col}_0}[d]\\ 
\mathcal M^\DR\ar_{\delta}[rr]&&
({\mathcal M}^\DR[\frac{1}{e_1}]_{\geq-1})^{\otimes2}}
\end{equation}
It is such that 
\begin{equation}\label{delta:1:DR}
\delta(1_\DR)=1_\DR^{\otimes2}.
\end{equation}
\end{lem}

\proof If $x\in \mathcal V^\DR$, then 
$$
(e_1f_1)^{-1}\mathrm{row}_1\cdot\rho(xe_0)\cdot\mathrm{col}_0=
(e_1f_1)^{-1}\mathrm{row}_1\rho(x)\rho(e_0)\mathrm{col}_0=0 
$$
by (\ref{eq:10022019}), therefore $\Big(
(e_1f_1)^{-1}\mathrm{row}_1\cdot(-)\cdot\mathrm{col}_0
\Big)\circ\rho(\mathcal V^\DR e_0)=0$. The existence and uniqueness of $\delta$ follow. 
One then computes 
$$
\delta(1_\DR)=(e_1f_1)^{-1}\mathrm{row}_1\cdot \rho(1)\cdot \mathrm{col}_0=
(e_1f_1)^{-1}\mathrm{row}_1\cdot  \mathrm{col}_0
=(e_1f_1)^{-1}e_1f_1\cdot 1_\DR^{\otimes2}=1_\DR^{\otimes2}.
$$ 
\hfill\qed\medskip 

\begin{lem}
The map $\delta$ satisfies the identity 
\begin{equation}\label{id:delta:module}
\forall x\in\mathcal W^\DR,\quad\forall m\in \mathcal M^\DR, \quad
\delta(x\cdot m)=\Delta^{\mathcal W,\DR}(x)\cdot \delta(m), 
\end{equation}
where the module structure in the left-hand side (resp. right-hand side) is that of $\mathcal M^\DR$ over 
$\mathcal W^\DR$ (resp. ${\mathcal M}^\DR[\frac{1}{e_1}]^{\otimes2}$ over $(\mathcal W^\DR)^{\otimes2}$).  
\end{lem}

\proof The identity is obvious if $x=1$. Assume now that $x=ae_1$ with $a\in\mathcal V^\DR$ and that 
$m\in \mathcal M^\DR=\mathcal V^\DR/\mathcal V^\DR e_0$. Let $\tilde m\in\mathcal V^\DR$ be a lift of $m$. 
Then 
\begin{align*}
& \delta(x\cdot m)=\delta(ae_1\cdot m)=(e_1f_1)^{-1}\mathrm{row}_1\cdot \rho(ae_1\tilde m)\cdot\mathrm{col}_0
\\ & =(e_1f_1)^{-1}\mathrm{row}_1\cdot \rho(a)\rho(e_1)\rho(\tilde m)\cdot\mathrm{col}_0
=(e_1f_1)^{-1}\mathrm{row}_1\cdot \rho(a)\cdot \mathrm{col}_1\cdot\mathrm{row}_1\cdot 
\rho(\tilde m)\cdot\mathrm{col}_0
\\ & = (e_1f_1)^{-1}\mathrm{row}_1\cdot \rho(a)\cdot \mathrm{col}_1\cdot e_1f_1 \cdot 
(e_1f_1)^{-1}\mathrm{row}_1\cdot 
\rho(\tilde m)\cdot\mathrm{col}_0
\\ & = \Delta^{\mathcal W,\DR}(x)\cdot \delta(m), 
\end{align*} 
where $\mathrm{row}_1$ is as in \eqref{def:row:col:04012018}, 
the second equality follows from \eqref{diag:delta:10022019}, the third equality follows from the fact that $\rho$
is an algebra morphism, the fourth equality follows from \eqref{525bis} and the last equality follows from the 
combination of 
\eqref{diag:delta:10022019} and the equality $\Delta^{\mathcal W,\DR}(ae_1)=(e_1f_1)^{-1}\mathrm{row}_1\cdot\rho(a)
\cdot\mathrm{col}_1\cdot(e_1f_1)$ for $a\in\mathcal V^\DR$, which follows from equation \eqref{diagram:prop:59}. 
\hfill\qed\medskip

\begin{prop}\label{prop:22:14022019}
The following diagram commutes 
\begin{equation}\label{diagram:1402}
\xymatrix{
\mathcal V^\DR\ar^{\rho}[rrr]\ar_{(-)\cdot 1_\DR}[d] &&& M_3((\mathcal V^\DR)^{\otimes2})
\ar^{(e_1f_1)^{-1}\mathrm{row}_1\cdot(-)\cdot\mathrm{col}_0}[d]\\ 
\mathcal M^\DR\ar_{\Delta^{\mathcal M,\DR}}[rr]&&(\mathcal M^\DR)^{\otimes2}
\ar@{^(->}[r]&({\mathcal M}^\DR[\frac{1}{e_1}]_{\geq-1})^{\otimes2} 
}
\end{equation}
In this diagram, all the maps have degree $0$. 
\end{prop}

\proof Combining \eqref{delta:1:DR}, \eqref{id:delta:module} and the fact that $\mathcal M^\DR$ is a free $\mathcal W^\DR$-module of 
rank one with generator $1_\DR$, one obtains $\delta=\Delta^{\mathcal M,\DR}$, which one injects in (\ref{diag:delta:10022019}) to get 
the result. The statement on degrees follows from inspection of the degrees of $(e_1f_1)^{-1}\mathrm{row}_1$ and $\mathrm{col}_0$. 
\hfill\qed\medskip 

\subsection{Completions 
(commutativities of (A7) in \eqref{diagg:main:alg} and (M5) in \eqref{diagram:main:mod})}
\label{sect:completion:giadrhc}

The following lemma will be used to prove the commutativities 
mentioned in the title of this subsection. 
\begin{lem}\label{lem:completion:diag:LA}
The commutative diagram \eqref{diagram:prop:59} (resp. \eqref{diagram:1402}) gives rise to a commutative diagram between the
degree completions of its constituents, in which the completion of the map $(\mathcal W^\DR)^{\otimes2}
\hookrightarrow({\mathcal V}^\DR[\frac{1}{e_1}]_{\geq0})^{\otimes2}$ (resp. $(\mathcal M^\DR)^{\otimes2}
\hookrightarrow({\mathcal M}^\DR[\frac{1}{e_1}]_{\geq-1})^{\otimes2} $) is injective. 
\end{lem}

\proof By Proposition \ref{prop:CD:DR:28/11/2017}, if one equips $\mathcal V^\DR$ and $M_3((\mathcal V^\DR)^{\otimes2})$ with the 
shifted gradings $\mathcal V^\DR[1]_n:=\mathcal V^\DR_{n-1}$ and $M_3((\mathcal V^\DR)^{\otimes2})[1]_n:=
M_3(((\mathcal V^\DR)^{\otimes2})_{n-1})$, then \eqref{diagram:prop:59} is a diagram in the category $\mathbf k\text{-mod}_{\gr,+}$. 
By Proposition \ref{prop:22:14022019}, \eqref{diagram:1402} is similarly a diagram in the same category. Applying the functor
$(-)^\wedge\circ\mathrm{taut}_{\gr,+}^{\mathrm{fil},+}$ (see \S\ref{sect:functors}), one obtains commutative diagrams in 
$\mathbf k\text{-mod}_{\topo}$. The announced injectivities follows from Lemma \ref{lemma:injectivity:completion}, 1). 
\hfill\qed\medskip 

\section{Braid groups}\label{sect:braids}

In \S\ref{sect:material:on:P5}, we recall the definition of various families of braid groups (the Artin braid group, the sphere 
braid group and the modular group of the sphere with marked points), of their pure subgroups, and of a diagram of morphisms 
relating them (see \eqref{diag:P:K}). We recall the presentation of these groups and relate various generators by morphisms 
(Lemma \ref{lemma:6:3:19032018}). We then give a presentation of the modular group $P_5^*$ which exhibits an order
5 cyclic symmetry, and may be viewed as an analogue of the presentation of $\mathfrak p_5$ in \cite{Ih}, Proposition 4
(this presentation is not used is the sequel of the paper). In \S\ref{subsect:def:gp:morphisms:bis}, we define morphisms 
$\underline\ell:F_2\to P_5^*$, $\underline{\mathrm{pr}}_i:P_5^*\to F_2$ ($i\in[\![1,5]\!]$),  
$\underline{\mathrm{pr}}_{12}:P_5^*\to(F_2)^2$ relating $P_5^*$ with the free group with two generators $F_2$ or its square. 

In \S\ref{sect:6:2:19032018}, we introduce an ideal $J(\underline{\mathrm{pr}}_5)$ of the group algebra $\mathbf kP_5^*$ arising from $\underline{\mathrm{pr}}_5$, and show its freeness as a left $\mathbf kP_5^*$-module 
(Lemma \ref{lemma:decomp:barJ:pr5}). Lemma \ref{algebraic:lemma:1} then gives rise to an algebra morphism 
$\underline\varpi:\mathbf kP_5^*\to M_3(\mathbf kP_5^*)$. By composing $\underline\varpi$ with the morphisms 
$\underline\ell$ and $\underline{\mathrm{pr}}_{12}$, we construct a morphism $\underline\rho:\mathcal V^\B\to 
M_3((\mathcal V^\B)^{\otimes2})$ (see \eqref{underline:B:05012017}). In Lemma \ref{lemma:def:underline:row:col:04012018}, 
by introducing suitable elements $\underline{\mathrm{row}}_1\in M_{1\times3}((\mathcal V^\B)^{\otimes2})$ and 
$\underline{\mathrm{col}}_1\in M_{3\times1}((\mathcal V^\B)^{\otimes2})$, we show that the morphism $\underline\rho$ 
satisfies the hypothesis of Lemma \ref{algebraic:lemma:2}. Applying this lemma, we obtain in \S\ref{subsection:underline:tilde:B} 
a morphism $\underline{\tilde{\BB}}:(\mathcal V^\B,\cdot_{X_1-1})\to (\mathcal V^\B)^{\otimes2}$, which we compute in 
Lemma \ref{lemma:65:22:11:2017}. 

\subsection{Material on braid groups}\label{sect:material:on:P5}

\subsubsection{Braid groups}\label{subsect:def:gp:morphisms}

For $X$ a topological space, let $C_n(X)\index{C_nX@$C_n(X)$}$ denote its configuration space of $n$ distinct points. 
Let also $\mathfrak M_{0,n+1}\index{M_0,n+1@ $\mathfrak M_{0,n+1}$}$ be the moduli space of smooth complex projective curves of genus 
zero with $n+1$ marked points. Below, we give a list
of topological spaces and simply-connected  subspaces, together with standard names and notation for the 
corresponding  fundamental groups (see \cite{Bir,Ih:G,LS}). 
\begin{center}
\begin{tabular}{|c|c|c|c|c|c|c|c|c|c|c|}
\hline
  range of $n$ & $n\geq1$ 
& $n\geq1$ & $n\geq3$\\ 
 \hline
  space $X$ & $C_n(\mathbb C)$ 
& $C_n(\mathbb P^1_{\mathbb C})$ & $C_n(\mathbb P^1_{\mathbb C})/\mathrm{PGL}_2(\mathbb C)\simeq\mathfrak M_{0,n}$  \\ 
\hline   
  subspace  $b$& $\mathcal U_n$ & $\mathcal U_n$ & $\mathcal B_n$ 
 \\ \hline  
notation for $\pi_1(X,b)$ & $K_n$\index{K_n@$K_n$}  & $P_n$\index{P_n@$P_n$} 
& $P^*_n$\index{P_n*@$P^*_n$}  \\
  \hline name of $\pi_1(X,b)$ & 
$\begin{array}{r}\text{pure Artin} \\ \text{braid group}\end{array}$
& $\begin{array}{r}\text{pure sphere (Hurwitz)} \\ \text{braid group}\end{array}$
& $\begin{array}{r}\text{pure modular group of the} \\ \text{sphere with $n$ marked points} \end{array}$
  \\ \hline
\end{tabular}
\end{center}
\begin{center}
\begin{tabular}{|c|c|c|c|c|c|c|c|c|c|c|}
\hline
  range of $n$ & $n\geq1$ 
& $n\geq1$ & $n\geq3$\\
\hline
 space $X$ & $C_n(\mathbb C)/S_n$ 
& $C_n(\mathbb P^1_{\mathbb C})/S_n$ & 
$\begin{array}{r}C_n(\mathbb P^1_{\mathbb C})/(S_n\times\mathrm{PGL}_2(\mathbb C))
\\ \simeq\mathfrak M_{0,n}/S_n\end{array}$ 
 \\ \hline   
  subspace $b$ & $S_n\cdot\mathcal U_n$ & $S_n\cdot\mathcal U_n$ & $S_n\cdot \mathcal B_n$ 
 \\ \hline  
notation for $\pi_1(X,b)$ & $B_n$\index{B_n@$b_n$}  & $H_n$\index{H_n@$H_n$} & $B^*_n$ \index{B_n*@$B^*_n$} \\  \hline 
name of $\pi_1(X,b)$ & Artin braid group & 
$\begin{array}{r}\text{sphere (Hurwitz)} \\ \text{braid group}\end{array}$
& 
$\begin{array}{r}\text{modular group of the} \\ \text{sphere with $n$ marked points} \end{array}$  \\ \hline
\end{tabular}
\end{center}
Here we set $\mathcal U_n:=\{(x_1,\ldots,x_n)\in \mathbb R^n|x_1<\cdots<x_n\}\subset C_n(\mathbb C)\index{U_n @$\mathcal U_n$}$; 
we define  
$\tilde{\mathcal B}_n\subset C_n(\mathbb P^1)\index{B_ncal^tilde@$\tilde{\mathcal B}_n$}$ to be the set of $n$-tuples $(x_1,\ldots,x_n)$ in $(\mathbb P^1_{\mathbb R})^n$, which lie on $\mathbb P^1_{\mathbb R}$ in the counterclockwise order; one has 
$\tilde{\mathcal B}_n=\mathrm{PGL}_2^+(\mathbb R)\cdot\mathcal U_n$; we define 
$\mathcal B_n\index{B_ncal @${\mathcal B}_n$}$ as the quotient $\tilde{\mathcal B}_n/\mathrm{PGL}_2^+(\mathbb R)$; 
it is a simply-connected subspace of $C_n(\mathbb P^1_{\mathbb C})/\mathrm{PGL}_2(\mathbb C)$. 
 
By homotopy exact sequence, the pure groups appear as the kernels of the natural morphisms of their non-pure counterparts to $S_n$, 
so 
$$
K_n=\mathrm{Ker}(B_n\to S_n), \quad P_n=\mathrm{Ker}(H_n\to S_n), \quad P_n^*=\mathrm{Ker}(B_n^*\to S_n). 
$$

According to \cite{LS}, Appendix, $B_n^*$ is isomorphic to the quotient of 
the Artin braid group $B_n$ by the normal subgroup 
generated by $\eta_n\index{eta_n @ $\eta_n$}$ if $n\geq3$, which corresponds to the winding of $x_n$ around $(x_1,\ldots,x_{n-1})$ (see 
(\ref{formula:etan}) for an expression in terms of standard generators), and by its center $Z(B_n)$, 
which is isomorphic to $\mathbb Z$. One has $Z(B_n)\subset K_n$, and $P_{n+1}^*\simeq K_n/Z(B_n)$ for $n\geq2$.
One also has $P_n\simeq P_n^*\times C_2$, where $C_2$ is the cyclic group of order 2
(see \cite{LS}, Proposition A4 iii) and also \cite{Ih:G}, Corollary 2.1.2).

\begin{rem}
The isomorphism $P_{n+1}^*\simeq K_n/Z(B_n)$ can be interpreted as follows. There is an isomorphism 
$\mathfrak M_{0,n+1}\simeq C_n(\mathbb C)/\mathrm{Aff}$, with $\mathrm{Aff}=
\{x\mapsto ax+b\ |\ a\in\mathbb C^\times,b\in\mathbb C\}$; it gives rise to a homotopy exact sequence 
$\pi_2(\mathfrak M_{0,n+1})\to\pi_1(\mathrm{Aff})\to\pi_1(C_n(\mathbb C))\to\pi_1(\mathfrak M_{0,n+1}) \to 1$. 
The spaces $\mathfrak M_{0,n}$ are $K(\pi,1)$-spaces, as can be seen inductively from the homotopy exact sequences
of the fibrations $\mathfrak M_{0,n+1}\to\mathfrak M_{0,n}$, therefore $\pi_2(\mathfrak M_{0,n+1})=1$. One has 
$K_n=\pi_1(C_n(\mathbb C))$, $P^*_{n+1}=\pi_1(\mathfrak M_{0,n+1})$ and $\pi_1(\mathrm{Aff})=\mathbb Z$, so the above exact sequence 
implies $P^*_{n+1}\simeq K_n/\mathbb Z$. 
\end{rem}

\begin{rem}
The isomorphism $P_n\simeq P_n^*\times C_2$ for $n\geq 2$ implies, in the notation of Remark \ref{rem:27122017}, 
the isomorphism $\mathrm{gr}(P_n)\simeq\mathrm{gr}(P_n^*)\times C_2$, 
therefore $\mathrm{gr}(P_n^*)\otimes\mathbf k\simeq\mathrm{gr}(P_n)\otimes\mathbf k\simeq\mathfrak p_n$ as 
$\mathbb Q\subset\mathbf k$. 
\end{rem}

\underline{A diagram of pure braid groups.}
The canonical projection $C_{n+1}(\mathbb C)\to C_{n+1}(\mathbb P^1_{\mathbb C})/\mathrm{PGL}_2(\mathbb C)$
defines a morphism of topological spaces; this map takes $\mathcal U_{n+1}$ to $\mathcal B_{n+1}$, therefore induces a group 
morphism $K_{n+1}\to P_{n+1}^*$. 

Define a morphism $C_n(\mathbb C)\to C_{n+1}(\mathbb P^1_{\mathbb C})/\mathrm{PGL}_2(\mathbb C)$ to be the map taking 
$(x_1,\ldots,x_n)$ to the class of $(x_1,\ldots,x_n,\infty)$. This map takes $\mathcal U_n$ to $\mathcal B_{n+1}$, therefore induces a group morphism $K_n\to P_{n+1}^*$. 

Let $D$ be the open unit disc in $\mathbb C$, let $C_n^D(\mathbb C)$ (resp.\ $\mathcal U_n^D$) be the intersection of 
$C_n(\mathbb C)$ (resp.\ $\mathcal U_n$) with $D^n$. Then $(C_n^D(\mathbb C),\mathcal U_n^D)$ is a deformation 
retract of $(C_n(\mathbb C),\mathcal U_n)$, therefore $K_n\simeq\pi_1(C_n^D(\mathbb C),\mathcal U_n^D)$. 
The morphism $C_n^D(\mathbb C)\to C_{n+1}(\mathbb C)$ given by $(x_1,\ldots,x_n)\mapsto (x_1,\ldots,x_n,2)$
takes $\mathcal U_n^D$ to $\mathcal U_{n+1}$, and therefore induces a morphism $K_n\to K_{n+1}$. 

Then the diagram 
\begin{equation}\label{diag:P:K}
\xymatrix{ K_n\ar@{->>}[r]\ar@{^(->}[d]& P_{n+1}^*\\ K_{n+1}\ar@{->>}[ur]& }
\end{equation}
commutes. 

\underline{Presentations of braid groups.} \label{ref:26122017}
According to \cite{A}, the group $B_n$ is presented for $n\geq1$ 
by generators $\sigma_1,\ldots,\sigma_{n-1}\index{sigma @$\sigma_1,\ldots,\sigma_{n-1}$}$, 
subject to relations $\sigma_i\sigma_j=\sigma_j\sigma_i$ for $|i-j|>1$, and $\sigma_i\sigma_{i+1}\sigma_i
=\sigma_{i+1}\sigma_i\sigma_{i+1}$ for $i\in[\![1,n-1]\!]$. The morphism $B_n\to S_n$ is then given by 
$\sigma_i\mapsto(i,i+1)$. 

For $i<j\in[\![1,n]\!]$, set 
$$
\tilde x_{ij}:=
(\sigma_{j-2}\cdots\sigma_i)^{-1}\sigma_{j-1}^2(\sigma_{j-2}\cdots\sigma_i)=
(\sigma_{j-1}\cdots\sigma_{i+1})\sigma_i^2(\sigma_{j-1}\cdots\sigma_{i+1})^{-1}\in B_n.
\index{x_ij^tilde @ $\tilde x_{ij}$}
$$
(when $j=i+1$, the products $\sigma_{j-2}\cdots\sigma_i$ and $\sigma_{j-1}\cdots\sigma_{i+1}$ 
are equal to 1 by convention, see \eqref{convention:sums}). 
Then one checks that $\tilde x_{ij}\in K_n$. 
According to \cite{A}, a presentation of $K_n$ is given by 
generators $\tilde x_{ij}$, $i<j\in[\![1,n]\!]$, subject to relations 
\begin{equation}\label{relation:K:1}
(a_{ijk},\tilde x_{ij})=(a_{ijk},\tilde x_{ik})=(a_{ijk},\tilde x_{jk})=1
\end{equation}
for $i<j<k$ and $i,j,k\in[\![1,n]\!]$, where $a_{ijk}=\tilde x_{ij}\tilde x_{ik}\tilde x_{jk} \index{a_ijk @ $a_{ijk}$}$, together with 
\begin{equation}\label{relation:K:2}
(\tilde x_{ij},\tilde x_{kl})=(\tilde x_{ik},\tilde x_{ij}^{-1}\tilde x_{jl}\tilde x_{ij})=(\tilde x_{il},\tilde x_{jk})=1.  
\end{equation} 
for $i<j<k<l$ and $i,j,k,l\in[\![1,n]\!]$. 

Set 
$$
\omega_n:=\tilde x_{12}\cdot (\tilde x_{13}\tilde x_{23})\cdots (\tilde x_{1n}\cdots\tilde x_{n-1,n})\in K_n.
\index{omega_n @ $\omega_n$}
$$ 
Then $\omega_n$ is a generator of $Z(B_n)$, therefore 
\begin{equation}\label{eq:Pn:Kn:crm}
P_{n+1}^*=K_n/\langle \omega_n\rangle.
\end{equation} 

For $n\geq 2$ and $i\in[\![1,n]\!]$, define $\tilde x_{i,n+1}\in K_n$ by 
\begin{equation}\label{def:x:i:n+1}
\tilde x_{i,n+1}:=(\tilde x_{1i}\cdots \tilde x_{i-1,i}\tilde x_{i,i+1}\cdots \tilde x_{in})^{-1}.
\index{x_in+1 @ $\tilde x_{i,n+1}$}
\end{equation}
For $i<j\in[\![1,n+1]\!]$, we denote by $x_{ij}\in P_{n+1}^* \index{x_ij @ $x_{ij}$}$ the image of $\tilde x_{ij}\in K_n$ 
by the projection \eqref{eq:Pn:Kn:crm}. 

\begin{lem}\label{lemma:6:3:19032018}
If $i<j\in[\![1,n+1]\!]$, then $x_{ij}\in P^*_{n+1}$ is the image of $\tilde x_{ij}\in K_{n+1}$ under the 
morphism $K_{n+1}\to P^*_{n+1}$.  
\end{lem}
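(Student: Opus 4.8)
The plan is to read the statement off the commutativity of the diagram \eqref{diag:P:K}, treating the cases $j\leq n$ and $j=n+1$ separately. Write $p\colon K_n\twoheadrightarrow P_{n+1}^*$ for the horizontal map, $\iota\colon K_n\hookrightarrow K_{n+1}$ for the vertical one, and $q\colon K_{n+1}\twoheadrightarrow P_{n+1}^*$ for the diagonal one, so that commutativity of \eqref{diag:P:K} reads $q\circ\iota=p$. I would first note that the horizontal map coincides with the projection \eqref{eq:Pn:Kn:crm} used to define $x_{ij}$ (both arise from $\mathfrak M_{0,n+1}\simeq C_n(\mathbb C)/\mathrm{Aff}$, i.e. from sending the last point to $\infty$), so that $x_{ij}=p(\tilde x_{ij})$ by definition. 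I would also record that, by its topological description $(x_1,\dots,x_n)\mapsto(x_1,\dots,x_n,2)$, the inclusion $\iota$ places the new strand outside the unit disc containing the others; hence $\iota$ sends each standard generator $\tilde x_{ij}\in K_n$ with $i<j\in[\![1,n]\!]$ to the generator of the same name $\tilde x_{ij}\in K_{n+1}$.

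For $i<j\in[\![1,n]\!]$ the claim is then immediate: by definition $x_{ij}=p(\tilde x_{ij})$, and $p(\tilde x_{ij})=q(\iota(\tilde x_{ij}))=q(\tilde x_{ij})$, where the last $\tilde x_{ij}$ is the standard generator of $K_{n+1}$. This already settles all indices with $j\leq n$.

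For $j=n+1$ (so $i\in[\![1,n]\!]$) I would feed the definition \eqref{def:x:i:n+1} into the homomorphism property of $\iota$ and $p$. Since $\tilde x_{i,n+1}\in K_n$ is by definition $(\tilde x_{1i}\cdots\tilde x_{i-1,i}\tilde x_{i,i+1}\cdots\tilde x_{in})^{-1}$ and every factor carries indices $\leq n$, the previous step gives $x_{i,n+1}=p(\tilde x_{i,n+1})=q\big((\tilde x_{1i}\cdots\tilde x_{in})^{-1}\big)$, the product now being taken among standard generators of $K_{n+1}$. It therefore remains to prove that $q(\tilde x_{i,n+1})$, for the standard generator $\tilde x_{i,n+1}\in K_{n+1}$, equals this element, i.e. that the product $\tilde x_{1i}\cdots\tilde x_{i-1,i}\tilde x_{i,i+1}\cdots\tilde x_{in}\tilde x_{i,n+1}$ lies in $\ker q$.

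To establish this I would factor $q$ through the pure sphere braid group: the map $C_{n+1}(\mathbb C)\to C_{n+1}(\mathbb P^1_{\mathbb C})/\mathrm{PGL}_2(\mathbb C)$ inducing $q$ factors as the inclusion $\mathbb C\hookrightarrow\mathbb P^1_{\mathbb C}$ followed by the $\mathrm{PGL}_2$-quotient, so $q$ factors as $K_{n+1}\to P_{n+1}\to P_{n+1}^*$. In $P_{n+1}$ the defining ``sphere relation'' for the index $i$ asserts exactly that the image of the loop of strand $i$ around all the remaining punctures, namely $\tilde x_{1i}\cdots\tilde x_{i-1,i}\tilde x_{i,i+1}\cdots\tilde x_{i,n+1}$, is trivial; hence this product lies in $\ker(K_{n+1}\to P_{n+1})\subseteq\ker q$, which is what was needed. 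The one genuinely delicate point to verify is that the ordering in \eqref{def:x:i:n+1} matches the sphere relator up to a cyclic rotation — harmless, since a cyclic rotation of a word equal to $1$ is a conjugate of it and hence still trivial — and up to the identification of the $\tilde x_{ij}$ with the band generators entering the standard presentation of $P_{n+1}$; everything else is bookkeeping with the commutative diagram \eqref{diag:P:K}.
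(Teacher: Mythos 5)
Your proof is correct, and for $j\leq n$ it is identical to the paper's: both read the claim off the commutativity of \eqref{diag:P:K}, using that the inclusion $K_n\hookrightarrow K_{n+1}$ (new strand placed outside the disc containing the others) sends each standard generator $\tilde x_{ij}$ to the generator of the same name. For $j=n+1$ you take a genuinely different route. The paper stays inside the full braid group: it quotes from \cite{LS} the formula \eqref{formula:etan} for $\eta_{n+1}$ and verifies by a direct braid computation the identity $\sigma_i\cdots\sigma_n\cdot\eta_{n+1}^{-1}\cdot(\sigma_i\cdots\sigma_n)^{-1}=\tilde x_{1i}\cdots\tilde x_{i-1,i}\tilde x_{i,i+1}\cdots\tilde x_{i,n+1}$ in $B_{n+1}$; since $\eta_{n+1}$ dies in $B^*_{n+1}$ and $K_{n+1}\to B^*_{n+1}$ factors as $K_{n+1}\twoheadrightarrow P^*_{n+1}\hookrightarrow B^*_{n+1}$, the product lies in $\ker(K_{n+1}\twoheadrightarrow P^*_{n+1})$. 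You instead factor $q$ through the pure sphere braid group, $K_{n+1}\to P_{n+1}\to P^*_{n+1}$ (correctly justified at the level of spaces, with the basepoint check that $\mathcal U_{n+1}$ lands in $\mathcal B_{n+1}$), and kill the same product via the sphere relation $D_i=1$ in $P_{n+1}$. The two arguments are close cousins: the paper's displayed identity is precisely a proof that your $D_i$ is a conjugate of the sphere relator $\eta_{n+1}^{-1}$. What your version buys is that the explicit conjugation computation is outsourced to the standard presentation of the sphere pure braid group, at the cost of depending on a cited presentation fact that you currently invoke only as "the defining sphere relation" — to make this step fully rigorous you should attach a precise reference (Gillette--Van Buskirk, or \cite{Bir}), since the exact relator in the literature is the cyclic rotation $\tilde x_{i,i+1}\cdots\tilde x_{i,n+1}\tilde x_{1i}\cdots\tilde x_{i-1,i}$; your observation that cyclic rotations of a trivial word are conjugates and hence trivial is exactly the right care here, and it closes the gap. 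A small additional merit of your write-up is that you make explicit the identification of the horizontal map of \eqref{diag:P:K} with the projection \eqref{eq:Pn:Kn:crm} defining the $x_{ij}$, which the paper uses silently (it is implicit in its remark identifying $\mathfrak M_{0,n+1}$ with $C_n(\mathbb C)/\mathrm{Aff}$).
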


\proof If $j\leq n$, then the image of $\tilde x_{ij}\in K_n$ under $K_n\to P^*_{n+1}$ is 
$x_{ij}\in P^*_{n+1}$, and the image of the same element under $K_n\to K_{n+1}$
is $\tilde x_{ij}\in K_n$, so the result follows from the commutativity of (\ref{diag:P:K}). 

Assume now that $j=n+1$. According to \cite{LS}, three lines after (A1), one has 
\begin{equation}\label{formula:etan}
\eta_{n+1}=\sigma_n\cdots\sigma_1^2\cdots \sigma_n
\end{equation}
(equality in $B_{n+1}$). Moreover, one checks that for $i\in[\![1,n]\!]$, 
$$
\sigma_i\cdots\sigma_n\cdot \eta_{n+1}^{-1}\cdot (\sigma_i\cdots\sigma_n)^{-1}
=\tilde x_{1i}\cdots\tilde x_{i-1,i}\tilde x_{i,i+1}\cdots\tilde x_{i,n+1}
$$
(equality in $B_{n+1}$). The left-hand side belongs to the kernel of the morphism $B_{n+1}\to B_{n+1}^*$, and the right-hand 
side belongs to the subgroup $K_{n+1}\subset B_{n+1}$, therefore the latter side belongs to the kernel of the 
composed morphism $K_{n+1}\subset B_{n+1}\to B_{n+1}^*$. As this morphism factors as $K_{n+1}\twoheadrightarrow P_{n+1}^*
\hookrightarrow B_{n+1}^*$, the right-hand side belongs to $\mathrm{Ker}(K_{n+1}\twoheadrightarrow P_{n+1}^*)$. 
\hfill \qed\medskip 

\underline{Diagrammatic representation.}
The generators of $B_n$ are depicted as follows when $n=4$,  
\begin{tikzpicture}[xscale=0.5,yscale=0.3]
\draw (3,0) node{$\sigma_1=$} ; 
\braid[number of strands=4]  (braid) at (4,1)  s_1;
\end{tikzpicture} , 
\begin{tikzpicture}[xscale=0.5,yscale=0.3]
\draw (3,0) node{$\sigma_2=$} ; 
\braid[number of strands=4]   (braid) at (4,1)  s_2;
\end{tikzpicture}, 
\begin{tikzpicture}[xscale=0.5,yscale=0.3]
\draw (3,0) node{$\sigma_3=$} ; 
\braid[number of strands=4]    (braid) at (4,1)  s_3;
\end{tikzpicture}
and the convention for the product is 
\begin{tikzpicture}[xscale=0.5,yscale=0.3]
\draw (2.5,-0.7) node{$\sigma_2\sigma_1=$} ; 
\braid[number of strands=4]    (braid) at (4,1)  s_1s_2;
\end{tikzpicture}

The element $\tilde x_{ij}\in K_n$ is then depicted as follows 
\begin{center}
           \begin{tikzpicture}
                              \draw(-1.2,0.5) node{$\tilde x_{ij}=$};
                 \draw[-] (-0.5,0)--(-0.5,1) (-0.2,0)--(-0.2,1)
(0.2,0)--(0.2,1) (0.3,0)--(0.3,1) (0.6,0)--(0.6,1) (1.2,0)--(1.2,1)
(1.5,0)--(1.5,1);
                 \draw[dotted] (-0.5,0.5)--(-0.2,0.5)
(0.3,0.5)--(0.6,0.5) (1.2,0.5)--(1.5,0.5);
                 \draw[-] (0.8,1)--(0.8,0.5);
                 \draw[color=white, line width=7pt](0,0)
..controls(0.1,0.5) and (0.9,0)        ..(1,0.5);
                 \draw[-] (0,0) ..controls(0.1,0.5) and (0.9,0)
..(1,0.5);
                 \draw[color=white, line width=7pt](1,0.5)
..controls(0.9,1.0) and (0.1,0.5)       ..(0,1);
                 \draw[-] (1,0.5) ..controls(0.9,1.0) and (0.1,0.5)
   ..(0,1);
                 \draw[color=white, line width=5pt] (0.8,0.5)--(0.8,0);
                 \draw[-] (0.8,0.5)--(0.8,0);
\draw[decorate,decoration={brace,mirror}] (-0.6,-0.1) -- (-0.1,-0.1)
node[midway,below]{$i-1$};
\draw[decorate,decoration={brace}] (-0.6,1.1) -- (0.6,1.1)
node[midway,above]{$j-1$};
            \end{tikzpicture}
\end{center}
According to these conventions, the diagrammatic representatives of the elements $\tilde x_{i,5}\in B_4$ given by 
\eqref{def:x:i:n+1} are the following 
\begin{figure}[h]
\begin{tabular}{c}
  \begin{minipage}{0.25\hsize}
      \begin{center}
\begin{tikzpicture}
  \draw (-1.5,0.2) node{$\tilde x_{15}=$};
  \draw[-] (0.6,0.25)..controls(0.6,0.75) and (-0.5,0.5)  ..(-0.5,1);
  \draw[color=white, line width=5pt] (-0.2,0)--(-0.2,1) (0.1,0)--(0.1,1) (0.4,0)--(0.4,1);
  \draw[-] (-0.2,-0.5)--(-0.2,1) (0.1,-0.5)--(0.1,1) (0.4,-0.5)--(0.4,1);
  \draw[color=white, line width=5pt] (0.1,0.5)--(0.1,1);
  \draw[-] (0.1,0.5)--(0.1,1);
  \draw[color=white, line width=5pt]  (-0.5,-0.5) ..controls(-0.5,0) and (0.6,-0.25) ..(0.6,0.25);
  \draw[-] (-0.5,-0.5) ..controls(-0.5,0) and (0.6,-0.25) ..(0.6,0.25);
\end{tikzpicture},
     \end{center}
 \end{minipage}
 \begin{minipage}{0.25\hsize}
      \begin{center}
\begin{tikzpicture}
  \draw (-1.5,0.2) node{$\tilde x_{25}=$};
  \draw[-] (-0.5,0.7)--(-0.5,1);
    \draw[color=white, line width=5pt] (-0.2,1)..controls(-0.2,0.5) and (-0.7,1)  ..(-0.7,0.5);
  \draw[-] (-0.2,1)..controls(-0.2,0.5) and (-0.7,1)  ..(-0.7,0.5);
   \draw[-] (-0.7,0.5)..controls(-0.7,0) and (0.6,0.5)  ..(0.6,0);
   \draw[color=white, line width=5pt] (-0.5,0.4)--(-0.5,-0.5) (0.1,0)--(0.1,1) (0.4,0)--(0.4,1);
  \draw[-] (-0.5,0.6)--(-0.5,-0.5)  (0.1,-0.5)--(0.1,1) (0.4,-0.5)--(0.4,1);
     \draw[color=white, line width=5pt](-0.2,-0.5) ..controls(-0.2,0) and (0.6,-0.5) ..(0.6,0);
     \draw[-] (-0.2,-0.5) ..controls(-0.2,0) and (0.6,-0.5) ..(0.6,0);  
\end{tikzpicture},
     \end{center}
 \end{minipage}
 \begin{minipage}{0.25\hsize}
      \begin{center}
\begin{tikzpicture}
  \draw (-1.5,0.2) node{$\tilde x_{35}=$};
    \draw[-] (-0.5,1)--(-0.5,0.4) (-0.2,1)--(-0.2,0.4) ;
    \draw[color=white, line width=5pt]  (0.1,1) ..controls(0.1,0.5) and (-0.7,1) ..(-0.7,0.5);
  \draw[-] (0.1,1) ..controls(0.1,0.5) and (-0.7,1) ..(-0.7,0.5);
(-0.2,1)--(-0.2,-0.5);
   \draw[-] (-0.7,0.5) ..controls(-0.5,0) and (0.6,0.5) ..(0.6,0);
  \draw[color=white, line width=5pt]  (-0.5,0.45)--(-0.5,-0.5)  (-0.2,0.4)--(-0.2,-0.5) (0.4,1)--(0.4,-0.5);
   \draw (-0.5,0.45)--(-0.5,-0.5)  (-0.2,0.45)--(-0.2,-0.5) (0.4,1)--(0.4,-0.5);
  \draw[color=white, line width=5pt] (0.6,0)..controls(0.6,-0.5) and (0.1,0)  ..(0.1,-0.5);
  \draw[-] (0.6,0)..controls(0.6,-0.5) and (0.1,0)  ..(0.1,-0.5);
\end{tikzpicture},
     \end{center}
 \end{minipage}
 \begin{minipage}{0.25\hsize}
      \begin{center}
\begin{tikzpicture}
  \draw (-1.5,0.2) node{$\tilde x_{45}=$};
   \draw[-] (-0.5,0)--(-0.5,1) (-0.2,0)--(-0.2,1) (0.1,0)--(0.1,1);
   \draw[color=white, line width=5pt](-0.7,0.25)..controls(-0.7,0.75) and (0.4,0.5)  ..(0.4,1);
  \draw[-] (-0.7,0.25)..controls(-0.7,0.75) and (0.4,0.5)  ..(0.4,1);
  \draw[-] (-0.7,0.25) ..controls(-0.7,-0.25) and (0.6,0) ..(0.4,-0.5);
  \draw[color=white, line width=5pt] (-0.5,0.2)--(-0.5,-0.5) (-0.2,0)--(-0.2,-0.5) (0.1,0)--(0.1,-0.5);
  \draw[-] (-0.5,0.3)--(-0.5,-0.5) (-0.2,0)--(-0.2,-0.5) (0.1,0)--(0.1,-0.5);
\end{tikzpicture}.
     \end{center}
 \end{minipage}
\end{tabular}
\end{figure}
\par\noindent 

For $a,b\geq 1$, we define 
$\sigma_{a,b}\in B_{a+b}$\index{sigma_ab@$\sigma_{a,b}$} to be the element represented by 
\begin{equation}\label{def:sigma:a:b}
           \begin{tikzpicture}[baseline=(current  bounding  box.center),xscale=0.8, yscale=1]
        \draw (-0.5,0.5) node{$\sigma_{a,b}=$} ; 
        \draw[-] (0,1) --(1.2, 0) (0.2,1)--(1.4,0)  (0.8,1)--(2,0);
        \draw[decorate,decoration={brace}] (-0.1,1.1) -- (0.8,1.1) node[midway,above]{$a$};
        \draw[dotted] (0.3,0.9) --(0.8, 0.9)  ;

         \draw[color=white, line width=5pt] (0,0) --(1.2, 1) (0.2,0)--(1.4,1)  (0.8,0)--(2,1);
         \draw[-] (0,0) --(1.2, 1) (0.2,0)--(1.4,1)  (0.8,0)--(2,1);
         \draw[decorate,decoration={brace}] (1.1,1.1) -- (2.1,1.1) node[midway,above]{$b$};
         \draw[dotted] (0.8,0.45) --(1.3, 0.45)  ;
         \draw (2.7,0.5) node{$\in B_{a+b}$} ; 
        \end{tikzpicture}.
    \end{equation}
Note the equality $\sigma_1=\sigma_{1,1}^{-1} \in B_2$. 

The following result may be viewed as an analogue of Proposition 4 in \cite{Ih}. 
\begin{prop}\label{prop:pres:P5}
For $i\in  C_5\simeq [\![1,5]\!]$, define $g_i\in P_5^*$ by $g_i:=x_{i,i+1}$ (with the convention $x_{\overline 0,\overline 1}:=x_{1,5}$). 
The group $P_5^*$ is presented by generators $g_i\index{gi@ $g_i$ ($i\in C_5$)}$ ($i\in C_5$), subject to the relations
$$
(g_i,g_j)=1 \text{ if } i,j\in C_5 \text{\ and\ }i-j\neq \pm\overline 1, \quad 
(g_{\overline 0},g_{\overline 1})(g_{\overline 1},g_{\overline 2})(g_{\overline 2},g_{\overline 3})(g_{\overline 3},g_{\overline 4})
(g_{\overline 4},g_{\overline 0})=1.
$$
\end{prop}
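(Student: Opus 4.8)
The plan is to derive the presentation from Artin's presentation of the pure braid group $K_4$ together with the isomorphism $P_5^*\simeq K_4/\langle\omega_4\rangle$ from \eqref{eq:Pn:Kn:crm}, by a sequence of Tietze transformations, in close parallel with Ihara's computation for the Lie algebra $\mathfrak p_5$ (\cite{Ih}, Proposition 4). First I would record the full symmetric family of generators $x_{ij}$ of $P_5^*$, for $i<j\in[\![1,5]\!]$: the $x_{ij}$ with $j\le 4$ are the images of the Artin generators $\tilde x_{ij}\in K_4$, while $x_{i5}$ is the image of $\tilde x_{i5}$ defined in \eqref{def:x:i:n+1}. In these terms $g_i=x_{i,i+1}$ with the cyclic convention of the statement, the five generators $g_i$ being the "edges" of a pentagon on the vertices $1,\dots,5$ and the five remaining $x_{ij}$ (with $i-j\ne\pm\overline 1$ in $C_5$) its "diagonals".

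The next step is to establish an intermediate symmetric presentation of $P_5^*$ on the ten generators $x_{ij}$. Adjoining the generators $x_{i5}$ together with their defining relations \eqref{def:x:i:n+1} is a Tietze transformation; those defining relations are exactly the sphere relations
$$
\prod_{j\ne i}x_{ij}=1\qquad(i\in[\![1,4]\!]),
$$
the product being read in the cyclic order of the indices, while the relation $\omega_4=1$ that passes from $K_4$ to $P_5^*$ furnishes the remaining sphere relation, for $i=5$ (the vanishing of the loop around the fifth point). The point to verify here is that, modulo these sphere relations, Artin's relations \eqref{relation:K:1}--\eqref{relation:K:2} — a priori attached only to index strings inside $[\![1,4]\!]$ — become valid for every triple and quadruple inside $[\![1,5]\!]$; geometrically this reflects the restoration of full symmetry on the sphere, the $S_5$-action on $\mathfrak M_{0,5}$ permuting the marked points inducing automorphisms of $P_5^*$ that permute the $x_{ij}$ by their indices. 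Restricting this action to the $5$-cycle $(1\,2\,3\,4\,5)$ yields the order-$5$ symmetry $g_i\mapsto g_{i+1}$ referred to in the statement and leaves the symmetric relation set invariant.

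Then I would eliminate the five diagonal generators. The sphere relations allow one to solve explicitly for each diagonal $x_{ij}$ as a word in the edge generators $g_1,\dots,g_5$, and by the cyclic symmetry of the previous step it suffices to carry out one representative of the $C_5$-orbit and transport it. Substituting these expressions into Artin's relations, the commutation relations $(x_{ij},x_{kl})=1$ between disjoint edges survive precisely as the five relations $(g_i,g_j)=1$ for $i-j\ne\pm\overline 1$, whereas the triangle relations \eqref{relation:K:1} combine with the sphere relations and, after cancellation, collapse onto the single pentagon relation
$$
(g_{\overline 0},g_{\overline 1})(g_{\overline 1},g_{\overline 2})(g_{\overline 2},g_{\overline 3})(g_{\overline 3},g_{\overline 4})(g_{\overline 4},g_{\overline 0})=1.
$$
Conversely, one checks that all original relations follow from these, completing the Tietze equivalence and hence the proof.

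The hard part will be the bookkeeping of this last reduction: showing that the triangle and sphere relations reduce to exactly one pentagon relation, with none redundant and none missing, and that the diagonals can be consistently eliminated in all five cyclic positions simultaneously rather than introducing hidden new relations. As in Ihara's Lie-algebra computation, the cyclic symmetry is what keeps this finite and tractable — it is enough to treat one representative of each $C_5$-orbit of relations and then apply the symmetry — and the resulting pentagon relation is precisely the group-theoretic counterpart of Ihara's pentagon relation for the degree-one generators of $\mathfrak p_5$.
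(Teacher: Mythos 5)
Your overall strategy---symmetrize first on all ten generators $x_{ij}$ ($i<j\in[\![1,5]\!]$) and then eliminate the five diagonals using a cyclic symmetry---is a plausible plan, but as written it has a genuine gap, and the gap sits exactly where the proof lives. First, the symmetry you invoke is unjustified and, used this way, circular. The $S_5$-action on $\mathfrak M_{0,5}$ induces in general only an \emph{outer} action on $\pi_1(\mathfrak M_{0,5})=P_5^*$: a lift of a permutation $\sigma$ sends $x_{ij}$ to a \emph{conjugate} of $x_{\sigma(i)\sigma(j)}$, not to $x_{\sigma(i)\sigma(j)}$ itself. Even for the 5-cycle, which does preserve the base subspace $\mathcal B_5$, the claim that the induced automorphism fixes your relation set on the nose fails to be obvious for the diagonal generators, whose Artin-style definitions (including \eqref{def:x:i:n+1}) are not index-symmetric. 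In the paper, the statement that $D_5\times C_2$ acts by permuting the $g_i$ by indices is Remark \ref{remark:6:5:08012018}, and it is \emph{deduced from} the presentation of Proposition \ref{prop:pres:P5}; taking it as an input requires an independent proof that you do not supply. The same objection applies to your intermediate step: the assertion that, modulo the sphere relations, the Artin relations \eqref{relation:K:1}--\eqref{relation:K:2} ``become valid for every triple and quadruple inside $[\![1,5]\!]$''---and, more importantly, that the enlarged symmetric set is again a \emph{presentation}, so that nothing is lost in the Tietze moves---is itself a nontrivial word computation, made delicate by the fact that the Artin relations depend on the linear order $i<j<k<l$ and so do not even have an evidently $S_5$-symmetric form. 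Likewise your sphere relation for $i=5$: unlike for $\mathfrak p_5$, at the group level $\prod_{j\neq i}x_{ij}$ must be taken in a specific order, and the claim that $\omega_4=1$ yields it is another unproved verification.

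Second, the step you explicitly defer as ``bookkeeping''---solving for the diagonals and showing that the triangle relations collapse to exactly one pentagon relation, with nothing redundant and nothing missing---is not an afterthought: it is the entire content of the paper's proof. There one works directly with $P_5^*=K_4/\langle\omega_4\rangle$ (see \eqref{eq:Pn:Kn:crm}), derives from \eqref{relation:center} and \eqref{corr:2f} the explicit expressions $x_{13}=g_{\overline 1}^{-1}g_{\overline 4}g_{\overline 2}^{-1}$, $x_{24}=g_{\overline 2}^{-1}g_{\overline 0}g_{\overline 3}^{-1}$, $x_{14}=g_{\overline 4}^{-1}g_{\overline 2}g_{\overline 0}^{-1}$, substitutes them into \eqref{relation:K:1}, \eqref{relation:K:2}, $\omega_4=1$ and \eqref{corr:2f} for $i=1,4$, and checks case by case that every resulting relation is either one of the stated commutations, the cyclic relation, or a consequence of these. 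Your plan becomes a proof only if you either carry out that case-by-case verification (at which point the symmetric intermediate presentation buys you little) or first establish the exact, on-the-nose cyclic symmetry of all ten generators by an independent geometric argument with carefully controlled basepoints. As it stands, the proposal postpones precisely the computations that constitute the result.
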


\proof It follows from \eqref{def:x:i:n+1} that for $i\in[\![1,4]\!]$, one has 
\begin{equation}\label{corr:2f}
x_{i,5}=(x_{1i}\cdots x_{i-1,i}x_{i,i+1}\cdots x_{i4})^{-1}\in P_5^*. 
\end{equation}
Relation $\omega_5=1$, namely 
\begin{equation}\label{relation:center}
x_{12}x_{13}x_{23}x_{14}x_{24}x_{34}=1, 
\end{equation}
together with relation (\ref{corr:2f}) for $i=4$, implies $g_{\overline 4}=x_{45}=x_{12}x_{13}x_{23}$, 
therefore 
\begin{equation}\label{generation:1}
x_{13}=g_{\overline 1}^{-1}g_{\overline 4}g_{\overline 2}^{-1}.
\end{equation} 
Relation (\ref{relation:center})
together with $(x_{14},x_{23})=1$ yields 
$x_{12}x_{13}x_{14}x_{23}x_{24}x_{34}=1$,
which together with 
 (\ref{corr:2f}) for $i=1$, yields $x_{15}=x_{23}x_{24}x_{34}$. This relation yields
\begin{equation}\label{generation:2}
x_{24}=g_{\overline 2}^{-1}g_{\overline 0}g_{\overline 3}^{-1}.
\end{equation}
By the commutation of 
$x_{34}$ with $x_{23}x_{24}x_{34}$, this relation also yields $g_{\overline 0}=x_{15}=x_{24}x_{34}x_{23}=x_{24}x_{34}g_{\overline 2}$, therefore $x_{34}^{-1}x_{24}^{-1}=g_{\overline 2}g_{\overline 0}^{-1}$. 
(\ref{relation:center}) implies $x_{14}=x_{23}^{-1}x_{13}^{-1}x_{12}^{-1}x_{34}^{-1}x_{24}^{-1}$, 
which after combination with the previous equality yields $x_{14}=x_{23}^{-1}x_{13}^{-1}x_{12}^{-1}g_{\overline 2}
g_{\overline 0}^{-1}=
g_{\overline 2}^{-1}x_{13}^{-1}g_{\overline 1}^{-1}g_{\overline 2}g_{\overline 0}^{-1}$. 
Combining with (\ref{generation:1}), we obtain 
\begin{equation}\label{generation:3}
x_{14}=g_{\overline 4}^{-1}g_{\overline 2}g_{\overline 0}^{-1}.
\end{equation} 
As $P_5^*$ is a quotient of $K_4$ it is generated by $\{x_{ij}|i<j\in[\![1,4]\!]]\}$, therefore also by 
the union of this set with $g_{\overline 0}$, which is equal to $\{g_i|i\in C_5\}\cup\{x_{13},x_{24},x_{14}\}$. 
Relations (\ref{generation:1}), (\ref{generation:2}) and (\ref{generation:3}) then imply that a generating set is 
$\{g_i|i\in C_5\}$. 

The group $P_5^*$ may be viewed as generated by $\{g_i|i\in C_5\}\cup\{x_{13},x_{24},x_{14}\}$, subject to 
relations (\ref{relation:K:1}), (\ref{relation:K:2}), $\omega_5=1$, and (\ref{corr:2f}) for $i=1,4$. 
These relations imply expressions (\ref{generation:1}), (\ref{generation:2}) and (\ref{generation:3}) of 
$x_{13},x_{24},x_{14}$ in terms of $\{g_i|i\in C_5\}$. Substituting these expressions in relations 
(\ref{relation:K:1}), (\ref{relation:K:2}), $\omega_5=1$, and (\ref{corr:2f}) for $i=1,4$, we obtain
a presentation of $P_5^*$ in terms of the generators $\{g_i|i\in C_5\}$. The relations obtained in this way are the following. 
Relation $\omega_5=1$ yields the commutation of $g_{\overline 0}$ with $g_{\overline 2}$. 
Relation (\ref{relation:K:1}) for $(i,j,k)=(1,2,3)$ yields the commutation of $g_{\overline 4}$ with $g_{\overline 1}$ and $g_{\overline 2}$.
Relation (\ref{relation:K:1}) for $(i,j,k)=(2,3,4)$ yields the commutation of $g_{\overline 0}$ with $g_{\overline 2}$ and $g_{\overline 3}$.
The first part of relation (\ref{relation:K:2}), namely $(x_{12},x_{34})=1$, yields the commutation of 
$g_{\overline 1}$ with $g_{\overline 3}$. The last part of relation (\ref{relation:K:2}), namely $(x_{14},x_{23})=1$, 
yields a consequence of the already obtained commutations of $g_{\overline 2}$ with $g_{\overline 0}$ and $g_{\overline 4}$.
The middle part of relation (\ref{relation:K:2}), namely $(x_{13},x_{12}^{-1}x_{24}x_{12})=1$, together with the 
commutations $(g_{\overline 1},g_{\overline 3})=(g_{\overline 0},g_{\overline 2})=1$, yields the relation 
$g_{\overline 0}g_{\overline 2}^{-1}g_{\overline 1}g_{\overline 3}^{-1}g_{\overline 2}g_{\overline 4}^{-1}g_3g_{\overline 0}^{-1}g_{\overline 4}g_{\overline 1}^{-1}=1$, 
which by again using the commutation relations yields $g_{\overline 0}g_{\overline 4}^{-1}g_{\overline 1}g_{\overline 0}^{-1}g_{\overline 2}g_{\overline 1}^{-1}g_{\overline 3}g_{\overline 2}^{-1}g_{\overline 4}g_{\overline 3}^{-1}=1$, 
which is equivalent to the cyclic relation $(g_{\overline 0},g_{\overline 1})(g_{\overline 1},g_{\overline 2})(g_{\overline 2},g_{\overline 3})(g_{\overline 3},g_{\overline 4})(g_{\overline 4},g_{\overline 0})=1$. 

Relation (\ref{relation:K:1}) for $(i,j,k)=(1,2,4)$ splits as the conjunction of 
$x_{12}x_{14}x_{24}=x_{14}x_{24}x_{12}$ and $x_{14}x_{24}x_{12}=x_{24}x_{12}x_{14}$. 
The first relation yields a consequence of already obtained relations, namely 
$(g_{\overline 0},g_{\overline 2})=(g_{\overline 1},g_{\overline 3})=(g_{\overline 1},g_{\overline 4})=1$. 
After using $(g_{\overline 0},g_{\overline 2})=(g_{\overline 1},g_{\overline 3})=1$, the second relation yields 
$g_{\overline 0}g_{\overline 3}^{-1}g_{\overline 1}g_{\overline 4}^{-1}g_{\overline 2}g_{\overline 0}^{-1}g_{\overline 3}g_{\overline 1}^{-1}g_{\overline 4}g_{\overline 2}^{-1}=1$ which as above is equivalent to the already obtained cyclic relation.  

Relation (\ref{relation:K:1}) for $(i,j,k)=(1,3,4)$ splits as the conjunction of 
$x_{13}x_{14}x_{34}=x_{34}x_{13}x_{14}$ and $x_{34}x_{13}x_{14}=x_{14}x_{34}x_{13}$. 
The first relation yields a consequence of the already obtained relations 
$(g_{\overline 2},g_{\overline 4})=(g_{\overline 3},g_{\overline 1})=(g_{\overline 3},g_{\overline 0})=1$. After using 
$(g_{\overline 2},g_{\overline 4})=1$ and the commutation of $g_{\overline 1}$ with $g_{\overline 3}^{-1}$, the second relation yields
the already obtained relation $g_{\overline 0}g_{\overline 3}^{-1}g_{\overline 1}g_{\overline 4}^{-1}g_{\overline 2}g_{\overline 0}^{-1}g_{\overline 3}g_{\overline 1}^{-1}g_{\overline 4}g_{\overline 2}^{-1}=1$. 
\hfill\qed\medskip 

\begin{rem}\label{remark:6:5:08012018}
Under the commutation relations, the cyclic relation $\prod_{i\in C_5}(g_i,g_{i+1})=1$ (using the notation 
$\prod_{i\in C_5}a_i:=a_{\overline 0}a_{\overline 1}\cdots a_{\overline 4}$) is equivalent to any of the relations 
$\prod_{i\in C_5}g_ig_{i+j}^{-1}=1$, where $j\in C_5-\{\overline 0,\overline 1\}$. For $j=\overline 2$, this relation 
expresses as $\prod_{i\in C_5}(g_i^{-1},g_{i+1}^{-1})=1$. The cyclic relation is also equivalent to the relation
 $\prod_{i\in C_5}(g_{-i},g_{-i-1})=1$. All this proves that the group $D_5\times C_2$ acts by automorphisms of 
$P_5^*$ as follows: the dihedral group $D_5\index{D_5 @ $D_5$}$ acts by permutation of indices of the generators $(g_i)_{i\in C_5}$
and the cyclic group $C_2$ acts by $g_i\mapsto g_i^{-1}$. 
\end{rem}

\subsubsection{The morphisms $\underline\ell,\underline{\mathrm{pr}}_i$ and $\underline{\mathrm{pr}}_{12}$
between braid groups}\label{subsect:def:gp:morphisms:bis}

Let $F_2\index{F_2@$F_2$}$ be the free group with generators $X_0$, $X_1$
\index{X_0, X_1@$X_0$, $X_1$}.
Similarly to \S\ref{sect:514:12122017}, we will abuse notation by setting for $i=0,1$
\begin{equation}\label{def:of:Xi:Yi}
X_i:=(X_i,1)\in(F_2)^2,\quad  Y_i:=(1,X_i)\in(F_2)^2.
\index{Y_0, Y_1@$Y_0$, $Y_1$}
\end{equation}
We then have $X_i=X_i\otimes1$, $Y_i=1\otimes X_i$ in $(\mathcal V^\B)^{\otimes2}$. 

\begin{lem}\label{lemma:6:6:20190107}
1) There are group morphisms $\underline{\mathrm{pr}}_i:P_5^*\to F_2\index{pr_125_underline@ $\underline{\mathrm{pr}}_1$, $\underline{\mathrm{pr}}_2$, $\underline{\mathrm{pr}}_5$}$ for $i=1,2,5$, given by 

\begin{tabular}{|c|c|c|c|c|c|c|c|c|c|c|}
  \hline
$x\in P_5^*$ & $x_{12}$ & $x_{13}$ & $x_{14}$ & $x_{15}$ & $x_{23}$& $x_{24}$& $x_{25}$& $x_{34}$& $x_{35}$& $x_{45}$ \\
  \hline
  $\underline{\mathrm{pr}}_1(x)$ & $1$ & $1$ & $1$ & $1$ & $X_0$ & $(X_1X_0)^{-1}$ & $X_1$ & $X_1$ & $(X_0X_1)^{-1}$ & $X_0$ \\ 
\hline   
$\underline{\mathrm{pr}}_2(x)$ & $1$ & $(X_0X_1)^{-1}$ & $X_0$ & $X_1$ & $1$ & $1$ & $1$  & $X_1$  & $X_1^{-1}X_0X_1$ & 
$(X_0X_1)^{-1}$ \\
  \hline $\underline{\mathrm{pr}}_5(x)$ & $X_1$ & $(X_0X_1)^{-1}$ & $X_0$ & $1$ & $X_0$ & $(X_1X_0)^{-1}$ & $1$ & $X_1$ & $1$ & $1$ 
\\ \hline
\end{tabular}

2) There is a group morphism $\underline\ell:F_2\to P_5^*$\index{l_underline@$\underline\ell$}, given by $X_0\mapsto x_{23}$, $X_1\mapsto x_{12}$.  
We have $\underline{\mathrm{pr}}_5\circ\underline\ell=\mathrm{id}_{F_2}$. 
\end{lem}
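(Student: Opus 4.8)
The statement to be proved is Lemma \ref{lemma:6:6:20190107}, concerning the existence of the group morphisms $\underline{\mathrm{pr}}_1,\underline{\mathrm{pr}}_2,\underline{\mathrm{pr}}_5:P_5^*\to F_2$ and $\underline\ell:F_2\to P_5^*$, together with the relation $\underline{\mathrm{pr}}_5\circ\underline\ell=\mathrm{id}_{F_2}$. The plan is to verify directly that each proposed assignment respects a presentation of the relevant group. For part 2), since $F_2$ is free on $X_0,X_1$, there is \emph{nothing} to check for the existence of $\underline\ell$: any assignment of generators to elements of $P_5^*$ extends uniquely to a group morphism. Thus I would simply define $\underline\ell$ by $X_0\mapsto x_{23}$, $X_1\mapsto x_{12}$, and the only genuine content is the identity $\underline{\mathrm{pr}}_5\circ\underline\ell=\mathrm{id}_{F_2}$, which reduces (again by freeness) to checking it on the two generators: $\underline{\mathrm{pr}}_5(\underline\ell(X_0))=\underline{\mathrm{pr}}_5(x_{23})=X_0$ and $\underline{\mathrm{pr}}_5(\underline\ell(X_1))=\underline{\mathrm{pr}}_5(x_{12})=X_1$, both of which I read off from the table in part 1).

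For part 1), the approach is to use the presentation of $P_5^*$ established in Proposition \ref{prop:pres:P5}: the group is generated by $g_i=x_{i,i+1}$ ($i\in C_5$) subject to the commutation relations $(g_i,g_j)=1$ for $i-j\neq\pm\overline 1$ together with the single cyclic relation $(g_{\overline 0},g_{\overline 1})(g_{\overline 1},g_{\overline 2})(g_{\overline 2},g_{\overline 3})(g_{\overline 3},g_{\overline 4})(g_{\overline 4},g_{\overline 0})=1$. For each $i\in\{1,2,5\}$ I would first record the images $\underline{\mathrm{pr}}_i(g_j)$ of the five cyclic generators $g_{\overline 0}=x_{15}$, $g_{\overline 1}=x_{12}$, $g_{\overline 2}=x_{23}$, $g_{\overline 3}=x_{34}$, $g_{\overline 4}=x_{45}$, taken from the proposed table, define $\underline{\mathrm{pr}}_i$ on these generators accordingly, and then verify that the defining relations of Proposition \ref{prop:pres:P5} are mapped to identities in $F_2$. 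The commutation relations are immediate to check: for each non-adjacent pair $(i,j)$ one sees from the table that at least one of $\underline{\mathrm{pr}}_i(g_{\cdot})$ is trivial, or that the two images manifestly commute in $F_2$, so $(\underline{\mathrm{pr}}_i(g_i),\underline{\mathrm{pr}}_i(g_j))=1$ holds. The substance lies in the cyclic relation.

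Concretely, for $\underline{\mathrm{pr}}_1$ the nonzero generator-images among the cyclic generators are $\underline{\mathrm{pr}}_1(x_{23})=X_0$, $\underline{\mathrm{pr}}_1(x_{45})=X_0$, $\underline{\mathrm{pr}}_1(x_{15})=1$, and $\underline{\mathrm{pr}}_1(x_{12})=\underline{\mathrm{pr}}_1(x_{34})=1$ (here I must be careful that the entries of the table are written for all $x_{ij}$, and that $g_{\overline 0}=x_{15}$, etc.); I would then expand the image of the cyclic commutator word and check it collapses to $1$ in $F_2$, using only free-group cancellation. The same computation is carried out for $\underline{\mathrm{pr}}_2$ and $\underline{\mathrm{pr}}_5$. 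I expect the \textbf{main obstacle} to be precisely this bookkeeping in the cyclic relation: one must track the cyclic indexing convention ($x_{\overline 0,\overline 1}=x_{1,5}$) correctly and substitute the right table entries, since an off-by-one in the index will produce a spurious nontrivial word. It may be cleaner to appeal instead to the topological origin of these maps — each $\underline{\mathrm{pr}}_i$ arises from forgetting the $i$-th marked point, i.e.\ from a fibration $\mathfrak M_{0,5}\to\mathfrak M_{0,4}$ whose fiber projection induces $P_5^*\to P_4^*\simeq F_2$ — which guarantees well-definedness without an explicit relation check; but since the excerpt supplies the explicit presentation, the direct verification via Proposition \ref{prop:pres:P5} is the most self-contained route, and I would present it that way.
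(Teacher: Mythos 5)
Your overall strategy is sound and matches the paper's, whose entire proof is the single phrase ``Direct computation'': part 2) is handled exactly as you say (freeness of $F_2$ gives $\underline\ell$ for free, and the identity $\underline{\mathrm{pr}}_5\circ\underline\ell=\mathrm{id}_{F_2}$ is read off the table on the two generators). On part 1), however, there are two genuine defects. First, a concrete slip in your data: the table gives $\underline{\mathrm{pr}}_1(x_{34})=X_1$, not $1$, so the images of the cyclic generators $(x_{15},x_{12},x_{23},x_{34},x_{45})$ under $\underline{\mathrm{pr}}_1$ are $(1,1,X_0,X_1,X_0)$, and the cyclic relation maps to $(X_0,X_1)(X_1,X_0)=1$ rather than collapsing for trivial reasons; this is exactly the off-by-one bookkeeping you flagged as the main obstacle, and with the images as you wrote them you would certify a different morphism than the one in the statement. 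Second, and more substantively: defining $\underline{\mathrm{pr}}_i$ on the five cyclic generators and checking the relations of Proposition \ref{prop:pres:P5} only establishes a morphism with the claimed values on $x_{12},x_{23},x_{34},x_{45},x_{15}$, whereas the lemma asserts all ten columns of the table. You must additionally verify the entries for $x_{13},x_{14},x_{24},x_{25},x_{35}$, which is routine using the identities from the proof of Proposition \ref{prop:pres:P5}, namely $x_{13}=g_{\overline 1}^{-1}g_{\overline 4}g_{\overline 2}^{-1}$, $x_{24}=g_{\overline 2}^{-1}g_{\overline 0}g_{\overline 3}^{-1}$, $x_{14}=g_{\overline 4}^{-1}g_{\overline 2}g_{\overline 0}^{-1}$, together with \eqref{corr:2f} for $x_{25}$ and $x_{35}$ --- e.g.\ for $\underline{\mathrm{pr}}_5$ one gets $x_{13}\mapsto X_1^{-1}\cdot 1\cdot X_0^{-1}=(X_0X_1)^{-1}$ and $x_{35}=(x_{13}x_{23}x_{34})^{-1}\mapsto((X_0X_1)^{-1}X_0X_1)^{-1}=1$, as tabulated --- but your plan omits this step entirely.

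One further caveat: immediately after Proposition \ref{prop:pres:P5} the paper remarks that this presentation ``is not used in the sequel of the paper,'' so the intended direct computation presumably checks the table against the presentation $P_5^*=K_4/\langle\omega_4\rangle$ with Artin generators $x_{ij}$, $i<j\in[\![1,4]\!]$, and relations \eqref{relation:K:1}, \eqref{relation:K:2} together with $\omega_4=1$, the values on the $x_{i5}$ then being forced by \eqref{def:x:i:n+1}. Your route through the cyclic presentation is nonetheless legitimate (it is proved before the lemma) and arguably lighter --- five commutation checks plus one cyclic relation per morphism, the cyclic relation in each case reducing to the free-group identity $(a,b)(b,a)=1$ --- provided you repair the two points above.
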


\proof Direct computation. \hfill \qed\medskip 

Define $\underline{\mathrm{pr}}_{12}:P_5^*\to (F_2)^2\index{pr_12t_underline@$\underline{\mathrm{pr}}_{12}$}$ as the morphism $\underline p\mapsto (\underline{\mathrm{pr}}_1(\underline p),
\underline{\mathrm{pr}}_2(\underline p))$. We will denote by $\underline\ell,\underline{\mathrm{pr}}_i$ and $\underline{\mathrm{pr}}_{12}$ 
the Hopf algebra morphisms relating the group algebras $\mathbf k F_2$, $(\mathbf k F_2)^{\otimes2}$ and $\mathbf k P_5^*$
induced by $\underline\ell,\underline{\mathrm{pr}}_i$ and $\underline{\mathrm{pr}}_{12}$. 

\begin{rem}  The pure modular group of the sphere with 4 marked points $P_4^*$ is freely generated by $x_{12},x_{23}$, 
and therefore isomorphic to $F_2$. Composing with this isomorphism 
the morphisms $\underline{\mathrm{pr}}_i$ (resp. $\underline\ell$), one obtains the morphisms $P_5^*\to P_4^*$ (resp. $P_4^*\to P_5^*$) 
induced by the morphisms between moduli spaces consisting in forgetting the $i$-th marked point (resp. doubling the fourth marked
point). 
\end{rem}

\subsection{Algebraic constructions related to an ideal of $\mathbf k P_5^*$}\label{sect:6:2:19032018}

\subsubsection{The structure of $J(\underline{\mathrm{pr}}_5)$}

\begin{defn}
{\it We denote by $J(\underline{\mathrm{pr}}_5)\index{Jpr_5_underline @$J(\underline{\mathrm{pr}}_5)$}$ the kernel 
$\mathrm{Ker}(\mathbf k P_5^*\stackrel{\underline{\mathrm{pr}}_5}{\to}\mathbf k F_2)$.
This is a two-sided ideal of $\mathbf k P_5^*$.} 
\end{defn}

Let $F_3\index{F_3 @ $F_3$}$ be the free group with generators $a_i$,
$i\in[\![1,3]\!]$; there is a unique group morphism $F_3\to P_5^*$, given by 
$a_i\mapsto x_{i5}$ for $i\in[\![1,3]\!]$. 

\begin{lem}\label{lemma:60:16:11:2017}
1) The morphisms $\underline{\mathrm{pr}}_5$ and $F_3\to P_5^*$ fit in an exact sequence $1\to F_3\to P_5^*\to F_2\to1$.
As $F_3\to P_5^*$ is injective, we will identify $F_3$ with its image in $P_5^*$.  

2) The map $F_2\times F_3\to P_5^*$, $(\underline f,\underline f')\mapsto \underline\ell(\underline f)\cdot \underline f'$
is a bijection. 
\end{lem}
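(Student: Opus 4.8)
The plan is to prove Lemma \ref{lemma:60:16:11:2017} by importing the topological structure underlying the groups $P_5^*$, $F_2$, and $F_3$, and then translating it into the algebraic statement required. Recall from \S\ref{sect:material:on:P5} that $P_5^*=\pi_1(\mathfrak M_{0,5})$ and that the morphism $\underline{\mathrm{pr}}_5$ is induced by the map $\mathfrak M_{0,5}\to\mathfrak M_{0,4}\simeq F_2$ forgetting the fifth marked point (via the remark following Lemma \ref{lemma:6:6:20190107} identifying $\underline{\mathrm{pr}}_5$ with such a forgetful morphism, after the isomorphism $P_4^*\simeq F_2$). The fibration $\mathfrak M_{0,5}\to\mathfrak M_{0,4}$ has as its fiber the configuration space of one additional point in a four-times-punctured sphere, which is homotopy equivalent to a wedge of circles and hence has free fundamental group on $5-2=3$ generators, namely $F_3$.

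First I would establish part 1). The long exact homotopy sequence of the fibration $\mathfrak M_{0,5}\to\mathfrak M_{0,4}$ reads
$$
\pi_2(\mathfrak M_{0,4})\to\pi_1(\mathrm{fiber})\to\pi_1(\mathfrak M_{0,5})\to\pi_1(\mathfrak M_{0,4})\to1.
$$
Since the moduli spaces $\mathfrak M_{0,n}$ are $K(\pi,1)$-spaces (as noted in the remark in \S\ref{sect:material:on:P5}), one has $\pi_2(\mathfrak M_{0,4})=1$, so this gives a short exact sequence $1\to F_3\to P_5^*\to F_2\to1$. It then remains to check that the abstractly obtained fiber inclusion coincides with the map $F_3\to P_5^*$ sending $a_i\mapsto x_{i5}$: this is a matter of verifying that the loops $x_{i5}$ (which wind the fifth point around the $i$-th) are precisely the standard free generators of the fiber group, which can be read off the diagrammatic representatives of $\tilde x_{i,5}$ given in \S\ref{ref:26122017} together with the explicit formulas $\underline{\mathrm{pr}}_5(x_{i5})=1$ from Lemma \ref{lemma:6:6:20190107}. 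Injectivity of $F_3\to P_5^*$ is then automatic from exactness, justifying the identification.

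For part 2), I would argue purely group-theoretically from the split short exact sequence of part 1). Since $\underline{\mathrm{pr}}_5\circ\underline\ell=\mathrm{id}_{F_2}$ (Lemma \ref{lemma:6:6:20190107}, 2)), the morphism $\underline\ell$ is a section of $\underline{\mathrm{pr}}_5$, so $P_5^*$ is the internal semidirect product $\underline\ell(F_2)\ltimes F_3$. The multiplication map $F_2\times F_3\to P_5^*$, $(\underline f,\underline f')\mapsto\underline\ell(\underline f)\cdot\underline f'$ is then a bijection of sets: surjectivity follows because any $\underline p\in P_5^*$ satisfies $\underline p\cdot\underline\ell(\underline{\mathrm{pr}}_5(\underline p))^{-1}\in\ker(\underline{\mathrm{pr}}_5)=F_3$, and injectivity follows because $\underline\ell(\underline f)\underline f'=\underline\ell(\underline g)\underline g'$ forces $\underline{\mathrm{pr}}_5$ to identify $\underline f=\underline g$ (as $\underline f'$, $\underline g'\in F_3$ lie in the kernel), whence $\underline f'=\underline g'$.

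The only genuine obstacle is the identification in part 1) of the topologically-defined fiber generators with the algebraically-defined elements $x_{i5}$; everything else is the standard formalism of a split fibration sequence. I expect this identification can be handled cleanly by appeal to the cited references (\cite{Bir}, \cite{Ih:G}, \cite{LS}) for the description of the fiber $\pi_1$ and its generators, combined with the explicit values of $\underline{\mathrm{pr}}_5$ tabulated in Lemma \ref{lemma:6:6:20190107}, which confirm that each $x_{i5}$ lies in $\ker(\underline{\mathrm{pr}}_5)=F_3$ and that the three elements $x_{15},x_{25},x_{35}$ are a free generating set.
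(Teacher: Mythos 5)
Your proposal is correct and follows essentially the same route as the paper: the paper's proof of 1) is a citation of \cite{Ih:G}, Proposition 2.1.3 (itself based on \cite{FvB}), whose content is precisely the fibration/homotopy-exact-sequence argument you write out, including the identification of the fiber generators with the classes $x_{i5}$; and the paper's proof of 2) is word-for-word your observation that $\underline\ell$ is a section of $\underline{\mathrm{pr}}_5$, yielding the semidirect-product bijection. The only difference is that you unpack the cited argument (the $K(\pi,1)$ property killing $\pi_2$, the four-punctured sphere fiber with free $\pi_1$ on three generators) where the paper simply defers to the reference.
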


\proof 1) follows from \cite{Ih:G}, Proposition 2.1.3, based on \cite{FvB}. 2) then follows from the fact that $\underline\ell$ is a 
section of $\underline{\mathrm{pr}}_5$. \hfill\qed\medskip 

\begin{lem} \label{lemma:decomp:barJ:pr5}
The map $(\mathbf k P_5^*)^{\oplus3}\to J(\underline{\mathrm{pr}}_5)$, $(\underline p_i)_{i\in[\![1,3]\!]}\mapsto\sum_{i\in[\![1,3]\!]}
\underline p_i\cdot (x_{i5}-1)$ 
is an isomorphism of left $\mathbf k P_5^*$-modules. 
\end{lem}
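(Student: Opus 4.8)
The plan is to mirror exactly the argument used in Lemma \ref{lemma:decomp:J:pr5}, replacing universal enveloping algebras by group algebras and Lie-algebraic decompositions by the group-theoretic ones established in Lemma \ref{lemma:60:16:11:2017}. The key structural input is the semidirect product decomposition $1\to F_3\to P_5^*\to F_2\to 1$ with the splitting $\underline\ell$, which is the Betti counterpart of the direct-sum decomposition $\mathfrak p_5=\tilde{\mathfrak p}_4\oplus\mathfrak f_3$ used in the de Rham case.

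First I would introduce the analogue of the map $\mathrm{codec}$. Using the bijection $F_2\times F_3\to P_5^*$ of Lemma \ref{lemma:60:16:11:2017}, 2), the product map yields a $\mathbf k$-linear isomorphism
$$
\underline{\mathrm{codec}}:\mathbf k F_2\otimes\mathbf k F_3\to\mathbf k P_5^*,\quad \underline f\otimes\underline f'\mapsto\underline\ell(\underline f)\cdot\underline f';
$$
this is an isomorphism of left $\mathbf k F_2$-modules (via $\underline\ell$ on the left factor) because the underlying map of sets is a bijection. Next I would check, exactly as in \eqref{diagram:Up5:times:Uf3}, that the diagram with $\underline{\mathrm{codec}}$, $\mathrm{id}\otimes\varepsilon$ (where $\varepsilon$ is the counit of $\mathbf k F_3$) and $\underline{\mathrm{pr}}_5$ commutes: for $\underline f\in F_2$ and $\underline f'\in F_3$ one has $\underline{\mathrm{pr}}_5(\underline\ell(\underline f)\cdot\underline f')=\underline{\mathrm{pr}}_5(\underline\ell(\underline f))\cdot\underline{\mathrm{pr}}_5(\underline f')$, and $\underline{\mathrm{pr}}_5\circ\underline\ell=\mathrm{id}_{F_2}$ while $\underline{\mathrm{pr}}_5(\underline f')=\varepsilon(\underline f')$ since $F_3=\mathrm{Ker}(\underline{\mathrm{pr}}_5)$. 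This gives the identity
$$
J(\underline{\mathrm{pr}}_5)=\mathrm{im}\bigl(\mathbf k F_2\otimes(\mathbf k F_3)_+\xrightarrow{\underline{\mathrm{codec}}}\mathbf k P_5^*\bigr),
$$
the analogue of \eqref{equality:J(pr5)}, where $(\mathbf k F_3)_+$ is the augmentation ideal.

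Then I would invoke Lemma \ref{lemma:61:16:11:2017} with $n=3$: the map $(\mathbf k F_3)^{\oplus 3}\to(\mathbf k F_3)_+$ sending $(\underline f'_i)_{i}\mapsto\sum_i\underline f'_i\cdot(a_i-1)$ is an isomorphism of left $\mathbf k F_3$-modules, where $a_i\mapsto x_{i5}$. Tensoring with $\mathbf k F_2$ and assembling into a square analogous to \eqref{CD:17:11:2017}, whose vertical arrows are $\underline{\mathrm{codec}}^{\oplus 3}$ and $\underline{\mathrm{codec}}$ (both isomorphisms), whose top arrow is $\mathrm{id}_{\mathbf k F_2}\otimes$ (the map of Lemma \ref{lemma:61:16:11:2017}) and whose bottom arrow is $(\underline p_i)_i\mapsto\sum_i\underline p_i\cdot(x_{i5}-1)$, the commutativity being a direct consequence of $x_{i5}\in F_3$ and the definition of $\underline{\mathrm{codec}}$. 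Since the top arrow corestricts to an isomorphism $\mathbf k F_2\otimes(\mathbf k F_3)^{\oplus 3}\to\mathbf k F_2\otimes(\mathbf k F_3)_+$ and the verticals are isomorphisms, the bottom arrow corestricts to an isomorphism of $(\mathbf k P_5^*)^{\oplus 3}$ onto $\mathrm{im}(\underline{\mathrm{codec}})$ of $\mathbf k F_2\otimes(\mathbf k F_3)_+$, which is precisely $J(\underline{\mathrm{pr}}_5)$, proving the claim.

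The main obstacle, and the only genuinely new point compared with the de Rham proof, is the freeness and module-isomorphism statement underlying $\underline{\mathrm{codec}}$ and Lemma \ref{lemma:61:16:11:2017}: in the enveloping-algebra setting one uses the PBW theorem to see that $\mathrm{codec}$ is a filtered isomorphism, whereas here I must instead rely on the fact that $\mathbf k F_3$ is a \emph{free} associative algebra on the $a_i-1$ (the group-algebra analogue, which is exactly the content of Lemma \ref{lemma:61:16:11:2017}) together with the set-theoretic bijectivity of $F_2\times F_3\to P_5^*$ to conclude that $\underline{\mathrm{codec}}$ is a left $\mathbf k F_2$-module isomorphism. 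Once these two facts are in hand, the diagram chase is purely formal and identical in structure to that of Lemma \ref{lemma:decomp:J:pr5}.
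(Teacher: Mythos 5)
Your proposal is correct and follows essentially the same route as the paper: the linear isomorphism $\mathbf k F_2\otimes\mathbf k F_3\to\mathbf k P_5^*$ induced by the bijection of Lemma \ref{lemma:60:16:11:2017}, 2), the commuting triangle with $\mathrm{id}\otimes\varepsilon$ identifying $J(\underline{\mathrm{pr}}_5)$ with the image of $\mathbf k F_2\otimes(\mathbf k F_3)_+$, and the commuting square built from Lemma \ref{lemma:61:16:11:2017} (with $a_i\mapsto x_{i5}$) are exactly the paper's three steps. Your closing remarks on replacing PBW by the freeness statement of Lemma \ref{lemma:61:16:11:2017} match the paper's intent precisely.
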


\proof The bijection from Lemma \ref{lemma:60:16:11:2017}, 2) induces a linear isomorphism $\mathbf k F_2\otimes
\mathbf k F_3\stackrel{\simeq}{\to} \mathbf k P_5^*$. For $(\underline f,\underline f')\in F_2\times F_3$, 
$\underline{\mathrm{pr}}_5(\underline\ell(\underline f)\cdot \underline f')=\underline f$ as $\underline{\mathrm{pr}}_5\circ 
\underline\ell=\mathrm{id}_{F_2}$ and $F_3=\mathrm{Ker}(\underline{\mathrm{pr}}_5)$. It follows that the
following diagram is commutative 
$$
\xymatrix{
\mathbf k F_2\otimes
\mathbf k F_3\ar^\simeq[r]\ar_{\mathrm{id}\otimes\varepsilon}[dr] & \mathbf k P_5^* \ar^{\underline{\mathrm{pr}}_5}[d]\\ 
 & \mathbf k F_2}
$$
It follows that $J(\underline{\mathrm{pr}}_5)$ is the image of $\mathbf k F_2\otimes
(\mathbf k F_3)_+$ under the linear isomorphism $\mathbf k F_2\otimes
\mathbf k F_3\stackrel{\simeq}{\to} \mathbf k P_5^*$. 

Since $x_{i5}\in F_3$ for $i\in[\![1,3]\!]$, the diagram 
$$
\xymatrix{
(\mathbf k F_2\otimes
\mathbf k F_3)^{\oplus3}\ar^\simeq[r]\ar[d] & (\mathbf k P_5^*)^{\oplus3}\ar[d]\\ \mathbf k F_2\otimes
\mathbf k F_3\ar^\simeq[r] &\mathbf k P_5^*}
$$
commutes, where the vertical maps are, on the left-hand side, the tensor product of the $\mathrm{id}_{\mathbf k F_2}$
with $\mathbf k F_3^{\oplus3}\to \mathbf k F_3$, $(\underline f_i)_{i\in[\![1,3]\!]}\mapsto \sum_{i\in[\![1,3]\!]}\underline f_i
\cdot(x_{i5}-1)$, and on the right-hand side, the map given by the same formula.  
 
It follows from Lemma \ref{lem:decomp:ZZG} that $\mathbf k F_2\otimes
(\mathbf k F_3)_+$ is the isomorphic image of the left vertical map, therefore $J(\underline{\mathrm{pr}}_5)$
is the isomorphic image of $(\mathbf k P_5^*)^{\oplus3}$ by the right vertical map, which proves the claimed statement. 
\hfill\qed\medskip 

\subsubsection{A morphism $\underline\AAA:\mathbf k P_5^*\to M_3(\mathbf k P_5^*)$}
\label{subsect:def:bar:A}

Lemma \ref{lemma:decomp:barJ:pr5} says that the hypothesis of Lemma \ref{algebraic:lemma:1} is satisfied 
in the following situation: $R=\mathbf k P_5^*$, $J=J(\underline{\mathrm{pr}}_5)$, $d=3$, $(j_a)_{a\in[\![1,d]\!]}
=(x_{i5}-1)_{i\in[\![1,3]\!]}$. We denote by 
$$
\underline\AAA:\mathbf k P_5^*\to M_3(\mathbf k P_5^*)
\index{pivar_underline@$\underline\AAA$}
$$
the algebra morphism given in this situation by Lemma \ref{algebraic:lemma:1}. Then for $\underline p\in \mathbf k P_5^*$, 
$\underline\AAA(\underline p)=(\underline a_{ij}(\underline p))_{i,j\in[\![1,3]\!]}$, and 
$$
\forall i\in[\![1,3]\!],\quad (x_{i5}-1)\underline p=\sum_{j\in[\![1,3]\!]}\underline a_{ij}(\underline p)(x_{j5}-1) 
$$
(equalities in $\mathbf k P_5^*$). 

\subsubsection{Construction and properties of a morphism $\mathcal V^\B\to M_3((\mathcal V^\B)^{\otimes2})$}

Define the algebra morphism 
\begin{equation}\label{underline:B:05012017}
\underline{\BB}:\mathcal V^\B\to M_3((\mathcal V^\B)^{\otimes2})
\index{rhovar_underline@$\underline{\BB}$}
\end{equation}
to be the composition 
$$
\mathcal V^\B\stackrel{\underline \ell}{\to} \mathbf k P_5^*\stackrel{\underline\AAA}{\to}
M_3(\mathbf k P_5^*)\stackrel{M_3(\underline{\mathrm{pr}}_{12})}{\to}
M_3((\mathcal V^\B)^{\otimes2}), 
$$
where $\underline\ell$ is as in 2) of Lemma \ref{lemma:6:6:20190107}, 
$\underline\AAA$ is as in \S\ref{subsect:def:bar:A}, 
and 
$M_3(\underline{\mathrm{pr}}_{12})\index{M_3pr_12 @$M_3(\underline{\mathrm{pr}}_{12})$}$ is the  
morphism induced by $\underline{\mathrm{pr}}_{12}$, i.e., taking $(\underline p_{ij})_{i,j\in[\![1,3]\!]}$ to $(\underline{\mathrm{pr}}_{12}(\underline p_{ij}))_{i,j\in[\![1,3]\!]}$. 

\begin{lem}\label{lemma:def:underline:row:col:04012018}\label{lemma:decomp:underline:A:X_1-1}
Set 
\begin{equation}\label{def:underline:row:col:04012018}
\underline{\mathrm{row}}_1:=\begin{pmatrix} X_1-1& 1-Y_1 & 0 \end{pmatrix}\in M_{1\times 3}((\mathcal V^\B)^{\otimes2}), \quad  
\underline{\mathrm{col}}_1:=\begin{pmatrix} Y_1 \\ -1 \\ 0\end{pmatrix}\in M_{3\times 1}((\mathcal V^\B)^{\otimes2})
\index{row1_underline @ $\underline{\mathrm{row}}_1$}
\index{col1_underline @ $\underline{\mathrm{col}}_1$}
\end{equation} 
(where $X_1,Y_1\in F_2^2\subset (\mathcal V^\B)^{\otimes2}$ are defined by (\ref{def:of:Xi:Yi})), then 
\begin{equation}\label{722bis}
\underline{\BB}(X_1-1)=\underline{\mathrm{col}}_1\cdot\underline{\mathrm{row}}_1
\end{equation}
(equality in $M_3((\mathcal V^\B)^{\otimes2})$). 
\end{lem}

\proof One has $\underline\ell(X_1)=x_{12}$. Let us compute $\underline\AAA(x_{12})$. One has 
\begin{align*}
 (x_{15}-1)x_{12} & =   (x_{15}-1)x_{12}x_{15}x_{25}x_{25}^{-1}x_{15}^{-1}
=x_{12}x_{15}x_{25} (x_{15}-1)x_{25}^{-1}x_{15}^{-1}
\\ & 
=x_{12}x_{15}x_{25}(-x_{15}x_{25}^{-1}x_{15}^{-1}+1+x_{25}^{-1}x_{15}^{-1})(x_{15}-1)
+x_{12}x_{15}x_{25}(1-x_{15})x_{25}^{-1}(x_{25}-1), \\
(x_{25}-1)x_{12} & =(x_{25}-1)x_{12}x_{15}x_{25}x_{25}^{-1}x_{15}^{-1}
=x_{12}x_{15}x_{25}(x_{25}-1)x_{25}^{-1}x_{15}^{-1}
\\ &=x_{12}x_{15}x_{25}(x_{25}^{-1}-1)x_{15}^{-1}(x_{15}-1)+x_{12}x_{15}(x_{25}-1) , \\
(x_{35}-1)x_{12}& =x_{12}(x_{35}-1) , 
\end{align*}
which implies that 
$$
\underline\AAA(x_{12})
=\begin{pmatrix} x_{12}x_{15}x_{25}(-x_{15}x_{25}^{-1}x_{15}^{-1}+1+x_{25}^{-1}x_{15}^{-1})&
x_{12}x_{15}x_{25}(1-x_{15})x_{25}^{-1}& 0\\ 
x_{12}x_{15}x_{25}(x_{25}^{-1}-1)x_{15}^{-1}& x_{12}x_{15}& 0\\ 
0& 0& x_{12}\end{pmatrix}\in M_3(\mathbf k P_5^*). 
$$
The image of $\underline\AAA(x_{12}-1)$ in $M_3((\mathcal V^\B)^{\otimes2})$ by $M_3(\underline{\mathrm{pr}}_{12})$ is therefore 
$$
\begin{pmatrix} (X_1-1)Y_1& Y_1(1-Y_1) & 0\\ 1-X_1& Y_1-1& 0\\ 0& 0& 0\end{pmatrix}
=\begin{pmatrix} Y_1 \\ -1 \\ 0\end{pmatrix}\begin{pmatrix} X_1-1& 1-Y_1 & 0 \end{pmatrix}
=\underline{\mathrm{col}}_1\cdot\underline{\mathrm{row}}_1. 
$$
Therefore $\underline{\BB}(X_1-1)=\underline{\mathrm{col}}_1\cdot\underline{\mathrm{row}}_1$. \hfill\qed\medskip

\subsubsection{Construction and properties of a morphism $\underline{\tilde{\BB}}:(\mathcal V^\B,\cdot_{X_1-1})\to 
(\mathcal V^\B)^{\otimes2}$}\label{subsection:underline:tilde:B}

Lemma \ref{lemma:decomp:underline:A:X_1-1} shows that the hypothesis of Lemma \ref{algebraic:lemma:2} is satisfied in the 
following situation: $R=\mathcal V^\B$, $S=(\mathcal V^\B)^{\otimes2}$, $e=X_1-1$, $n=3$, $f=\underline{\BB}$, $\mathrm{row}$ 
and $\mathrm{col}$ are $\underline{\mathrm{row}}_1$ and $\underline{\mathrm{col}}_1$ from Lemma 
\ref{lemma:decomp:underline:A:X_1-1}. We denote by
$$
\underline{\tilde{\BB}}:(\mathcal V^\B,\cdot_{X_1-1})\to (\mathcal V^\B)^{\otimes2}
\index{rhovar_underline^tilde@$\underline{\tilde{\BB}}$}
$$
the algebra morphism given in this situation by Lemma \ref{algebraic:lemma:2}. 

Then for any $\underline f\in\mathcal V^\B$, one has 
\begin{equation}\label{formula:underline:tilde:B:f}
\underline{\tilde{\BB}}(\underline f)=\underline{\mathrm{row}}_1\cdot \underline{\BB}(\underline f)\cdot \underline{\mathrm{col}}_1
=\underline{\mathrm{row}}_1\cdot 
\{M_3(\underline{\mathrm{pr}}_{12})\circ \underline\AAA\circ \underline \ell(\underline f)\}\cdot \underline{\mathrm{col}}_1
\in (\mathcal V^\B)^{\otimes2}. 
\end{equation}

\begin{lem}\label{lemma:65:22:11:2017}
For any $k\in\mathbb Z$, 
$$
\underline{\tilde{\BB}}(X_0^k) 
=(X_1-1)X_0^k\otimes 1+1\otimes (1-X_1^{-1})X_0^kX_1
-\sum_{i=1}^{k-1}(X_1-1)X_0^{i}\otimes (1-X_1^{-1})X_0^{k-i}X_1, 
$$
$$
\underline{\tilde{\BB}}(X_0^kX_1^{-1})=\underline{\tilde{\BB}}(X_0^k) (X_1^{-1}\otimes X_1^{-1})
$$
(equalities in $(\mathcal V^\B)^{\otimes2}$). 
\end{lem}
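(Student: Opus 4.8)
The plan is to prove the formula for $\underline{\tilde{\BB}}(X_0^k)$ by following exactly the strategy of Lemma \ref{lemma:53:15:11:2017}, which computed the de Rham analogue $\tilde{\BB}(e_0^n)$, and then to derive the second formula as an easy consequence of the algebra-morphism property of $\underline{\tilde{\BB}}$. First I would use the definition \eqref{formula:underline:tilde:B:f}, which gives $\underline{\tilde{\BB}}(X_0^k)=\underline{\mathrm{row}}\cdot\underline{\BB}(X_0^k)\cdot\underline{\mathrm{col}}$. Since $\underline{\BB}$ is an algebra morphism, $\underline{\BB}(X_0^k)=\underline{\BB}(X_0)^k$, so the core computation is to evaluate $\underline{\BB}(X_0)=M_3(\underline{\mathrm{pr}}_{12})\circ\underline\AAA\circ\underline\ell(X_0)$. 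As $\underline\ell(X_0)=x_{23}$ (Lemma \ref{lemma:6:6:20190107}, 2)), I would first compute $\underline\AAA(x_{23})\in M_3(\mathbf k P_5^*)$ by expressing each $(x_{i5}-1)x_{23}$ in the form $\sum_j \underline a_{ij}(x_{23})(x_{j5}-1)$, using the braid relations in $P_5^*$; this is the group-algebra analogue of the three commutator identities displayed in Lemma \ref{lemma:53:15:11:2017}, and $x_{15}$ should commute with $x_{23}$ while $x_{25},x_{35}$ interact nontrivially.

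Once $\underline\AAA(x_{23})$ is found as an explicit $3\times 3$ matrix, applying $M_3(\underline{\mathrm{pr}}_{12})$ (using the tables for $\underline{\mathrm{pr}}_1,\underline{\mathrm{pr}}_2$) yields a matrix $\underline{\BB}(X_0)\in M_3((\mathbf k F_2)^{\otimes2})$ whose top-left entry is scalar and whose lower-right $2\times2$ block is some matrix $\underline T\in M_2((\mathbf k F_2)^{\otimes2})$, exactly as in the de Rham case where the block was $T$. Because of the shape of $\underline{\mathrm{row}}$ and $\underline{\mathrm{col}}$ (whose third entries vanish), only this $2\times2$ block contributes together with the first entry, so $\underline{\tilde{\BB}}(X_0^k)$ reduces to $(\text{first entry})^k$ applied to $X_1-1$ on one side plus a row-column contraction of $\underline T^{k}$. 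I would then diagonalize $\underline T^{k}$ by conjugating with a unipotent matrix $\begin{pmatrix}1&0\\ -1&1\end{pmatrix}$ (or its inverse), precisely mimicking the computation $T=\begin{pmatrix}1&0\\-1&1\end{pmatrix}\begin{pmatrix}f_0&-e_1\\0&e_0\end{pmatrix}\begin{pmatrix}1&0\\-1&1\end{pmatrix}^{-1}$ of Lemma \ref{lemma:53:15:11:2017}, so that $\underline T^k$ is read off from the $k$-th power of an upper-triangular matrix, whose off-diagonal entry is a geometric-type sum $\sum_{i}(\cdots)X_0^{i}(\cdots)X_0^{k-1-i}$.

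The main obstacle I anticipate is twofold. First, unlike the Lie-algebra setting where $e_0,e_1$ are primitive and degree is a clean bookkeeping device, here I must carry inverses $X_1^{-1}$, $X_0^{-1}$ and conjugations $x_{i5}^{\pm1}$ through the braid relations, so the commutator manipulations in $\mathbf k P_5^*$ needed to compute $\underline\AAA(x_{23})$ are bulkier and must be done for all $k\in\mathbb Z$, including negative powers (which requires the convention \eqref{convention:sums} for the sums). Second, the upper-triangular reduction must be set up so that the resulting off-diagonal entry produces exactly $\sum_{i=1}^{k-1}(X_1-1)X_0^i\otimes(1-X_1^{-1})X_0^{k-i}X_1$ after the row-column contraction; matching signs and the placement of the factors $X_1,X_1^{-1}$ on the correct tensor slot is the delicate point, and I would verify it by checking low values of $k$ and comparing with the associated graded picture, where this formula must reduce to Lemma \ref{lemma:53:15:11:2017} under $\mathrm{gr}(\underline{\mathrm{iso}}_1^l)$.

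For the second equality, I would argue as follows. Since $\underline{\tilde{\BB}}$ is an algebra morphism $(\mathbf k F_2,\cdot_{X_1-1})\to(\mathbf k F_2)^{\otimes2}$, and since $X_0^kX_1^{-1}=X_0^k\cdot_{X_1-1}(\text{something})$ is not directly of product form, I would instead exploit the explicit formula \eqref{formula:underline:tilde:B:f} together with $\underline{\BB}(X_0^kX_1^{-1})=\underline{\BB}(X_0)^k\,\underline{\BB}(X_1^{-1})$, computing $\underline{\BB}(X_1^{-1})=\underline{\BB}(X_1)^{-1}$ from $\underline{\BB}(X_1)=\underline{\mathrm{col}}\cdot\underline{\mathrm{row}}+\mathrm{(diagonal\ correction)}$ implicit in Lemma \ref{lemma:decomp:underline:A:X_1-1}. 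The cleaner route, which I expect to work, is to observe that right multiplication by $X_1^{-1}$ inside $\mathbf k F_2$ corresponds, after the row-column contraction and using the specific form of $\underline{\mathrm{col}}$, to right multiplication of the output by $X_1^{-1}\otimes X_1^{-1}$; this is the group-algebra shadow of the relation $\underline\Delta_\sharp(\underline\xi(-,s|n))=-\underline\Delta_\sharp(\underline\xi(+,s|n))(X_1^{-1}\otimes X_1^{-1})$ already used in Lemma \ref{lemma:matiuobt}. I would record this compatibility as the key identity and conclude the second formula immediately from the first.
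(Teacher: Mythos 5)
Your proposal is correct and follows essentially the same route as the paper's proof: reduce to $\underline{\mathrm{row}}\cdot\underline{\BB}(X_0)^k\cdot\underline{\mathrm{col}}$, compute $\underline\AAA(x_{23})$ from the relations $(x_{15},x_{23})=1$ and $x_{23}x_{25}x_{35}=x_{35}x_{23}x_{25}$ in $P_5^*$, triangularize the resulting $2\times2$ block $\underline T$ by a unipotent conjugation (the paper uses $\bigl(\begin{smallmatrix}1&0\\-X_1^{-1}&1\end{smallmatrix}\bigr)$, the slot-placement subtlety you correctly anticipated) with the convention \eqref{convention:sums} covering all $k\in\mathbb Z$. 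Your "cleaner route" for the second equality is also precisely the paper's argument: from $\underline{\BB}(X_1)=1+\underline{\mathrm{col}}\cdot\underline{\mathrm{row}}$ and $1+\underline{\mathrm{row}}\cdot\underline{\mathrm{col}}=X_1Y_1$ one inverts the rank-one update, and sliding $\underline{\mathrm{col}}$ across turns right multiplication by $X_1^{-1}$ into right multiplication by $(X_1Y_1)^{-1}=X_1^{-1}\otimes X_1^{-1}$.
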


\proof According to (\ref{formula:underline:tilde:B:f}), $\underline{\tilde{\BB}}(X_0^k)
=\underline{\mathrm{row}}_1\cdot \underline{\BB}(X_0^k)\cdot \underline{\mathrm{col}}_1$. 
As $\underline{\BB}$ is an algebra morphism, $\underline{\BB}(X_0^k)=\underline{\BB}(X_0)^k$. Then 
$\underline{\BB}(X_0)=M_3(\underline{\mathrm{pr}}_{12})\circ \underline\AAA\circ \underline \ell(X_0)
=M_3(\underline{\mathrm{pr}}_{12})(\underline\AAA(x_{23}))$. 

Let us compute $\underline\AAA(x_{23})$.

As $x_{15}$ commutes with $x_{23}$, one has 
\begin{equation}\label{1-x15-1}
(x_{15}-1)\cdot x_{23}=x_{23}\cdot (x_{15}-1). 
\end{equation}

Since $x_{25}$ commutes with $x_{23}x_{25}x_{35}$, one has 
$$
x_{23}^{-1}x_{25}x_{23}=x_{25}x_{35}x_{25}x_{35}^{-1}x_{25}^{-1}
$$
which implies the first equality in the following chain of equalities 
\begin{align*}
& x_{23}^{-1}(x_{25}-1)x_{23}=x_{25}x_{35}x_{25}x_{35}^{-1}x_{25}^{-1}-1=x_{25}(x_{35}x_{25}x_{35}^{-1}x_{25}^{-1}-1)+x_{25}-1
\\ & = x_{25}\{x_{35}(x_{25}x_{35}^{-1}x_{25}^{-1}-1)+x_{35}-1\}+x_{25}-1
\\ & =  x_{25}[x_{35}\{x_{25}(x_{35}^{-1}x_{25}^{-1}-1)+x_{25}-1\}+x_{35}-1]+x_{25}-1
\\ & = x_{25}\big(x_{35}[x_{25}\{x_{35}^{-1}(x_{25}^{-1}-1)+x_{35}^{-1}-1\}+x_{25}-1]+x_{35}-1\big)+x_{25}-1
\\ & =x_{25}x_{35}x_{25}x_{35}^{-1}(x_{25}^{-1}-1)+x_{25}x_{35}x_{25}(x_{35}^{-1}-1)+x_{25}x_{35}(x_{25}-1)+x_{25}(x_{35}-1)+x_{25}-1
\\ & = (-x_{25}x_{35}x_{25}x_{35}^{-1}x_{25}^{-1}+x_{25}x_{35}+1)(x_{25}-1)+
(-x_{25}x_{35}x_{25}x_{35}^{-1}+x_{25})(x_{35}-1), 
\end{align*}
the following equalities being immediate. Therefore 
\begin{equation}\label{x25-1x23=x23}
(x_{25}-1)\cdot x_{23}=x_{23}(-x_{25}x_{35}x_{25}x_{35}^{-1}x_{25}^{-1}+x_{25}x_{35}+1)\cdot (x_{25}-1)
+x_{23}(-x_{25}x_{35}x_{25}x_{35}^{-1}+x_{25})\cdot (x_{35}-1).
\end{equation}

Since $x_{23}x_{25}x_{35}=x_{35}x_{23}x_{25}$, one has $x_{23}^{-1}x_{35}x_{23}=x_{25}x_{35}x_{25}^{-1}$, 
which implies the first equality in the following chain of equalities 
\begin{align*}
&x_{23}^{-1}(x_{35}-1)x_{23}=x_{25}x_{35}x_{25}^{-1}-1=x_{25}(x_{35}x_{25}^{-1}-1)+x_{25}-1 
\\ & =x_{25}\{x_{35}(x_{25}^{-1}-1)+x_{35}-1\}+x_{25}-1=x_{25}x_{35}(x_{25}^{-1}-1)+x_{25}-1+x_{25}(x_{35}-1)
\\ & =(-x_{25}x_{35}x_{25}^{-1}+1)(x_{25}-1)+x_{25}(x_{35}-1), 
\end{align*}
the following equalities being immediate. Therefore  
\begin{equation}\label{x34-1x23=x23}
(x_{35}-1)\cdot x_{23}=x_{23}(-x_{25}x_{35}x_{25}^{-1}+1)\cdot (x_{25}-1)+x_{23}x_{25}\cdot (x_{35}-1). 
\end{equation}
Equalities (\ref{1-x15-1}), (\ref{x25-1x23=x23}) and (\ref{x34-1x23=x23}) imply that 
$$
\underline\AAA(x_{23})=\begin{pmatrix}  x_{23}& 0&0  \\   0&x_{23}(-x_{25}x_{35}x_{25}x_{35}^{-1}x_{25}^{-1}+x_{25}x_{35}+1)
&x_{23}(-x_{25}x_{35}x_{25}x_{35}^{-1}+x_{25})\\  0&x_{23}(-x_{25}x_{35}x_{25}^{-1}+1)& x_{23}x_{25}\end{pmatrix}\in 
M_3(\mathbf k P_5^*). 
$$
Then  
\begin{equation}\label{eq:rho:X0}
\underline{\BB}(X_0)=M_3(\underline{\mathrm{pr}}_{12})(\underline\AAA(x_{23}))=
\begin{pmatrix} X_0&0 & 0\\ 0& (1-X_1)X_0+Y_1^{-1}Y_0Y_1& (1-X_1)X_0X_1\\ 0& 
X_0-Y_1^{-1}Y_0Y_1X_1^{-1}& X_0X_1\end{pmatrix}\in M_3((\mathcal V^\B)^{\otimes2}). 
\end{equation}
Set $\underline T\index{T_@$\underline T$}:=\begin{pmatrix} (1-X_1)X_0+Y_1^{-1}Y_0Y_1& (1-X_1)X_0X_1\\
X_0-Y_1^{-1}Y_0Y_1X_1^{-1}& X_0X_1\end{pmatrix}\in M_2((\mathcal V^\B)^{\otimes2})$, 
then $\underline{\BB}(X_0^k)=\begin{pmatrix} X_0^k& 0\\0& \underline T^k\end{pmatrix}$, therefore 
$$
\underline{\tilde{\BB}}(X_0^k)=\underline{\mathrm{row}}_1\cdot\underline{\BB}(X_0^k)\cdot\underline{\mathrm{col}}_1
=(X_1-1)X_0^kY_1+\begin{pmatrix} 1-Y_1&0 \end{pmatrix}\underline T^k
\begin{pmatrix} -1\\0 \end{pmatrix}
=(X_1-1)X_0^kY_1-(1-Y_1)(\underline T^k)_{11}, 
$$
where the second equality follows from the form of $\underline{\mathrm{row}}_1$ and $\underline{\mathrm{col}}_1$, and where 
$(\underline T^k)_{11}$ means the $(1,1)$-entry of $\underline T^k$.  

One checks that 
$$
\underline T=\begin{pmatrix} 1&0 \\ -X_1^{-1}&1 \end{pmatrix}
\begin{pmatrix} a& b \\ 0 & c \end{pmatrix}\begin{pmatrix} 1&0 \\ -X_1^{-1}&1 \end{pmatrix}^{-1}
$$
where 
$$
\begin{pmatrix} 1&0 \\ -X_1^{-1}&1 \end{pmatrix}^{-1}=\begin{pmatrix} 1&0 \\ X_1^{-1}&1 \end{pmatrix}
$$
and 
$$
\begin{pmatrix} a& b \\ 0 & c \end{pmatrix}=
\begin{pmatrix} Y_1^{-1}Y_0Y_1& (1-X_1)X_0X_1\\ 0& X_1^{-1}X_0X_1\end{pmatrix}. 
$$

For $k\in\mathbb Z$, one has 
$$
\begin{pmatrix} a& b \\ 0 & c \end{pmatrix}^k=\begin{pmatrix} a^k& \sum_{i=0}^{k-1}a^ibc^{k-i-1}  \\ 0& c^k \end{pmatrix}
$$
using the notation (\ref{convention:sums})
so that 
$$
\underline T^k=\begin{pmatrix} 1&0 \\ -X_1^{-1}&1 \end{pmatrix}
\begin{pmatrix} a^k& \sum_{i=0}^{k-1}a^ibc^{k-i-1}  \\ 0& c^k \end{pmatrix}
\begin{pmatrix} 1&0 \\ X_1^{-1}&1 \end{pmatrix}=\begin{pmatrix} a^k+
\sum_{i=0}^{k-1}a^ibc^{k-i-1} \cdot X_1^{-1}& * \\ * & * \end{pmatrix}
$$
therefore 
\begin{align*}
& (\underline T^k)_{11}=a^k+
\sum_{i=0}^{k-1}a^ibc^{k-i-1} \cdot X_1^{-1}
\\ & = Y_1^{-1}Y_0^kY_1+
\sum_{i=0}^{k-1}Y_1^{-1}Y_0^iY_1  \cdot (1-X_1)X_0X_1\cdot    X_1^{-1}X_0^{k-i-1}X_1 \cdot X_1^{-1}
\\ & = Y_1^{-1}Y_0^kY_1+
\sum_{i=0}^{k-1}Y_1^{-1}Y_0^iY_1  \cdot (1-X_1)X_0^{k-i}
\\ & = Y_1^{-1}Y_0^kY_1+(1-X_1)X_0^k
+\sum_{i=1}^{k-1}Y_1^{-1}Y_0^iY_1\cdot (1-X_1)X_0^{k-i}.  
\end{align*}

It follows that 
\begin{align*}
& \underline{\tilde{\BB}}(X_0^k)=(X_1-1)Y_1X_0^k-(1-Y_1)(\underline T^k)_{11} 
\\ & =
(X_1-1)Y_1X_0^k-(1-Y_1)
\{
(1-X_1)X_0^k+Y_1^{-1}Y_0^kY_1
+\sum_{i=1}^{k-1}Y_1^{-1}Y_0^iY_1\cdot (1-X_1)X_0^{k-i}
\}
\\ &
= (X_1-1)Y_1X_0^k-(1-Y_1)
(1-X_1)X_0^k-(1-Y_1)Y_1^{-1}Y_0^kY_1
-(1-Y_1)\sum_{i=1}^{k-1}Y_1^{-1}Y_0^iY_1\cdot (1-X_1)X_0^{k-i}
\\ & = 
(X_1-1)X_0^k+(Y_1-1)Y_1^{-1}Y_0^kY_1
-\sum_{i=1}^{k-1}(1-X_1)X_0^{k-i}\cdot Y_1^{-1}(1-Y_1)Y_0^iY_1, 
\end{align*} 
which implies the first identity. 

By Lemma \ref{lemma:decomp:underline:A:X_1-1}, one has $\underline{\BB}(X_1)=1+\underline{\mathrm{col}}_1\cdot\underline{\mathrm{row}}_1$. 
As $1+\underline{\mathrm{row}}_1\cdot\underline{\mathrm{col}}_1$ is equal to $X_1Y_1$ and is therefore invertible, one checks that the inverse of $1+\underline{\mathrm{col}}_1\cdot\underline{\mathrm{row}}_1$ is $1-\underline{\mathrm{col}}_1\cdot(1+\underline{\mathrm{row}}_1\cdot\underline{\mathrm{col}}_1)^{-1}\cdot\underline{\mathrm{row}}_1$, therefore 
$$
\underline{\BB}(X_1^{-1})=1-\underline{\mathrm{col}}_1\cdot(1+\underline{\mathrm{row}}_1
\cdot\underline{\mathrm{col}}_1)^{-1}\cdot\underline{\mathrm{row}}_1=1-\underline{\mathrm{col}}_1\cdot(X_1Y_1)^{-1}
\cdot\underline{\mathrm{row}}_1. 
$$
Then 
\begin{align*}
&\underline{\BB}(X_0^kX_1^{-1})=\underline{\mathrm{row}}_1\cdot\underline{\BB}(X_0^k)\cdot\Big(
1-\underline{\mathrm{col}}_1\cdot(X_1Y_1)^{-1}
\cdot\underline{\mathrm{row}}_1\Big)\cdot\underline{\mathrm{col}}_1
\\ & 
=\underline{\mathrm{row}}_1\cdot\underline{\BB}(X_0^k)\cdot\underline{\mathrm{col}}_1\cdot\Big(
1-(X_1Y_1)^{-1}\cdot\underline{\mathrm{row}}_1\cdot\underline{\mathrm{col}}_1\Big)=\underline{\BB}(X_0^k)\cdot(X_1Y_1)^{-1},  
\end{align*}
which proves the second statement. 
\hfill\qed\medskip

\begin{rem}
Identity (\ref{identity:28:11:2017}) from Remark \ref{remark:alternative:decomp:DR} implies another decomposition of $\underline T$, 
namely 
$$
\underline T=
\begin{pmatrix} 1 & -X_1 \\ 0 & 1 \end{pmatrix}
\begin{pmatrix} X_0&  0\\ X_0-Y_1^{-1}Y_0Y_1 X_1^{-1} &  Y_1^{-1}Y_0Y_1\end{pmatrix}
\begin{pmatrix} 1 & -X_1 \\ 0 & 1 \end{pmatrix}^{-1},  
$$
which as in this remark allows for an alternative computation of $\underline T^k$. 
\end{rem}

\section{Geometric interpretation of the Betti harmonic coproducts}\label{sect:geom:betti}

The purpose of this section is to construct a commutative diagram relating the  Betti algebra and module coproducts 
$\Delta^{\mathcal W,\B}$ and $\Delta^{\mathcal M,\B}$ with braid groups (diagrams \eqref{diagram:prop:71} and \eqref{diagram:2202}), 
analogously to the de Rham diagrams \eqref{diagram:prop:59} and \eqref{diagram:1402}. 

This construction involves a localization $\mathcal V^\B[{1\over X_1-1}]$ of the algebra $\mathcal V^\B$, and a module 
$\mathcal M^\B[{1\over X_1-1}]$ over this algebra, which are introduced and studied in \S\ref{sect:loc:Betti}. 
We prove the commutativity of diagram \eqref{diagram:prop:71} in \S\ref{sect:6:3:19032018} (Proposition \ref{prop:comm:DR:05012017}); 
this diagram relates $\Delta^{\mathcal W,\B}$ with $\underline\rho$ and $\underline{\mathrm{row}}_1$, $\underline{\mathrm{col}}_1$, 
and is a Betti analogue of diagram \eqref{diagram:prop:59}. We prove the commutativity of the diagram \eqref{diagram:2202} in 
\S\ref{sect:proof:T1} (Proposition \ref{prop:22022019}); this statement is based on Proposition \ref{prop:comm:DR:05012017} and relates 
$\Delta^{\mathcal M,\B}$ with $\underline\rho$, $\underline{\mathrm{row}}_1$ and the column vector $\underline{\mathrm{col}}_0$ 
(see Definition \ref{def:col:0:Betti:30oct}). We construct completions of the diagrams \eqref{diagram:prop:71} and \eqref{diagram:2202} in 
\S\ref{sect:compl:Betti:30oct}. 

\subsection{Localizations}\label{sect:loc:Betti} 

Define $\mathcal V^\B[{1\over X_1-1}]$\index{V^B/X_1-1@$\mathcal V^\B[{1\over X_1-1}]$} to be the localization of $\mathcal V^\B$ with respect to $X_1-1$, i.e. the $\mathbf k$-algebra
with generators $X_0^{\pm1},X_1^{\pm1},(X_1-1)^{-1}$ and relations expressing that $u^{-1}$ is a left and right inverse of $u$ for 
$u\in\{X_0,X_1,X_1-1\}$. It is equipped with a collection of subspaces indexed by $i\in\mathbb Z$, namely 
\begin{equation}\label{29oct}
F^i(\mathcal V^\B[{1\over{X_1-1}}])
:=\sum_{\substack{
n,i_0,\ldots,i_n\geq 0,\\
i_0+\cdots+i_n-n=i}}F^{i_0}\mathcal V^\B
(X_1-1)^{-1}\cdots(X_1-1)^{-1}F^{i_n}\mathcal V^\B.
\index{F^iVB/X_1-1@$F^i(\mathcal V^\B[{1\over{X_1-1}}])$}
\end{equation}
This collection is decreasing and compatible with the product, and therefore equips $\mathcal V^\B[{1\over{X_1-1}}]$ with the structure of 
an algebra in $\mathbf k\text{-mod}_{\mathrm{fil}}$ (see Definition \ref{def:cat}). 

Set also $\mathcal M^\B[{1\over X_1-1}]:=\mathcal V^\B[{1\over X_1-1}]/\mathcal V^\B[{1\over X_1-1}](X_0-1)$\index{M^B/X_1-1@$\mathcal M^\B[{1\over X_1-1}]$}. This is a module over 
$\mathcal V^\B[{1\over{X_1-1}}]$.
For $i\in\mathbb Z$, let 
$F^i(\mathcal M^\B[{1\over{X_1-1}}])$
\index{F^iMB/X_1-1@$F^i(\mathcal M^\B[{1\over{X_1-1}}])$}
 be the image of 
$F^i(\mathcal V^\B[{1\over{X_1-1}}])$ under the canonical projection $(-)\cdot 1_\B:\mathcal V^\B[{1\over{X_1-1}}]
\to\mathcal M^\B[{1\over{X_1-1}}]$. This collection is decreasing and compatible with the collection of subspaces of 
$\mathcal V^\B[{1\over{X_1-1}}]$ and the action of this algebra, therefore equips $\mathcal M^\B[{1\over X_1-1}]$ with the 
structure of a module over $\mathcal V^\B[{1\over{X_1-1}}]$ in $\mathbf k\text{-mod}_{\mathrm{fil}}$. 

The natural maps $\mathcal V^\B\to\mathcal V^\B[{1\over X_1-1}]$ and $\mathcal M^\B\to\mathcal M^\B[{1\over X_1-1}]$
are compatible algebra and module morphisms in $\mathbf k\text{-mod}_{\mathrm{fil}}$. 

\begin{lem}\label{lem:size:loc}
1) The canonical morphism $\mathcal V^\B\to\mathcal V^\B[{1\over X_1-1}]$ is injective. 

2) The image of $\mathcal V^\B[{1\over X_1-1}]$ under the functor $\mathrm{gr}:\mathbf k\text{-mod}_{\mathrm{fil}}
\to\mathbf k\text{-mod}_{\mathrm{gr}}$ 
(see \S\ref{sect:functors}) is isomorphic to $\mathcal V^\DR[{1\over e_1}]$.  
\end{lem}

\proof 1) Denote by $\mathcal V^\DR[{1\over e_0}]^\wedge$ the completion of $\mathcal V^\DR[{1\over e_1}]$ with respect to large 
degrees (see \S\ref{sect:completions:DR}). It may be viewed as an algebra in $\mathbf k\text{-mod}_{\mathrm{fil}}$ by setting 
$F^\alpha(\mathcal V^\DR[{1\over e_1}]^\wedge):=\prod_{\beta\geq\alpha}\mathcal V^\DR[{1\over e_1}]_\beta$. There is a unique 
algebra morphism $\varphi:\mathcal V^\B[{1\over X_1-1}]\to\mathcal V^\DR[{1\over e_1}]^\wedge$ in $\mathbf k\text{-mod}_{\mathrm{fil}}$
given by $X_i^{\pm1}\mapsto(1+e_i)^{\pm1}$ for $i=0,1$ and $(X_1-1)^{-1}\mapsto e_1^{-1}$. It induces an algebra morphism 
$\mathrm{gr}\varphi:\mathrm{gr}\mathcal V^\B[{1\over X_1-1}]\to\mathrm{gr}\mathcal V^\DR[{1\over e_1}]^\wedge
\simeq\mathcal V^\DR[{1\over e_1}]$ in $\mathbf k\text{-mod}_{\mathrm{gr}}$. Its composition with $\mathcal V^\DR\simeq
\mathrm{gr}\mathcal V^\B\stackrel{\mathrm{gr}(\mathrm{can})}{\to}\mathrm{gr}\mathcal V^\B[{1\over X_1-1}]$, $\mathrm{can}$ 
being the morphism $\mathcal V^\B\to\mathcal V^\B[{1\over X_1-1}]$, is the canonical map 
$\mathcal V^\DR\to\mathcal V^\DR[{1\over e_1}]$, which is injective. It follows that $\mathrm{gr}(\mathrm{can})$ is injective. 
Since $\cap_{k\geq0}F^k\mathcal V^\B=0$, this implies that $\mathrm{can}$ is injective. 

2) There is a unique graded algebra morphism $\psi:\mathcal V^\DR[{1\over e_1}]\to\mathrm{gr}\mathcal V^\B[{1\over X_1-1}]$, induced by 
$e_i\mapsto [X_i-1]\in\mathrm{gr}_1\mathcal V^\B[{1\over X_1-1}]$ for $i=0,1$ and $e_1^{-1}\mapsto[(X_1-1)^{-1}]\in
\mathrm{gr}_{-1}\mathcal V^\B[{1\over X_1-1}]$. One has $\mathrm{gr}\varphi\circ\psi=\mathrm{id}$, which implies that $\psi$
is injective. For $\alpha\in\mathbb Z$,  for $n,\alpha_0,\ldots,\alpha_n$ as in the right-hand side of \eqref{29oct}, and for 
$v_i\in F^{\alpha_i}\mathcal V^\B$ for $i=1,\ldots,n$, the degree $\alpha$ component $\psi_\alpha$ of $\psi$ maps 
$[v_0]e_1^{-1}\cdots e_1^{-1}[v_n]\in\mathcal V^\DR[{1\over e_1}]_\alpha$, where $[v_i]\in \mathrm{gr}_{\alpha_i}\mathcal V^\B
\simeq\mathcal V^\DR_{\alpha_i}$ to $[v_0(X_1-1)^{-1}\cdots(X_1-1)^{-1}v_n]\in\mathrm{gr}_\alpha(\mathcal V^\B[{1\over X_1-1}])$, 
therefore $\psi_\alpha$ is surjective. It follows that $\psi$ is surjective. \hfill\qed\medskip 

\begin{lem}\label{lemma:ass:gr:M:loc:19dec2019}
1) The morphism $\mathcal M^\B\to\mathcal M^\B[{1\over X_1-1}]$ is injective. 

2) The image of $\mathcal M^\B[{1\over X_1-1}]$ under the functor $\mathrm{gr}$ (see \S\ref{sect:functors}) is isomorphic 
to $\mathcal M^\DR[{1\over e_1}]$.  
\end{lem}

\proof 1) Let $\mathcal M^\DR[{1\over e_1}]^\wedge$ the completion of $\mathcal M^\DR[{1\over e_1}]$ with respect to large degrees 
(see \S\ref{sect:completions:DR}); it can be identified with $\mathcal V^\DR[{1\over e_1}]^\wedge/\mathcal V^\DR[{1\over e_1}]^\wedge e_0$. 
The map $\varphi$ in the proof of Lemma \ref{lem:size:loc} maps the left ideal generated by $X_0-1$ 
to the left ideal generated by $e_0$, therefore induces a morphism $\overline\varphi:\mathcal M^\B[{1\over X_1-1}]\to
\mathcal M^\DR[{1\over e_1}]^\wedge$ in $\mathbf k\text{-mod}_{\mathrm{fil}}$, which gives rise to a morphism $\mathrm{gr}\overline\varphi:
\mathrm{gr}\mathcal M^\B[{1\over X_1-1}]\to\mathrm{gr}\mathcal M^\DR[{1\over e_1}]^\wedge$. Its composition with 
$\mathcal M^\DR\simeq\mathrm{gr}\mathcal M^\B\stackrel{\mathrm{gr}(\mathrm{can}_{\mathcal M})}{\to}\mathrm{gr}\mathcal M^\B
[{1\over X_1-1}]$, where $\mathrm{can}_{\mathcal M}$ is the morphism $\mathcal M^\B\to\mathcal M^\B[{1\over X_1-1}]$, is the canonical 
map $\mathcal M^\DR\to\mathcal M^\DR[{1\over e_1}]$, which is injective. This implies the injectivity of $\mathrm{gr}(\mathrm{can}_{\mathcal M})$,
then of $\mathrm{can}_{\mathcal M}$ as in the proof of Lemma \ref{lem:size:loc}.  

2) The composition of $\psi$ from the proof of Lemma \ref{lem:size:loc} with $\mathrm{gr}((-)\cdot 1_\B
)$ takes $e_0$ to $0$, therefore takes 
$\mathcal V^\DR[{1\over e_1}]e_0$ to 0, therefore induces a map $\overline\psi:\mathcal M^\DR[{1\over e_1}]\to
\mathrm{gr}\mathcal M^\B[{1\over X_1-1}]$. One has $\mathrm{gr}\overline\varphi\circ\overline\psi=\mathrm{id}$, which implies 
that $\overline\psi$ is injective. The equality $\mathrm{gr}((-)\cdot 1_\B)
\circ\psi=\overline\psi\circ((-)\cdot 1_\DR)
$, where 
$(-)\cdot 1_\DR:
\mathcal V^\DR[{1\over e_1}] \to\mathcal M^\DR[{1\over e_1}]$ is the canonical projection, the surjectivity of $\psi$
and that of $\mathrm{gr}((-)\cdot 1_\B)
$ (which follows from the construction of the filtration in $\mathcal M^\B[{1\over X_1-1}]$)
imply the surjectivity of $\overline\psi$. \hfill\qed\medskip 

\subsection{Relationship between braid groups and $\Delta^{\mathcal W,\B}$} 
\label{sect:6:3:19032018}

Denote by $\mathbf k[X_0^{\pm1}]$ the linear span in $\mathbf k F_2$ of the elements $X_0^k$, $k\in\mathbb Z$, 
by $\mathbf k[X_0^{\pm1}]X_1^{-1}$ the linear span of the elements $X_0^kX_1^{-1}$, $k\in\mathbb Z$. The sum of 
these submodules of $\mathcal V^\B=\mathbf k F_2$ is direct.  

\begin{lem}\label{lemma:X0k:generating}
$(\mathcal V^\B,\cdot_{X_1-1})$\index{V^B, X_1-1@$(\mathcal V^\B,\cdot_{X_1-1})$} is generated, as an 
associative algebra, by $\mathbf k[X_0^{\pm1}]\oplus\mathbf k[X_0^{\pm1}]X_1^{-1}$.  
\end{lem}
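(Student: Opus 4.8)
The plan is to show directly that the non-unital subalgebra $A$ of $(\mathbf k F_2,\cdot_{X_1-1})$ generated by $M:=\mathbf k[X_0^{\pm1}]\oplus\mathbf k[X_0^{\pm1}]X_1^{-1}$ is all of $\mathbf k F_2$. Since the elements of $F_2$ form a $\mathbf k$-basis of $\mathbf k F_2$, it suffices to prove that every $g\in F_2$ lies in $A$ (note that $1=X_0^0\in M\subseteq A$). This is the group-algebra counterpart of Lemma \ref{lemma:e0n:generating}, but instead of a bijectivity statement I would only aim for surjectivity, obtained by a peeling-off induction.

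First I would put each $g\in F_2$ into the form
$$
g=X_0^{a_0}X_1^{\epsilon_1}X_0^{a_1}\cdots X_1^{\epsilon_m}X_0^{a_m},\qquad \epsilon_i\in\{+1,-1\},\ a_i\in\mathbb Z,
$$
obtained from any word representing $g$ by merging consecutive $X_0^{\pm1}$-letters into powers of $X_0$ and inserting trivial factors $X_0^0$ so that every $X_1$-syllable is a single power $X_1^{\pm1}$ (some $a_i$ may vanish). Every group element admits such an expression, so it is enough to show that all words of this shape lie in $A$, which I would do by induction on the number $m$ of $X_1$-syllables. The base case $m=0$ is immediate, as $X_0^{a_0}\in M$.

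For the inductive step, write $g=X_0^{a_0}X_1^{\epsilon_1}W$ with $W=X_0^{a_1}X_1^{\epsilon_2}\cdots X_1^{\epsilon_m}X_0^{a_m}$, a word with $m-1$ syllables, so that $W\in A$ by the induction hypothesis. Recalling that $r\cdot_{X_1-1}r'=r(X_1-1)r'$ (see \S\ref{section:background:2:08012018}), I would then use the identities
$$
X_0^{a_0}X_1W=X_0^{a_0}\cdot_{X_1-1}W+X_0^{a_0}W,\qquad
X_0^{a_0}X_1^{-1}W=X_0^{a_0}W-(X_0^{a_0}X_1^{-1})\cdot_{X_1-1}W
$$
according to whether $\epsilon_1=+1$ or $\epsilon_1=-1$. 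In either case the first summand is a $\cdot_{X_1-1}$-product of an element of $M$ (namely $X_0^{a_0}$ or $X_0^{a_0}X_1^{-1}$) with $W\in A$, hence lies in $A$; and the remaining term $X_0^{a_0}W=X_0^{a_0+a_1}X_1^{\epsilon_2}\cdots X_1^{\epsilon_m}X_0^{a_m}$ has only $m-1$ syllables and so lies in $A$ by the induction hypothesis. Thus $g\in A$, which completes the induction and yields $A=\mathbf k F_2$.

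I do not expect a genuine obstacle here. The only point requiring care is the bookkeeping of the induction: one must observe that absorbing the peeled-off factor $X_0^{a_0}$ into $W$ merges it with the leading $X_0^{a_1}$ rather than producing a new $X_1$-syllable, so that the error term $X_0^{a_0}W$ genuinely drops the syllable count. Because the iterated-product map $\bigoplus_{k\geq1}M^{\otimes k}\to\mathbf k F_2$ is far from injective, I would deliberately avoid trying to mirror the bijectivity argument of Lemma \ref{lemma:e0n:generating}; the surjectivity furnished by the peeling induction is exactly what the statement of generation requires.
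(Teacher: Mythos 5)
Your proof is correct and follows essentially the same route as the paper: the paper writes each element of $F_2$ in the syllable form $w(k_0,\ldots,k_s|\epsilon_1,\ldots,\epsilon_s)=X_0^{k_0}X_1^{\epsilon_1}\cdots X_1^{\epsilon_s}X_0^{k_s}$ and performs an induction on $s$ using precisely your two peeling identities $X_0^{k_0}X_1W=X_0^{k_0}\cdot_{X_1-1}W+X_0^{k_0}W$ and $X_0^{k_0}X_1^{-1}W=X_0^{k_0}W-(X_0^{k_0}X_1^{-1})\cdot_{X_1-1}W$, with the error term absorbing $X_0^{k_0}$ into the next $X_0$-power exactly as you observe. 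Your closing remark is also on target: the paper likewise settles for the surjectivity given by this peeling induction and does not attempt a Betti analogue of the bijectivity statement of Lemma \ref{lemma:e0n:generating}.
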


\proof For $s\geq 0$ and for $(k_0,\ldots,k_s)\in\mathbb Z^{s+1}$, $(\epsilon_1,\ldots,\epsilon_s)\in\{\pm1\}^s$, set 
\begin{equation}\label{formula:w:arguments}
w(k_0,\ldots,k_s|\epsilon_1,\ldots,\epsilon_s):=X_0^{k_0}X_1^{\epsilon_1}X_0^{k_1}\cdots X_0^{k_{s-1}}X_1^{\epsilon_s}
X_0^{k_s}\in F_2. 
\end{equation}
If $s\geq 0$, let $(F_2)_s$ be the subset of $F_2$ of all the elements (\ref{formula:w:arguments}), where $(k_0,\ldots,k_s)\in\mathbb Z^{s+1}$, $(\epsilon_1,\ldots,\epsilon_s)\in\{\pm1\}^s$; so when $s=0$, $(F_2)_0$ is the set of all $X_0^k$, $k\in\mathbb Z$. 

One has for $s\geq 0$, $(k_0,\ldots,k_{s+1})\in\mathbb Z^{s+1}$, $(\epsilon_2,\ldots,\epsilon_{s+1})\in\{\pm1\}^s$, 
$$
\begin{array}{r}
w(k_0,\ldots,k_{s+1}|1,\epsilon_2,\ldots,\epsilon_{s+1})=X_0^{k_0}\cdot_{X_1-1}
w(k_1,\ldots,k_{s+1}|\epsilon_2,\ldots,\epsilon_{s+1})\\+w(k_0+k_1,k_2,\ldots,k_{s+1}|\epsilon_2,\ldots,\epsilon_{s+1}), 
\end{array}
$$
$$
\begin{array}{r}
w(k_0,\ldots,k_{s+1}|-1,\epsilon_2,\ldots,\epsilon_{s+1})=-X_0^{k_0}X_1^{-1}\cdot_{X_1-1}
w(k_1,\ldots,k_{s+1}|\epsilon_2,\ldots,\epsilon_{s+1})\\ +w(k_0+k_1,k_2,\ldots,k_{s+1}|\epsilon_2,\ldots,\epsilon_{s+1}). 
\end{array}
$$
These identities enable one to prove by induction on $s\geq0$ that $(F_2)_s$ is contained in the associative 
subalgebra of $(\mathcal V^\B,\cdot_{X_1-1})$ generated by $\mathbf k[X_0^{\pm1}]\oplus\mathbf k[X_0^{\pm1}]X_1^{-1}$.
The statement then follows from the fact that the union for $s\geq0$ of all $(F_2)_s$ is equal to $F_2$.   
\hfill\qed\medskip 

Recall that $\mathcal W^{\B}$ is the subalgebra of $\mathcal V^\B$ equal to $\mathbf k\oplus \mathcal V^\B(X_1-1)$
(see \S\ref{sect:tcDaD}). We set 
$$
\mathcal W^{\B}_+:= \mathcal V^\B(X_1-1).
\index{W^B+@$\mathcal W^{\B}_+$} 
$$
This is a (non-unital) subalgebra of $\mathcal W^{\B}$. It is equipped with the filtration induced by $\mathcal W^{\B}$. 

Since the right multiplication by $X_1-1$ is injective in $\mathcal V^\B$, the algebra morphism 
\begin{equation}\label{def:mor:04012018}
\mathrm{mor}_{\mathcal V^\B,X_1-1}\index{morVBX_1-1@$\mathrm{mor}_{\mathcal V^\B,X_1-1}$}
:(\mathcal V^\B,\cdot_{X_1-1})\to\mathcal W^{\B}_+
\end{equation}
(see \S\ref{section:algebraic:lemmas}) is an algebra isomorphism. 

One checks that the automorphisms 
$\mathrm{Ad}(X_1-1)^{-1}$ and $\mathrm{Ad}(1-X_1^{-1})^{-1}$ of $\mathcal V^\B[{1\over X_1-1}]$ map $\mathcal V^\B$
to $F^0(\mathcal V^\B[{1\over X_1-1}])$. 

\begin{lem}\label{CD:tilde:underlineB:Delta*}
The following diagram is commutative
$$
\xymatrix{
(\mathcal V^\B,\cdot_{X_1-1}) \ar^{\mathrm{Ad}(X_1-1)^{-1}(1-Y_1^{-1})^{-1}\circ\underline{\tilde{\BB}}}[rrrr]
\ar_{\mathrm{mor}_{\mathcal V_\B,X_1-1}}^{\simeq}[d]&&&& F^0(\mathcal V^\B[{1\over{X_1-1}}])^{\otimes2}\\
\mathcal W^{\B}_+\ar_{\Delta^{\mathcal W,\B}}[rrrr]&&&&  (\mathcal W^{\B})^{\otimes2}\ar@{^{(}->}[u]
}
$$
where the top horizontal map is the composition with $\underline{\tilde\rho}$ of tensor product of the maps 
$\mathcal V^\B\to F^0(\mathcal V^\B[{1\over X_1-1}])$ given by $\mathrm{Ad}(X_1-1)^{-1}$ and $\mathrm{Ad}(1-X_1^{-1})^{-1}$
and the right vertical map is the tensor square of $\mathcal W^{\B}\hookrightarrow\mathcal V^\B\hookrightarrow
F^0(\mathcal V^\B[{1\over{X_1-1}}])$. 
\end{lem}

\proof  For $k\in\mathbb Z$, 
\begin{align*}
& \Delta^{\mathcal W,\B}\circ\mathrm{mor}_{\mathcal V^\B,X_1-1}(X_0^k)
=\Delta^{\mathcal W,\B}(X_0^k(X_1-1))= -X_0^k(1-X_1)\otimes 1-1\otimes X_0^k(1-X_1)
\\ & -\sum_{i=1}^{k-1} X_0^{i}(1-X_1) \otimes X_0^{k-i}(1-X_1)
=\mathrm{Ad}(X_1-1)^{-1}(Y_1-1)^{-1}\Big(
(X_1-1)X_0^k\otimes1+1\otimes (X_1-1)X_0^k\\ & -\sum_{i=1}^{k-1} (X_1-1)X_0^{i}\otimes (X_1-1)X_0^{k-i}\Big)
=\mathrm{Ad}((X_1-1)^{-1}(Y_1-1)^{-1}Y_1)(\underline{\tilde{\BB}}(X_0^k)),  
\end{align*}
and similarly 
\begin{align*}
& \Delta^{\mathcal W,\B}\circ\mathrm{mor}_{\mathcal V^\B,X_1-1}(X_0^kX_1^{-1})
=\Delta^{\mathcal W,\B}(X_0^k(1-X_1^{-1}))
=X_0^k(1-X_1^{-1})\otimes 1+1\otimes X_0^k(1-X_1^{-1})
\\ & -\sum_{i=0}^{k} X_0^{i}(1-X_1^{-1}) \otimes X_0^{k-i}(1-X_1^{-1})
=\mathrm{Ad}(X_1-1)^{-1}(Y_1-1)^{-1}\Big(
(X_1-1)X_0^kX_1^{-1}\otimes1 \\ & +1\otimes (X_1-1)X_0^kX_1^{-1}
 -\sum_{i=0}^{k} (X_1-1)X_0^{i}X_1^{-1}\otimes (X_1-1)X_0^{k-i}X_1^{-1}
\Big) 
\\ & =\mathrm{Ad}(X_1-1)^{-1}(Y_1-1)^{-1}\Big((X_1-1)X_0^kX_1^{-1}\otimes X_1^{-1}+X_1^{-1}\otimes (X_1-1)X_0^kX_1^{-1}
\\ & -\sum_{i=1}^{k-1} (X_1-1)X_0^{i}X_1^{-1}\otimes (X_1-1)X_0^{k-i}X_1^{-1}\Big)
=\mathrm{Ad}((X_1-1)^{-1}(Y_1-1)^{-1}Y_1)(\underline{\tilde{\BB}}(X_0^k))(X_1^{-1}\otimes X_1^{-1})
\\ & 
= \mathrm{Ad}((X_1-1)^{-1}(Y_1-1)^{-1}Y_1)(\underline{\tilde{\BB}}(X_0^kX_1^{-1})); 
\end{align*}
in each of these sequence of equalities, the second equality follows from 
\eqref{cop:+} (resp. \eqref{cop:-}), the fourth 
equality follows from Lemma \ref{lemma:65:22:11:2017}. 

It follows that the two maps of the announced diagram agree on the family of elements $X_0^k$, $X_0^kX_1^{-1}$, $k\in\mathbb Z$. 
Since these maps are algebra morphisms, and since this family generates $(\mathcal V^\B,\cdot_{X_1-1})$ (see Lemma 
\ref{lemma:X0k:generating}), this diagram commutes. \hfill\qed\medskip 

\begin{prop}\label{prop:comm:DR:05012017}
The following diagram commutes
\begin{equation}\label{diagram:prop:71}
\xymatrix
{
\mathcal V^\B\ar_{\simeq}^{\diamond}[d]\ar^{\!\!\!\!\!\!\underline\rho}[r]
& 
M_3((\mathcal V^\B)^{\otimes2})\ar^{(X_1-1)^{-1}(1-Y_1^{-1})^{-1}\underline{\mathrm{row}}_1
\cdot(-)\cdot\underline{\mathrm{col}}_1(X_1-1)(1-Y_1^{-1})}_{\diamond\sharp}[rrrrrr]
&&&&&&
F^0({\mathcal V}^\B[\frac{1}{X_1-1}])^{\otimes2}
\\ (\mathcal V^\B,\cdot_{X_1-1})\ar_{\mathrm{mor}_{\mathcal V^\B,X_1-1}}^{\simeq\sharp}[d]&&&&&&
\\ 
\mathcal W^{\B}_+\ar_{\Delta^{\mathcal W,\B}}[rrrrrrr]
&&&&&&& 
(\mathcal W^{\B})^{\otimes2}\ar@{^{(}->}[uu]
}
\end{equation}
where $\underline\rho$ is as in \eqref{underline:B:05012017}, $\underline{\mathrm{row}}_1$, $\underline{\mathrm{col}}_1$, are as in \eqref{def:underline:row:col:04012018}, and $\Delta^{\mathcal W,\DR}$ is as in \S\ref{sect:tcDaD}; in this diagram, all the maps are 
$\mathbf k$-algebra morphisms (resp. compatible with the filtrations), except for the maps marked with $\diamond$ (resp. $\sharp$), which 
are only $\mathbf k$-module morphisms (resp. increase the filtration degrees by $1$).  
\end{prop}

\proof The map marked $\diamond\sharp$ is well-defined as it is the composition of the map $\underline{\mathrm{row}}_1\cdot(-)\cdot
\underline{\mathrm{col}}_1:M_3((\mathcal V^\B)^{\otimes2})\to(\mathcal V^\B)^{\otimes2}$ and of the tensor product of the maps 
$\mathcal V^\B\to F^0(\mathcal V^\B[{1\over X_1-1}])$ given by $\mathrm{Ad}(X_1-1)^{-1}$ and $\mathrm{Ad}(1-X_1^{-1})^{-1}$. 

The commutation of \eqref{diagram:prop:71} follows from the combination the commutative diagram from Lemma 
\ref{CD:tilde:underlineB:Delta*} with the specialization, 
based on using \eqref{722bis}, of the commutative diagram from Lemma \ref{algebraic:lemma:2} to $R=\mathcal V^\B$, 
$S=\mathcal V^\B[{1\over{X_1-1}}]^{\otimes2}$, $e=X_1-1$, $\mathrm{row}=(X_1-1)^{-1}(1-Y_1^{-1})^{-1}\underline{\mathrm{row}}_1$, 
$\mathrm{col}=\underline{\mathrm{col}}_1(X_1-1)(1-Y_1^{-1})$, which yields the commutative diagram 
$$
\xymatrix{\mathcal V^\B \ar^{\underline\rho}[rrr]\ar_\simeq[d]&&& M_3((\mathcal V^\B)^{\otimes2})
\ar^{(X_1-1)^{-1}(1-Y_1^{-1})^{-1}\underline{\mathrm{row}}_1\cdot(-)\cdot
\underline{\mathrm{col}}_1(X_1-1)(1-Y_1^{-1})}[d]\\ 
(\mathcal V^\B,\cdot_{X_1-1})
\ar_{\mathrm{Ad}(X_1-1)^{-1}(1-Y_1^{-1})^{-1}\circ\underline{\tilde\rho}}[rrr]&&& F^0(\mathcal V^\B[{1\over X_1-1}])^{\otimes2}}
$$ 
The filtration statement about $\mathrm{mor}_{\mathcal V^\B,X_1-1}$ follows from: $X_1-1\in F^1\mathcal V^\B$, the filtration of 
$\mathcal V^\B$ is compatible with its algebra structure. The filtration statement about the map marked with $\diamond\sharp$
follows from: same statement regarding $\mathcal V^\B[{1\over X_1-1}]^{\otimes2}$, the components of the row (resp. column) element 
in this map belong to $F^{-1}(\mathcal V^\B[{1\over X_1-1}]^{\otimes2})$ (resp. $F^2(\mathcal V^\B[{1\over X_1-1}]^{\otimes2})$). The 
filtration statement about $\Delta^{\mathcal W,\B}$ is Proposition \ref{lemma:compatibility:Delta:l/r:ideals}. The filtration statement on 
$\underline\rho$ will be proved in Lemma \ref{lemma:compatibility:03012018}. 

\begin{lem}\label{lemma:73:15122017}
Recall that $F_3$ may be viewed as a normal subgroup of $P_5^*$ (see Lemma \ref{lemma:60:16:11:2017}). 
For any $g\in P_5^*$, for any $i\in[\![1,3]\!]$, there exists an element $w(g,i)\in F_3$, such that 
\begin{equation}\label{identity:gxg:gwg}
g^{-1}x_{i5}g=w(g,i)x_{i5}w(g,i)^{-1}. 
\end{equation}
\end{lem}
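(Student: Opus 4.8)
The statement to prove is Lemma \ref{lemma:73:15122017}: for any $g\in P_5^*$ and any $i\in[\![1,3]\!]$, the conjugate $g^{-1}x_{i5}g$ can be written as $w(g,i)\,x_{i5}\,w(g,i)^{-1}$ for some $w(g,i)\in F_3$, where $F_3$ is the normal subgroup $\mathrm{Ker}(\underline{\mathrm{pr}}_5)$ of $P_5^*$ (see Lemma \ref{lemma:60:16:11:2017}), freely generated by $x_{15},x_{25},x_{35}$. The conceptual point is that $x_{i5}$ generates (the image of) a \emph{peripheral} loop, and conjugation by $P_5^*$ preserves its conjugacy class inside $F_3$; concretely I want to show $g^{-1}x_{i5}g$ lies in $F_3$ and is conjugate \emph{within $F_3$} to $x_{i5}$.

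\textbf{First step: reduce to the case $g\in F_3$.} Since $F_3$ is normal in $P_5^*$ with quotient $F_2$ via $\underline{\mathrm{pr}}_5$, and since $x_{i5}\in F_3$ with $\underline{\mathrm{pr}}_5(x_{i5})=1$, for any $g$ the element $g^{-1}x_{i5}g$ lies in $F_3$ (normality), so the question is genuinely internal to $F_3$. Using the splitting $\underline\ell$ of $\underline{\mathrm{pr}}_5$ and the bijection $F_2\times F_3\to P_5^*$ of Lemma \ref{lemma:60:16:11:2017}, write $g=\underline\ell(f)\cdot h$ with $f\in F_2$, $h\in F_3$. Then
$$
g^{-1}x_{i5}g=h^{-1}\bigl(\underline\ell(f)^{-1}x_{i5}\,\underline\ell(f)\bigr)h,
$$
so if I can handle $g=\underline\ell(f)$ (producing a conjugator in $F_3$) and $g=h\in F_3$ (where the conjugator is $h$ itself) separately, the general $w(g,i)$ is obtained by composing the two conjugators, both in $F_3$. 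The case $g=h\in F_3$ is immediate with $w=h$.

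\textbf{Second step: the case $g=\underline\ell(f)$.} It suffices to treat $f$ a generator $X_0,X_1$ (equivalently the two generators $x_{23},x_{12}$ of $\underline\ell(F_2)$), since the general $f$ follows by iterating and multiplying conjugators, all staying in $F_3$. This reduces to computing $x_{12}^{-1}x_{i5}x_{12}$ and $x_{23}^{-1}x_{i5}x_{23}$ for $i\in[\![1,3]\!]$ and exhibiting each as an $F_3$-conjugate of $x_{i5}$. These are exactly the kind of commutator computations already carried out in the proofs of Lemma \ref{lemma:decomp:underline:A:X_1-1} and Lemma \ref{lemma:65:22:11:2017}, which express $(x_{j5}-1)x_{12}$ and $(x_{j5}-1)x_{23}$ using the relations of $P_5^*$; I would extract from those the underlying \emph{group-level} conjugation identities (e.g.\ that $x_{12}$ commutes with $x_{35}$, while conjugating $x_{15},x_{25}$ by $x_{12}$ returns words in $x_{15},x_{25}$, and symmetrically for $x_{23}$). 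The relevant structural fact is Ihara's description (\cite{Ih:G}, \cite{FvB}) of the $P_5^*$-action on $F_3=\pi_1$ of the configuration fiber: $P_5^*$ acts on $F_3$ by automorphisms that fix the conjugacy class of each peripheral generator $x_{i5}$, which is precisely the needed statement.

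\textbf{Main obstacle.} The hard part is the bookkeeping in the second step: verifying that each of $x_{12}^{-1}x_{i5}x_{12}$ and $x_{23}^{-1}x_{i5}x_{23}$ is conjugate to $x_{i5}$ \emph{by an element of $F_3$} (not merely an abstract conjugate in $P_5^*$), and tracking the conjugators through products so that they assemble into a single $w(g,i)\in F_3$. I expect the cleanest route is not brute-force word manipulation but invoking the topological/automorphism-theoretic description: the action of $P_5^*$ on the fundamental group $F_3$ of the fiber is by \emph{basis-conjugating} automorphisms (mapping each free generator to a conjugate of itself), a property inherited from the braid action on surface groups. Making this inheritance precise — i.e.\ identifying $F_3=\mathrm{Ker}(\underline{\mathrm{pr}}_5)$ with such a fiber group and citing that braid-type automorphisms preserve peripheral conjugacy classes — is the step requiring care; once established, the existence of $w(g,i)$ is immediate and the explicit formula need not be computed.
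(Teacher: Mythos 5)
Your proposal is correct and follows essentially the same route as the paper's proof: there, too, one uses the normality of $F_3$ and the decomposition of Lemma \ref{lemma:60:16:11:2017} to reduce to $g\in F_3$ (where conjugation is inner) and to the generators $x_{12},x_{23}$ of $\underline\ell(F_2)$, for which the paper records precisely the group-level conjugation identities you locate in the proofs of Lemmas \ref{lemma:decomp:underline:A:X_1-1} and \ref{lemma:65:22:11:2017} (namely $\theta_{x_{12}}$ fixes $x_{35}$ and sends $x_{15},x_{25}$ to $F_3$-conjugates, and symmetrically for $\theta_{x_{23}}$), the set of such $g$ forming a subgroup (the preimage of the basis-conjugating automorphisms $\mathrm{Aut}^*(F_3)$). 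Only a trivial slip: for $g=h\in F_3$ the conjugator is $w=h^{-1}$, not $h$.
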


\proof  As $F_3$ is normal in $P_5^*$, for any $g\in P_5^*$, the inner automorphism of $P_5^*$ given by 
conjugation by $g^{-1}$ restricts to an automorphism $\theta_g$ of $F_3$. Then $g\mapsto\theta_g$ defines a 
group anti-homomorphism $\theta:P_5^*\to\mathrm{Aut}(F_3)$. Let $\mathrm{Aut}^*(F_3)\index{aut^*F_3@$\mathrm{Aut}^*(F_3)$}$ be the subgroup of $\mathrm{Aut}(F_3)$, consisting of all the automorphisms taking each $x_{i5}$, $i\in[\![1,3]\!]$, 
to a conjugate of this element. When $g\in F_3$, $\theta_g\index{theta_g@$\theta_g$}$ is an inner automorphism of $F_3$, therefore belongs to $\mathrm{Aut}^*(F_3)$. One computes 
$$
\theta_{x_{12}}:x_{15}\mapsto\mathrm{Ad}(x_{15}x_{25})(x_{15}), \ x_{25}\mapsto\mathrm{Ad}(x_{15})(x_{25}), \ x_{35}\mapsto x_{35}, 
$$
$$
\theta_{x_{23}}:x_{15}\mapsto x_{15}, \ x_{25}\mapsto\mathrm{Ad}(x_{25}x_{35})(x_{25}), \ x_{35}\mapsto \mathrm{Ad}(x_{25})(x_{35}), 
$$
which implies that the images by $\theta$ of $x_{12}^{-1}$ and $x_{23}^{-1}$, and therefore also of $F_2$, lie in $\mathrm{Aut}^*(F_3)$. 
Together with Lemma \ref{lemma:60:16:11:2017}, this implies that the image of $P_5^*$ is contained in $\mathrm{Aut}^*(F_3)$,
and therefore the announced statement. 
\hfill\qed\medskip 

\begin{lem}\label{prop:image:under:barA}
Equip $\mathbf k P_5^*$ with the adic filtration of its augmentation ideal. The algebra morphism $\underline\AAA:\mathbf k P_5^*\to 
M_3(\mathbf k P_5^*)$ is compatible with the filtrations on both sides. 
\end{lem}

\proof Let $J:=(\mathbf k P_5^*)_+$. The announced statement means that for any $n\geq0$, 
$\underline\AAA(J^n)\subset M_3(J^n)$. This is obvious for $n=0$, let us prove it for $n=1$. 

As $\underline\AAA$ is linear, and as $J$ is spanned by the $g-1$, where $g\in P_5^*$, it suffices to show that 
$\underline\AAA(g)-\mathrm{id}\in M_3(J)$ for any $g\in P_5^*$, where $\mathrm{id}$ is the unit matrix in $M_3(\mathbf k P_5^*)$. 
Let $i\in[\![1,3]\!]$; then 
$$
(x_{i5}-1)g=g\cdot w(g,i)(x_{i5}-1)w(g,i)^{-1}=g\cdot w(g,i)(x_{i5}-1)+g\cdot w(g,i)(x_{i5}-1)(w(g,i)^{-1}-1)
$$ 
where the first equality follows from (\ref{identity:gxg:gwg}). 

By Lemma \ref{lem:decomp:ZZG}, the map $(\mathbf k F_3)^{\oplus 3}\to(\mathbf k F_3)_+$, $(\underline f_i)_{i\in[\![1,3]\!]}
\mapsto \sum_{i\in[\![1,3]\!]}\underline f_i\cdot (x_{i5}-1)$ is bijective. For $w\in F_3$, we denote by 
$(\underline f_i(w))_{i\in[\![1,3]\!]}$ the preimage of $w-1$ in $(\mathbf k F_3)^{\oplus 3}$. Then 
$\sum_{i\in[\![1,3]\!]}\underline f_i(w)\cdot (x_{i5}-1)=w-1$. Substituting $w$ by $w(g,i)^{-1}$ in 
this identity, one obtains  
$$
(x_{i5}-1)g=g\cdot w(g,i)(x_{i5}-1)+\sum_{j\in[\![1,3]\!]}g\cdot w(g,i)(x_{i5}-1)\underline f_j(w(g,i)^{-1})\cdot (x_{j5}-1). 
$$
It follows that 
$$
\underline\AAA(g)_{ij}=g\cdot w(g,i)\delta_{ij}+g\cdot w(g,i)(x_{i5}-1)\underline f_j(w(g,i)^{-1}). 
$$
As $g\cdot w(g,i)\in P_5^*$, $g\cdot w(g,i)\equiv 1$ mod $J$. Moreover, as $J$ is a two-sided ideal, 
$g\cdot w(g,i)(x_{i5}-1)\underline f_j(w(g,i)^{-1})\in J$. It follows that $\underline\AAA(g)_{ij}\equiv \delta_{ij}$
mod $J$, and therefore that $\underline\AAA(g)-\mathrm{id}\in M_3(J)$. 
Therefore $\underline\AAA(J)\subset M_3(J)$. 

Then for $n\geq1$, $\underline\AAA(J^n)\subset\underline\AAA(J)^n\subset M_3(J)^n\subset M_3(J^n)$. 
\hfill\qed\medskip 

\begin{lem}\label{lemma:compatibility:03012018}
The algebra morphism $\underline\rho$ is compatible with the filtrations of its source and target algebras.  
\end{lem}

\proof The morphisms $\underline\ell$ and $\underline{\mathrm{pr}}_{12}$ are compatible with the filtrations as they can be identified with 
algebra morphisms between group algebras induced by group morphisms, the group algebras being equipped with the adic filtrations of their augmentation ideals. Together with Lemma \ref{prop:image:under:barA}, this implies the result. 
\hfill \qed\qed\medskip 

\subsection{Relationship between braid groups and $\Delta^{\mathcal M,\B}$} \label{sect:proof:T1} 

\begin{defn}\label{def:col:0:Betti:30oct}
Set 
$$
\underline{\mathrm{col}}_0:=\begin{pmatrix} 0 \\ (1-X_1)Y_1^{-1}\cdot 1_\B^{\otimes2} \\ (1-X_1^{-1})Y_1^{-1}
\cdot 1_\B^{\otimes2} \end{pmatrix}\in 
M_{3\times1}((\mathcal M^\B)^{\otimes2}).
\index{col_0_underline@$\underline{\mathrm{col}}_0$} 
$$
\end{defn}

\begin{lem}
Denote by $(a,x)\mapsto ax$ the action of $M_3((\mathcal V^\B)^{\otimes2})$ on 
$M_{3\times 1}((\mathcal M^\B)^{\otimes2})$. Then 
\begin{equation}\label{eq:22022019}
\underline\rho(X_0-1)\underline{\mathrm{col}}_0=0.
\end{equation}
\end{lem}

\proof \eqref{eq:rho:X0} implies the equalities
$$
\underline\rho(X_0)\underline{\mathrm{col}}_0=
\begin{pmatrix} 0\\ Y_1^{-1}Y_0Y_1(1-X_1)Y_1^{-1}\cdot 1_\B^{\otimes2} \\ 
-Y_1^{-1}Y_0Y_1X_1^{-1}(1-X_1)Y_1^{-1}
\cdot 1_\B^{\otimes2} \end{pmatrix}
=\begin{pmatrix} 0\\ (1-X_1)Y_1^{-1}Y_0\cdot 1_\B^{\otimes2} \\ 
(1-X_1^{-1})Y_1^{-1}Y_0\cdot 1_\B^{\otimes2} \end{pmatrix}
=\underline{\mathrm{col}}_0
$$ 
in $M_{3\times 1}((\mathcal V^\B/\mathcal V^\B(X_0-1))^{\otimes2})
=M_{3\times 1}((\mathcal M^\B)^{\otimes2})$. 
\hfill\qed\medskip

\begin{lem}\label{prop:8:11}
The map $(X_1-1)^{-1}(1-Y_1^{-1})^{-1}\underline{\mathrm{row}}_1\cdot(-)\cdot\underline{\mathrm{col}}_0 : M_3((\mathcal V^\B)^{\otimes2})
\to{\mathcal M}^\B[\frac{1}{X_1-1}]^{\otimes2}$ 
defined similarly to the map in Lemma \ref{lemma:6:6}, replacing exponents $\mathrm{DR}$ by $\mathrm{B}$ and $e_1$ by $X_1-1$ 
in localizations, 
has image contained in $F^{-1}({\mathcal M}^\B[\frac{1}{X_1-1}])^{\otimes2}$. 
\end{lem}

\proof This map is given by $(m_{ij})_{i,j\in[\![1,3]\!]}\mapsto \{(Y_1-1)^{-1}Y_1(m_{13}X_1^{-1}-m_{12})Y_1^{-1}(X_1-1)
\}\cdot 1_\B^{\otimes2}
+\{(X_1-1)^{-1}Y_1(m_{22}-m_{23}X_1^{-1})Y_1^{-1}(X_1-1)\}
\cdot 1_\B^{\otimes2}$.  The first (resp. second) summand belongs to 
$F^1({\mathcal M}^\B[\frac{1}{X_1-1}])\otimes F^{-1}({\mathcal M}^\B[\frac{1}{X_1-1}])$ (resp. 
$F^0({\mathcal M}^\B[\frac{1}{X_1-1}])^{\otimes2}$), 
therefore their sum belongs to the announced space. \hfill\qed\medskip 

\begin{lem}
There is a unique map $\underline\delta:\mathcal M^\B\to F^{-1}({\mathcal M}^\B[\frac{1}{X_1-1}])^{\otimes2}$\index{delta_underline@$\underline\delta$}, 
such that the following diagram commutes 
\begin{equation}\label{diag:delta:22022019}
\xymatrix{
\mathcal V^\B\ar^{\underline\rho}[rr]\ar_{(-)\cdot 1_\B}[d] && M_3((\mathcal V^\B)^{\otimes2})
\ar^{
(X_1-1)^{-1}(1-Y_1^{-1})^{-1}\underline{\mathrm{row}}_1
\cdot(-)\cdot\underline{\mathrm{col}}_0}[d]\\ 
\mathcal M^\B\ar_{\!\!\!\!\!\!\!\!\!\!\!\!\!\!\underline\delta}[rr]&&
F^{-1}({\mathcal M}^\B[\frac{1}{X_1-1}])^{\otimes2}}
\end{equation}
It is such that 
\begin{equation}\label{delta:1:B}
\underline\delta(1_\B)=1_\B^{\otimes2}.
\end{equation}
\end{lem}

\proof If $x\in\mathcal V^\B$, then 
$$
(X_1-1)^{-1}(1-Y_1^{-1})^{-1}\underline{\mathrm{row}}_1\cdot\underline\rho(x(X_0-1))\cdot
\underline{\mathrm{col}}_0=
(X_1-1)^{-1}(1-Y_1^{-1})^{-1}\underline{\mathrm{row}}_1\cdot\underline\rho(x)
\underline\rho(X_0-1)\underline{\mathrm{col}}_0=0 
$$
by (\ref{eq:22022019}), therefore $\Big(
(X_1-1)^{-1}(1-Y_1^{-1})^{-1}\underline{\mathrm{row}}_1\cdot(-)\cdot\underline{\mathrm{col}}_0
\Big)\circ\rho(\mathcal V^\B(X_0-1))=0$. The existence and uniqueness of $\underline\delta$ follow. 
One then computes 
\begin{align*}
& \underline\delta(1_\B)=(X_1-1)^{-1}(1-Y_1^{-1})^{-1}\underline{\mathrm{row}}_1
\cdot \underline\rho(1)\cdot \underline{\mathrm{col}}_0
=(X_1-1)^{-1}(1-Y_1^{-1})^{-1}\underline{\mathrm{row}}_1\cdot
\underline{\mathrm{col}}_0
\\ & =
(X_1-1)^{-1}(1-Y_1^{-1})^{-1}\cdot
(1-Y_1)(1-X_1)Y_1^{-1}\cdot 1_\B^{\otimes2}=1_\B^{\otimes2}.
\end{align*}
\hfill\qed\medskip 

\begin{lem}
The map $\underline\delta$ satisfies the identity 
\begin{equation}\label{id:underline:delta:module}
\forall x\in\mathcal W^\B,\quad\forall m\in \mathcal M^\B, \quad
\underline\delta(x\cdot m)=\Delta^{\mathcal W,\B}(x)\cdot \underline\delta(m), 
\end{equation}
where the module structure in the left-hand side (resp. right-hand side) is that of $\mathcal M^\B$ over 
$\mathcal W^\B$ (resp. ${\mathcal M}^\B[\frac{1}{X_1-1}]^{\otimes2}$ over 
$(\mathcal W^\B)^{\otimes2}$).  
\end{lem}

\proof The identity is obvious if $x=1$. Assume now that $x=a(X_1-1)$ with $a\in\mathcal V^\B$ and that 
$m\in \mathcal M^\B=\mathcal V^\B/\mathcal V^\B(X_0-1)$. Let $\tilde m\in\mathcal V^\B$ be a lift of $m$. 
Then 
\begin{align*}
& \underline\delta(x\cdot m)=\underline\delta(a(X_1-1)\cdot m)
=
(X_1-1)^{-1}(1-Y_1^{-1})^{-1}\underline{\mathrm{row}}_1
\cdot \underline\rho(a(X_1-1)\tilde m)\cdot\underline{\mathrm{col}}_0
\\ & =
(X_1-1)^{-1}(1-Y_1^{-1})^{-1}\underline{\mathrm{row}}_1
\cdot \underline\rho(a)\underline\rho(X_1-1)\underline\rho(\tilde m)\cdot\underline{\mathrm{col}}_0
\\ & =
(X_1-1)^{-1}(1-Y_1^{-1})^{-1}\underline{\mathrm{row}}_1
\cdot\underline\rho(a)\cdot \underline{\mathrm{col}}_1\cdot\underline{\mathrm{row}}_1\cdot 
\underline\rho(\tilde m)\cdot\underline{\mathrm{col}}_0
\\ & 
\scriptstyle{= 
(X_1-1)^{-1}(1-Y_1^{-1})^{-1}\underline{\mathrm{row}}_1
\cdot \underline\rho(a)\cdot \underline{\mathrm{col}}_1\cdot (X_1-1)(1-Y_1^{-1})\cdot 
(X_1-1)^{-1}(1-Y_1^{-1})^{-1}\underline{\mathrm{row}}_1\cdot 
\underline\rho(\tilde m)\cdot\underline{\mathrm{col}}_0}
\\ & = \Delta^{\mathcal W,\B}(x)\cdot \underline\delta(m), 
\end{align*} 
where $\underline{\mathrm{row}}_1$ is as in \eqref{def:underline:row:col:04012018}, 
the second equality follows from \eqref{diag:delta:22022019}, the third equality follows from the fact that
$\underline\rho$ is an algebra morphism, the fourth equality follows from \eqref{722bis}, 
and the last equality follows from the combination of 
\eqref{diag:delta:22022019} and the equality 
$$
\Delta^{\mathcal W,\B}(a(X_1-1))=
(X_1-1)^{-1}(1-Y_1^{-1})^{-1}\underline{\mathrm{row}}_1\cdot\underline\rho(a)
\cdot\underline{\mathrm{col}}_1\cdot(X_1-1)(1-Y_1^{-1})
$$
for $a\in\mathcal V^\B$, which follows from \eqref{diagram:prop:71}. 
\hfill\qed\medskip

\begin{prop}\label{prop:22022019}
The following diagram commutes 
\begin{equation}\label{diagram:2202}
\xymatrix{
\mathcal V^\B\ar^{\underline\rho}[rrr]\ar_{(-)\cdot 1_\B}[d] &&& M_3((\mathcal V^\B)^{\otimes2})
\ar^{(X_1-1)^{-1}(1-Y_1^{-1})^{-1}\underline{\mathrm{row}}_1\cdot(-)\cdot\underline{\mathrm{col}}_0}[d]\\ 
\mathcal M^\B\ar_{\Delta^{\mathcal M,\B}}[rr]&&(\mathcal M^\B)^{\otimes2}
\ar@{^(->}[r]&F^{-1}({\mathcal M}^\B[\frac{1}{X_1-1}])^{\otimes2} 
}
\end{equation}
where the right vertical map is as in Proposition \ref{prop:8:11}.  All the maps in this diagram are compatible with the filtrations.  
\end{prop}

\proof 
Combining \eqref{delta:1:B}, \eqref{id:underline:delta:module} and the fact that $\mathcal M^\B$ is a free $\mathcal W^\B$-module 
of rank one with generator $1_\B$, one obtains $\underline\delta=\Delta^{\mathcal W,\B}$, which we inject in 
(\ref{diag:delta:22022019}) to get the result. 

The filtration statement on $\underline\rho$ is Lemma \ref{lemma:compatibility:03012018}. The filtration statement on 
$\Delta^{\mathcal M,\B}$ follows from Lemma \ref{lem:sect:243:28oct}. The filtration statement on the right vertical map 
follows from: the components of the row vector belong to $F^{-1}(\mathcal V^\B[{1\over{X_1-1}}]^{\otimes2})$, the components 
of the column vector belong to $F^1(\mathcal M^\B[{1\over{X_1-1}}]^{\otimes2})$, the action of 
$\mathcal V^\B[{1\over{X_1-1}}]$ on $\mathcal M^\B[{1\over{X_1-1}}]$ is compatible with the filtrations. \hfill\qed\medskip 

\subsection{Completions 
(commutativities of (A1) in \eqref{diagg:main:alg} and (M1) in \eqref{diagram:main:mod})}\label{sect:compl:Betti:30oct}

The following lemma will be used to prove the commutativities 
mentioned in the title of this subsection. 
\begin{lem}\label{lemma:completion:10dec2019}
The commutative diagram \eqref{diagram:prop:71} (resp. \eqref{diagram:2202}) gives rise to a commutative diagram between the
completions of its constituents with respect to the filtrations, in which the completion of the map $(\mathcal W^\B)^{\otimes2}
\hookrightarrow F^0({\mathcal V}^\B[\frac{1}{X_1-1}])^{\otimes2}$ (resp. $(\mathcal M^\B)^{\otimes2}
\hookrightarrow F^{-1}({\mathcal M}^\B[\frac{1}{X_1-1}])^{\otimes2}$) is injective.  
\end{lem}

\proof By Proposition \ref{prop:comm:DR:05012017}, if one equips $\mathcal V^\B$ and $M_3((\mathcal V^\B)^{\otimes2})$ with the 
shifted filtration $F^n(\mathcal V^\B[1]):=F^{n-1}(\mathcal V^\B)$ and $F^n(M_3((\mathcal V^\B)^{\otimes2})[1]):=
M_3(F^{n-1}(\mathcal V^\B)^{\otimes2})$, then \eqref{diagram:prop:71} is a diagram in the category $\mathbf k\text{-mod}_{\fil,+}$. 
By Proposition \ref{prop:22022019}, \eqref{diagram:2202} is similarly a diagram in the same category. Applying the functor
$(-)^\wedge$ (see \S\ref{sect:functors}), one obtains commutative diagrams in $\mathbf k\text{-mod}_{\topo}$. The injectivity of the
completions of the said maps follows from Lemma \ref{lemma:injectivity:completion}, 2), together with the identification of the associated 
graded maps with the maps $(\mathcal W^\DR)^{\otimes2}\to({\mathcal V}^\DR[\frac{1}{e_1}]_{\geq0})^{\otimes2}$ and 
$(\mathcal M^\DR)^{\otimes2}\to{\mathcal M}^\DR[\frac{1}{e_1}]_{\geq-1}^{\otimes2}$ (see Proposition \ref{prop:computation:grVB:grWB}, 
Lemma \ref{lem:size:loc}, 2), Lemma \ref{lem:sect:243:28oct}, Lemma \ref{lemma:ass:gr:M:loc:19dec2019}, 2)), which are known to be injective. 
\hfill\qed\medskip 

\part{Associators and harmonic coproducts}

In this part, we recall the formalism of associators and its interpretation in terms of categories related to braids, and do some 
related computations of matrices (\S\ref{sect:associators:2jan2020}). We combine these computations with the geometric 
interpretations of the Betti and de Rham algebra coproducts obtained in Part \ref{part:geom:int} to show Theorem 
\ref{thm:compat:assoc:algebra:24dec2019}, according to which any associator relates these two algebra coproducts 
(\S\ref{sect:8:19032018}). We follow a similar approach in \S\ref{sect:aamhc:2jan2020} to show Theorem 
\ref{thm:new:crm}, according to which any associator relates the Betti and de Rham module coproducts.  

\section{Associators and comparison isomorphisms}\label{sect:associators:2jan2020}

The purpose of this section is to recall some facts on associators, and in particular how these objects
relate braid groups to infinitesimal braid Lie algebras.

In \S\ref{sect:assoc:20032018}, we recall the definition of the set of associators over $\mathbf k$ with a given parameter 
$\mu\in\mathbf k^\times$ (Definition \ref{def:assoc:20032018}). In \S\ref{subsect:Gamma:fun:20032018}, we recall the construction and properties of $\Gamma$-functions of associators (Lemma \ref{lemma:Gamma:associators}). In 
\S\ref{isos:associators:20033018}, we recall from \cite{BN} that the choice of an associator gives rise to a functor 
from a category $\mathbf{PaB}$ of parenthesized braids to a category $\widehat{\mathbf{PaCD}}$ of parenthesized 
chord diagrams; we compute images of particular morphisms of $\mathbf{PaB}$. This leads, for each associator $(\mu,\Phi)$, to a collection $\mathrm{comp}_{(\mu,\Phi)}^P$ of morphisms from algebras associated to braid groups to algebras associated to 
infinitesimal braid Lie algebras, indexed by parenthesized words $P$ in one letter $\bullet$. In \S\ref{sect:rels:B4}, we express particular elements of the pure braid group $K_4$  in terms of elements 
$\sigma_{a,b}\in B_4$. We apply these results to explicitly compute the images of these elements in the modular group 
$P_5^*$ in the group of invertible elements of the completion $(U\mathfrak p_5)^\wedge$ of the enveloping algebra of 
the infinitesimal braid Lie algebra $\mathfrak p_5$, first under the functor 
$\mathrm{comp}^{(\bullet(\bullet\bullet))\bullet}_{(\mu,\Phi)}$ (\S\ref{comp:of:elements:20032018}, Proposition 
\ref{prop:comp:images:20190728}), then under the functor $\mathrm{comp}^{((\bullet\bullet)\bullet)\bullet}_{(\mu,\Phi)}$ 
(\S\ref{sect:9:6:12nov2019}, Proposition \ref{prop:comp:images:brd:grp:elts}). This leads to the definition of 
matrices $P_{(\mu,\Phi)}$, $R_{(\mu,\Phi)}$ and $\overline P_{(\mu,\Phi)}$, $\overline R_{(\mu,\Phi)}$ (\S\ref{sect:def:matrices:P:R}) 
and to their computation in \S\S\ref{sect:comp:P} and \ref{sect:comp:R}. 

\subsection{The set $\mathsf M(\mathbf k)$ of associators}\label{sect:assoc:20032018}


Let $\mathcal A=F^0\mathcal A\supset F^1\mathcal A\supset\cdots$ be a filtered algebra over $\mathbf k$, complete 
with respect to its filtration. Let $a_0,a_1\in F^1\mathcal A$. Then there is a morphism 
$\mathrm{ev}_{a_0,a_1}\index{ev_a@$\mathrm{ev}_{a_0,a_1}$}:\hat{\mathcal V}^\DR\to\mathcal A$, uniquely determined by the conditions $e_i\mapsto a_i$ for $i=0,1$. 

\begin{defn}
{\it For $\Phi\in \hat{\mathcal V}^\DR$, we set $\Phi(a_0,a_1):=\mathrm{ev}_{a_0,a_1}(\Phi)\index{Phi(a_0,a_1) @ $\Phi(a_0,a_1)$}$
(this is an element of $\mathcal A$).} 
\end{defn}

\begin{rem}
Assume that $\mathcal A=\hat{\mathcal V}^\DR$, $(a_0,a_1):=(e_0,e_1)$, then 
$\mathrm{ev}_{a_0,a_1}=\mathrm{id}_{\hat{\mathcal V}^\DR}$, therefore $\Phi=\Phi(e_0,e_1)\index{Phi(e_0,e_1) @ $\Phi(e_0,e_1)$}$. 
\end{rem}

Recall the graded Lie algebra $\mathfrak t_4$ from \S\ref{subsect:def:LA:morphisms}. 

\begin{defn} \label{def:assoc:20032018} (\cite{Dr})
{\it If $\mu\in\mathbf k$, one defines the set $\mathsf M_\mu(\mathbf k)\index{M_muk @ $\mathsf M_\mu(\mathbf k)$}$ 
to be the set of elements $\Phi\in\hat{\mathcal V}^\DR$, which are group-like for $\hat{\Delta}^{\mathcal V,\DR}$ and such that  
\begin{equation}\label{duality:rel}
\Phi(b,a)=\Phi(a,b)^{-1}, 
\end{equation}
\begin{equation}\label{hexagon+}
e^{(\mu/2)b}\Phi(c,b)e^{(\mu/2)c}\Phi(a,c)e^{(\mu/2)a}\Phi(b,a)=1 
\end{equation}
(equalities in $\mathbf k\langle\langle a,b\rangle\rangle$, called the 2-cycle and hexagon conditions; here $c:=-a-b$), 
$$
\Phi(t_{12},t_{23}+t_{24})\cdot\Phi(t_{13}+t_{23},t_{34})=\Phi(t_{23},t_{34})\cdot\Phi(t_{12}+t_{13},t_{24}+t_{34})\cdot\Phi(t_{12},t_{23}) 
$$
(equality in $(U\mathfrak t_4)^\wedge$, called the pentagon condition).

We call $\mathsf M(\mathbf k):=\{(\mu,\Phi)|\mu\in\mathbf k^\times,\Phi\in
\mathsf M_\mu(\mathbf k)\}$\index{M_k @ $\mathsf M(\mathbf k)$} the set of associators over $\mathbf k$. 
} 
\end{defn}
In \cite{F:pentagon}, it was proved  that the first two equalities are consequences of the latter one, 
where $\mu$ is obtained from the expansion $\Phi=1+{\mu^2\over 24}[e_0,e_1]+$ terms of degree $\geq3$.   
These two equalities also imply the relation 
\begin{equation}\label{hexagon-}
e^{-(\mu/2)b}\Phi(c,b)e^{-(\mu/2)c}\Phi(a,c)e^{-(\mu/2)a}\Phi(b,a)=1 
\end{equation}
in $\mathbf k\langle\langle a,b\rangle\rangle$. 

\begin{rem}
In \cite{Dr}, \S2, it is proved that $\varphi_{\mathrm{KZ}}\in(\hat{\mathcal V}_{\mathbb C}^\DR)^\times$
(see \S\ref{sect:rel:Rac}) belongs to $\mathsf M_1(\mathbb C)$, and that $\mathsf M_\mu(\mathbb Q)$ is nonempty, from 
which one derives that $\mathsf M_\mu(\mathbf k)$ is nonempty for $\mathbf k$ any $\mathbb Q$-algebra and any 
$\mu\in\mathbf k^\times$. 
\end{rem}

\subsection{$\Gamma$-functions}\label{subsect:Gamma:fun:20032018}

Let $\Phi\in\mathsf M_\mu(\mathbf k)$. Let $\varphi_0,\varphi_1$ be the elements of $\hat{\mathcal V}^\DR$
defined by the equality $\Phi=1+\varphi_0 e_0+\varphi_1 e_1$ (equality in $\hat{\mathcal V}^\DR$). 
Let $\overline e_0,\overline e_1\index{e_0, e_1^bar @ $\overline e_0,\overline e_1$}$ be free commutative formal variables; there is a unique continuous $\mathbf k$-algebra morphism 
$\hat{\mathcal V}^\DR\to\mathbf k[[\overline e_0,\overline e_1]]
\index{k[[e_0,e_1]]@$\mathbf k[[\overline e_0,\overline e_1]]$}$, denoted $f\mapsto f^{\mathrm{ab}}\index{f^ab@$f^{\mathrm{ab}}$}$, such that $e_i\mapsto
\overline e_i$ for $i=0,1$.  

\begin{lem}\label{lemma:Gamma:associators} 
Let $\mu\in\mathbf k$ and $\Phi\in\mathsf  M_\mu(\mathbf k)$. Let $\Gamma_\Phi(t)\in\mathbf k[\![t]\!]^\times$ be as in 
\eqref{def:Gamma:fun:of:NC:series}. 
\begin{enumerate}
\item One has the identity 
\begin{equation}\label{identity:Gamma:1/12/2017}
(1+\varphi_1 e_1)^{\mathrm{ab}}={\Gamma_{\Phi}(-\overline e_0)\Gamma_{\Phi}(-\overline e_1)\over
\Gamma_{\Phi}(-\overline e_0-\overline e_1)}
\end{equation}
in $\mathbf k[[\overline e_0,\overline e_1]]$. 

\item $\Gamma_{\Phi}$ satisfies the identity
\begin{equation}\label{funct:id:Gamma:Phi}
\Gamma_{\Phi}(t)\Gamma_{\Phi}(-t)={\mu t\over e^{\mu t/2}-e^{-\mu t/2}}
\end{equation}
in $1+t^2\mathbf k[[t]]$.  
\end{enumerate}
\end{lem}

\proof In \cite{E}, one attaches to $\Phi$ a collection $(\zeta_\Phi(n))_{n\geq2}\index{zeta_\Phi@$\zeta_\Phi(n)$}$ of elements of $\mathbf k$
with the following properties: (a) for $n$ even $\geq 2$, one has $\zeta_\Phi(n)=\mu^n\cdot \zeta(n)/(2\pi \mathrm{i})^n$; 
(b) the series $\tilde\Gamma_\Phi(t)$ (denoted $\Gamma_\Phi(t)$ in \cite{E}) defined by 
$\tilde\Gamma_\Phi(t):=\mathrm{exp}(-\sum_{n\geq2}\zeta_\Phi(n)t^n/n)$ is such that 
\begin{equation}\label{interm:eq}
(1+\varphi_1 e_1)^{\mathrm{ab}}=
{\tilde\Gamma_\Phi(\overline e_0+\overline e_1)\over\tilde\Gamma_\Phi(\overline e_0)\tilde\Gamma_\Phi(\overline e_1)}
\end{equation}
(identity in $\mathbf k[[\overline e_0,\overline e_1]]$). 
Then 
\begin{align*}
& \text{(r.h.s. of (\ref{interm:eq}))}=\mathrm{exp}(-\sum_{n\geq 2}{\zeta_\Phi(n)\over n} 
\{(\overline e_0+\overline e_1)^n-\overline e_0^n-\overline e_1^n\})
\\ 
& =\mathrm{exp}(-\sum_{n\geq 2}{\zeta_\Phi(n)\over n} 
\{n\overline e_0^{n-1}\overline e_1+O(\overline e_1^2)\})  
= 1-\sum_{n\geq 2}\zeta_\Phi(n)\overline e_0^{n-1}\overline e_1+O(\overline e_1^2),   
\end{align*}
and 
$$
\text{(l.h.s. of (\ref{interm:eq}))}=1+\sum_{n\geq 2}(\Phi|e_0^{n-1}e_1)\overline e_0^{n-1}\overline e_1+O(\overline e_1^2). 
$$
Then (\ref{interm:eq}) implies $\zeta_\Phi(n)=-(\Phi|e_0^{n-1}e_1)$ for $n\geq2$. It follows that 
$\Gamma_{\Phi}(t)=1/\tilde\Gamma_\Phi(-t)$, 
where $\Gamma_{\Phi}$ is as in \eqref{def:Gamma:fun:of:NC:series}. Plugging this equality in (\ref{interm:eq}), we obtain 
(\ref{identity:Gamma:1/12/2017}). This proves 1). 

Let us prove 2). Since $\mathrm{log}\Gamma(1-t)=\gamma t+\sum_{n\geq 2}\zeta(n) t^n/n$, 
the series $\mathrm{exp}(2\sum_{n\mathrm{\ even},n\geq 2}{\zeta(n)t^n\over n})$ is equal to 
$\Gamma(1+t)\Gamma(1-t)$. The identities $\Gamma(t+1)=t\Gamma(t)$ and $\Gamma(t)\Gamma(-t)
={t\over{\mathrm{sin}(\pi t)}}$ imply that the latter series equals $(2\pi \mathrm{i} t) / (e^{(2\pi \mathrm{i} t)/2}-
e^{-(2\pi \mathrm{i} t)/2})$, therefore 
$$
\mathrm{exp}(2\sum_{n\mathrm{\ even},n\geq 2}{\zeta(n)t^n\over n})={2\pi \mathrm{i} t\over{e^{(2\pi \mathrm{i} t)/2}-
e^{-(2\pi \mathrm{i} t)/2}}}. 
$$
Then 
\begin{align*}
& \Gamma_{\Phi}(t)\Gamma_{\Phi}(-t)=(\tilde\Gamma_\Phi(t)\tilde\Gamma_\Phi(-t))^{-1}
=\mathrm{exp}(2\sum_{n\mathrm{\ even},n\geq 2}{\zeta_\Phi(n)t^n\over n})
=\mathrm{exp}(2\sum_{n\mathrm{\ even},n\geq 2}{\zeta(n) (\mu t/(2\pi \mathrm{i}))^n\over n})
\\ & ={\mu t\over e^{\mu t/2}-e^{-\mu t/2}},  
\end{align*}
which proves 2). \hfill \qed\medskip 

\begin{rem}
Lemma \ref{lemma:Gamma:associators} can also be derived by combining two results from \cite{F}, namely 
the inclusion result $\mathsf M_\mu(\mathbf k)\subset\mathsf{DMR}_\mu(\mathbf k)$ 
and the result on $\Gamma$-functions for elements of $\mathsf{DMR}_\mu(\mathbf k)$.  
\end{rem}

\begin{lem}
The following identities 
\begin{equation}\label{ids:phi:ab}
\varphi_1(\alpha,\beta)={1\over\beta}\Big({{\Gamma_\Phi(-\alpha)\Gamma_\Phi(-\beta)}\over{\Gamma_\Phi(-\alpha-\beta)}}-1\Big), 
\quad
\varphi_0(\alpha,\beta)=-{1\over\alpha}\Big({{\Gamma_\Phi(-\alpha)\Gamma_\Phi(-\beta)}\over{\Gamma_\Phi(-\alpha-\beta)}}-1\Big), 
\end{equation}
hold in the commutative formal series ring $\mathbf k[[\alpha,\beta]]$. 
\end{lem}

\proof The first identity follows from the image of \eqref{identity:Gamma:1/12/2017} by the isomorphism of formal series rings taking 
$(\overline e_0,\overline e_1)$ to $(\alpha,\beta)$. One has $\Phi(e_0,e_1)\in\mathcal G(\hat{\mathcal V}^\DR)$, which implies that 
$\mathrm{log}\Phi(e_0,e_1)\in\hat{\mathfrak f}_2$. One also knows that the degree 1 component of the series 
$\mathrm{log}\Phi(e_0,e_1)$ in $e_0,e_1$ is zero. This implies $(\mathrm{log}\Phi(e_0,e_1))^{\mathrm{ab}}=0$, therefore 
$\Phi(e_0,e_1)^{\mathrm{ab}}=1$. This implies the equality $1+\alpha\varphi_0(\alpha,\beta)+\beta\varphi_1(\alpha,\beta)=1$. 
Together with the first identity of \eqref{ids:phi:ab}, this implies the second identity of \eqref{ids:phi:ab}. \hfill\qed\medskip 

\subsection{Functors arising from associators}\label{isos:associators:20033018} 

\subsubsection{The category $\mathbf{PaB}$}\label{sect:9.1.3.0306}

For $n\geq0$, let $\mathrm{Par}_n$\index{Parn@$\mathrm{Par}_n$} be the set of parenthesizations of the word $\underbrace{\bullet\bullet\cdots\bullet}_{n\text{ letters}}$. 
Set $\mathrm{Par}:=\sqcup_{n\geq 0}\mathrm{Par}_n$\index{Par@$\mathrm{Par}$}. The set Par is a monoid with product denoted $(P,Q)\mapsto PQ$. 
For $P\in\mathrm{Par}$, we denote by $|P|$ the integer $n$ such that $P\in\mathrm{Par}_n$. 

We denote by $\mathbf{PaB}$\index{PaB@$\mathbf{PaB}$} the category with set of objects given by $\mathrm{Ob}(\mathbf{PaB}):=\mathrm{Par}$
and set of morphisms given by $\mathbf{PaB}(P,Q):=\emptyset$ if $|P|\neq|Q|$, $\mathbf{PaB}(P,Q):=B_{|P|}$ if $|P|=|Q|$, where 
$B_{|P|}$ is the Artin braid group with $|P|$ strands. 

The composition $\mathbf{PaB}(P,Q)\times\mathbf{PaB}(Q,R)\to\mathbf{PaB}(P,R)$ is given by the product in $B_{|P|}$
if $|P|=|Q|=|R|$, more precisely $((P\stackrel{f}{\to}Q),(Q\stackrel{g}{\to}R))\mapsto
(P\stackrel{gf}{\to}R)$ for $f,g\in B_{|P|}$. 

We now introduce particular morphisms of $\mathbf{PaB}$: 

(a) for $P,Q,R\in\mathrm{Par}$, we denote by 
$$
a_{P,Q,R}\in\mathbf{PaB}((PQ)R,P(QR))
\index{a_PQR@$a_{P,Q,R}$}
$$
the image of $1\in B_{|P|+|Q|+|R|}\simeq\mathbf{PaB}((PQ)R,P(QR))$;   

(b) for $P\in\mathrm{Par}$ and $b\in B_{|P|}$, we denote by 
$$
b_P\in\mathbf{PaB}(P)
\index{b_p@$b_P$}
$$ 
the image of $b\in B_{|P|}\simeq\mathbf{PaB}(P)$;  

(c) for $P,Q\in\mathrm{Par}$, we denote by 
$$
\sigma_{P,Q}\in\mathbf{PaB}(PQ,QP)
\index{sigma_PQ@$\sigma_{P,Q}$}
$$
the image of $\sigma_{|P|,|Q|}\in B_{|P|+|Q|}\simeq\mathbf{PaB}(PQ,QP)$, where $\sigma_{|P|,|Q|}$ is as in \eqref{def:sigma:a:b}.  

\subsubsection{The category $\widehat{\mathbf{PaCD}}$}

For $n\geq1$, the permutation group $S_n\index{S_n@$S_n$}$ acts by automorphisms of $\mathfrak t_n$ by permutation of indices via $\sigma\cdot t_{ij}
=t_{\sigma(i)\sigma(j)}$. 
This action gives rise to an algebra structure on the tensor product 
$(U\mathfrak t_n)^\wedge\otimes\mathbf k S_n$, and to a topological Hopf algebra on it, defined by the conditions that 
the elements of $S_n$ be group-like and the elements of $\mathfrak t_n$ be primitive; the resulting topological Hopf algebra structure
is denoted by $(U\mathfrak t_n)^\wedge\rtimes S_n\index{UT_nkS_n@ $U(\mathfrak t_n)^\wedge\rtimes\mathbf k S_n$}$.

Following \cite{BN}, we denote by $\widehat{\mathbf{PaCD}}$\index{PaCD^@$\widehat{\mathbf{PaCD}}$} the category with set of objects $\mathrm{Par}$ and sets of morphisms given by 
$\widehat{\mathbf{PaCD}}(P,Q):=(U\mathfrak t_{|P|})^\wedge\rtimes S_{|P|}$ if $|P|=|Q|$ and $\widehat{\mathbf{PaCD}}(P,Q):=0$ 
otherwise, and with composition induced by the product in $(U\mathfrak t_{|P|})^\wedge\rtimes S_{|P|}$ in the same way as the composition 
in $\mathbf{PaB}$ is induced by the product in braid groups. 

We also denote by $\widehat{\mathbf{PaCD}^*}$\index{PaCD^ast@$\widehat{\mathbf{PaCD}^*}$} the analogue of $\widehat{\mathbf{PaCD}}$, where the Lie algebra $\mathfrak t_{|P|}$
is replaced by its quotient $\mathfrak p_{|P|+1}$. One then has a natural functor $\widehat{\mathbf{PaCD}}\to\widehat{\mathbf{PaCD}^*}$. 

\subsubsection{The functor $\mathrm{comp}_{(\mu,\Phi)}:\mathbf{PaB}\to\widehat{\mathbf{PaCD}}$}\label{sect:9:3:3:20191203}

It follows from \cite{BN}, Theorem 1, that each 
pair $(\mu,\Phi)$, with $\mu\in\mathbf k^\times$ and $\Phi\in\mathsf M_\mu(\mathbf k)$, gives rise to a functor 
$$
\mathrm{comp}_{(\mu,\Phi)}:\mathbf{PaB}\to\widehat{\mathbf{PaCD}}
\index{comp@$\mathrm{comp}_{(\mu,\Phi)}$}
$$
determined by 
\begin{equation}\label{image:sigma:11}
\mathrm{comp}_{(\mu,\Phi)}(\sigma_{\bullet,\bullet})=\mathrm{exp}(-{\mu\over 2}t_{12})\cdot \bigl(\begin{smallmatrix}
    1 & 2 \\
    2 & 1
  \end{smallmatrix}\bigr)\in\widehat{\mathbf{PaCD}}(\bullet\bullet), 
\end{equation}
\begin{equation}\label{image:iso}
\mathrm{comp}_{(\mu,\Phi)}(a_{\bullet,\bullet,\bullet})=\Phi(t_{12},t_{23})
\in\widehat{\mathbf{PaCD}}((\bullet\bullet)\bullet,\bullet(\bullet\bullet)), 
\end{equation}
and its compatibility with operations of extensions, cabling and strand removal from \cite{BN} 
in both the source and target categories.  

Let us again denote by 
$$
\mathrm{comp}_{(\mu,\Phi)}:\mathbf{PaB}\to\widehat{\mathbf{PaCD}^*}
$$
the composition of functors $\mathbf{PaB}\stackrel{\mathrm{comp}_{(\mu,\Phi)}}{\to}\widehat{\mathbf{PaCD}}\to\widehat{\mathbf{PaCD}^*}$.

For $P,Q\in\mathrm{Par}$ with $|P|=|Q|=n$, we will denote by 
$$
\mathrm{comp}_{(\mu,\Phi)}^{P,Q}:B_n\to(U\mathfrak p_{n+1})^\wedge\rtimes S_n
\index{comp^PQ@$\mathrm{comp}_{(\mu,\Phi)}^{P,Q}$}
$$
the composed map 
$$
B_n\simeq \mathbf{PaB}(P,Q)
\stackrel{\mathrm{comp}^{P,Q}_{(\mu,\Phi)}}{\to}\widehat{\mathbf{PaCD}^*}(P,Q)
\simeq(U\mathfrak p_{n+1})^\wedge\rtimes S_n. 
$$
It corestricts to a map $B_n\to((U\mathfrak p_{n+1})^\wedge)^\times\rtimes S_n$, which is a group morphism 
denoted $\mathrm{comp}_{(\mu,\Phi)}^P:=\mathrm{comp}_{(\mu,\Phi)}^{P,P}$
\index{comp^P@$\mathrm{comp}_{(\mu,\Phi)}^{P}$}
 when $Q=P$.  

The map $\mathrm{comp}_{(\mu,\Phi)}^{P,Q}$ restricts and corestricts to a map $K_n\to
(U\mathfrak p_{n+1})^\wedge$, which factors through a map 
$P_{n+1}^*\to(U\mathfrak p_{n+1})^\wedge$. The latter map induces an isomorphism 
$\hat{\mathcal V}^\B(\mathfrak M_{0,{n+1}})\stackrel{\sim}{\to}\hat{\mathcal V}^\DR(\mathfrak M_{0,{n+1}})$, 
where $\hat{\mathcal V}^\B(\mathfrak M_{0,{n+1}}):=(\mathbf kP_{n+1}^*)^\wedge$, 
$\hat{\mathcal V}^\DR(\mathfrak M_{0,{n+1}}):=(U\mathfrak p_{n+1})^\wedge$. 

It follows from the categorical formalism that the algebra isomorphisms 
$\mathrm{comp}_{(\mu,\Phi)}^{(\bullet(\bullet\bullet))\bullet}$ and 
$\mathrm{comp}_{(\mu,\Phi)}^{((\bullet\bullet)\bullet)\bullet} : 
\hat{\mathcal V}^\B(\mathfrak M_{0,5})\to\hat{\mathcal V}^\DR(\mathfrak M_{0,5})$ are related by 
\begin{equation}\label{relation:comp:comp}
\mathrm{comp}_{(\mu,\Phi)}^{(\bullet(\bullet\bullet))\bullet}=\mathrm{Ad}(\Phi(e_{12},e_{23}))\circ
\mathrm{comp}_{(\mu,\Phi)}^{((\bullet\bullet)\bullet)\bullet}
\end{equation}

\begin{rem}
There are natural identifications $\mathrm{comp}_{(\mu,\Phi)}^{\mathcal V,(1)}\simeq\mathrm{comp}_{(\mu,\Phi)}^{(\bullet\bullet)\bullet}$ and $\mathrm{comp}_{(\mu,\Phi)}^{\mathcal V,(10)}\simeq
\mathrm{comp}_{(\mu,\Phi)}^{\bullet(\bullet\bullet),(\bullet\bullet)\bullet}$.
 \end{rem}

\subsubsection{Images by $\mathrm{comp}_{(\mu,\Phi)}$ of particular morphisms}

\begin{lem}\label{lemma:images}

Let $\mu\in\mathbf k^\times$ and $\Phi\in\mathsf M_\mu(\mathbf k)$. 

1) If $P,Q$ are in $\mathrm{Par}$, with $|P|=a$, $|Q|=b$, then 
\begin{align}\label{image:sigma}
& \mathrm{comp}_{(\mu,\Phi)}(\sigma_{P,Q}) =
 \bigl(\begin{smallmatrix}    1 & \cdots & a& a+1 & \cdots & a+b \\    b+1 & \cdots & a+b& 1 & \cdots & b\end{smallmatrix}\bigr) 
 \cdot \mathrm{exp}({\mu\over 2}\sum_{\beta\in a+[\![1,b]\!]}e_{\beta,a+b+1}
 +\mu\sum_{\alpha<\beta\in a+[\![1,b]\!]}e_{\alpha,\beta})
  \\ & \nonumber
  = \bigl(\begin{smallmatrix}
    1 & \cdots & a& a+1 & \cdots & a+b \\
    b+1 & \cdots & a+b& 1 & \cdots & b
  \end{smallmatrix}\bigr) \cdot 
  \mathrm{exp}({\mu\over 2}\sum_{\alpha\in [\![1,a]\!]}e_{\alpha,a+b+1}
 +\mu\sum_{\alpha<\beta\in [\![1,a]\!]}e_{\alpha,\beta})
   \in\widehat{\mathbf{PaCD}^*}(PQ,QP). 
\end{align}

2) If $P,Q,R$ are in $\mathrm{Par}$, with $|P|=a$, $|Q|=b$, $|R|=c$, then 
\begin{equation}\label{image:a}
\mathrm{comp}_{(\mu,\Phi)}(a_{P,Q,R})=\Phi(\sum_{\gamma\in a+b+[\![1,c]\!]}e_{\gamma,a+b+1},
\sum_{\alpha\in[\![1,a]\!]}e_{\alpha,a+b+1})
\in\widehat{\mathbf{PaCD}^*}((PQ)R,P(QR)). 
\end{equation}
\end{lem}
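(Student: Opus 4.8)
The statement asks for explicit formulas for the images under the functor $\mathrm{comp}_{(\mu,\Phi)}:\mathbf{PaB}\to\widehat{\mathbf{PaCD}^*}$ of the two basic types of structural morphisms of $\mathbf{PaB}$ when the parenthesized objects $P,Q,R$ are arbitrary (of sizes $a,b,c$), starting from the known images \eqref{image:sigma} and \eqref{image:iso} of the elementary morphisms $\sigma_{\bullet,\bullet}$ and $a_{\bullet,\bullet,\bullet}$ where each object is a single point $\bullet$.

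Let me sketch how I would prove this.

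---

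**The approach.** The key principle is that $\mathrm{comp}_{(\mu,\Phi)}$ is a functor *compatible with the cabling (strand-doubling) and strand-removal operations* of Bar-Natan (\cite{BN}). The morphisms $\sigma_{P,Q}$ and $a_{P,Q,R}$ for general $P,Q,R$ are obtained from the elementary ones $\sigma_{\bullet,\bullet}$, $a_{\bullet,\bullet,\bullet}$ by replacing each of the single points by a "bunch" of $a$, $b$, or $c$ parallel strands, i.e.\ by applying the cabling operation. Since $\mathrm{comp}_{(\mu,\Phi)}$ intertwines cabling on the braid side with the corresponding cabling operation on the chord-diagram side, the images are computed by applying the chord-diagram cabling operation to the elementary images.

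First I would recall precisely the cabling operation on $\widehat{\mathbf{PaCD}^*}$: replacing strand $i$ by $n_i$ strands sends a generator $t_{ij}$ (equivalently $e_{ij}$ in $\mathfrak p_{n+1}$) to the sum $\sum_{\alpha,\beta} e_{\alpha\beta}$ over all pairs $(\alpha,\beta)$ where $\alpha$ ranges over the bunch replacing $i$ and $\beta$ over the bunch replacing $j$, and sends a permutation to the induced block permutation. This is the only nontrivial input and I would cite \cite{BN} for it.

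---

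**Step for the braid $\sigma_{P,Q}$ (part 1).** Starting from \eqref{image:sigma} for $\sigma_{\bullet,\bullet}$, I would cable the first point into $a$ strands and the second into $b$ strands. The permutation $\bigl(\begin{smallmatrix}1&2\\2&1\end{smallmatrix}\bigr)$ becomes the block transposition $\bigl(\begin{smallmatrix}1&\cdots&a&a+1&\cdots&a+b\\b+1&\cdots&a+b&1&\cdots&b\end{smallmatrix}\bigr)$. The chord $t_{12}$ becomes $\sum_{\alpha\in[\![1,a]\!],\beta\in a+[\![1,b]\!]} e_{\alpha\beta}$; but to match the stated closed form I must rewrite this exponent. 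Here I use the defining relation $\sum_{j\ne i} e_{ij}=0$ in $\mathfrak p_5$ (i.e.\ $\mathfrak p_{a+b+1}$ in general, using the extra index $a+b+1$ coming from $\infty$). This relation lets me convert a "cross" sum $\sum_{\alpha\le a,\,\beta>a}e_{\alpha\beta}$ into $\tfrac12\sum_{\beta}e_{\beta,a+b+1}+\sum_{\alpha<\beta\le a+b}e_{\alpha\beta}$ restricted to one block — yielding the two equivalent closed forms displayed. The equality of the two expressions in \eqref{image:sigma} is then itself a consequence of the sphere relations, symmetric in swapping the two blocks.

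**Step for the associator $a_{P,Q,R}$ (part 2).** Starting from \eqref{image:iso}, $a_{\bullet,\bullet,\bullet}\mapsto\Phi(t_{12},t_{23})$, I cable the three points into $a$, $b$, $c$ strands. By the same cabling rule, $t_{12}$ (the chord between the first and second point) becomes the cross-sum over the first two blocks, and $t_{23}$ becomes the cross-sum over the last two blocks. Again using the sphere relation $\sum_{j\ne i}e_{ij}=0$ (with index $a+b+1$) to rewrite these cross-sums in terms of the $e_{\gamma,a+b+1}$ and $e_{\alpha,a+b+1}$, I obtain exactly the two arguments $\sum_{\gamma\in a+b+[\![1,c]\!]}e_{\gamma,a+b+1}$ and $\sum_{\alpha\in[\![1,a]\!]}e_{\alpha,a+b+1}$ of $\Phi$ in the claimed formula \eqref{image:a}.

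---

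**The main obstacle.** The genuinely delicate step is the bookkeeping of the sphere relations: one must verify that after cabling, the cross-block sums of the $e_{ij}$ really do collapse to the compact expressions involving only indices paired with the auxiliary index $a+b+1$, and that this is consistent with the passage from $\widehat{\mathbf{PaCD}}$ (where $\mathfrak t_n$ has no such relation) to $\widehat{\mathbf{PaCD}^*}$ (where $\mathfrak p_{n+1}$ does). Concretely, the equality of the two alternative forms in \eqref{image:sigma} encodes precisely the identity $\sum_{\beta\in a+[\![1,b]\!]}e_{\beta,a+b+1}+2\sum_{\alpha<\beta\in a+[\![1,b]\!]}e_{\alpha\beta}=\sum_{\alpha\in[\![1,a]\!]}e_{\alpha,a+b+1}+2\sum_{\alpha<\beta\in[\![1,a]\!]}e_{\alpha\beta}$ in $\mathfrak p_{a+b+1}$, which I would verify directly from $\sum_{j\ne i}e_{ij}=0$. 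Everything else is a formal consequence of functoriality and Bar-Natan's compatibility axioms, so I would keep those citations explicit and concentrate the written proof on the relation-chasing.
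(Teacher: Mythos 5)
Your treatment of part 1) coincides with the paper's proof: by Bar-Natan's cabling compatibility, $\mathrm{comp}_{(\mu,\Phi)}(\sigma_{P,Q})$ is the block permutation times $\mathrm{exp}(-{\mu\over2}\sum_{\alpha\in A,\beta\in B}t_{\alpha\beta})$ with $A=[\![1,a]\!]$, $B=a+[\![1,b]\!]$, and since the relevant relation is applied inside a \emph{single linear} exponent, the sphere relation $\sum_{j\neq i}e_{ij}=0$ rewrites $-{1\over2}\sum_{\alpha\in A,\beta\in B}e_{\alpha\beta}$ verbatim into the two displayed forms. That part is correct and is exactly what the paper does.

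For part 2), however, there is a genuine gap. Cabling gives $\mathrm{comp}_{(\mu,\Phi)}(a_{P,Q,R})=\Phi(e_{AB},e_{BC})$, where $e_{AB}=\sum_{\alpha\in A,\beta\in B}e_{\alpha\beta}$, etc., but, contrary to your claim, these cross-sums do \emph{not} collapse via the sphere relation to the stated arguments: in $\mathfrak p_{a+b+c+1}$ (note the ambient Lie algebra is $\mathfrak p_{a+b+c+1}$, so the auxiliary index should be $a+b+c+1$) one has $\sum_{\gamma\in C}e_{\gamma,a+b+c+1}=e_{AB}+z$ and $\sum_{\alpha\in A}e_{\alpha,a+b+c+1}=e_{BC}+z'$, with $z=-e_{AB}-e_{AC}-e_{BC}-e_C$ and $z'=-e_{AB}-e_{AC}-e_{BC}-e_A$; using the vanishing in $\mathfrak p_{a+b+c+1}$ of the image of the center of $\mathfrak t_{a+b+c}$ one computes, e.g., $z'={1\over2}(e_B+e_C-e_A)$, which is nonzero in general (already when $b\geq2$ or $c\geq2$). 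So the substitution into the noncommutative two-variable series $\Phi$ is not a rewriting of equal elements, and the linearity that saved you in part 1) is unavailable here. The missing step, which is the actual content of the paper's proof of \eqref{image:a}, is to verify from the \emph{full} infinitesimal braid relations (notably $[e_{ij},e_{ik}+e_{jk}]=0$ and $[e_{ij},e_{kl}]=0$ for disjoint indices, not the sphere relation alone) the commutations $[z,z']=[z,e_{AB}]=[z,e_{BC}]=[z',e_{AB}]=[z',e_{BC}]=0$, and then to invoke the hypothesis that $\Phi$ is the exponential of a Lie series \emph{without linear terms}: every bracket monomial of degree $\geq2$ in $(e_{AB}+z,e_{BC}+z')$ then equals the corresponding monomial in $(e_{AB},e_{BC})$, whence $\Phi(e_{AB}+z,e_{BC}+z')=\Phi(e_{AB},e_{BC})$. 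Without this commutator-chasing and the no-linear-term argument, your proof of part 2) does not go through.
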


\proof
1) It follows from \eqref{image:sigma:11} that if $P,Q$ are in $\mathrm{Par}$, with $|P|=a$, $|Q|=b$, then 
$$
\mathrm{comp}_{(\mu,\Phi)}(\sigma_{P,Q})=
\bigl(\begin{smallmatrix}    1 & \cdots & a& a+1 & \cdots & a+b \\    b+1 & \cdots & a+b& 1 & \cdots & b\end{smallmatrix}\bigr) 
 \cdot \mathrm{exp}(-{\mu\over 2}\sum_{\alpha\in A,\beta\in B}t_{\alpha,\beta})
\in\widehat{\mathbf{PaCD}}(PQ,QP),  
$$
where $A:=[\![1,a]\!]$, $B:=a+[\![1,b]\!]$. 
The image of $-{1\over 2}\sum_{\alpha\in A,\beta\in B}t_{\alpha,\beta}$ under $\mathfrak t_{a+b}\to\mathfrak p_{a+b+1}$ is 
$$ -{1\over2}\sum_{\alpha\in A,\beta\in B}e_{\alpha,\beta} 
={1\over 2}\sum_{\beta\in B}(-\sum_{\alpha\in A}e_{\alpha,\beta}-\sum_{\beta'\in B}e_{\beta,\beta'})
 +\sum_{\alpha<\beta\in B}e_{\alpha,\beta}= {1\over 2}\sum_{\beta\in B}e_{\beta,a+b+1}
 +\sum_{\alpha<\beta\in B}e_{\alpha,\beta}. 
$$
By symmetry, one also has 
$$ 
-{1\over2}\sum_{\alpha\in A,\beta\in B}e_{\alpha,\beta} 
={1\over 2}\sum_{\alpha\in A}e_{\alpha,a+b+1}+\sum_{\alpha<\beta\in B}e_{\alpha,\beta}. 
$$
This implies that the image of $\mathrm{comp}_{(\mu,\Phi)}(\sigma_{P,Q})$ in $\widehat{\mathbf{PaCD}^*}(PQ,QP)$
is the announced value. 

2) 
It follows from \eqref{image:iso} that if $P,Q,R$ are in $\mathrm{Par}$, with $|P|=a$, $|Q|=b$, $|R|=c$, then 
$$
\mathrm{comp}_{(\mu,\Phi)}(a_{P,Q,R})=\Phi(\sum_{\alpha\in A,\beta\in B}t_{\alpha,\beta},
\sum_{\beta\in B,\gamma\in C}t_{\beta,\gamma})
\in\widehat{\mathbf{PaCD}}((PQ)R,P(QR)), 
$$
where $A:=[\![1,a]\!]$, $B:=a+[\![1,b]\!]$, $C:=a+b+[\![1,c]\!]$. 

For $I\subset[\![1,a+b+c]\!]$, set $e_I:=\sum_{i,i'\in I}e_{i,i'}$, and for $I,J\subset[\![1,a+b+c]\!]$ disjoint, set $e_{IJ}:=\sum_{i\in I,j\in J}e_{i,j}$. 
Then 
$$ 
\sum_{\gamma\in C}e_{\gamma,a+b+c+1}=-\sum_{\gamma\in C,\alpha\in A}e_{\alpha,\gamma}
-\sum_{\gamma\in C,\beta\in B}e_{\beta,\gamma}-\sum_{\gamma,\gamma'\in C}e_{\gamma,\gamma'}
=z+e_{AB}, 
$$
where 
$$
z=-e_{AB}-e_{AC}-e_{BC}-e_C.
$$
Similarly,
$$ 
\sum_{\alpha\in A}e_{\alpha,a+b+c+1}=z'+e_{BC},
$$
where 
$$
z'=-e_{AB}-e_{AC}-e_{BC}-e_A, 
$$
One immediately has $[e_{AB},e_C]=[e_{BC},e_A]=0$. Moreover, the relations $[e_{\alpha,\beta}+e_{\alpha',\beta},e_{\alpha,\alpha'}]=0$
for $\alpha,\alpha'\in A$ and $\beta\in B$,  
and $[e_{\alpha'',\beta},e_{\alpha,\alpha'}]=0$ for $\beta\in B$ and $\alpha,\alpha',\alpha''\in A$ all distinct, 
imply that $[e_{AB},e_A]=0$. Similarly, $[e_{AC},e_A]=0$ and $[e_{AC},e_C]=[e_{BC},e_C]=0$. Finally, 
\begin{align*}
&
\sum_{\alpha,\alpha'\in A,\beta\in B,\gamma\in C}[e_{\alpha,\gamma},e_{\alpha',\beta}]
+\sum_{\alpha\in A,\beta,\beta'\in B,\gamma\in C}[e_{\beta,\gamma},e_{\alpha,\beta'}]
=\sum_{\alpha\in A,\beta\in B,\gamma\in C}[e_{\alpha,\gamma},e_{\alpha,\beta}]
+\sum_{\alpha\in A,\beta\in B,\gamma\in C}[e_{\beta,\gamma},e_{\alpha,\beta}]
\\&=\sum_{\alpha\in A,\beta\in B,\gamma\in C}[e_{\alpha,\gamma}+e_{\beta,\gamma},e_{\alpha,\beta}]=0
\end{align*} 
therefore $[e_{AC}+e_{BC},e_{AB}]=0$. 

All this implies
$$
[z,z']=[z,e_{AB}]=[z',e_{BC}]=[z,e_{BC}]=[z',e_{AB}]=0, 
$$
which, together with the fact that $\Phi$ is the exponential of a Lie series without linear terms, implies that 
$\mathrm{comp}_{(\mu,\Phi)}(a_{P,Q,R})\in\widehat{\mathbf{PaCD}^*}((PQ)R,P(QR))$ is the announced value. 
\hfill \qed\medskip 

\subsection{Relations in $B_4$}\label{sect:rels:B4}

\begin{lem}\label{lemma: elements and equalities in Bn}
One has the following equalities in $B_4$:  
\begin{equation}\label{equality:x15}
\tilde x_{1,5}=\sigma_{3,1}\sigma_{1,3}, 
\end{equation}
\begin{equation}\label{equality:x25:bis}
\tilde x_{2,5}=\sigma_{2,2}\cdot\tilde x_{4,5}\cdot\sigma_{2,2}^{-1}. 
\end{equation}
\begin{equation}\label{equality:x35}
\tilde x_{3,5}=\sigma_{1,3}\cdot\tilde x_{4,5}\cdot\sigma_{1,3}^{-1}.
\end{equation}
\begin{equation}\label{equality:x45}
\tilde x_{4,5}=\sigma_{1,3}\sigma_{3,1}, 
\end{equation}
\begin{equation}\label{equality:x25}
\tilde x_{2,5}=\sigma_{1,3}\sigma_{2,2}^{-1}\cdot\tilde x_{1,5}\cdot\sigma_{2,2}\sigma_{1,3}^{-1}, 
\end{equation}
\end{lem}

\proof
These equalities follow from the fact their left-hand sides can be represented as indicated in Figure \ref{picture for lemma1.1}.
\begin{figure}[h]
\begin{tabular}{c}
  \begin{minipage}{0.2\hsize}
      \begin{center} 
  \begin{tikzpicture}
      \draw[-] (0,2)--(1.5,1);
      \draw[color=white, line width=3pt] (0,1)--(0.5,2) (0.5,1)--(1,2) (1,1)--(1.5,2) ; 
      \draw[-] (0,1)--(0.5,2) (0.5,1)--(1,2) (1,1)--(1.5,2) ; 
      \draw[-] (0,1)--(0.5,0) (0.5,1)--(1,0) (1,1)--(1.5,0) ;
      \draw[color=white, line width=3pt] (0,0)--(1.5,1) ; 
      \draw[-] (0,0)--(1.5,1) ; 
      \draw[color=white, line width=3pt] (1,3.75)--(1,3.75) (1,-1.35)--(1,-1.35) ;
  \end{tikzpicture}    \\ $\tilde x_{1,5}$
     \end{center}
 \end{minipage}
 \begin{minipage}{0.2\hsize}
      \begin{center}
  \begin{tikzpicture}
      \draw[-] (0,3)--(1,4) (0.5,3)--(1.5,4) ; 
      \draw[color=white, line width=3pt] (0,4)--(1,3) (0.5,4)--(1.5,3) ; 
      \draw[-] (0,4)--(1,3) (0.5,4)--(1.5,3) ;     

      \draw[-](-0.5,2.5)..controls(-0.75,2.25) and (1.7,2.25)  ..(1.5,2);
      \draw[color=white, line width=3pt] (0,3)--(0,2) (0.5,3)--(0.5,2) (1,3)--(1,2); 
      \draw[-] (0,3)--(0,2) (0.5,3)--(0.5,2) (1,3)--(1,2); 
      \draw[color=white, line width=3pt](1.5,3)..controls(1.7,2.75) and (-0.75,2.75)  ..(-0.5,2.5);
      \draw[-](1.5,3)..controls(1.7,2.75) and (-0.75,2.75)  ..(-0.5,2.5);
            
      \draw[-] (-0,2)--(1,1) (0.5,2)--(1.5,1) ;           
      \draw[color=white, line width=3pt] (1,2)--(0,1) (1.5,2)--(0.5,1);  
      \draw[-] (1,2)--(0,1) (1.5,2)--(0.5,1);     
      \draw[color=white, line width=3pt] (1.2,5)--(1.2,5) (0,0)--(0,0) ;
  \end{tikzpicture}  \\ $\tilde x_{2,5}$
      \end{center}
 \end{minipage}
 \begin{minipage}{0.2\hsize}
      \begin{center}
  \begin{tikzpicture}
      \draw[-] (0.5,3)--(2,4) ; 
      \draw[color=white, line width=3pt] (0.5,4)--(1,3) (1,4)--(1.5,3) (1.5,4)--(2,3) ;
      \draw[-] (0.5,4)--(1,3) (1,4)--(1.5,3) (1.5,4)--(2,3) ; 
            
      \draw[-](0,2.5)..controls(-0.25,2.25) and (2.2,2.25)  ..(2,2);
      \draw[color=white, line width=3pt] (0.5,3)--(0.5,2) (1,3)--(1,2) (1.5,3)--(1.5,2); 
      \draw[-] (0.5,3)--(0.5,2) (1,3)--(1,2) (1.5,3)--(1.5,2); 
      \draw[color=white, line width=3pt](2,3)..controls(2.2,2.75) and (-0.25,2.75)  ..(0,2.5);
      \draw[-](2,3)..controls(2.2,2.75) and (-0.25,2.75)  ..(0,2.5);      
      
      \draw[-](0.5,2)--(2,1) ;
      \draw[color=white, line width=3pt] (1,2)--(0.5,1) (1.5,2)--(1,1) (2,2)--(1.5,1) ;    
      \draw[-] (1,2)--(0.5,1) (1.5,2)--(1,1) (2,2)--(1.5,1) ; 
      
      \draw[color=white, line width=3pt] (1.2,5.2)--(1.2,5.2) (0,0.)--(0,0.) ;
  \end{tikzpicture}  \\ $\tilde x_{3,5}$
    \end{center}
 \end{minipage}
 \begin{minipage}{0.2\hsize}
      \begin{center}
  \begin{tikzpicture}
        \draw[-](0,2.5)..controls(-0.25,2.25) and (2.2,2.25)  ..(2,2);
      \draw[color=white, line width=3pt] (0.5,3)--(0.5,2) (1,3)--(1,2) (1.5,3)--(1.5,2); 
      \draw[-] (0.5,3)--(0.5,2) (1,3)--(1,2) (1.5,3)--(1.5,2); 
      \draw[color=white, line width=3pt](2,3)..controls(2.2,2.75) and (-0.25,2.75)  ..(0,2.5);
      \draw[-](2,3)..controls(2.2,2.75) and (-0.25,2.75)  ..(0,2.5);

      \draw[color=white, line width=3pt] (1.2,5.)--(1.2,5.) (0,0)--(0,0) ;

  \end{tikzpicture}  \\ $\tilde x_{4,5}$
       \end{center}
 \end{minipage}
 \begin{minipage}{0.2\hsize}
      \begin{center}
  \begin{tikzpicture}
      \draw[-] (0,4)--(1.5,5) ; 
      \draw[color=white, line width=3pt] (0,5)--(0.5,4) (0.5,5)--(1,4) (1,5)--(1.5,4) ;  
      \draw[-] (0,5)--(0.5,4) (0.5,5)--(1,4) (1,5)--(1.5,4) ; 

      \draw[-] (0,4)--(1,3) (0.5,4)--(1.5,3) ;     
      \draw[color=white, line width=3pt](0,3)--(1,4) (0.5,3)--(1.5,4) ;    
      \draw[-] (0,3)--(1,4) (0.5,3)--(1.5,4) ; 
      
      \draw[-](0,3)..controls(-0.25,2.75) and (2.2,2.75)  ..(2,2.5);
      \draw[color=white, line width=3pt] (0.5,3)--(0.5,2) (1,3)--(1,2) (1.5,3)--(1.5,2); 
      \draw[-] (0.5,3)--(0.5,2) (1,3)--(1,2) (1.5,3)--(1.5,2); 
      \draw[color=white, line width=3pt](2,2.5)..controls(2.2,2.25) and (-0.25,2.25)  ..(0,2);
      \draw[-](2,2.5)..controls(2.2,2.25) and (-0.25,2.25)  ..(0,2);

      \draw[-] (1,2)--(0,1) (1.5,2)--(0.5,1);     
      \draw[color=white, line width=3pt](0,2)--(1,1) (0.5,2)--(1.5,1) ; 
      \draw[-] (0,2)--(1,1) (0.5,2)--(1.5,1) ;     
      
      \draw[-](0,1)--(1.5,0) ;     
      \draw[color=white, line width=3pt] (0.5,1)--(0,0) (1,1)--(0.5,0) (1.5,1)--(1,0) ;     
      \draw[-] (0.5,1)--(0,0) (1,1)--(0.5,0) (1.5,1)--(1,0) ;    
  \end{tikzpicture}  \\ $\tilde x_{2,5}$
     \end{center}
 \end{minipage}

\end{tabular}
  \caption{Proof of Lemma \ref{lemma: elements and equalities in Bn}}
   \label{picture for lemma1.1}
\end{figure}
\hfill\qed\medskip 

\subsection{Computations of images of $\tilde x_{i5}$ under $\mathrm{comp}_{(\mu,\Phi)}^{((\bullet\bullet)\bullet)\bullet}$}\label{comp:of:elements:20032018} 

\subsubsection{Relations in $\mathbf{PaB}$}
 
\begin{lem}\label{lemma:dnb:2}
1) The automorphism $(\tilde x_{1,5})_{((\bullet\bullet)\bullet)\bullet}$ of the object $((\bullet\bullet)\bullet)\bullet$ of the category $\mathbf{PaB}$ is equal to the composition
$$
((\bullet\bullet)\bullet)\bullet\stackrel{a_{\bullet\bullet,\bullet,\bullet}}{\to}
(\bullet\bullet)(\bullet\bullet)\stackrel{a_{\bullet,\bullet,\bullet\bullet}}{\to}
\bullet(\bullet(\bullet\bullet))\stackrel{(\tilde x_{15})_{\bullet(\bullet(\bullet\bullet))}}{\to}
\bullet(\bullet(\bullet\bullet))\stackrel{a_{\bullet,\bullet,\bullet\bullet}^{-1}}{\to}
(\bullet\bullet)(\bullet\bullet)\stackrel{a_{\bullet\bullet,\bullet,\bullet}^{-1}}{\to}
((\bullet\bullet)\bullet)\bullet
$$
of isomorphisms of this category. 

2) The automorphism $(\tilde x_{2,5})_{((\bullet\bullet)\bullet)\bullet}$ of the object $((\bullet\bullet)\bullet)\bullet$ of the category $\mathbf{PaB}$ is equal to the composition
\begin{align*}
& ((\bullet\bullet)\bullet)\bullet\stackrel{a_{\bullet\bullet,\bullet,\bullet}}{\to}
(\bullet\bullet)(\bullet\bullet)\stackrel{\sigma^{-1}_{\bullet\bullet,\bullet\bullet}}{\to}
(\bullet\bullet)(\bullet\bullet)\stackrel{a^{-1}_{\bullet\bullet,\bullet,\bullet}}{\to}
((\bullet\bullet)\bullet)\bullet\stackrel{(\tilde x_{45})_{((\bullet\bullet)\bullet)\bullet}}{\to}
\\ & 
((\bullet\bullet)\bullet)\bullet\stackrel{a_{\bullet\bullet,\bullet,\bullet}}{\to}
(\bullet\bullet)(\bullet\bullet)\stackrel{\sigma_{\bullet\bullet,\bullet\bullet}}{\to}
(\bullet\bullet)(\bullet\bullet)\stackrel{a_{\bullet\bullet,\bullet,\bullet}^{-1}}{\to}
((\bullet\bullet)\bullet)\bullet
\end{align*}
of isomorphisms of this category. 

3) The automorphism $(\tilde x_{3,5})_{((\bullet\bullet)\bullet)\bullet}$ of the object $((\bullet\bullet)\bullet)\bullet$ of the category $\mathbf{PaB}$ is equal to the composition
$$
((\bullet\bullet)\bullet)\bullet\stackrel{\sigma^{-1}_{\bullet,(\bullet\bullet)\bullet}}{\to}
\bullet((\bullet\bullet)\bullet)\stackrel{a^{-1}_{\bullet,\bullet\bullet,\bullet}}{\to}
(\bullet(\bullet\bullet))\bullet\stackrel{(\tilde x_{45})_{(\bullet(\bullet\bullet))\bullet}}{\to}
(\bullet(\bullet\bullet))\bullet\stackrel{a_{\bullet,\bullet\bullet,\bullet}}{\to}
\bullet((\bullet\bullet)\bullet)\stackrel{\sigma_{\bullet,(\bullet\bullet)\bullet}}{\to}
((\bullet\bullet)\bullet)\bullet
$$
of isomorphisms of this category. 

4) The automorphism $(\tilde x_{4,5})_{((\bullet\bullet)\bullet)\bullet}$ of the object $((\bullet\bullet)\bullet)\bullet$ of the category $\mathbf{PaB}$ is equal to the composition
$$
((\bullet\bullet)\bullet)\bullet\stackrel{\sigma_{(\bullet\bullet)\bullet,\bullet}}{\to}
\bullet((\bullet\bullet)\bullet)\stackrel{\sigma_{\bullet,(\bullet\bullet)\bullet}}{\to}
((\bullet\bullet)\bullet)\bullet
$$
of isomorphisms of this category. 
\end{lem}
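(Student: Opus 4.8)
The plan is to reduce each of the four assertions to an identity between braids in $B_4$. The key point is that in $\mathbf{PaB}$ every Hom-set between objects of length $4$ is identified with $B_4$, and under these identifications composition is just multiplication in $B_4$: by the composition convention $(Q\stackrel{g}{\to}R)\circ(P\stackrel{f}{\to}Q)=(P\stackrel{gf}{\to}R)$, a chain $P_0\stackrel{f_1}{\to}P_1\stackrel{f_2}{\to}\cdots\stackrel{f_k}{\to}P_k$ has underlying braid $f_k f_{k-1}\cdots f_1$, the last-applied morphism appearing leftmost. Moreover the structural morphisms have explicit underlying braids: $a_{P,Q,R}$ is the image of $1$ and so contributes the trivial braid, $\sigma_{P,Q}$ contributes $\sigma_{|P|,|Q|}$, and $b_P$ contributes $b$. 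Since two morphisms of $\mathbf{PaB}$ with common source and target agree exactly when their underlying braids coincide, and since every composition in the statement starts and ends at $((\bullet\bullet)\bullet)\bullet$, it suffices, for each part, to (i) check that the listed chain of arrows is composable, i.e.\ that the intermediate parenthesizations match, and (ii) compute the resulting product of underlying braids and identify it with $\tilde x_{i,5}$.

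Step (i) is a direct inspection using the typings $a_{P,Q,R}\colon (PQ)R\to P(QR)$ and $\sigma_{P,Q}\colon PQ\to QP$; for instance in 1) the chain visits $((\bullet\bullet)\bullet)\bullet$, $(\bullet\bullet)(\bullet\bullet)$, $\bullet(\bullet(\bullet\bullet))$ and returns, each arrow being of the prescribed type. For step (ii) I would treat the four parts as follows. In 1) the only nontrivial underlying braid along the chain is that of $(\tilde x_{1,5})_{\bullet(\bullet(\bullet\bullet))}$, all other arrows being associativities; hence the composite braid is $\tilde x_{1,5}$, which is precisely the underlying braid of $(\tilde x_{1,5})_{((\bullet\bullet)\bullet)\bullet}$, so this part merely records that re-bracketing preserves the underlying braid. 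In 2) the composite braid is $\sigma_{1,3}\sigma_{3,1}$, which equals $\tilde x_{4,5}$ by \eqref{equality:x45}. In 3) the associativities drop out and the composite braid is $\sigma_{2,2}\,\tilde x_{4,5}\,\sigma_{2,2}^{-1}$, which equals $\tilde x_{2,5}$ by \eqref{equality:x25:bis}. In 4) the composite braid is $\sigma_{1,3}\,\tilde x_{4,5}\,\sigma_{1,3}^{-1}$, which equals $\tilde x_{3,5}$ by \eqref{equality:x35}.

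There is no genuine obstacle here; the argument is pure bookkeeping built on top of Lemma \ref{lemma: elements and equalities in Bn}. The only points demanding care are the orientation of the composition convention (ensuring the product is read with the last-applied morphism on the left, so that the conjugations in 3) and 4) indeed produce $\sigma_{a,b}$ on the left and $\sigma_{a,b}^{-1}$ on the right, matching the stated equalities) and the verification that each intermediate object in the chains carries the correct parenthesization. Once these are settled, equations \eqref{equality:x45}, \eqref{equality:x25:bis} and \eqref{equality:x35} close cases 2)--4) at once, while 1) is the trivial re-bracketing identity.
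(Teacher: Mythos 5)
Your proposal is correct and follows essentially the same route as the paper: the paper formalizes your ``underlying braid'' reduction via a forgetful functor $\mathrm{forget}:\mathbf{PaB}\to\mathbf B$ to the groupoid of braid groups, uses the bijectivity of $\mathbf{PaB}(P,Q)\to\mathbf B(|P|,|Q|)$ to reduce each item to an equality in $B_4$, and then closes cases 2)--4) by the same citations of \eqref{equality:x45}, \eqref{equality:x25:bis} and \eqref{equality:x35}, with 1) being the trivial re-bracketing identity exactly as you say. Your careful tracking of the composition convention and of the intermediate parenthesizations matches the paper's implicit bookkeeping, so nothing is missing.
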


\proof Let $\mathbf B$ be the groupoid with set of objets equal to $\mathbb Z_{\geq 0}$ and sets of morphisms given by 
$\mathbf B(n):=B_n$ if $n\geq0$ (with the convention $B_0=1$) and $\mathbf B(n,m):=\emptyset$ if $m\neq n$. 

There is a functor
$\mathrm{forget}:\mathbf{PaB}\to\mathbf{B}$ given by $\mathrm{forget}(P):=|P|$ at the level of objects and defined at the level of 
morphisms by the condition that the map $\mathrm{forget}:\mathbf{PaB}(P,Q)\to \mathbf{B}(|P|,|Q|)$ is given by the composition 
$\mathbf{PaB}(P,Q)\simeq B_{|P|}\simeq\mathbf{B}(|P|,|Q|)$ when $|Q|=|P|$, and is the only self-map of the empty set if $|Q|\neq|P|$. 

Then 
\begin{equation}\label{to:be:used}
\forall P,Q\in\mathrm{Ob}(\mathbf{PaB}), \quad \mathbf{PaB}(P,Q)\to\mathbf B(|P|,|Q|) \text{ is an isomorphism.} 
\end{equation}

Each of the items of the lemma states the equality of two elements of $\mathbf{PaB}((\bullet(\bullet\bullet))\bullet)$. In view 
of \eqref{to:be:used}, this equality is a consequence of the equality of their images in $\mathbf B(4)=B_4$. The latter equality is itself a consequence of Lemma \ref{lemma: elements and equalities in Bn}: \eqref{equality:x25:bis} (resp. 
\eqref{equality:x35}, \eqref{equality:x45}) implies the equality of elements of $B_4$ relevant to 2) (resp. 3), 4)).  
\hfill\qed\medskip 

\begin{rem}
The equalities from Lemma \ref{lemma:dnb:2} can be translated in the graphical language of \cite{BN} as
the statement that for $i\in[\![1,4]\!]$, the isomorphism $(\tilde x_{i5})_{((\bullet\bullet)\bullet)\bullet}$ of the object 
$((\bullet\bullet)\bullet)\bullet$ of the category $\mathbf{PaB}$ is equal to the composition of morphisms 
indicated in Figure \ref{picture for lemma3.3}. 
\begin{figure}[h]
\begin{tabular}{c}
  \begin{minipage}{0.25\hsize}
      \begin{center} 
  \begin{tikzpicture}

      \foreach \x in {0,0.5,1, 1.5} \foreach \y in {0,1,2,3,4,5}
      \draw[-,thin] (\x,5)--(\x,3)  (\x,2)--(\x,0);  

      \draw[-](0,3)..controls(-0.25,2.75) and (2.2,2.75)  ..(2,2.5);
      \draw[color=white, line width=3pt] (0.5,3)--(0.5,2) (1,3)--(1,2) (1.5,3)--(1.5,2); 
      \draw[-] (0.5,3)--(0.5,2) (1,3)--(1,2) (1.5,3)--(1.5,2); 
      \draw[color=white, line width=3pt](2,2.5)..controls(2.2,2.25) and (-0.25,2.25)  ..(0,2);
      \draw[-](2,2.5)..controls(2.2,2.25) and (-0.25,2.25)  ..(0,2); 
      
      \foreach \x in {0,0.5,1, 1.5} \foreach \y in {0,1,2,3,4,5}
      \draw[fill=black](\x,\y) circle(2pt);

\draw(-0.2,5) node{$(($};
\draw(0.7,5)node{$)$};
\draw(1.2,5)node{$)$};

\draw(-0.2,4) node{$($};
\draw(0.6,4)node{$)$};
\draw(0.9,4)node{$($};
\draw(1.7,4)node{$)$};

\draw(0.4,3) node{$($};
\draw(0.9,3)node{$($};
\draw(1.7,3)node{$))$};

\draw(0.4,2) node{$($};
\draw(0.9,2)node{$($};
\draw(1.7,2)node{$))$};

\draw(-0.2,1) node{$($};
\draw(0.6,1)node{$)$};
\draw(0.9,1)node{$($};
\draw(1.7,1)node{$)$};

\draw(-0.2,0) node{$(($};
\draw(0.7,0)node{$)$};
\draw(1.2,0)node{$)$};

 \draw[color=white, line width=3pt] (0.5,-1)--(0.5,-1) (0.5,6)--(0.5,6);
      
 \end{tikzpicture}    \\ $(\tilde x_{1,5})_{((\bullet\bullet)\bullet)\bullet}$
     \end{center}
 \end{minipage}
 \begin{minipage}{0.25\hsize}
      \begin{center}
  \begin{tikzpicture}
      \foreach \x in {0.5,1, 1.5, 2 } 
      \draw[-] (\x,6)--(\x,5) ;     
      
      \draw[-] (0.5,4)--(1.5,5) (1,4)--(2,5); 
      \draw[color=white, line width=3pt] (0.5,5)--(1.5,4) (1,5)--(2,4) ; 
      \draw[-] (0.5,5)--(1.5,4) (1,5)--(2,4) ; 
 
      \foreach \x in {0.5,1, 1.5, 2 } 
      \draw[-] (\x,4)--(\x,3) ;
                     
      \draw[-](0,2.5)..controls(-0.25,2.25) and (2.2,2.25)  ..(2,2);
      \draw[color=white, line width=3pt] (0.5,3)--(0.5,2) (1,3)--(1,2) (1.5,3)--(1.5,2); 
      \draw[-] (0.5,3)--(0.5,2) (1,3)--(1,2) (1.5,3)--(1.5,2); 
      \draw[color=white, line width=3pt](2,3)..controls(2.2,2.75) and (-0.25,2.75)  ..(0,2.5);
      \draw[-](2,3)..controls(2.2,2.75) and (-0.25,2.75)  ..(0,2.5);      

      \foreach \x in {0.5,1, 1.5, 2 } 
      \draw[-] (\x,2)--(\x,1) ; 
      
      \draw[-] (0.5,1)--(1.5,0) (1,1)--(2,0); 
      \draw[color=white, line width=3pt] (0.5,0)--(1.5,1) (1,0)--(2,1) ; 
      \draw[-](0.5,0)--(1.5,1) (1,0)--(2,1) ; 
 
      \foreach \x in {0.5,1, 1.5, 2 } 
      \draw[-] (\x,0)--(\x,-1) ; 
                  
      \foreach \x in {0.5,1, 1.5, 2} \foreach \y in {-1,0,1,2,3,4,5,6}
      \draw[fill=black](\x,\y) circle(2pt);

\draw(0.3,6) node{$(($};
\draw(1.2,6)node{$)$};
\draw(1.7,6)node{$)$};

\draw(0.3,5) node{$($};
\draw(1.1,5)node{$)$};
\draw(1.4,5)node{$($};
\draw(2.2,5)node{$)$};

\draw(0.3,4) node{$($};
\draw(1.1,4)node{$)$};
\draw(1.4,4)node{$($};
\draw(2.2,4)node{$)$};

\draw(0.3,3) node{$(($};
\draw(1.1,3)node{$)$};
\draw(1.7,3)node{$)$};

\draw(0.3,2) node{$(($};
\draw(1.1,2)node{$)$};
\draw(1.7,2)node{$)$};

\draw(0.3,1) node{$($};
\draw(1.1,1)node{$)$};
\draw(1.4,1)node{$($};
\draw(2.2,1)node{$)$};

\draw(0.3,0) node{$($};
\draw(1.1,0)node{$)$};
\draw(1.4,0)node{$($};
\draw(2.2,0)node{$)$};

\draw(0.3,-1) node{$(($};
\draw(1.2,-1)node{$)$};
\draw(1.7,-1)node{$)$}; 
  \end{tikzpicture}  \\ $(\tilde x_{2,5})_{((\bullet\bullet)\bullet)\bullet}$
     \end{center}
 \end{minipage}
 \begin{minipage}{0.25\hsize}
      \begin{center}
  \begin{tikzpicture}
        
      \draw[-] (0.5,4)--(2,5); 
      \draw[color=white, line width=3pt](0.5,5)--(1,4) (1,5)--(1.5,4) (1.5,5)--(2,4);
      \draw[-] (0.5,5)--(1,4) (1,5)--(1.5,4) (1.5,5)--(2,4); 
      
      \foreach \x in {0.5,1, 1.5, 2 } 
      \draw[-] (\x,4)--(\x,3) ;   
            
      \draw[-](0,2.5)..controls(-0.25,2.25) and (2.2,2.25)  ..(2,2);
      \draw[color=white, line width=3pt] (0.5,3)--(0.5,2) (1,3)--(1,2) (1.5,3)--(1.5,2); 
      \draw[-] (0.5,3)--(0.5,2) (1,3)--(1,2) (1.5,3)--(1.5,2); 
      \draw[color=white, line width=3pt](2,3)..controls(2.2,2.75) and (-0.25,2.75)  ..(0,2.5);
      \draw[-](2,3)..controls(2.2,2.75) and (-0.25,2.75)  ..(0,2.5);      

       \foreach \x in {0.5,1, 1.5, 2 } 
      \draw[-] (\x,2)--(\x,1) ; 
      
        \draw[-] (0.5,1)--(2,0); 
      \draw[color=white, line width=3pt](0.5,0)--(1,1) (1,0)--(1.5,1) (1.5,0)--(2,1);
      \draw[-] (0.5,0)--(1,1) (1,0)--(1.5,1) (1.5,0)--(2,1); 
                  
      \foreach \x in {0.5,1, 1.5, 2} \foreach \y in {0,1,2,3,4,5}
      \draw[fill=black](\x,\y) circle(2pt);

\draw(0.3,5) node{$(($};
\draw(1.2,5)node{$)$};
\draw(1.7,5)node{$)$};

\draw(0.8,4) node{$(($};
\draw(1.6,4)node{$)$};
\draw(2.2,4)node{$)$};

\draw(0.3,3) node{$($};
\draw(0.8,3)node{$($};
\draw(1.7,3)node{$))$};

\draw(0.3,2) node{$($};
\draw(0.8,2)node{$($};
\draw(1.7,2)node{$))$};

\draw(0.8,1) node{$(($};
\draw(1.6,1)node{$)$};
\draw(2.2,1)node{$)$};

\draw(0.3,0) node{$(($};
\draw(1.2,0)node{$)$};
\draw(1.7,0)node{$)$};

 \draw[color=white, line width=3pt] (0.5,-1.2)--(0.5,-1.2) (0.5,6.3)--(0.5,6.3);

        \end{tikzpicture}  \\ $(\tilde x_{3,5})_{((\bullet\bullet)\bullet)\bullet}$
    \end{center}
 \end{minipage}
 \begin{minipage}{0.25\hsize}
      \begin{center}
  \begin{tikzpicture}
        \draw[-](0,2.5)..controls(-0.25,2.25) and (2.2,2.25)  ..(2,2);
      \draw[color=white, line width=3pt] (0.5,3)--(0.5,2) (1,3)--(1,2) (1.5,3)--(1.5,2); 
      \draw[-] (0.5,3)--(0.5,2) (1,3)--(1,2) (1.5,3)--(1.5,2); 
      \draw[color=white, line width=3pt](2,3)..controls(2.2,2.75) and (-0.25,2.75)  ..(0,2.5);
      \draw[-](2,3)..controls(2.2,2.75) and (-0.25,2.75)  ..(0,2.5);
      \draw[color=white, line width=3pt] (1.2,5)--(1.2,5.01) (0,0.5)--(0,0.51) ;

    \foreach \x in {0.5,1, 1.5, 2} \foreach \y in {2,3}
      \draw[fill=black](\x,\y) circle(2pt);

\draw(0.2,3) node{$(($};
\draw(1.2,3)node{$)$};
\draw(1.7,3)node{$)$};

\draw(0.2,2) node{$(($};
\draw(1.2,2)node{$)$};
\draw(1.7,2)node{$)$};

 \draw[color=white, line width=3pt] (0.5,-1.25)--(0.5,-1.25) (0.5,6.25)--(0.5,6.25);

  \end{tikzpicture}  \\ $(\tilde x_{4,5})_{((\bullet\bullet)\bullet)\bullet}$
       \end{center}
 \end{minipage}
\end{tabular}
  \caption{$(\tilde x_{i,5})_{((\bullet\bullet)\bullet)\bullet}$}
            \label{picture for lemma3.3}
\end{figure}
\end{rem}

\subsubsection{Computation of images of morphisms in $\mathbf{PaB}$}

\begin{prop}\label{prop:comp:images:20190728}
Let $\mu\in\mathbf k^\times$ and $\Phi\in \mathsf M_\mu(\mathbf k)$. One has
\begin{equation}\label{image:partI:x15}
\mathrm{comp}_{(\mu,\Phi)}^{{((\bullet\bullet)\bullet)\bullet}}(\tilde x_{1,5})=
\mathrm{Ad}\Big(\Phi(e_{4,5},e_{12,5})^{-1}\Phi(e_{34,5},e_{1,5})^{-1}
\Big)(\mathrm{exp}(\mu e_{1,5})), 
\end{equation}
\begin{equation}\label{image:partI:x25}
\mathrm{comp}_{(\mu,\Phi)}^{{((\bullet\bullet)\bullet)\bullet}}(\tilde x_{2,5})
=\mathrm{Ad}\Big(
\Phi(e_{4,5},e_{12,5})^{-1}\mathrm{exp}({\mu\over 2}e_{34,5})\Phi(e_{34,5},e_{2,5})^{-1}
\Big)
(\mathrm{exp}(\mu e_{2,5})), 
\end{equation}
\begin{equation}\label{image:partI:x35}
\mathrm{comp}_{(\mu,\Phi)}^{((\bullet\bullet)\bullet)\bullet}(\tilde x_{3,5})
= \mathrm{Ad}\Big( \mathrm{exp}({\mu\over 2}e_{4,5})\Phi(e_{3,5},e_{4,5})\Big)( \mathrm{exp}(\mu e_{3,5})), 
\end{equation}
\begin{equation}\label{image:partI:x45}
\mathrm{comp}_{(\mu,\Phi)}^{((\bullet\bullet)\bullet)\bullet}(\tilde x_{4,5})
=\mathrm{exp}(\mu e_{4,5}). 
\end{equation}
\end{prop}

\proof For $i\in[\![1,4]\!]$, $\mathrm{comp}_{(\mu,\Phi)}^{((\bullet\bullet)\bullet)\bullet}(\tilde x_{i,5})$ is equal to $\mathrm{comp}_{(\mu,\Phi)}((\tilde x_{i,5})_{((\bullet\bullet)\bullet)\bullet})$.  One computes
\begin{align}\label{09072019toto}
& \nonumber
\mathrm{comp}_{(\mu,\Phi)}((\tilde x_{1,5})_{\bullet(\bullet(\bullet\bullet))})=
\mathrm{comp}_{(\mu,\Phi)}(\sigma_{\bullet(\bullet\bullet),\bullet})\circ
\mathrm{comp}_{(\mu,\Phi)}(\sigma_{\bullet,\bullet(\bullet\bullet)}) 
 \\ & 
=\bigl(\begin{smallmatrix} 1 & 2 &3&4 \\ 2&3&4&1\end{smallmatrix}\bigr) 
 \cdot \mathrm{exp}({\mu\over 2}e_{45})\cdot
 \bigl(\begin{smallmatrix} 1 & 2 &3&4 \\ 4&1&2&3\end{smallmatrix}\bigr) 
 \cdot \mathrm{exp}({\mu\over 2}e_{15}) =\mathrm{exp}(\mu e_{15}),   
\end{align}
where the second equality follows from the first equality in \eqref{image:sigma} for 
$(P,Q)=(\bullet(\bullet\bullet),\bullet)$ and from the second equality in \eqref{image:sigma} for 
$(P,Q)=(\bullet,\bullet(\bullet\bullet))$, and the last equality is a computation in $(U\mathfrak p_5)^\wedge\rtimes S_4$. 

Then 
\begin{align*}
& 
\mathrm{comp}_{(\mu,\Phi)}((\tilde x_{1,5})_{((\bullet\bullet)\bullet)\bullet})=
\mathrm{comp}_{(\mu,\Phi)}(a_{\bullet\bullet,\bullet,\bullet})^{-1}\circ 
\mathrm{comp}_{(\mu,\Phi)}(a_{\bullet,\bullet,\bullet\bullet})^{-1}
\\ & \circ 
\mathrm{comp}_{(\mu,\Phi)}((\tilde x_{1,5})_{\bullet(\bullet(\bullet\bullet))})
\circ\mathrm{comp}_{(\mu,\Phi)}(a_{\bullet,\bullet,\bullet\bullet})\circ
\mathrm{comp}_{(\mu,\Phi)}(a_{\bullet\bullet,\bullet,\bullet})
\\&=\Phi(e_{4,5},e_{12,5})^{-1}\cdot\Phi(e_{34,5},e_{1,5})^{-1}\cdot \mathrm{exp}(\mu e_{15})\cdot\Phi(e_{34,5},e_{1,5})\cdot\Phi(e_{4,5},e_{12,5}),  
\end{align*}
where the first equality follows from Lemma \ref{lemma:dnb:2}, 1), and the second equality follows from \eqref{image:a} for 
$(P,Q,R)=(\bullet\bullet,\bullet,\bullet)$ and $(\bullet,\bullet,\bullet\bullet)$ and from \eqref{09072019toto}. 
This proves \eqref{image:partI:x15}. 

One computes
\begin{align*}
& \nonumber
\mathrm{comp}_{(\mu,\Phi)}((\tilde x_{4,5})_{((\bullet\bullet)\bullet)\bullet})=
\mathrm{comp}_{(\mu,\Phi)}(\sigma_{\bullet,(\bullet\bullet)\bullet})\circ 
\mathrm{comp}_{(\mu,\Phi)}(\sigma_{(\bullet\bullet)\bullet,\bullet})
 \\ & 
=\bigl(\begin{smallmatrix} 1 & 2 &3&4 \\ 4&1&2&3\end{smallmatrix}\bigr) 
 \cdot \mathrm{exp}({\mu\over 2}e_{15}) \cdot
\bigl(\begin{smallmatrix} 1 & 2 &3&4 \\ 2&3&4&1\end{smallmatrix}\bigr) 
 \cdot \mathrm{exp}({\mu\over 2}e_{4,5})=\mathrm{exp}(\mu e_{4,5}),     
\end{align*}
where the first equality follows from Lemma \ref{lemma:dnb:2}, 2), and the second 
equality follows from the second equality in \eqref{image:sigma} for $(P,Q)=(\bullet,(\bullet\bullet)\bullet)$
and from the first equality in \eqref{image:sigma} for $(P,Q)=(\bullet,(\bullet\bullet)\bullet)$. This proves \eqref{image:partI:x45}. 

Then 
\begin{align*}
& 
\mathrm{comp}_{(\mu,\Phi)}((\tilde x_{2,5})_{((\bullet\bullet)\bullet)\bullet})=
\mathrm{comp}_{(\mu,\Phi)}(a_{\bullet\bullet,\bullet,\bullet})^{-1}\circ
\mathrm{comp}_{(\mu,\Phi)}(\sigma_{\bullet\bullet,\bullet\bullet})\circ
\mathrm{comp}_{(\mu,\Phi)}(a_{\bullet\bullet,\bullet,\bullet})
\\ & 
\circ\mathrm{comp}_{(\mu,\Phi)}((\tilde x_{4,5})_{((\bullet\bullet)\bullet)\bullet})\circ
\mathrm{comp}_{(\mu,\Phi)}(a_{\bullet\bullet,\bullet,\bullet})^{-1}\circ
\mathrm{comp}_{(\mu,\Phi)}(\sigma_{\bullet\bullet,\bullet\bullet})^{-1}\circ
\mathrm{comp}_{(\mu,\Phi)}(a_{\bullet\bullet,\bullet,\bullet})
\\ & 
=\Phi(e_{4,5},e_{12,5})^{-1}
\cdot\mathrm{exp}({\mu\over 2}e_{34,5}+\mu e_{3,4})\cdot 
\bigl(\begin{smallmatrix} 1 & 2 &3&4 \\ 3&4&1&2\end{smallmatrix}\bigr) 
\cdot \Phi(e_{4,5},e_{12,5})
\\ & 
\cdot e^{\mu e_{4,5}}\cdot\Phi(e_{4,5},e_{12,5})^{-1} 
\cdot \bigl(\begin{smallmatrix} 1 & 2 &3&4 \\ 3&4&1&2\end{smallmatrix}\bigr) 
 \cdot \mathrm{exp}(-{\mu\over 2}e_{34,5}-\mu e_{3,4})\cdot\Phi(e_{4,5},e_{12,5})
 \\ & = \Phi(e_{4,5},e_{12,5})^{-1}\cdot\mathrm{exp}({\mu\over 2}e_{34,5})\cdot  \mathrm{exp}(\mu e_{3,4})\cdot \Phi(e_{2,5},e_{34,5})\cdot
 e^{\mu e_{2,5}}\cdot\Phi(e_{2,5},e_{34,5})^{-1} \\ & 
  \cdot \mathrm{exp}(-\mu e_{3,4})\cdot \mathrm{exp}(-{\mu\over 2}e_{34,5})\cdot\Phi(e_{4,5},e_{12,5})
  \\ & =\Phi(e_{4,5},e_{12,5})^{-1}\cdot\mathrm{exp}({\mu\over 2}e_{34,5})\cdot\Phi(e_{2,5},e_{34,5}) \cdot 
 e^{\mu e_{2,5}}\cdot\Phi(e_{2,5},e_{34,5})^{-1}
 \cdot \mathrm{exp}(-{\mu\over 2}e_{34,5})\cdot\Phi(e_{4,5},e_{12,5}) 
 \\ & =\Phi(e_{4,5},e_{12,5})^{-1}\cdot\mathrm{exp}({\mu\over 2}e_{34,5})\cdot\Phi(e_{34,5},e_{2,5})^{-1} \cdot 
 e^{\mu e_{2,5}}\cdot\Phi(e_{34,5},e_{2,5})
 \cdot \mathrm{exp}(-{\mu\over 2}e_{34,5})\cdot\Phi(e_{4,5},e_{12,5}), 
\end{align*}
where the first equality follows from Lemma \ref{lemma:dnb:2}, 3), the second equality follows from 
\eqref{image:a} for $(P,Q,R)=(\bullet\bullet,\bullet,\bullet)$, from the second equality in \eqref{image:sigma} 
for $(P,Q)=(\bullet\bullet,\bullet\bullet)$ and from \eqref{image:partI:x45}, the third equality follows from computation in  
$(U\mathfrak p_5)^\wedge\rtimes S_4$, in particular the fact that $e_{3,4}$ commutes with $e_{34,5}$, the fourth equality follows from the
fact the $e_{3,4}$ commutes with $e_{2,5}$ and $e_{34,5}$, and the last equality follows from the 2-cycle identity 
\eqref{duality:rel}. This proves \eqref{image:partI:x25}. 

Finally
\begin{align*}
& 
\mathrm{comp}_{(\mu,\Phi)}((\tilde x_{3,5})_{((\bullet\bullet)\bullet)\bullet})=
\mathrm{comp}_{(\mu,\Phi)}(\sigma_{\bullet,(\bullet\bullet)\bullet})\circ
\mathrm{comp}_{(\mu,\Phi)}(a_{\bullet,\bullet\bullet,\bullet})
\\ & \circ
\mathrm{comp}_{(\mu,\Phi)}((\tilde x_{4,5})_{(\bullet(\bullet\bullet))\bullet})
\circ\mathrm{comp}_{(\mu,\Phi)}(a_{\bullet,\bullet\bullet,\bullet})^{-1}\circ
\mathrm{comp}_{(\mu,\Phi)}(\sigma_{\bullet,(\bullet\bullet)\bullet})^{-1}
\\ & 
=  \bigl(\begin{smallmatrix} 1 & 2 &3&4 \\ 4&1&2&3\end{smallmatrix}\bigr) 
 \cdot \mathrm{exp}({\mu\over 2}e_{1,5})  \cdot\Phi(e_{4,5},e_{1,5})\cdot e^{\mu e_{4,5}}\cdot \Phi(e_{4,5},e_{1,5})^{-1}
  \cdot \mathrm{exp}(-{\mu\over 2}e_{1,5})\cdot
\bigl(\begin{smallmatrix} 1 & 2 &3&4 \\ 2&3&4&1\end{smallmatrix}\bigr) 
\\ & =  \mathrm{exp}({\mu\over 2}e_{4,5})  \cdot\Phi(e_{3,5},e_{4,5})\cdot e^{\mu e_{3,5}}\cdot \Phi(e_{3,5},e_{4,5})^{-1}
  \cdot \mathrm{exp}(-{\mu\over 2}e_{4,5}), 
\end{align*}
where the first equality follows from Lemma \ref{lemma:dnb:2}, 4), the second equality follows from \eqref{image:a} for 
$(P,Q,R)=(\bullet,\bullet\bullet,\bullet)$, from the second equality in \eqref{image:sigma} for $(P,Q)=(\bullet,(\bullet\bullet)\bullet)$
and from \eqref{image:partI:x45}, and the third equality follows from computation in  
$(U\mathfrak p_5)^\wedge\rtimes S_4$. This proves \eqref{image:partI:x35}. 
\hfill\qed\medskip

\subsection{Computations of images of $\tilde x_{i5}$ under $\mathrm{comp}_{(\mu,\Phi)}^{(\bullet(\bullet\bullet))\bullet}$
} \label{sect:9:6:12nov2019}

\subsubsection{Relations in $\mathbf{PaB}$}

\begin{lem}\label{lemma:1:1:dnb}
1) The automorphism $(\tilde x_{1,5})_{(\bullet(\bullet\bullet))\bullet}$ of the object $(\bullet(\bullet\bullet))\bullet$ of the category $\mathbf{PaB}$ is equal to the composition
$$
(\bullet(\bullet\bullet))\bullet\stackrel{a_{\bullet,\bullet\bullet,\bullet}}{\to}
\bullet((\bullet\bullet)\bullet)\stackrel{\sigma_{\bullet,(\bullet\bullet)\bullet}}{\to}
((\bullet\bullet)\bullet)\bullet\stackrel{\sigma_{(\bullet\bullet)\bullet,\bullet}}{\to}
\bullet((\bullet\bullet)\bullet)\stackrel{a_{\bullet,\bullet\bullet,\bullet}^{-1}}{\to}
(\bullet(\bullet\bullet))\bullet
$$
of isomorphisms of this category. 

2) The automorphism $(\tilde x_{2,5})_{(\bullet(\bullet\bullet))\bullet}$ of the object $(\bullet(\bullet\bullet))\bullet$ of the category $\mathbf{PaB}$ is equal to the composition
\begin{align*}
&(\bullet(\bullet\bullet))\bullet\stackrel{\sigma_{\bullet,\bullet(\bullet\bullet)}^{-1}}{\to}
\bullet(\bullet(\bullet\bullet))\stackrel{a^{-1}_{\bullet,\bullet,\bullet\bullet}}{\to}
(\bullet\bullet)(\bullet\bullet)\stackrel{\sigma_{\bullet\bullet,\bullet\bullet}}{\to}
(\bullet\bullet)(\bullet\bullet)\stackrel{a_{\bullet,\bullet,\bullet\bullet}}{\to}
\bullet(\bullet(\bullet\bullet))\stackrel{(\tilde x_{1,5})_{\bullet(\bullet(\bullet\bullet))}}{\to}
\bullet(\bullet(\bullet\bullet))
\\ & 
\stackrel{a_{\bullet,\bullet,\bullet\bullet}^{-1}}{\to}
(\bullet\bullet)(\bullet\bullet)\stackrel{\sigma_{\bullet\bullet,\bullet\bullet}^{-1}}{\to}
(\bullet\bullet)(\bullet\bullet)\stackrel{a_{\bullet,\bullet,\bullet\bullet}}{\to}
\bullet(\bullet(\bullet\bullet))\stackrel{\sigma_{\bullet,\bullet(\bullet\bullet)}}{\to}
(\bullet(\bullet\bullet))\bullet
\end{align*}
of isomorphisms of this category. 

3) The automorphism $(\tilde x_{3,5})_{(\bullet(\bullet\bullet))\bullet}$ of the object 
$(\bullet(\bullet\bullet))\bullet$ of the category $\mathbf{PaB}$ is equal to the composition
\begin{align*}
&(\bullet(\bullet\bullet))\bullet
\stackrel{\sigma_{\bullet,\bullet(\bullet\bullet)}^{-1}}{\to}\bullet(\bullet(\bullet\bullet))
\stackrel{a^{-1}_{\bullet,\bullet,\bullet\bullet}}{\to}(\bullet\bullet)(\bullet\bullet)
\stackrel{a^{-1}_{\bullet\bullet,\bullet,\bullet}}{\to}((\bullet\bullet)\bullet)\bullet
\stackrel{(\tilde x_{4,5})_{((\bullet\bullet)\bullet)\bullet}}{\to}((\bullet\bullet)\bullet)\bullet
\\ & \stackrel{a_{\bullet\bullet,\bullet,\bullet}}{\to}(\bullet\bullet)(\bullet\bullet)
\stackrel{a_{\bullet,\bullet,\bullet\bullet}}{\to}\bullet(\bullet(\bullet\bullet))
\stackrel{\sigma_{\bullet,\bullet(\bullet\bullet)}}{\to}
(\bullet(\bullet\bullet))\bullet
\end{align*}
of isomorphisms of this category. 

4) The automorphism $(\tilde x_{4,5})_{(\bullet(\bullet\bullet))\bullet}$ of the object $(\bullet(\bullet\bullet))\bullet$ of the category $\mathbf{PaB}$ is equal to the composition
$$
(\bullet(\bullet\bullet))\bullet\stackrel{\sigma_{\bullet(\bullet\bullet),\bullet}}{\to}
\bullet(\bullet(\bullet\bullet))\stackrel{\sigma_{\bullet,\bullet(\bullet\bullet)}}{\to}
(\bullet(\bullet\bullet))\bullet
$$
of isomorphisms of this category. 
\end{lem}

\proof Each of the items of the lemma states the equality of two elements of $\mathbf{PaB}((\bullet(\bullet\bullet))\bullet)$. In view 
of \eqref{to:be:used}, this equality is a consequence of the equality of their images in $\mathbf B(4)=B_4$ under the 
morphism of groupoids $\mathrm{forget}:\mathbf{PaB}\to\mathbf{B}$ from the proof of Lemma \ref{lemma:dnb:2}. 
The latter equality is itself a consequence of Lemma \ref{lemma: elements and equalities in Bn}: \eqref{equality:x15} 
(resp., \eqref{equality:x25}, \eqref{equality:x35}, \eqref{equality:x45}) implies the equality of elements of $B_4$ relevant to 1) 
(resp., 2), 3), 4)). \hfill\qed\medskip 

\begin{rem}
The equalities from Lemma \ref{lemma:1:1:dnb} can be translated in the graphical language of \cite{BN} as
the statement that for $i\in[\![1,4]\!]$, the isomorphism $(\tilde x_{i5})_{(\bullet(\bullet\bullet))\bullet}$ of the object 
$(\bullet(\bullet\bullet))\bullet$ of the category $\mathbf{PaB}$ is equal to the composition of morphisms 
indicated in Figure \ref{picture for lemma2.1}.
\begin{figure}[h]
\begin{tabular}{c}
  \begin{minipage}{0.25\hsize}
      \begin{center} 
  \begin{tikzpicture}

      \foreach \x in {0,0.5,1, 1.5} \foreach \y in {0,1,2,3,4,5}
      \draw[-] (\x,5)--(\x,4)  (\x,2)--(\x,1);  

      \draw[-] (0,4)--(1.5,3);
      \draw[color=white, line width=3pt] (0,3)--(0.5,4) (0.5,3)--(1,4) (1,3)--(1.5,4) ; 
      \draw[-] (0,3)--(0.5,4) (0.5,3)--(1,4) (1,3)--(1.5,4) ; 
      \draw[-] (0,3)--(0.5,2) (0.5,3)--(1,2) (1,3)--(1.5,2) ;
      \draw[color=white, line width=3pt] (0,2)--(1.5,3) ; 
      \draw[-] (0,2)--(1.5,3) ; 
      
      \foreach \x in {0,0.5,1, 1.5} \foreach \y in {1,2,3,4,5}
      \draw[fill=black](\x,\y) circle(2pt);

\draw(-0.2,5) node{$($};
\draw(0.4,5)node{$($};
\draw(1.2,5)node{$))$};

\draw(0.4,4) node{$(($};
\draw(1.1,4)node{$)$};
\draw(1.7,4)node{$)$};

\draw(-0.2,3) node{$(($};
\draw(0.7,3)node{$)$};
\draw(1.2,3)node{$)$};

\draw(0.3,2) node{$(($};
\draw(1.1,2)node{$)$};
\draw(1.7,2)node{$)$};

\draw(-0.2,1) node{$($};
\draw(0.4,1)node{$($};
\draw(1.2,1)node{$))$};
 \draw[color=white, line width=3pt] (0.5,-1.85)--(0.5,-1.85) (0.5,8)--(0.5,8);
      
 \end{tikzpicture}    \\ $(\tilde x_{1,5})_{(\bullet(\bullet\bullet))\bullet}$
     \end{center}
 \end{minipage}
 \begin{minipage}{0.25\hsize}
      \begin{center}
  \begin{tikzpicture}

      \draw[-] (0.5,6)--(2,7); 
      \draw[color=white, line width=3pt](0.5,7)--(1,6) (1,7)--(1.5,6) (1.5,7)--(2,6);
      \draw[-] (0.5,7)--(1,6) (1,7)--(1.5,6) (1.5,7)--(2,6); 
      
      \foreach \x in {0.5,1, 1.5, 2 } 
      \draw[-] (\x,6)--(\x,5) ;     
      
      \foreach \x in {0.5,1, 1.5, 2 } 
      \draw[-] (\x,6)--(\x,5) ;

     \draw[-] (0.5,5)--(1.5,4) (1,5)--(2,4); 
      \draw[color=white, line width=3pt] (0.5,4)--(1.5,5) (1,4)--(2,5) ; 
      \draw[-](0.5,4)--(1.5,5) (1,4)--(2,5) ; 

    \foreach \x in {0.5,1, 1.5, 2 } 
      \draw[-] (\x,4)--(\x,3) ;

      \draw[-](0.5,3)..controls(0.25,2.75) and (2.7,2.75)  ..(2.5,2.5);
      \draw[color=white, line width=3pt] (1,3)--(1,2) (1.5,3)--(1.5,2) (2,3)--(2,2); 
      \draw[-](1,3)--(1,2) (1.5,3)--(1.5,2) (2,3)--(2,2); 
      \draw[color=white, line width=3pt](2.5,2.5)..controls(2.7,2.25) and (0.25,2.25)  ..(0.5,2);
      \draw[-](2.5,2.5)..controls(2.7,2.25) and (0.25,2.25)  ..(0.5,2); 
                          
      \foreach \x in {0.5,1, 1.5, 2 } 
      \draw[-] (\x,2)--(\x,1) ; 
      
      \draw[-] (0.5,0)--(1.5,1) (1,0)--(2,1); 
      \draw[color=white, line width=3pt] (0.5,1)--(1.5,0) (1,1)--(2,0) ; 
      \draw[-] (0.5,1)--(1.5,0) (1,1)--(2,0) ; 
      
      \foreach \x in {0.5,1, 1.5, 2 } 
      \draw[-] (\x,0)--(\x,-1) ; 

       \draw[-] (0.5,-1)--(2,-2); 
      \draw[color=white, line width=3pt](0.5,-2)--(1,-1) (1,-2)--(1.5,-1) (1.5,-2)--(2,-1);
      \draw[-] (0.5,-2)--(1,-1) (1,-2)--(1.5,-1) (1.5,-2)--(2,-1); 
                  
      \foreach \x in {0.5,1, 1.5, 2} \foreach \y in {-2,-1,0,1,2,3,4,5,6,7}
      \draw[fill=black](\x,\y) circle(2pt);

\draw(0.3,7) node{$($};
\draw(0.8,7)node{$($};
\draw(1.7,7)node{$))$};

\draw(0.9,6) node{$($};
\draw(1.4,6)node{$($};
\draw(2.2,6)node{$))$};

\draw(0.3,5) node{$($};
\draw(1.1,5)node{$)$};
\draw(1.4,5)node{$($};
\draw(2.2,5)node{$)$};

\draw(0.3,4) node{$($};
\draw(1.1,4)node{$)$};
\draw(1.4,4)node{$($};
\draw(2.2,4)node{$)$};

\draw(0.9,3) node{$($};
\draw(1.4,3)node{$($};
\draw(2.2,3)node{$))$};

\draw(0.9,2) node{$($};
\draw(1.4,2)node{$($};
\draw(2.2,2)node{$))$};

\draw(0.3,1) node{$($};
\draw(1.1,1)node{$)$};
\draw(1.4,1)node{$($};
\draw(2.2,1)node{$)$};

\draw(0.3,0) node{$($};
\draw(1.1,0)node{$)$};
\draw(1.4,0)node{$($};
\draw(2.2,0)node{$)$};

\draw(0.9,-1) node{$($};
\draw(1.4,-1)node{$($};
\draw(2.2,-1)node{$))$};

\draw(0.3,-2) node{$($};
\draw(0.8,-2)node{$($};
\draw(1.7,-2)node{$))$}; 

\draw[color=white, line width=3pt] (0.5,-2.4)--(0.5,-2.4) (0.5,7.3)--(0.5,7.3);
  \end{tikzpicture}  \\ $(\tilde x_{2,5})_{(\bullet(\bullet\bullet))\bullet}$
     \end{center}
 \end{minipage}
 \begin{minipage}{0.25\hsize}
      \begin{center}
  \begin{tikzpicture}
        
      \draw[-] (0.5,5)--(2,6); 
      \draw[color=white, line width=3pt](0.5,6)--(1,5) (1,6)--(1.5,5) (1.5,6)--(2,5);
      \draw[-] (0.5,6)--(1,5) (1,6)--(1.5,5) (1.5,6)--(2,5); 
      
      \foreach \x in {0.5,1, 1.5, 2 } 
      \draw[-] (\x,5)--(\x,3) ;   
            
      \draw[-](0,2.5)..controls(-0.25,2.25) and (2.2,2.25)  ..(2,2);
      \draw[color=white, line width=3pt] (0.5,3)--(0.5,2) (1,3)--(1,2) (1.5,3)--(1.5,2); 
      \draw[-] (0.5,3)--(0.5,2) (1,3)--(1,2) (1.5,3)--(1.5,2); 
      \draw[color=white, line width=3pt](2,3)..controls(2.2,2.75) and (-0.25,2.75)  ..(0,2.5);
      \draw[-](2,3)..controls(2.2,2.75) and (-0.25,2.75)  ..(0,2.5);      

       \foreach \x in {0.5,1, 1.5, 2 } 
      \draw[-] (\x,2)--(\x,0) ; 
      
        \draw[-] (0.5,0)--(2,-1); 
      \draw[color=white, line width=3pt](0.5,-1)--(1,0) (1,-1)--(1.5,0) (1.5,-1)--(2,0);
      \draw[-] (0.5,-1)--(1,0) (1,-1)--(1.5,0) (1.5,-1)--(2,0); 
                  
      \foreach \x in {0.5,1, 1.5, 2} \foreach \y in {-1,0,1,2,3,4,5,6}
      \draw[fill=black](\x,\y) circle(2pt);

\draw(0.3,6) node{$($};
\draw(0.8,6)node{$($};
\draw(1.7,6)node{$))$};

\draw(0.9,5) node{$($};
\draw(1.4,5)node{$($};
\draw(2.2,5)node{$))$};

\draw(0.3,4) node{$($};
\draw(1.1,4)node{$)$};
\draw(1.4,4)node{$($};
\draw(2.2,4)node{$)$};

\draw(0.3,3) node{$(($};
\draw(1.2,3)node{$)$};
\draw(1.7,3)node{$)$};

\draw(0.3,2) node{$(($};
\draw(1.2,2)node{$)$};
\draw(1.7,2)node{$)$};

\draw(0.3,1) node{$($};
\draw(1.1,1)node{$)$};
\draw(1.4,1)node{$($};
\draw(2.2,1)node{$)$};

\draw(0.8,0) node{$($};
\draw(1.4,0)node{$($};
\draw(2.2,0)node{$))$};

\draw(0.3,-1) node{$($};
\draw(0.8,-1)node{$($};
\draw(1.7,-1)node{$))$}; 

 \draw[color=white, line width=3pt] (0.5,-2.35)--(0.5,-2.35) (0.5,7.30)--(0.5,7.30);

        \end{tikzpicture}  \\ $(\tilde x_{3,5})_{(\bullet(\bullet\bullet))\bullet}$
    \end{center}
 \end{minipage}
 \begin{minipage}{0.25\hsize}
      \begin{center}
  \begin{tikzpicture}      
      \draw[-] (0,4)--(0.5,3) (0.5,4)--(1,3) (1,4)--(1.5,3) ;
      \draw[color=white, line width=3pt] (0,3)--(1.5,4) ; 
      \draw[-] (0,3)--(1.5,4) ; 

      \draw[-] (0,3)--(1.5,2);
      \draw[color=white, line width=3pt] (0,2)--(0.5,3) (0.5,2)--(1,3) (1,2)--(1.5,3) ; 
      \draw[-] (0,2)--(0.5,3) (0.5,2)--(1,3) (1,2)--(1.5,3) ; 

      \foreach \x in {0, 0.5,1, 1.5} \foreach \y in {2,3,4}
      \draw[fill=black](\x,\y) circle(2pt);

\draw(-0.2,4) node{$($};
\draw(0.3,4)node{$($};
\draw(1.2,4)node{$))$};

\draw(0.3,3) node{$($};
\draw(0.8,3)node{$($};
\draw(1.7,3)node{$))$};

\draw(-0.2,2) node{$($};
\draw(0.3,2)node{$($};
\draw(1.2,2)node{$))$};

 \draw[color=white, line width=3pt] (0.5,-1.9)--(0.5,-1.9) (0.5,7.9)--(0.5,7.9);

  \end{tikzpicture}  \\ $(\tilde x_{4,5})_{(\bullet(\bullet\bullet))\bullet}$
       \end{center}
 \end{minipage}
\end{tabular}
  \caption{$(\tilde x_{i,5})_{(\bullet(\bullet\bullet))\bullet}$}
            \label{picture for lemma2.1}
\end{figure}
\end{rem}

\subsubsection{Computation of images of morphisms in $\mathbf{PaB}$}

\begin{prop}\label{prop:comp:images:brd:grp:elts}
Let $\mu\in\mathbf k^\times$ and $\Phi\in\mathsf M_\mu(\mathbf k)$. One has
\begin{equation}\label{image:partIII:x15}
\mathrm{comp}_{(\mu,\Phi)}^{(\bullet(\bullet\bullet))\bullet}(\tilde x_{1,5})=
\mathrm{Ad}\Big(\Phi(e_{1,5},e_{4,5})\Big)(\mathrm{exp}(\mu e_{15})),
\end{equation}
\begin{equation}\label{image:partIII:x25}
\mathrm{comp}_{(\mu,\Phi)}^{(\bullet(\bullet\bullet))\bullet}(\tilde x_{2,5})
=\mathrm{Ad}\Big( \mathrm{exp}({\mu\over 2}e_{4,5})
  \Phi(e_{23,5},e_{4,5}) \mathrm{exp}({\mu\over 2}e_{23,5}) \Phi(e_{2,5},e_{14,5})\Big)(\mathrm{exp}(\mu e_{2,5})),
\end{equation}
\begin{equation}\label{image:partIII:x35}
\mathrm{comp}_{(\mu,\Phi)}^{(\bullet(\bullet\bullet))\bullet}(\tilde x_{3,5})
=\mathrm{Ad}\Big(  \mathrm{exp}({\mu\over 2}e_{4,5})
 \cdot \Phi(e_{23,5},e_{4,5})\cdot \Phi(e_{3,5},e_{14,5})
\Big)(\mathrm{exp}(\mu e_{3,5})), 
\end{equation}
\begin{equation}\label{image:partIII:x45}
\mathrm{comp}_{(\mu,\Phi)}^{(\bullet(\bullet\bullet))\bullet}(\tilde x_{4,5})
=\mathrm{exp}(\mu e_{4,5}). 
\end{equation}
\end{prop}

\proof For $i\in[\![1,4]\!]$, $\mathrm{comp}_{(\mu,\Phi)}^{(\bullet(\bullet\bullet))\bullet}(\tilde x_{i,5})$ is equal to $\mathrm{comp}_{(\mu,\Phi)}((\tilde x_{i,5})_{(\bullet(\bullet\bullet))\bullet})$. One computes
\begin{align*}
&\mathrm{comp}_{(\mu,\Phi)}((\tilde x_{1,5})_{(\bullet(\bullet\bullet))\bullet})
=\mathrm{comp}_{(\mu,\Phi)}(a_{\bullet,\bullet\bullet,\bullet}^{-1}\circ
\sigma_{(\bullet\bullet)\bullet,\bullet}\circ
\sigma_{\bullet,(\bullet\bullet)\bullet}\circ
a_{\bullet,\bullet\bullet,\bullet})
\\ & 
=\mathrm{comp}_{(\mu,\Phi)}(a_{\bullet,\bullet\bullet,\bullet})^{-1}\circ
\mathrm{comp}_{(\mu,\Phi)}(\sigma_{(\bullet\bullet)\bullet,\bullet})\circ
\mathrm{comp}_{(\mu,\Phi)}(\sigma_{\bullet,(\bullet\bullet)\bullet})\circ\mathrm{comp}_{(\mu,\Phi)}(a_{\bullet,\bullet\bullet,\bullet})
\\ & = \Phi(e_{4,5},e_{1,5})^{-1}\cdot
\bigl(\begin{smallmatrix} 1 & 2 &3&4 \\ 2&3&4&1\end{smallmatrix}\bigr) 
 \cdot \mathrm{exp}({\mu\over 2}e_{4,5})
 \cdot
\bigl(\begin{smallmatrix} 1 & 2 &3&4 \\ 4&1&2&3\end{smallmatrix}\bigr) 
 \cdot \mathrm{exp}({\mu\over 2}e_{1,5})
 \cdot\Phi(e_{4,5},e_{1,5})
 \\ & =\Phi(e_{1,5},e_{4,5})\cdot \mathrm{exp}(\mu e_{15})\cdot\Phi(e_{1,5},e_{4,5})^{-1},   
\end{align*}
where the first equality follows from Lemma \ref{lemma:1:1:dnb}, 1), the third equality from the relation between composition and product, 
from \eqref{image:a} for $(P,Q,R)=(\bullet,\bullet\bullet,\bullet)$, from the first equality in \eqref{image:sigma} for 
$(P,Q)=((\bullet\bullet)\bullet,\bullet)$ and from the second equality in \eqref{image:sigma} for 
$(P,Q)=(\bullet,(\bullet\bullet)\bullet)$, and the last equality follows 
from the duality relation \eqref{duality:rel}. This proves \eqref{image:partIII:x15}. 

Similarly, 
\begin{align*}
&\mathrm{comp}_{(\mu,\Phi)}((\tilde x_{2,5})_{(\bullet(\bullet\bullet))\bullet})
\\ & 
=\mathrm{comp}_{(\mu,\Phi)}(\sigma_{\bullet,\bullet(\bullet\bullet)}\circ
a_{\bullet,\bullet,\bullet\bullet}\circ
\sigma_{\bullet\bullet,\bullet\bullet}^{-1}\circ
a_{\bullet,\bullet,\bullet\bullet}^{-1}\circ
(\tilde x_{1,5})_{\bullet(\bullet(\bullet\bullet))}\circ
a_{\bullet,\bullet,\bullet\bullet}\circ
\sigma_{\bullet\bullet,\bullet\bullet}\circ
a_{\bullet,\bullet\bullet,\bullet}^{-1}\circ
\sigma_{\bullet,\bullet(\bullet\bullet)}^{-1})
\\ & 
=\mathrm{comp}_{(\mu,\Phi)}(\sigma_{\bullet,\bullet(\bullet\bullet)})\circ
\mathrm{comp}_{(\mu,\Phi)}(a_{\bullet,\bullet,\bullet\bullet})\circ
\mathrm{comp}_{(\mu,\Phi)}(\sigma_{\bullet\bullet,\bullet\bullet})^{-1}\circ
\mathrm{comp}_{(\mu,\Phi)}(a_{\bullet,\bullet,\bullet\bullet})^{-1}
\\ & 
\scriptstyle{\circ\mathrm{comp}_{(\mu,\Phi)}(\tilde x_{1,5})_{\bullet(\bullet(\bullet\bullet))}
\circ\mathrm{comp}_{(\mu,\Phi)}(a_{\bullet,\bullet,\bullet\bullet})\circ
\mathrm{comp}_{(\mu,\Phi)}(\sigma_{\bullet\bullet,\bullet\bullet})\circ
\mathrm{comp}_{(\mu,\Phi)}(a_{\bullet,\bullet\bullet,\bullet})^{-1}
\mathrm{comp}_{(\mu,\Phi)}(\sigma_{\bullet,\bullet(\bullet\bullet)})^{-1}}
\\ & = 
\scriptstyle{\mathrm{Ad}\Big(
\bigl(\begin{smallmatrix} 1 & 2 &3&4 \\ 4&1&2&3\end{smallmatrix}\bigr) 
 \cdot \mathrm{exp}({\mu\over 2}e_{1,5})
 \cdot \Phi(e_{34,5},e_{1,5})\cdot \bigl(\begin{smallmatrix} 1 & 2 &3&4 \\ 3&4&1&2\end{smallmatrix}\bigr) 
 \cdot \mathrm{exp}(-{\mu\over 2}e_{34,5}-\mu e_{3,4})\cdot \Phi(e_{34,5},e_{1,5})^{-1}
 \Big)(\mathrm{exp}(\mu e_{1,5}))}
 \\ &= 
 \mathrm{Ad}\Big(
  \mathrm{exp}({\mu\over 2}e_{4,5})
 \cdot \Phi(e_{23,5},e_{4,5})
 \cdot \bigl(\begin{smallmatrix} 1 & 2 &3&4 \\ 2&3&4&1\end{smallmatrix}\bigr) 
 \cdot \mathrm{exp}(-{\mu\over 2}e_{34,5})\cdot \Phi(e_{34,5},e_{1,5})^{-1} \Big) 
(\mathrm{exp}(\mu e_{15}))
 \\ & 
 = \mathrm{Ad}\Big( \mathrm{exp}({\mu\over 2}e_{4,5})
 \cdot \Phi(e_{23,5},e_{4,5})
 \cdot \mathrm{exp}(-{\mu\over 2}e_{14,5})\cdot \Phi(e_{14,5},e_{2,5})^{-1}\Big) (\mathrm{exp}(\mu e_{2,5}))
 \\ & 
 = \mathrm{Ad}\Big( \mathrm{exp}({\mu\over 2}e_{4,5})
 \cdot \Phi(e_{23,5},e_{4,5})
 \cdot \mathrm{exp}({\mu\over 2}e_{23,5})\cdot \Phi(e_{2,5},e_{14,5})\Big)(\mathrm{exp}(\mu e_{2,5})) ;   
 \end{align*}
the first equality follows from Lemma \ref{lemma:1:1:dnb}, 2), the third equality follows from the relation between composition 
and product, from \eqref{image:a} for $(P,Q,R)=(\bullet,\bullet,\bullet\bullet)$ and from the second equality in \eqref{image:sigma} 
for $(P,Q)=(\bullet,\bullet(\bullet\bullet))$ and $(\bullet\bullet,\bullet\bullet)$. The two next equalities are computations in 
$U(\mathfrak p_5)^\wedge\rtimes S_4$, more precisely both are based on the effect of conjugation by an element of $S_4$ on 
an element of $\mathfrak p_5$  is this algebra, and the fourth equality also uses that $\mathbf kS_4\to 
U(\mathfrak p_5)^\wedge\rtimes S_4$ is an algebra morphism, 
and the fact that $e_{34}$ commutes with both $e_{34,5}$ and $e_{15}$. The sixth equality follows from 
$e_{1234,5}=0$ and from the duality identity \eqref{duality:rel}. This proves \eqref{image:partIII:x25}. 

One computes 
\begin{align*}
&\mathrm{comp}_{(\mu,\Phi)}((\tilde x_{3,5})_{(\bullet(\bullet\bullet))\bullet})
\\ & 
=\mathrm{comp}_{(\mu,\Phi)}(\sigma_{\bullet,\bullet(\bullet\bullet)}\circ
a_{\bullet,\bullet,\bullet\bullet}\circ
a_{\bullet\bullet,\bullet,\bullet}\circ
(\tilde x_{4,5})_{((\bullet\bullet)\bullet)\bullet}\circ
a_{\bullet\bullet,\bullet,\bullet}^{-1}\circ
a_{\bullet,\bullet,\bullet\bullet}^{-1}\circ
\sigma_{\bullet,\bullet(\bullet\bullet)}^{-1})
\\ & 
=\mathrm{comp}_{(\mu,\Phi)}(\sigma_{\bullet,\bullet(\bullet\bullet)})\circ
\mathrm{comp}_{(\mu,\Phi)}(a_{\bullet,\bullet,\bullet\bullet})\circ
\mathrm{comp}_{(\mu,\Phi)}(a_{\bullet\bullet,\bullet,\bullet})\circ
\mathrm{comp}_{(\mu,\Phi)}((\tilde x_{4,5})_{((\bullet\bullet)\bullet)\bullet})\circ\\ & 
\circ
\mathrm{comp}_{(\mu,\Phi)}(a_{\bullet\bullet,\bullet,\bullet})^{-1}
\circ\mathrm{comp}_{(\mu,\Phi)}(a_{\bullet,\bullet,\bullet\bullet})^{-1}
\circ\mathrm{comp}_{(\mu,\Phi)}(\sigma_{\bullet,\bullet(\bullet\bullet)})^{-1}
\\ & =
\mathrm{Ad}\Big(
 \bigl(\begin{smallmatrix} 1 & 2 &3&4 \\ 4&1&2&3\end{smallmatrix}\bigr) 
 \cdot \mathrm{exp}({\mu\over 2}e_{1,5})
 \cdot \Phi(e_{34,5},e_{1,5})\cdot \Phi(e_{4,5},e_{12,5})\Big) 
(\mathrm{exp}(\mu e_{45}))
 \\ &=  \mathrm{Ad}\Big(\mathrm{exp}({\mu\over 2}e_{4,5})
 \cdot \Phi(e_{23,5},e_{4,5})\cdot \Phi(e_{3,5},e_{14,5})\Big) 
(\mathrm{exp}(\mu e_{3,5})), 
 \end{align*}
where the first equality follows from Lemma \ref{lemma:1:1:dnb}, 3), the third equality follows from 
\eqref{image:a} for $(P,Q,R)=(\bullet\bullet,\bullet,\bullet)$ and $(\bullet,\bullet,\bullet\bullet)$,  
from the second equality in \eqref{image:sigma} for $(P,Q)=(\bullet,(\bullet\bullet)\bullet)$, and the last equality is a  
computation in $(U\mathfrak p_5)^\wedge\rtimes S_4$. This proves \eqref{image:partIII:x35}. 

One computes
\begin{align*}
&\mathrm{comp}_{(\mu,\Phi)}((\tilde x_{4,5})_{(\bullet(\bullet\bullet))\bullet})
=\mathrm{comp}_{(\mu,\Phi)}(
\sigma_{\bullet,\bullet(\bullet\bullet)}\circ
\sigma_{\bullet(\bullet\bullet),\bullet})
=\mathrm{comp}_{(\mu,\Phi)}(
\sigma_{\bullet,\bullet(\bullet\bullet)})\circ
\mathrm{comp}_{(\mu,\Phi)}(\sigma_{\bullet(\bullet\bullet),\bullet})
\\ & 
=\bigl(\begin{smallmatrix} 1 & 2 &3&4 \\ 4&1&2&3\end{smallmatrix}\bigr) 
 \cdot \mathrm{exp}({\mu\over 2}e_{15}) \cdot
\bigl(\begin{smallmatrix} 1 & 2 &3&4 \\ 2&3&4&1\end{smallmatrix}\bigr) 
 \cdot \mathrm{exp}({\mu\over 2}e_{45})=\mathrm{exp}(\mu e_{45}),   
\end{align*}
where the first equality follows from Lemma \ref{lemma:1:1:dnb}, 4), and 
the third equality follows from the second equality in \eqref{image:sigma} for $(P,Q)=(\bullet,\bullet(\bullet\bullet))$
and from the first equality in \eqref{image:sigma} for $(P,Q)=(\bullet,\bullet(\bullet\bullet))$. This proves \eqref{image:partIII:x45}. 
\hfill\qed\medskip 

\subsection{Definition of the matrices $P_{(\mu,\Phi)}$, $Q_{(\mu,\Phi)}$ and $\overline P_{(\mu,\Phi)}$, 
$\overline Q_{(\mu,\Phi)}$}\label{sect:def:matrices:P:R}

Recall from Lemma \ref{lemma:semidirect:product:LAs}, 1) the graded Lie subalgebra $\mathfrak f_3$ of $\mathfrak p_5$ generated 
by the 
$e_{i5}$, $i\in[\![1,4]\!]$. It follows from Propositions \ref{prop:comp:images:20190728} and \ref{prop:comp:images:brd:grp:elts} that 
for $i\in[\![1,4]\!]$, both $\mathrm{comp}^{((\bullet\bullet)\bullet)\bullet}_{(\mu,\Phi)}(\tilde x_{i5}-1)$ and 
$\mathrm{comp}^{(\bullet(\bullet\bullet))\bullet}_{(\mu,\Phi)}(\tilde x_{i5}-1)$  belong to the augmentation ideal of the degree 
completion $(U\mathfrak f_3)^\wedge$ of its universal enveloping algebra.  This augmentation ideal is freely generated, as a left 
$(U\mathfrak f_3)^\wedge$-module, by the elements $e_{i5}$, $i\in[\![1,3]\!]$. It follows that there are uniquely defined collections 
$(p_{ij})_{i,j\in[\![1,3]\!]}$ and $(q_{ij})_{i,j\in[\![1,3]\!]}$ of elements of $(U\mathfrak f_3)^\wedge$, such that 
$$
\forall i,j\in[\![1,3]\!],\quad \mathrm{comp}^{((\bullet\bullet)\bullet)\bullet}_{(\mu,\Phi)}(\tilde x_{i5}-1)
=\sum_{i\in[\![1,3]\!]}p_{ij}e_{j5},\quad \mathrm{comp}^{(\bullet(\bullet\bullet))\bullet}_{(\mu,\Phi)}(\tilde x_{i5}-1)
=\sum_{i\in[\![1,3]\!]}q_{ij}e_{j5}
$$
(equalities in $(U\mathfrak f_3)^\wedge$). 

\begin{defn}\label{def:P:R:20191203}
Let $\mu\in\mathbf k^\times$ and $\Phi\in\mathsf M_\mu(\mathbf k)$. One sets $P_{(\mu,\Phi)}:=(p_{ij})_{i,j\in[\![1,3]\!]}\in M_3((U\mathfrak f_3)^\wedge)$, 
$Q_{(\mu,\Phi)}:=(q_{ij})_{i,j\in[\![1,3]\!]}\in 
M_3((U\mathfrak f_3)^\wedge)$.
\index{P_mu,Phi@$P_{(\mu,\Phi)}$}
\index{Q_mu,Phi@$Q_{(\mu,\Phi)}$} 
\end{defn}
One then has 
\begin{equation}\label{matrices:P:R:16dec2019}
M_{3\times 1}(\mathrm{comp}^{((\bullet\bullet)\bullet)\bullet}_{(\mu,\Phi)})
(\begin{pmatrix} \tilde x_{15}-1\\  \tilde x_{25}-1\\  \tilde x_{35}-1\end{pmatrix})
=P_{(\mu,\Phi)}\begin{pmatrix} e_{15} \\ e_{25} \\ e_{35} \end{pmatrix}, \quad 
M_{3\times 1}(\mathrm{comp}^{(\bullet(\bullet\bullet))\bullet}_{(\mu,\Phi)})
(\begin{pmatrix} \tilde x_{15}-1\\  \tilde x_{25}-1\\  \tilde x_{35}-1\end{pmatrix})
=Q_{(\mu,\Phi)}\begin{pmatrix} e_{15} \\ e_{25} \\ e_{35} \end{pmatrix} 
\end{equation}
(equalities in $M_{3\times 1}((U\mathfrak f_3)^\wedge)$). 

\begin{lem} If $\mu\in\mathbf k^\times$ and $\Phi\in\mathsf M_\mu(\mathbf k)$, then the matrices $P_{(\mu,\Phi)}$ and $Q_{(\mu,\Phi)}$ belong to $\mathrm{GL}_3((U\mathfrak f_3)^\wedge)$. 
\end{lem}

\proof This follows from the fact that that both $p_{ij}$ and $q_{ij}$ are equal to $\mu\delta_{ij}$ modulo the augmentation ideal of 
$(U\mathfrak f_3)^\wedge$, which is equal to the direct product of the components of $U\mathfrak f_3$ of degrees $>0$. 
\hfill\qed\medskip 

\begin{defn}
For $i,j\in[\![1,3]\!]$, one sets $\overline p_{ij}:=\mathrm{pr}_{12}^\wedge(p_{ij})\in (\mathcal V^\DR)^{\otimes 2,\wedge}$, 
$\overline q_{ij}:=\mathrm{pr}_{12}^\wedge(q_{ij})\in (\mathcal V^\DR)^{\otimes 2,\wedge}$. 
\end{defn}

\begin{defn}\label{def:barP:barR}
Let $\mu\in\mathbf k^\times$ and $\Phi\in\mathsf M_\mu(\mathbf k)$. One sets $\overline P_{(\mu,\Phi)}:=\mathrm{pr}_{12}^\wedge(P_{(\mu,\Phi)})
=(\overline p_{ij})_{i,j\in[\![1,3]\!]}\in \mathrm{GL}_3((\mathcal V^\DR)^{\otimes 2,\wedge})$, 
$\overline Q_{(\mu,\Phi)}:=\mathrm{pr}_{12}^\wedge(Q_{(\mu,\Phi)})
=(\overline q_{ij})_{i,j\in[\![1,3]\!]}\in \mathrm{GL}_3((\mathcal V^\DR)^{\otimes 2,\wedge})$.
\index{P_mu,Phi^bar@$\overline P_{(\mu,\Phi)}$}
\index{Q_mu,Phi^bar@$\overline Q_{(\mu,\Phi)}$}  
\end{defn}

\subsection{Computation of $P_{(\mu,\Phi)}$ and $\overline P_{(\mu,\Phi)}$} 
\label{sect:comp:P}

Recall the elements $\varphi_0,\varphi_1\in\hat{\mathcal V}^\DR$ associated with $\Phi$ (\S\ref{subsect:Gamma:fun:20032018}). 
\begin{prop}\label{prop:comput:P:muPhi}
Let $\mu\in\mathbf k^\times$ and $\Phi\in\mathsf M_\mu(\mathbf k)$. One has
\begin{align*}
& P_{(\mu,\Phi)}=& \\
&\mathrm{diag}\left(\Phi(e_{4,5},e_{12,5})^{-1}\Phi(e_{34,5},e_{1,5})^{-1},\quad 
\Phi(e_{4,5},e_{12,5})^{-1}e^{{\mu\over 2}e_{34,5}}\Phi(e_{34,5},e_{2,5})^{-1}
,\quad e^{{\mu\over 2}e_{4,5}}\Phi(e_{4,5},e_{3,5})^{-1} \right)\cdot 
\\ & \cdot 
\left[
\begin{array}{c|c|c} 
\begin{matrix} \scriptstyle{
(e^{\mu e_{1,5}}-1)\Phi(e_{34,5},e_{1,5})} \\ 
\scriptstyle{\cdot(\varphi_1-\varphi_0)(e_{4,5},e_{12,5})} \\ 
\scriptstyle{+(e^{\mu e_{1,5}}-1)(\varphi_1-\varphi_0)(e_{34,5},e_{1,5})} 
\\ \scriptstyle{+{e^{\mu e_{1,5}}-1 \over e_{1,5}}} 
\end{matrix}
& 
\begin{matrix} 
\scriptstyle{(e^{\mu e_{1,5}}-1)\Phi(e_{34,5},e_{1,5})} \\ 
\scriptstyle{\cdot (\varphi_1-\varphi_0)(e_{4,5},e_{12,5})} \\ 
\scriptstyle{-(e^{\mu e_{1,5}}-1)\varphi_0(e_{34,5},e_{1,5})} 
\end{matrix}
& 
\begin{matrix}
\scriptstyle{-(e^{\mu e_{1,5}}-1)\Phi(e_{34,5},e_{1,5})} 
\\ \scriptstyle{\cdot\varphi_0(e_{4,5},e_{12,5})}
\end{matrix}
\\
\hline
\begin{matrix}
\scriptstyle{(e^{\mu e_{2,5}}-1)\Phi(e_{34,5},e_{2,5})} 
\\ \scriptstyle{\cdot e^{-{\mu\over 2}e_{34,5}}(\varphi_1-\varphi_0)(e_{4,5},e_{12,5})} \\ 
\scriptstyle{-(e^{\mu e_{2,5}}-1)\Phi(e_{34,5},e_{2,5}){{e^{-{\mu\over 2}e_{34,5}}-1}\over{e_{34,5}}} } 
\\ \scriptstyle{- (e^{\mu e_{2,5}}-1)\varphi_0(e_{34,5},e_{2,5})}
\end{matrix}
&
\begin{matrix}
\scriptstyle{(e^{\mu e_{2,5}}-1)\Phi(e_{34,5},e_{2,5})} \\ 
\scriptstyle{\cdot e^{-{\mu\over 2}e_{34,5}}(\varphi_1-\varphi_0)(e_{4,5},e_{12,5})} \\ 
\scriptstyle{-(e^{\mu e_{2,5}}-1)\Phi(e_{34,5},e_{2,5}){{e^{-{\mu\over 2}e_{34,5}}-1}\over{e_{34,5}}} }\\ 
\scriptstyle{+(e^{\mu e_{2,5}}-1)(\varphi_1-\varphi_0)(e_{34,5},e_{2,5})} \\ \scriptstyle{+{{e^{\mu e_{2,5}}-1}\over{e_{2,5}}}}
\end{matrix}
& 
\begin{matrix}
\scriptstyle{-(e^{\mu e_{2,5}}-1)\Phi(e_{34,5},e_{2,5}) } \\ 
\scriptstyle{\cdot e^{-{\mu\over 2}e_{34,5}}\varphi_0(e_{4,5},e_{12,5})}
\end{matrix}
\\ 
\hline
\begin{matrix}
\scriptstyle{-(e^{\mu e_{3,5}}-1)\Phi(e_{4,5},e_{3,5}){{e^{-{\mu\over 2}e_{4,5}}-1}\over{e_{4,5}}}} \\ 
\scriptstyle{-(e^{\mu e_{3,5}}-1)\varphi_0(e_{4,5},e_{3,5})}
\end{matrix}
&
\begin{matrix}
\scriptstyle{-(e^{\mu e_{3,5}}-1)\Phi(e_{4,5},e_{3,5}){{e^{-{\mu\over 2}e_{4,5}}-1}\over{e_{4,5}}}} \\ 
\scriptstyle{-(e^{\mu e_{3,5}}-1)\varphi_0(e_{4,5},e_{3,5})}
\end{matrix}
&
\begin{matrix}
\scriptstyle{-(e^{\mu e_{3,5}}-1)\Phi(e_{4,5},e_{3,5}){{e^{-{\mu\over 2}e_{4,5}}-1}\over{e_{4,5}}}} \\ 
\scriptstyle{+(e^{\mu e_{3,5}}-1)(\varphi_1-\varphi_0)(e_{4,5},e_{3,5})} \\ 
\scriptstyle{+{{e^{\mu e_{3,5}}-1}\over{e_{3,5}}}}
\end{matrix} 
\end{array}
\right]
\end{align*}
where $\mathrm{diag}(d_1,d_2,d_3)$ denotes the diagonal matrix with diagonal elements $d_1,d_2,d_3$.   
\end{prop}

\proof The computation of the first line of the matrix follows from (\ref{image:partI:x15}) and from
\begin{align*}
&
(e^{\mu e_{1,5}}-1)\Phi(e_{34,5},e_{1,5})\Phi(e_{4,5},e_{12,5})
\\ & 
=(e^{\mu e_{1,5}}-1)\Phi(e_{34,5},e_{1,5})\varphi_0(e_{4,5},e_{12,5})e_{4,5}
+(e^{\mu e_{1,5}}-1)\Phi(e_{34,5},e_{1,5})\varphi_1(e_{4,5},e_{12,5})e_{12,5}
\\ & 
+(e^{\mu e_{1,5}}-1)\varphi_0(e_{34,5},e_{1,5})e_{34,5}
+(e^{\mu e_{1,5}}-1)\varphi_1(e_{34,5},e_{1,5})e_{1,5}+(e^{\mu e_{1,5}}-1)
\\ &
=\Big((e^{\mu e_{1,5}}-1)\Phi(e_{34,5},e_{1,5})(\varphi_1-\varphi_0)(e_{4,5},e_{12,5})
+(e^{\mu e_{1,5}}-1)(\varphi_1-\varphi_0)(e_{34,5},e_{1,5})
+{e^{\mu e_{1,5}}-1 \over e_{1,5}}\Big)e_{1,5}
\\ & +\Big((e^{\mu e_{1,5}}-1)\Phi(e_{34,5},e_{1,5})(\varphi_1-\varphi_0)(e_{4,5},e_{12,5})
+(e^{\mu e_{1,5}}-1)(-\varphi_0)(e_{34,5},e_{1,5})
\Big)e_{2,5}
\\ & +\Big((e^{\mu e_{1,5}}-1)\Phi(e_{34,5},e_{1,5})(-\varphi_0)(e_{4,5},e_{12,5})
\Big)e_{3,5}. 
\end{align*}
The computation of the second line follows from \eqref{image:partI:x25} and from 
\begin{align*}
& (e^{\mu e_{2,5}}-1)\Phi(e_{34,5},e_{2,5})e^{-{\mu\over 2}e_{34,5}}\Phi(e_{4,5},e_{12,5}) 
=(e^{\mu e_{2,5}}-1)\Phi(e_{34,5},e_{2,5})e^{-{\mu\over 2}e_{34,5}}\varphi_0(e_{4,5},e_{12,5})e_{4,5}
\\ & 
+(e^{\mu e_{2,5}}-1)\Phi(e_{34,5},e_{2,5})e^{-{\mu\over 2}e_{34,5}}\varphi_1(e_{4,5},e_{12,5})e_{12,5}
+(e^{\mu e_{2,5}}-1)\Phi(e_{34,5},e_{2,5})(e^{-{\mu\over 2}e_{34,5}}-1)
\\ & +(e^{\mu e_{2,5}}-1)\varphi_0(e_{34,5},e_{2,5})e_{34,5}+(e^{\mu e_{2,5}}-1)\varphi_1(e_{34,5},e_{2,5})e_{2,5}+(e^{\mu e_{2,5}}-1)
\\ & 
= \Big(
(e^{\mu e_{2,5}}-1)\Phi(e_{34,5},e_{2,5})e^{-{\mu\over 2}e_{34,5}}(\varphi_1-\varphi_0)(e_{4,5},e_{12,5})
-(e^{\mu e_{2,5}}-1)\Phi(e_{34,5},e_{2,5})
{{e^{-{\mu\over 2}e_{34,5}}-1}\over{e_{34,5}}} \\ & 
+(e^{\mu e_{2,5}}-1)(-\varphi_0)(e_{34,5},e_{2,5})\Big)e_{1,5}
+ \Big(
(e^{\mu e_{2,5}}-1)\Phi(e_{34,5},e_{2,5})e^{-{\mu\over 2}e_{34,5}}(\varphi_1-\varphi_0)(e_{4,5},e_{12,5})
\\ & -(e^{\mu e_{2,5}}-1)\Phi(e_{34,5},e_{2,5}){{e^{-{\mu\over 2}e_{34,5}}-1}\over{e_{34,5}}}  
+(e^{\mu e_{2,5}}-1)(\varphi_1-\varphi_0)(e_{34,5},e_{2,5})+{{e^{\mu e_{2,5}}-1}\over{e_{2,5}}}\Big)e_{2,5}
\\ & + \Big(
(e^{\mu e_{2,5}}-1)\Phi(e_{34,5},e_{2,5})e^{-{\mu\over 2}e_{34,5}}(-\varphi_0)(e_{4,5},e_{12,5})\Big)e_{3,5}. 
\end{align*}
The computation of the third line follows from \eqref{image:partI:x35}, from the 2-cycle identity \eqref{duality:rel}, and from 
\begin{align*}
& (e^{\mu e_{3,5}}-1)\Phi(e_{4,5},e_{3,5})e^{-{\mu\over 2}e_{4,5}}=
(e^{\mu e_{3,5}}-1)\Phi(e_{4,5},e_{3,5})(e^{-{\mu\over 2}e_{4,5}}-1)+(e^{\mu e_{3,5}}-1)\varphi_0(e_{4,5},e_{3,5})e_{4,5}
\\ & +(e^{\mu e_{3,5}}-1)\varphi_1(e_{4,5},e_{3,5})e_{3,5}+(e^{\mu e_{3,5}}-1)
=\Big(-(e^{\mu e_{3,5}}-1)\Phi(e_{4,5},e_{3,5}){{e^{-{\mu\over 2}e_{4,5}}-1}\over{e_{4,5}}}
\\ & -(e^{\mu e_{3,5}}-1)\varphi_0(e_{4,5},e_{3,5})\Big)e_{1,5}
+ \Big(-(e^{\mu e_{3,5}}-1)\Phi(e_{4,5},e_{3,5}){{e^{-{\mu\over 2}e_{4,5}}-1}\over{e_{4,5}}}
\\ & -(e^{\mu e_{3,5}}-1)\varphi_0(e_{4,5},e_{3,5})\Big)e_{2,5}
+ \Big(-(e^{\mu e_{3,5}}-1)\Phi(e_{4,5},e_{3,5}){{e^{-{\mu\over 2}e_{4,5}}-1}\over{e_{4,5}}}
\\ & +(e^{\mu e_{3,5}}-1)(\varphi_1-\varphi_0)(e_{4,5},e_{3,5})
+{{e^{\mu e_{3,5}}-1}\over{e_{3,5}}}\Big)e_{3,5}. 
\end{align*}
\hfill \qed\medskip 

\begin{cor}\label{cor:equation:barP}
Let $\mu\in\mathbf k^\times$ and $\Phi\in\mathsf M_\mu(\mathbf k)$. One has 
\begin{align}\nonumber
& \overline P_{(\mu,\Phi)}
=  & \\ \nonumber
&
\mathrm{diag}\left(\Phi(e_0,e_1)^{-1}\Phi(f_\infty,f_1)^{-1}, 
\Phi(e_0,e_1)^{-1}e^{-{\mu\over 2}e_1}\Phi(f_\infty,f_1)^{-1}e^{-{\mu\over 2}f_1}, 
e^{{\mu\over 2}e_0}\Phi^{-1}(e_0,e_\infty)e^{{\mu\over 2}f_\infty}\Phi^{-1}(f_\infty,f_0) \right)
\\ & \nonumber\cdot 
\left[
\begin{array}{c|c|c} 
\begin{matrix} 
\scriptstyle{(e^{\mu f_1}-1)(\varphi_1-\varphi_0)(e_0+f_\infty,e_1+f_1)} \\ 
\scriptstyle{+(e^{\mu f_1}-1)(\varphi_1-\varphi_0)(-(e_1+f_1),f_1)} \\ 
\scriptstyle{+{e^{\mu f_1}-1 \over f_1}} 
\end{matrix}
& 
\begin{matrix} 
\scriptstyle{(e^{\mu f_1}-1)\cdot} \\ 
\scriptstyle{\cdot(\varphi_1-\varphi_0)(e_0+f_\infty,e_1+f_1)} \\ 
\scriptstyle{-(e^{\mu f_1}-1)\varphi_0(-(e_1+f_1),f_1)} \\ \ 
\end{matrix}
& 
\begin{matrix}
\scriptstyle{-(e^{\mu f_1}-1)\cdot} \\ \scriptstyle{\cdot\varphi_0(e_0+f_\infty,e_1+f_1)}
\end{matrix}
\\
\hline
\begin{matrix}
\scriptstyle{(e^{\mu e_1}-1)e^{{\mu\over 2}(e_1+f_1)}\cdot} \\ 
\scriptstyle{\cdot (\varphi_1-\varphi_0)(e_0+f_\infty,e_1+f_1)} \\ 
\scriptstyle{-(e^{\mu e_1}-1){{e^{{\mu\over 2}(e_1+f_1)}-1}\over{-(e_1+f_1)}} } \\ 
\scriptstyle{- (e^{\mu e_1}-1)\varphi_0(-(e_1+f_1),e_1)}
\end{matrix}
&
\begin{matrix}
\scriptstyle{(e^{\mu e_1}-1)e^{{\mu\over 2}(e_1+f_1)}\cdot} \\ 
\scriptstyle{\cdot(\varphi_1-\varphi_0)(e_0+f_\infty,e_1+f_1)} \\ 
\scriptstyle{-(e^{\mu e_1}-1){{e^{{\mu\over 2}(e_1+f_1)}-1}\over{-(e_1+f_1)}} }\\ 
\scriptstyle{+(e^{\mu e_1}-1)(\varphi_1-\varphi_0)(-(e_1+f_1),e_1)} \\ 
\scriptstyle{+{{e^{\mu e_1}-1}\over{e_1}}}
\end{matrix}
& 
\begin{matrix}
\scriptstyle{-(e^{\mu e_1}-1)e^{{\mu\over 2}(e_1+f_1)} \cdot} \\ 
\scriptstyle{\cdot \varphi_0(e_0+f_\infty,e_1+f_1)}
\end{matrix}
\\ 
\hline
\begin{matrix}
\scriptstyle{-(e^{\mu (e_\infty+f_0)}-1)\Phi(e_0,e_\infty)\cdot}\\ 
\scriptstyle{\cdot\Phi(f_\infty,f_0){{e^{-{\mu\over 2}(e_0+f_\infty)}-1}\over{e_0+f_\infty}}} \\ 
\scriptstyle{-(e^{\mu (e_\infty+f_0)}-1)\varphi_0(e_0+f_\infty,e_\infty+f_0)}
\end{matrix}
&
\begin{matrix}
\scriptstyle{-(e^{\mu (e_\infty+f_0)}-1)\Phi(e_0,e_\infty)\cdot}\\ 
\scriptstyle{\cdot\Phi(f_\infty,f_0){{e^{-{\mu\over 2}(e_0+f_\infty)}-1}\over{e_0+f_\infty}}} \\ 
\scriptstyle{-(e^{\mu (e_\infty+f_0)}-1)\varphi_0(e_0+f_\infty,e_\infty+f_0)}
\end{matrix}
&
\begin{matrix}
\scriptstyle{-(e^{\mu (e_\infty+f_0)}-1)\Phi(e_0,e_\infty)\cdot}\\ 
\scriptstyle{\cdot \Phi(f_\infty,f_0){{e^{-{\mu\over 2}(e_0+f_\infty)}-1}\over{e_0+f_\infty}}} \\ 
\scriptstyle{+(e^{\mu(e_\infty+f_0)}-1)\cdot} \\ 
\scriptstyle{\cdot(\varphi_1-\varphi_0)(e_0+f_\infty,e_\infty+f_0)}\\ 
\scriptstyle{+{{e^{\mu (e_\infty+f_0)}-1}\over{e_\infty+f_0}}}
\end{matrix} 
\end{array}
\right]
\end{align}
\end{cor}

\proof One transforms the expression obtained from Proposition \ref{prop:comput:P:muPhi} using identities $\Phi(e_0+f_\infty,e_1+f_1)=\Phi(e_0,e_1)\Phi(f_\infty,f_1)$, $\Phi(e_0+f_\infty,e_\infty+f_0)=\Phi(e_0,e_\infty)\Phi(f_\infty,f_0)$, 
$\Phi(-(e_1+f_1),f_1)=\Phi(-(e_1+f_1),e_1)=1$, which follow from the fact $\Phi$ is a group-like element with vanishing linear terms 
in $e_0,e_1$. \hfill\qed\medskip 

\subsection{Computation of $Q_{(\mu,\Phi)}$ and $\overline Q_{(\mu,\Phi)}$}\label{sect:comp:R}

\begin{prop}\label{prop:comput:R:muPhi}
Let $\mu\in\mathbf k^\times$ and $\Phi\in\mathsf M_\mu(\mathbf k)$. One has
\begin{align}\nonumber
& Q_{(\mu,\Phi)}
=  & \\ \nonumber
&
\mathrm{diag}\left(\Phi(e_{15},e_{45}), 
e^{{\mu\over 2}e_{45}}
  \Phi(e_{23,5},e_{45}) e^{{\mu\over 2}e_{23,5}} \Phi(e_{25},e_{14,5}), 
e^{{\mu\over 2}e_{45}} 
  \Phi(e_{23,5},e_{45}) \Phi(e_{35},e_{14,5}) \right)\cdot
\\ & \nonumber\cdot 
\left[
\begin{array}{c|c|c} 
\begin{matrix} 
\scriptstyle{{{e^{\mu e_{15}}-1}\over{e_{15}}}} \\ 
\scriptstyle{+(e^{\mu e_{15}}-1)(\varphi_1-\varphi_0)(e_{45},e_{15})}
\end{matrix}
 & 
\scriptstyle{-(e^{\mu e_{15}}-1)\varphi_0(e_{45},e_{15})}& 
\scriptstyle{-(e^{\mu e_{15}}-1)\varphi_0(e_{45},e_{15})}
\\
\hline
\begin{matrix} 
\scriptstyle{-(e^{\mu e_{25}}-1)\Phi(e_{14,5},e_{25})e^{-{\mu\over 2}e_{23,5}}\cdot} \\ 
\scriptstyle{\cdot\Phi(e_{45},e_{23,5})
{{e^{-{\mu\over 2}e_{45}}-1}\over{e_{45}}}}  \\ 
\scriptstyle{-(e^{\mu e_{25}}-1)\Phi(e_{14,5},e_{25})\cdot} \\ 
\scriptstyle{\cdot e^{-{\mu\over 2}e_{23,5}} 
\varphi_0(e_{45},e_{23,5})}
\end{matrix}
&
\begin{matrix} 
\scriptstyle{-(e^{\mu e_{25}}-1)\Phi(e_{14,5},e_{25})e^{-{\mu\over 2}e_{23,5}}\cdot} \\ 
\scriptstyle{\cdot\Phi(e_{45},e_{23,5}){{e^{-{\mu\over 2}e_{45}}-1}\over{e_{45}}}} \\ 
\scriptstyle{+(e^{\mu e_{25}}-1)\Phi(e_{14,5},e_{25})e^{-{\mu\over 2}e_{23,5}}\cdot}  \\ 
\scriptstyle{\cdot (\varphi_1-\varphi_0)(e_{45},e_{23,5})} \\ 
\scriptstyle{+ (e^{\mu e_{25}}-1)\Phi(e_{14,5},e_{25}){{e^{-{\mu\over 2}e_{23,5}}-1}\over{e_{23,5}}}} \\ 
\scriptstyle{ +(e^{\mu e_{25}}-1)(\varphi_1-\varphi_0)(e_{14,5},e_{25})}
\\ \scriptstyle{+{{e^{\mu e_{25}}-1}\over{e_{25}}}}
\end{matrix}
& 
\begin{matrix} 
\scriptstyle{-(e^{\mu e_{25}}-1)\Phi(e_{14,5},e_{25})e^{-{\mu\over 2}e_{23,5}}\cdot} \\ 
\scriptstyle{\cdot\Phi(e_{45},e_{23,5}){{e^{-{\mu\over 2}e_{45}}-1}\over{e_{45}}}} \\ 
\scriptstyle{+(e^{\mu e_{25}}-1)\Phi(e_{14,5},e_{25})e^{-{\mu\over 2}e_{23,5}} \cdot}  \\ 
\scriptstyle{\cdot(\varphi_1-\varphi_0)(e_{45},e_{23,5})}  \\ 
\scriptstyle{+ (e^{\mu e_{25}}-1)\Phi(e_{14,5},e_{25}){{e^{-{\mu\over 2}e_{23,5}}-1}\over{e_{23,5}}}} \\ 
\scriptstyle{-(e^{\mu e_{25}}-1)\varphi_0(e_{14,5},e_{25})} 
\end{matrix}
\\ 
\hline
\begin{matrix} 
\scriptstyle{-(e^{\mu e_{35}}-1)\Phi(e_{14,5},e_{35})\cdot
} \\ 
\scriptstyle{\cdot\Phi(e_{45},e_{23,5}){{e^{-{\mu\over 2}e_{45}}-1}\over{e_{45}}}}  \\ 
\scriptstyle{- (e^{\mu e_{35}}-1)\Phi(e_{14,5},e_{35})\varphi_0(e_{45},e_{23,5})} 
\end{matrix}
&
\begin{matrix} 
\scriptstyle{ -(e^{\mu e_{35}}-1)\Phi(e_{14,5},e_{35})
\cdot} \\ \scriptstyle{\cdot
\Phi(e_{45},e_{23,5}) {{e^{-{\mu\over 2}e_{45}}-1}\over{e_{45}}}}  \\ 
\scriptstyle{+ (e^{\mu e_{35}}-1)\Phi(e_{14,5},e_{35})
\cdot} \\ \scriptstyle{\cdot
(\varphi_1-\varphi_0)(e_{45},e_{23,5})}  \\ 
\scriptstyle{- (e^{\mu e_{35}}-1)\varphi_0(e_{14,5},e_{35})}  
\end{matrix}
&
\begin{matrix} 
\scriptstyle{-(e^{\mu e_{35}}-1)\Phi(e_{14,5},e_{35})\cdot} \\ 
\scriptstyle{\cdot\Phi(e_{45},e_{23,5}) {{e^{-{\mu\over 2}e_{45}}-1}\over{e_{45}}}} \\ 
\scriptstyle{+ (e^{\mu e_{35}}-1)\Phi(e_{14,5},e_{35})\cdot} \\ 
\scriptstyle{\cdot(\varphi_1-\varphi_0)(e_{45},e_{23,5})}  \\ 
\scriptstyle{+(e^{\mu e_{35}}-1)(\varphi_1-\varphi_0)(e_{14,5},e_{35})}  \\ 
\scriptstyle{+ {{e^{\mu e_{35}}-1}\over{e_{35}}}} 
\end{matrix} 
\end{array}
\right]
\end{align}
\end{prop}

\proof The computation of the first line of the matrix follows from \eqref{image:partIII:x15}, from the 2-cycle identity \eqref{duality:rel}, and from 
\begin{align*}
&(e^{\mu e_{15}}-1)\Phi(e_{45},e_{15})=(e^{\mu e_{15}}-1)(1+\varphi_0(e_{45},e_{15})e_{45}+\varphi_1(e_{45},e_{15})e_{15})
\\ & =\Big({{e^{\mu e_{15}}-1}\over{e_{15}}}
+(e^{\mu e_{15}}-1)\varphi_1(e_{45},e_{15})\Big)e_{15}+(e^{\mu e_{15}}-1)\varphi_0(e_{45},e_{15})e_{45}
\\ & 
\scriptstyle{=\Big({{e^{\mu e_{15}}-1}\over{e_{15}}}+(e^{\mu e_{15}}-1)(\varphi_1-\varphi_0)(e_{45},e_{15})\Big)e_{15}
-(e^{\mu e_{15}}-1)\varphi_0(e_{45},e_{15})e_{25}-(e^{\mu e_{15}}-1)\varphi_0(e_{45},e_{15})e_{35}.} 
\end{align*}

The computation of the second line of the matrix follows from \eqref{image:partIII:x25}, from the 2-cycle identity \eqref{duality:rel}, and from
\begin{align*}
&
(e^{\mu e_{25}}-1)\Phi(e_{14,5},e_{25})e^{-{\mu\over 2}e_{23,5}} 
\Phi(e_{45},e_{23,5})e^{-{\mu\over 2}e_{45}}
=(e^{\mu e_{25}}-1)\Phi(e_{14,5},e_{25})e^{-{\mu\over 2}e_{23,5}} 
\Phi(e_{45},e_{23,5})\cdot
\\ & \cdot(e^{-{\mu\over 2}e_{45}}-1)
+(e^{\mu e_{25}}-1)\Phi(e_{14,5},e_{25})e^{-{\mu\over 2}e_{23,5}} 
(\varphi_0(e_{45},e_{23,5})e_{45}
+\varphi_1(e_{45},e_{23,5})e_{23,5}) 
\\ & 
\scriptstyle{+ (e^{\mu e_{25}}-1)\Phi(e_{14,5},e_{25})(e^{-{\mu\over 2}e_{23,5}}-1)
+ (e^{\mu e_{25}}-1)(\varphi_0(e_{14,5},e_{25})e_{14,5}+\varphi_1(e_{14,5},e_{25})e_{25})
 +(e^{\mu e_{25}}-1)}
\\ &
=\scriptstyle{\Big(
-(e^{\mu e_{25}}-1)\Phi(e_{14,5},e_{25})e^{-{\mu\over 2}e_{23,5}}\Phi(e_{45},e_{23,5})
{{e^{-{\mu\over 2}e_{45}}-1}\over{e_{45}}}
-(e^{\mu e_{25}}-1)\Phi(e_{14,5},e_{25})e^{-{\mu\over 2}e_{23,5}} 
\varphi_0(e_{45},e_{23,5})\Big) e_{15}}
\\&
\scriptstyle{+\Big(
-(e^{\mu e_{25}}-1)\Phi(e_{14,5},e_{25})e^{-{\mu\over 2}e_{23,5}}\Phi(e_{45},e_{23,5})
{{e^{-{\mu\over 2}e_{45}}-1}\over{e_{45}}}
+(e^{\mu e_{25}}-1)\Phi(e_{14,5},e_{25})e^{-{\mu\over 2}e_{23,5}} 
(\varphi_1-\varphi_0)(e_{45},e_{23,5})}
\\ & 
\scriptstyle{
+ (e^{\mu e_{25}}-1)\Phi(e_{14,5},e_{25}){{e^{-{\mu\over 2}e_{23,5}}-1}\over{e_{23,5}}}
 +(e^{\mu e_{25}}-1)(\varphi_1-\varphi_0)(e_{14,5},e_{25})
+{{e^{\mu e_{25}}-1}\over{e_{25}}}\Big) e_{25}}
\\ & 
\scriptstyle{+\Big(
-(e^{\mu e_{25}}-1)\Phi(e_{14,5},e_{25})e^{-{\mu\over 2}e_{23,5}}\Phi(e_{45},e_{23,5})
{{e^{-{\mu\over 2}e_{45}}-1}\over{e_{45}}}
+(e^{\mu e_{25}}-1)\Phi(e_{14,5},e_{25})e^{-{\mu\over 2}e_{23,5}} 
(\varphi_1-\varphi_0)(e_{45},e_{23,5})}
 \\ & 
\scriptstyle{
+ (e^{\mu e_{25}}-1)\Phi(e_{14,5},e_{25}){{e^{-{\mu\over 2}e_{23,5}}-1}\over{e_{23,5}}}
 -(e^{\mu e_{25}}-1)\varphi_0(e_{14,5},e_{25})\Big)e_{35}}
\end{align*}

The computation of the third line of the matrix follows from \eqref{image:partIII:x35}, from the 2-cycle identity \eqref{duality:rel}, 
and from
\begin{align*}
& (e^{\mu e_{35}}-1)\Phi(e_{14,5},e_{35})\Phi(e_{45},e_{23,5}) e^{-{\mu\over 2}e_{45}} 
=(e^{\mu e_{35}}-1)\Phi(e_{14,5},e_{35})\Phi(e_{45},e_{23,5}) (e^{-{\mu\over 2}e_{45}}-1)
\\ &+ (e^{\mu e_{35}}-1)\Phi(e_{14,5},e_{35})(\varphi_0(e_{45},e_{23,5})e_{45}+\varphi_1(e_{45},e_{23,5})e_{23,5})
\\ &+ (e^{\mu e_{35}}-1)(\varphi_0(e_{14,5},e_{35})e_{14,5}+\varphi_1(e_{14,5},e_{35})e_{35})+ (e^{\mu e_{35}}-1)
\\ &
=\Big( -(e^{\mu e_{35}}-1)\Phi(e_{14,5},e_{35})\Phi(e_{45},e_{23,5}) {{e^{-{\mu\over 2}e_{45}}-1}\over{e_{45}}}
- (e^{\mu e_{35}}-1)\Phi(e_{14,5},e_{35})\varphi_0(e_{45},e_{23,5})\Big) e_{15}
\\ &
+\Big( -(e^{\mu e_{35}}-1)\Phi(e_{14,5},e_{35})\Phi(e_{45},e_{23,5}) {{e^{-{\mu\over 2}e_{45}}-1}\over{e_{45}}}
+(e^{\mu e_{35}}-1)\Phi(e_{14,5},e_{35})(\varphi_1-\varphi_0)(e_{45},e_{23,5})
\\ &
- (e^{\mu e_{35}}-1)\varphi_0(e_{14,5},e_{35})\Big)e_{25}
+\Big( -(e^{\mu e_{35}}-1)\Phi(e_{14,5},e_{35})\Phi(e_{45},e_{23,5}) {{e^{-{\mu\over 2}e_{45}}-1}\over{e_{45}}}
\\ &+(e^{\mu e_{35}}-1)\Phi(e_{14,5},e_{35})(\varphi_1-\varphi_0)(e_{45},e_{23,5})
+ (e^{\mu e_{35}}-1)(\varphi_1-\varphi_0)(e_{14,5},e_{35})
+ {{e^{\mu e_{35}}-1}\over{e_{35}}}\Big)e_{35}. 
\end{align*}
\hfill\qed\medskip 

\begin{cor}\label{cor:comput:bar:R:muPhi}
Let $\mu\in\mathbf k^\times$ and $\Phi\in\mathsf M_\mu(\mathbf k)$. One has
\begin{align}\label{equation:bar:R}
& \overline Q_{(\mu,\Phi)}= \nonumber  \\ 
\nonumber
&
\mathrm{diag}\left(\Phi(f_1,f_\infty), e^{{\mu\over 2}f_\infty}
 \Phi(f_0,f_\infty)e^{{\mu\over 2}f_0} \Phi(e_1,e_0), 
 e^{{\mu\over 2}(e_0+f_\infty)} 
\Phi(f_0,f_\infty)\Phi(e_\infty,e_0)\right)\cdot 
\\ & \nonumber\cdot 
\left[
\begin{array}{c|c|c} 
\begin{matrix}
\scriptstyle{{{e^{\mu f_1}-1}\over{f_1}}} \\ 
\scriptstyle{+(e^{\mu f_1}-1)(\varphi_1-\varphi_0)(e_0+f_\infty,f_1)} 
\end{matrix}
& 
\scriptstyle{-(e^{\mu f_1}-1)\varphi_0(e_0+f_\infty,f_1)}& 
\scriptstyle{-(e^{\mu f_1}-1)\varphi_0(e_0+f_\infty,f_1)}
\\
\hline
\begin{matrix} 
\scriptstyle{-(e^{\mu e_1}-1)\Phi(e_0,e_1)e^{-{\mu\over 2}(-e_0+f_0)}\cdot} \\ 
\scriptstyle{\cdot\Phi(f_\infty,f_0)
{{e^{-{\mu\over 2}(e_0+f_\infty)}-1}\over{e_0+f_\infty}}}  \\ 
\scriptstyle{-(e^{\mu e_1}-1)\Phi(e_0,e_1) e^{-{\mu\over 2}(-e_0+f_0)}\cdot} \\ 
\scriptstyle{\cdot  
\varphi_0(e_0+f_\infty,-e_0+f_0)}
\end{matrix}
&
\begin{matrix} 
\scriptstyle{-(e^{\mu e_1}-1)\Phi(e_0,e_1)e^{-{\mu\over 2}(-e_0+f_0)}\cdot} \\ 
\scriptstyle{\cdot\Phi(f_\infty,f_0){{e^{-{\mu\over 2}(e_0+f_\infty)}-1}\over{e_0+f_\infty}}} \\ 
\scriptstyle{+(e^{\mu e_1}-1)\Phi(e_0,e_1)e^{-{\mu\over 2}(-e_0+f_0)}\cdot}  \\ 
\scriptstyle{\cdot (\varphi_1-\varphi_0)(e_0+f_\infty,-e_0+f_0)} \\ 
\scriptstyle{+ (e^{\mu e_1}-1)\Phi(e_0,e_1){{e^{-{\mu\over 2}(-e_0+f_0)}-1}\over{-e_0+f_0}}} \\ 
\scriptstyle{ +(e^{\mu e_1}-1)(\varphi_1-\varphi_0)(e_0-f_0,e_1)} \\
\scriptstyle{+{{e^{\mu e_1}-1}\over{e_1}}}
\end{matrix}
& 
\begin{matrix} 
\scriptstyle{-(e^{\mu e_1}-1)\Phi(e_0,e_1)e^{-{\mu\over 2}(-e_0+f_0)}\cdot} \\ 
\scriptstyle{\cdot\Phi(f_\infty,f_0){{e^{-{\mu\over 2}(e_0+f_\infty)}-1}\over{e_0+f_\infty}}} \\ 
\scriptstyle{+(e^{\mu e_1}-1)\Phi(e_0,e_1)e^{-{\mu\over 2}(-e_0+f_0)} \cdot}  \\ 
\scriptstyle{\cdot(\varphi_1-\varphi_0)(e_0+f_\infty,-e_0+f_0)}  \\ 
\scriptstyle{+ (e^{\mu e_1}-1)\Phi(e_0,e_1){{e^{-{\mu\over 2}(-e_0+f_0)}-1}\over{-e_0+f_0}}} \\ 
\scriptstyle{-(e^{\mu e_1}-1)\varphi_0(e_0-f_0,e_1)} 
\end{matrix}
\\ 
\hline
\begin{matrix} 
\scriptstyle{-(e^{\mu (e_\infty+f_0)}-1)\Phi(e_0,e_\infty)\cdot} \\ 
\scriptstyle{\cdot\Phi(f_\infty,f_0){{e^{-{\mu\over 2}(e_0+f_\infty)}-1}\over{e_0+f_\infty}}}  \\ 
\scriptstyle{- (e^{\mu (e_\infty+f_0)}-1)\Phi(e_0,e_\infty)\cdot} \\
\scriptstyle{\cdot\varphi_0(e_0+f_\infty,-e_0+f_0)}
\end{matrix}
&
\begin{matrix} 
\scriptstyle{ -(e^{\mu (e_\infty+f_0)}-1)\Phi(e_0,e_\infty)
\cdot} \\ \scriptstyle{\cdot
\Phi(f_\infty,f_0) {{e^{-{\mu\over 2}(e_0+f_\infty)}-1}\over{e_0+f_\infty}}}  \\ 
\scriptstyle{+ (e^{\mu (e_\infty+f_0)}-1)\Phi(e_0,e_\infty)
\cdot} \\ \scriptstyle{\cdot
(\varphi_1-\varphi_0)(e_0+f_\infty,-e_0+f_0)}  \\ 
\scriptstyle{- (e^{\mu (e_\infty+f_0)}-1)\varphi_0(e_0-f_0,e_\infty+f_0)}  
\end{matrix}
&
\begin{matrix} 
\scriptstyle{-(e^{\mu (e_\infty+f_0)}-1)\Phi(e_0,e_\infty)\cdot} \\ 
\scriptstyle{\cdot\Phi(f_\infty,f_0) {{e^{-{\mu\over 2}(e_0+f_\infty)}-1}\over{e_0+f_\infty}}} \\ 
\scriptstyle{+ (e^{\mu (e_\infty+f_0)}-1)\Phi(e_0,e_\infty)\cdot} \\ 
\scriptstyle{\cdot(\varphi_1-\varphi_0)(e_0+f_\infty,-e_0+f_0)}  \\ 
\scriptstyle{+(e^{\mu (e_\infty+f_0)}-1)(\varphi_1-\varphi_0)(e_0-f_0,e_\infty+f_0)}  \\ 
\scriptstyle{+ {{e^{\mu (e_\infty+f_0)}-1}\over{e_\infty+f_0}}} 
\end{matrix} 
\end{array}
\right]
\end{align}
\end{cor}

\proof 
One transforms the expression obtained from Proposition \ref{prop:comput:R:muPhi} using the commutativity of $e_0$ with $f_0,f_\infty$, 
as well as the identities $\Phi(e_0+f_\infty,e_1+f_1)=\Phi(e_0,e_1)\Phi(f_\infty,f_1)$, 
$\Phi(e_0+f_\infty,e_\infty+f_0)=\Phi(e_0,e_\infty)\Phi(f_\infty,f_0)$, 
$\Phi(-(e_1+f_1),f_1)=\Phi(-(e_1+f_1),e_1)=1$, which follow from the fact $\Phi$ is a group-like element with vanishing linear terms 
in $e_0,e_1$. \hfill\qed\medskip 

\section{Associators and harmonic algebra coproducts}\label{sect:8:19032018}

The purpose of this section is the proof of the first main result of this paper, namely the compatibility of associators with 
harmonic algebra coproducts (Theorem \ref{thm:compat:assoc:algebra:24dec2019}). This proof relies on the commutativity of 
diagram \eqref{diagg:main:alg}, which is divided into subdiagrams (A1) to (A8). Among them, subdiagram (A1) (resp. (A7)) 
expresses the interpretation of the Betti (resp. de Rham) algebra coproduct and was proved in Lemma \ref{lemma:completion:10dec2019} 
(resp. Lemma \ref{lem:completion:diag:LA}). The main remaining diagrams are (A3) and (A4), both of which relate, by means of the 
matrix $\overline P_{(\mu,\Phi)}$ from \S\ref{sect:def:matrices:P:R} and the algebra comparison isomorphisms, 
`de Rham' and `Betti' objects: namely, the morphisms $\hat\rho$ and 
$\underline{\hat\rho}$ in the case of (A3), and the morphisms $\mathrm{row}_1\cdot(-)\cdot\mathrm{col}_1$ and 
$\underline{\mathrm{row}}_1\cdot(-)\cdot\underline{\mathrm{col}}_1$ in the case of (A4). \S\S\ref{sect:8:1:21032018} to 
\ref{sect:8:5:12122017} are devoted to the commutativity of (A3); more precisely, \S\S\ref{sect:8:1:21032018} to 
\ref{subsection8:3:21032018} are devoted to the commutativity of diagrams relating constituents of $\hat\rho$ and 
$\underline{\hat\rho}$, and these results are gathered in \S\ref{sect:8:5:12122017} for proving the commutativity of (A3). 
\S\S\ref{sect:rel:col:barcol:21032018} to \ref{CD:row:col:barrow:barcol:21032018} are devoted to the commutativity of (A4); 
more precisely, \S\ref{sect:rel:col:barcol:21032018} (resp. \S\ref{sect:8:6:row:bar:row:21032018}) is devoted to the proof of 
equalities relating $\overline P_{(\mu,\Phi)}$, $\mathrm{col}_1$ and $\underline{\mathrm{col}}_1$ (resp. $\overline P_{(\mu,\Phi)}$, 
$\mathrm{row}_1$ and $\underline{\mathrm{row}}_1$), based on the explicit computation of $\overline P_{(\mu,\Phi)}$ in 
\S\ref{sect:comp:P}, and these results are combined in \S\ref{CD:row:col:barrow:barcol:21032018} for proving the commutativity of (A4).  
Theorem \ref{thm:compat:assoc:algebra:24dec2019} is formulated and proved in \S\ref{sect:8:8:21:03:2018}. 

\subsection{Commutative diagram relating $\hat\ell$ and \underline{$\hat{\ell}$}}
\label{sect:8:1:21032018}

One checks that for $(\mu,\Phi)\in\mathbf k^\times\times\mathcal G(\hat{\mathcal V}^\DR)$, the morphism  
\begin{equation}\label{def:underline:alpha:Phi:group}
\mathrm{comp}_{(\mu,\Phi)}^{\mathcal V,(1)}:\hat{\mathcal V}^\B\to \hat{\mathcal V}^\DR
\end{equation}
defined in \S\ref{sect:3:3:28oct} is an isomorphism of topological Hopf algebras, where the two sides are equipped with the coproducts
$\Delta^{\mathcal V,\B}$ and $\Delta^{\mathcal V,\DR}$, and is given by  
\begin{equation}\label{formulas:a:Phi:12122017}
X_0\mapsto\Phi(e_0,e_1)e^{\mu e_0}\Phi(e_0,e_1)^{-1}, \quad X_1\mapsto e^{\mu e_1}. 
\end{equation}
Then: 
\begin{lem}\label{lemma:52:18:08:2017}
Let $\mu\in\mathbf k^\times$ and $\Phi\in\mathsf M_\mu(\mathbf k)$. The following diagram of topological Hopf algebras 
$$
\xymatrix{ \hat{\mathcal V}^\B\ar^{\underline{\hat\ell}}[r]\ar_{\mathrm{comp}_{(\mu,\Phi)}^{\mathcal V,(1)}}[d]& 
(\mathbf kP_5^*)^\wedge \ar^{\mathrm{comp}_{(\mu,\Phi)}^{((\bullet\bullet)\bullet)\bullet}}[d]
\\ \hat{\mathcal V}^\DR
\ar_{\hat\ell}[r]
& (U\mathfrak p_5)^\wedge}
$$
is commutative, where $\mathrm{comp}_{(\mu,\Phi)}^{\mathcal V,(1)}$ is as in \eqref{def:underline:alpha:Phi:group}, 
$\underline{\hat\ell}$\index{l_underline^hat@$\underline{\hat\ell}$} is the completed version of $\underline{\ell}$ from \S\ref{subsect:def:gp:morphisms:bis}, 
$\mathrm{comp}_{(\mu,\Phi)}^{((\bullet\bullet)\bullet)\bullet}$ is as in \S\ref{sect:9:3:3:20191203}, and 
$\hat\ell$\index{l^hat@$\hat\ell$} is the completed version of $\ell$ from 
\S\ref{subsect:def:LA:morphisms:2}.
\end{lem}

\proof The following equalities $x_{12}=\sigma_1^2=\sigma_{1,1}^{-2}$ hold in $B_2$ (see 
\S\ref{subsect:def:gp:morphisms}) and in view of \S\ref{sect:9.1.3.0306}, (c), 
imply the equality $(x_{12})_{\bullet\bullet}=\sigma_{\bullet,\bullet}^{-2}$ in 
$\mathbf{PaB}(\bullet\bullet)$. The following equalities 
$(x_{12})_{((\bullet\bullet)\bullet)\bullet}=(\sigma_{\bullet,\bullet}^{-2}\otimes\mathrm{id}_\bullet)\otimes\mathrm{id}_\bullet$
and $(x_{23})_{(\bullet(\bullet\bullet))\bullet}=(\mathrm{id}_\bullet\otimes\sigma_{\bullet,\bullet}^{-2})\otimes
\mathrm{id}_\bullet$ similarly hold in $\mathbf{PaB}(((\bullet\bullet)\bullet)\bullet)$ and 
$\mathbf{PaB}((\bullet(\bullet\bullet))\bullet)$. \S\ref{sect:9.1.3.0306} then implies the equalitites  
\begin{equation}\label{comp:image:x12:5dec}
\mathrm{comp}_{(\mu,\Phi)}^{((\bullet\bullet)\bullet)\bullet}(x_{12})
=e^{\mu e_{12}}, \quad
\mathrm{comp}_{(\mu,\Phi)}^{(\bullet(\bullet\bullet))\bullet}(x_{23})
=e^{\mu e_{23}}.  
\end{equation}
Then  
\begin{align}\label{comp:image:x23:5dec}
&\mathrm{comp}_{(\mu,\Phi)}^{((\bullet\bullet)\bullet)\bullet}(x_{23})=
\mathrm{comp}_{(\mu,\Phi)}((x_{23})_{((\bullet\bullet)\bullet)\bullet})
\\ & \nonumber=\mathrm{comp}_{(\mu,\Phi)}((a_{\bullet,\bullet,\bullet}\otimes\mathrm{id}_\bullet)^{-1}
\circ(x_{23})_{(\bullet(\bullet\bullet))\bullet}\circ(a_{\bullet,\bullet,\bullet}\otimes\mathrm{id}_\bullet))
=\Phi(e_{12},e_{23})^{-1}e^{\mu e_{23}}\Phi(e_{12},e_{23}),
\end{align}
where the last identity uses the second part of \eqref{comp:image:x12:5dec}. 

The first part of \eqref{comp:image:x12:5dec} implies that $\mathrm{comp}_{(\mu,\Phi)}^{((\bullet\bullet)\bullet)\bullet}\circ
\underline{\hat\ell}(X_1)=e^{\mu e_{12}}$, which is equal to $\hat\ell\circ\mathrm{comp}_{(\mu,\Phi)}^{\mathcal V,(1)}(X_1)$. 
\eqref{comp:image:x23:5dec} implies $\mathrm{comp}_{(\mu,\Phi)}^{((\bullet\bullet)\bullet)\bullet}\circ\underline{\hat\ell}(X_0)=
\Phi(e_{12},e_{23})^{-1}e^{\mu e_{23}}\Phi(e_{12},e_{23})$. One has 
$\hat\ell\circ\mathrm{comp}_{(\mu,\Phi)}^{\mathcal V,(1)}(X_0)=\Phi(e_{23},e_{12})e^{\mu e_{23}}\Phi(e_{23},e_{12})^{-1}$. 
The 2-cycle identity \eqref{duality:rel} implies that these images are equal. Therefore the images of both $X_0$ and $X_1$
under $\mathrm{comp}_{(\mu,\Phi)}^{((\bullet\bullet)\bullet)\bullet}\circ\underline{\hat\ell}$ and 
$\hat\ell\circ\mathrm{comp}_{(\mu,\Phi)}^{\mathcal V,(1)}$ are the same. 
\hfill\qed\medskip

\subsection{Commutative diagram relating $M_3(\mathrm{pr}_{12}^\wedge)$ and $M_3($\underline{$\mathrm{pr}$}$^\wedge_{12})$}
\label{sect:8:2:21032018}

\begin{lem}\label{lemma:CD:29122917}
Let $\mu\in\mathbf k^\times$ and $\Phi\in\mathsf M_\mu(\mathbf k)$. The following 
$$
\xymatrix{(\mathbf k P_5^*)^\wedge\ar_{\mathrm{comp}_{(\mu,\Phi)}^{((\bullet\bullet)\bullet)\bullet}}[d]
\ar^{\underline{\mathrm{pr}}_1^\wedge}[rrr]& & & 
\hat{\mathcal V}^\B \ar^{\mathrm{comp}_{(\mu,\Phi)}^{\mathcal V,(1)}}[d]\\ (U{\mathfrak p}_5)^\wedge
\ar_{\mathrm{pr}_1^\wedge}[r]& \hat{\mathcal V}^\DR && \hat{\mathcal V}^\DR \ar^{\mathrm{Ad}(\Phi(e_0,e_1)^{-1})}[ll]} 
\quad
\xymatrix{ (\mathbf kP_5^*)^\wedge\ar_{\mathrm{comp}_{(\mu,\Phi)}^{((\bullet\bullet)\bullet)\bullet}}[d]
\ar^{\underline{\mathrm{pr}}_2^\wedge}[rrr]&& & \hat{\mathcal V}^\B\ar^{\mathrm{comp}_{(\mu,\Phi)}^{\mathcal V,(1)}}[d]\\ (U{\mathfrak p}_5)^\wedge
\ar_{\mathrm{pr}_2^\wedge}[r]& \hat{\mathcal V}^\DR
& &\hat{\mathcal V}^\DR\ar^{\mathrm{Ad}(\Phi(e_\infty,e_1)^{-1}e^{(\mu/2)e_1})}[ll] 
} 
$$
are commutative diagrams of topological Hopf algebras.  
\end{lem}

\proof One computes 
\begin{align}\label{comp:x34:20191203}
& \mathrm{comp}_{(\mu,\Phi)}^{((\bullet\bullet)\bullet)\bullet}(x_{34})=
\mathrm{comp}_{(\mu,\Phi)}((x_{34})_{((\bullet\bullet)\bullet)\bullet})
\\ & \nonumber =\mathrm{comp}_{(\mu,\Phi)}((a_{\bullet\bullet,\bullet,\bullet})^{-1}
\circ(x_{34})_{(\bullet\bullet)(\bullet\bullet)}\circ a_{\bullet\bullet,\bullet,\bullet})
=\Phi(e_{12,3},e_{34})^{-1}e^{\mu e_{34}}\Phi(e_{12,3},e_{34}). 
\end{align}

It follows from Proposition \ref{prop:pres:P5} that the elements $x_{i,i+1}$, $i\in C_5$ generate $P_5^*$. So it suffices to check 
the commutativity of the diagrams on these elements. Using \eqref{image:partI:x15} and \eqref{image:partI:x45}, 
\eqref{comp:image:x12:5dec},  \eqref{comp:image:x23:5dec} and \eqref{comp:x34:20191203}, the images of these generators 
under the two maps can be shown to be given by the following table. 
\begin{center} 
\begin{tabular}{|c|c|c|c|c|c|c|c|c|c|c|}
 \hline
generator $x$ of  $P_5^*$ & $x_{12}$ &  $x_{23}$&   $x_{34}$&   $x_{45}$&   $x_{15}$
\\  \hline
$\begin{array}{r}\text{common value of} \\ \mathrm{Ad}(\Phi(e_0,e_1)^{-1})\circ\mathrm{comp}_{(\mu,\Phi)}^{\mathcal V,(1)}
\circ\underline{\mathrm{pr}}_1^\wedge(x) \\ \text{and } 
\mathrm{pr}_1^\wedge\circ\mathrm{comp}_{(\mu,\Phi)}^{((\bullet\bullet)\bullet)\bullet}(x) \end{array}$
& $1$ & $e^{\mu e_0}$ & $\Phi(e_0,e_1)^{-1}e^{\mu e_1}\Phi(e_0,e_1)$ & $e^{\mu e_0}$ & 1
\\ \hline   
\end{tabular}
\end{center} 
This implies that the first diagram commutes. 

The situation in the case of the second diagram is given by the following table  
\begin{center} 
\begin{tabular}{|c|c|c|c|c|c|c|c|c|c|c|}
 \hline
generator $x$ of  $P_5^*$ & $x_{12}$ &  $x_{23}$&   $x_{34}$&   $x_{45}$&   $x_{15}$
\\  \hline
$\begin{array}{r}\mathrm{Ad}(\Phi(e_\infty,e_1)^{-1}e^{(\mu/2)e_1})\\ 
\circ\mathrm{comp}_{(\mu,\Phi)}^{\mathcal V,(1)}\circ\underline{\mathrm{pr}}_2^\wedge(x)\end{array}$
& $1$ & $1$ & $\begin{array}{r}\Phi(e_\infty,e_1)^{-1}e^{\mu e_1}\cdot \\ \cdot \Phi(e_\infty,e_1)\end{array}$ & $\star$ & 
$\begin{array}{r}\Phi(e_\infty,e_1)^{-1}e^{\mu e_1}\cdot \\ \cdot \Phi(e_\infty,e_1)\end{array}$
\\ \hline   
$\mathrm{pr}_2^\wedge\circ\mathrm{comp}_{(\mu,\Phi)}^{((\bullet\bullet)\bullet)\bullet}(x)$
& $1$ & $1$ & $\begin{array}{r}\Phi(e_\infty,e_1)^{-1}e^{\mu e_1}\cdot \\ \cdot \Phi(e_\infty,e_1)\end{array}$ & $e^{\mu e_\infty}$ & 
$\begin{array}{r}\Phi(e_\infty,e_1)^{-1}e^{\mu e_1}\cdot \\ \cdot \Phi(e_\infty,e_1)\end{array}$
\\ \hline  
\end{tabular}
\end{center} 
where $\star=\Phi(e_\infty,e_1)^{-1}e^{-(\mu/2)e_1}\Phi(e_0,e_1)e^{-\mu e_0}\Phi(e_0,e_1)^{-1}e^{-(\mu/2)e_1}\Phi(e_\infty,e_1)$. 

By the hexagon identity \eqref{hexagon+}, one has 
$$
e^{(\mu/2)t_{23}}\Phi(t_{12},t_{23})e^{(\mu/2)t_{12}}=\Phi(t_{13},t_{23})e^{(\mu/2)(t_{12}+t_{23})}\Phi(t_{12},t_{13})
$$ 
and by the exchange of $t_{12}$ and $t_{23}$, also 
$$
e^{(\mu/2)t_{12}}\Phi(t_{23},t_{12})e^{(\mu/2)t_{23}}=\Phi(t_{13},t_{12})e^{(\mu/2)(t_{12}+t_{23})}\Phi(t_{23},t_{13}). 
$$
Taking the product of the last equality with the previous one and using the 2-cycle relation, we get 
$$
e^{(\mu/2)t_{12}}\Phi(t_{23},t_{12})e^{\mu t_{23}}
\Phi(t_{12},t_{23})e^{(\mu/2)t_{12}}
=\Phi(t_{13},t_{12})e^{\mu(t_{12}+t_{23})}
\Phi(t_{12},t_{13})
$$
Inverting, using the 2-cycle identity and conjugating by $\Phi(t_{13},t_{12})^{-1}e^{-(\mu/2)t_{12}}e^{(\mu/2)t_{12}}$, 
we get 
\begin{equation}\label{lunettes}
\Phi(t_{13},t_{12})^{-1}e^{-(\mu/2)t_{12}}\Phi(t_{12},t_{23})^{-1}e^{-\mu t_{23}}\Phi(t_{23},t_{12})^{-1}e^{-\mu t_{12}}
(\Phi(t_{13},t_{12})^{-1}e^{-(\mu/2)t_{12}})^{-1}=e^{-\mu t_{2,13}}. 
\end{equation}
Taking the image of the resulting identity by the 
morphism $(U\mathfrak t_3)^\wedge\to\hat{\mathcal V}^\DR$ given by $t_{12}\mapsto e_1$, $t_{23}\mapsto e_0$, 
$t_{13}\mapsto e_\infty$ and using the 2-cycle identity, one obtains the equality 
$\star=e^{\mu e_\infty}$. It follows that the second diagram commutes. 
\hfill \qed\medskip 

\begin{lem}\label{lemma:83:12122017} 
Let $\mu\in\mathbf k^\times$ and $\Phi\in\mathsf M_\mu(\mathbf k)$. The following diagram commutes 
$$
\xymatrix{M_3((\mathbf k P_5^*)^\wedge)\ar_{M_3(\mathrm{comp}_{(\mu,\Phi)}^{((\bullet\bullet)\bullet)\bullet})}[d]
\ar^{M_3(\underline{\mathrm{pr}}_{12}^\wedge)}[rr]
& & M_3((\mathcal V^\B)^{\otimes2\wedge})\ar^{M_3((\mathrm{comp}_{(\mu,\Phi)}^{\mathcal V,(1)})^{\otimes2})}[d]\\ 
M_3((U{\mathfrak p}_5)^\wedge)
\ar_{M_3(\mathrm{pr}_{12}^\wedge)}[r]& M_3((\mathcal V^\DR)^{\otimes2\wedge}) \ar_{\mathrm{Ad}(\kappa_{(\mu,\Phi)})}[r]& 
M_3((\mathcal V^\DR)^{\otimes2\wedge}) 
} 
$$
where 
\begin{equation}\label{definition:of:Xi}
\kappa_{(\mu,\Phi)}:=\Phi(e_0,e_1)\cdot e^{-(\mu/2)f_1}\Phi(f_\infty,f_1)\in(\mathcal V^\DR)^{\otimes2\wedge},  
\index{kappa_mu,Phi@$\kappa_{(\mu,\Phi)}$}
\end{equation} 
and $\mathrm{Ad}(\kappa_{(\mu,\Phi)})$ denotes the automorphism taking each entry of a matrix to its image by the automorphism 
$\mathrm{Ad}(\kappa_{(\mu,\Phi)})$ of $(\mathcal V^\DR)^{\otimes2\wedge}$. 
\end{lem}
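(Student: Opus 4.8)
The plan is to recognize that every arrow in the square is obtained by applying the functor $M_3(-)$ (entrywise application to matrices) to a morphism of the underlying completed algebras: the maps $M_3(\underline a_{(\mu,\Phi)}^{(5)})$, $M_3(\underline{\mathrm{pr}}_{12}^\wedge)$, $M_3((\underline a_{(\mu,\Phi)})^{\otimes 2})$ and $M_3(\mathrm{pr}_{12}^\wedge)$ are entrywise applications by definition, and by the convention stated in the lemma the arrow labeled $\mathrm{Ad}(\kappa)$ is precisely $M_3(\mathrm{Ad}(\kappa))$ for the inner automorphism $\mathrm{Ad}(\kappa)$ of $(U(\mathfrak f_2)^{\otimes 2})^\wedge$. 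Since $M_3(-)$ preserves composition, it suffices to prove commutativity of the underlying square of algebra morphisms, i.e.\ the identity $(\underline a_{(\mu,\Phi)})^{\otimes 2}\circ\underline{\mathrm{pr}}_{12}^\wedge=\mathrm{Ad}(\kappa)\circ\mathrm{pr}_{12}^\wedge\circ\underline a_{(\mu,\Phi)}^{(5)}$ of morphisms $(\mathbf kP_5^*)^\wedge\to(U(\mathfrak f_2)^{\otimes 2})^\wedge$. I would therefore reduce the lemma to this single identity.

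To prove it, I would test both sides on a topological generating family. By Proposition \ref{prop:pres:P5} the group $P_5^*$ is generated by the $x_{i,i+1}$, so these group-like elements topologically generate $(\mathbf kP_5^*)^\wedge$ as an algebra; as all four maps are continuous algebra morphisms, it is enough to check the identity on an arbitrary $g\in P_5^*$. The crucial simplification is that $\underline{\mathrm{pr}}_{12}$ and $\mathrm{pr}_{12}$ are $(\underline{\mathrm{pr}}_1\otimes\underline{\mathrm{pr}}_2)\circ\Delta$ and $(\mathrm{pr}_1\otimes\mathrm{pr}_2)\circ\Delta$ respectively, so on the group-like $g$ and on the group-like image $\underline a_{(\mu,\Phi)}^{(5)}(g)$ (the latter group-like because $\underline a_{(\mu,\Phi)}^{(5)}$ is a Hopf algebra morphism, see \eqref{a:Phi:sph:braids}) they split componentwise as $\underline{\mathrm{pr}}_{12}(g)=\underline{\mathrm{pr}}_1(g)\otimes\underline{\mathrm{pr}}_2(g)$ and $\mathrm{pr}_{12}^\wedge(\underline a^{(5)}_{(\mu,\Phi)}(g))=\mathrm{pr}_1^\wedge(\underline a^{(5)}_{(\mu,\Phi)}(g))\otimes\mathrm{pr}_2^\wedge(\underline a^{(5)}_{(\mu,\Phi)}(g))$. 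This reduces the computation to the two tensor factors separately.

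On each factor I would invoke the two commutative diagrams of Lemma \ref{lemma:CD:29122917}, which give $\mathrm{pr}_1^\wedge\circ\underline a^{(5)}_{(\mu,\Phi)}=\mathrm{Ad}(\Phi(e_0,e_1)^{-1})\circ\underline a_{(\mu,\Phi)}\circ\underline{\mathrm{pr}}_1^\wedge$ and $\mathrm{pr}_2^\wedge\circ\underline a^{(5)}_{(\mu,\Phi)}=\mathrm{Ad}(\Phi(e_\infty,e_1)^{-1}e^{(\mu/2)e_1})\circ\underline a_{(\mu,\Phi)}\circ\underline{\mathrm{pr}}_2^\wedge$. Writing $e_i=e_i\otimes 1$ and $f_i=1\otimes e_i$ as in \eqref{def:of:ei:fi}, the tensor product of these two conjugations is $\mathrm{Ad}(\lambda)$ with $\lambda:=\Phi(e_0,e_1)^{-1}\Phi(f_\infty,f_1)^{-1}e^{(\mu/2)f_1}$, whence $\mathrm{pr}_{12}^\wedge\circ\underline a^{(5)}_{(\mu,\Phi)}(g)=\mathrm{Ad}(\lambda)\bigl((\underline a_{(\mu,\Phi)})^{\otimes 2}\circ\underline{\mathrm{pr}}_{12}^\wedge(g)\bigr)$. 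The desired identity then amounts to the relation $\kappa\lambda=1$ in $((U(\mathfrak f_2)^{\otimes 2})^\wedge)^\times$.

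The final step is a short computation whose only substance is the commutation between the two tensor slots: $\Phi(e_0,e_1)$ lies in the first slot while $\Phi(f_\infty,f_1)$ and $e^{\pm(\mu/2)f_1}$ lie in the second, so these commute and
$$\kappa\lambda=e^{-(\mu/2)f_1}\Phi(e_0,e_1)\Phi(f_\infty,f_1)\Phi(e_0,e_1)^{-1}\Phi(f_\infty,f_1)^{-1}e^{(\mu/2)f_1}=1 .$$
This yields the underlying commutative square and hence, applying $M_3(-)$, the statement. The only genuinely delicate point, and the step I would verify most carefully, is the bookkeeping of the directions of the $\mathrm{Ad}$-arrows in Lemma \ref{lemma:CD:29122917} together with the placement of each factor in the correct tensor slot, so that the assembled conjugator is exactly $\lambda=\kappa^{-1}$ and not a reordered variant.
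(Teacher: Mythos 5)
Your proof is correct and follows essentially the same route as the paper: the paper likewise combines the tensor product of the two diagrams of Lemma \ref{lemma:CD:29122917} with the compatibility of $\underline a^{(5)}_{(\mu,\Phi)}$ with the coproducts (your evaluation on group-like elements of $P_5^*$ is just a concrete rendering of that coproduct-compatibility diagram, since $\underline{\mathrm{pr}}_{12}^\wedge$ and $\mathrm{pr}_{12}^\wedge$ factor as $(\mathrm{pr}_1\otimes\mathrm{pr}_2)\circ\Delta$), and then passes to $M_3$. Your verification that the assembled conjugator $\lambda=\Phi(e_0,e_1)^{-1}\Phi(f_\infty,f_1)^{-1}e^{(\mu/2)f_1}$ satisfies $\kappa\lambda=1$ by slot-wise commutation is exactly the bookkeeping implicit in the paper's final step.
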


\proof Combining the tensor product of the two diagrams from Lemma \ref{lemma:CD:29122917} with the diagram expressing the 
compatibility of $\mathrm{comp}_{(\mu,\Phi)}^{((\bullet\bullet)\bullet)\bullet}$ with the coproducts of its source and 
target, one obtains the following commutative diagram
$$
\xymatrix{(\mathbf k P_5^*)^\wedge\ar_{\mathrm{comp}_{(\mu,\Phi)}^{((\bullet\bullet)\bullet)\bullet}}[d]
\ar^{\underline{\mathrm{pr}}_{12}^\wedge}[rrrrr]& & &&& (\mathcal V^\B)^{\otimes2\wedge}
\ar^{(\mathrm{comp}_{(\mu,\Phi)}^{\mathcal V,(1)})^{\otimes2}}[d]\\ (U{\mathfrak p}_5)^\wedge
\ar_{\mathrm{pr}_{12}^\wedge}[r]& (\mathcal V^\DR)^{\otimes2\wedge} &&&& (\mathcal V^\DR)^{\otimes2\wedge}
\ar^{\mathrm{Ad}(\Phi(e_0,e_1)^{-1}\Phi(f_\infty,f_1)^{-1}e^{(\mu/2)f_1})}[llll]} 
$$
from where we derive the announced commutative diagram.  \hfill\qed\medskip 

\subsection{Commutative diagram relating $\hat\AAA$ and \underline{$\hat\AAA$}}\label{subsection8:3:21032018}

\begin{lem}\label{12122017Lemma84}
Let $\mu\in\mathbf k^\times$ and $\Phi\in\mathsf M_\mu(\mathbf k)$. The following diagram is commutative 
\begin{equation}
\xymatrix{
(\mathbf k P_5^*)^\wedge \ar^{\underline{\hat\AAA}}[rr]\ar_{\mathrm{comp}_{(\mu,\Phi)}^{((\bullet\bullet)\bullet)\bullet}}[d]& & 
M_3((\mathbf k P_5^*)^\wedge)\ar^{M_3(\mathrm{comp}_{(\mu,\Phi)}^{((\bullet\bullet)\bullet)\bullet})}[d]\\ 
(U\mathfrak p_5)^\wedge\ar_{\hat\AAA}[r]& M_3((U\mathfrak p_5)^\wedge)\ar_{\mathrm{Ad}(P_{(\mu,\Phi)})}[r]
&M_3((U\mathfrak p_5)^\wedge)}
\end{equation}
where $\hat\AAA\index{pivar^hat@$\hat\AAA$},\underline{\hat\AAA}\index{pivar_underline^hat@$\underline{\hat\AAA}$}$ are the 
completions of $\AAA,\underline\AAA$ (see \S\S\ref{sect:completion:giadrhc}, \ref{sect:compl:Betti:30oct}) and 
$P_{(\mu,\Phi)}\in\mathrm{GL}_3((U\mathfrak f_3)^\wedge)\subset\mathrm{GL}_3((U\mathfrak p_5)^\wedge)$ is 
given by Definition \ref{def:P:R:20191203}. 
\end{lem}

\proof This follows from Lemma \ref{lemma:ideals:and:morphisms}, with $R=(\mathbf kP_5^*)^\wedge$, 
$J=J(\underline{\mathrm{pr}}_5)^\wedge$, $(j_a)_{a\in[\![1,d]\!]}=(x_{i5}-1)_{i\in[\![1,3]\!]}$, 
$R'=(U\mathfrak p_5)^\wedge$, $J'=J(\mathrm{pr}_5)^\wedge$, $(j'_a)_{a\in[\![1,d]\!]}=(e_{i5})_{i\in[\![1,3]\!]}$, 
$f=\mathrm{comp}_{(\mu,\Phi)}^{((\bullet\bullet)\bullet)\bullet}$. \hfill\qed\medskip 

\subsection{Commutative diagram relating $\hat{\BB}$ and \underline{$\hat{\BB}$} ((A3) in \eqref{diagg:main:alg})}\label{sect:8:5:12122017}

\begin{lem}\label{lem:CD:relating:B:underlineB}
Let $\mu\in\mathbf k^\times$ and $\Phi\in\mathsf M_\mu(\mathbf k)$ and let $\kappa_{(\mu,\Phi)}\cdot 
\overline P_{(\mu,\Phi)}\in\mathrm{GL}_3((\mathcal V^\DR)^{\otimes2\wedge})$ be the matrix obtained 
by the left multiplication by the scalar $\kappa_{(\mu,\Phi)}\in((\mathcal V^\DR)^{\otimes2\wedge})^\times$ 
(see \eqref{definition:of:Xi}) of the entries of the matrix $\overline P_{(\mu,\Phi)}\in\mathrm{GL}_3((\mathcal V^\DR)^{\otimes2\wedge})$ 
(see Definition \ref{def:barP:barR}). The following diagram commutes 
\begin{equation}
\xymatrix{ \hat{\mathcal V}^\B\ar^{\underline{\hat{\BB}}}[rr]\ar_{\mathrm{comp}_{(\mu,\Phi)}^{\mathcal V,(1)}}[d]& & 
M_3((\mathcal V^\B)^{\otimes2\wedge})\ar^{M_3((\mathrm{comp}_{(\mu,\Phi)}^{\mathcal V,(1)})^{\otimes2})}[d]\\ 
\hat{\mathcal V}^\DR\ar_{\!\!\!\!\!\!\!\!\!\!\!\!\!\!\!\!\!\!\hat{\BB}}[r]& M_3((\mathcal V^\DR)^{\otimes2\wedge})
\ar_{\mathrm{Ad}(\kappa_{(\mu,\Phi)}\cdot \overline P_{(\mu,\Phi)})}[r] 
& M_3((\mathcal V^\DR)^{\otimes2\wedge})}
\end{equation} 
where $\hat\rho$, $\underline{\hat\rho}$ are as in  \S\S\ref{sect:completion:giadrhc}, \ref{sect:compl:Betti:30oct}. 
\end{lem}

\proof One checks that the following diagram commutes 
$$
\xymatrix{
M_3((U\mathfrak p_5)^\wedge)\ar^{\mathrm{Ad}(P_{(\mu,\Phi)})}[r]\ar_{M_3(\mathrm{pr}_{12}^\wedge)}[rrrd]& 
M_3((U\mathfrak p_5)^\wedge)
\ar^{M_3(\mathrm{pr}_{12}^\wedge)}[r]
& M_3((\mathcal V^\DR)^{\otimes2\wedge})\ar^{\mathrm{Ad}(\kappa_{(\mu,\Phi)})}[r]& 
M_3((\mathcal V^\DR)^{\otimes2\wedge})\\ 
& & & M_3((\mathcal V^\DR)^{\otimes2\wedge})\ar_{\mathrm{Ad}(\kappa_{(\mu,\Phi)}\cdot\overline P_{(\mu,\Phi)})}[u]}.
$$
The result follows from the juxtaposition of this diagram and of the diagrams from Lemmas \ref{lemma:52:18:08:2017}, 
\ref{lemma:83:12122017} and \ref{12122017Lemma84}. \hfill \qed\medskip 

\subsection{Relationship between $\mathrm{col}_1$, \underline{$\mathrm{col}$}$_1$ and $\overline P_{(\mu,\Phi)}$}
\label{sect:rel:col:barcol:21032018}

\begin{lem}\label{lemma:P:col}
Let $\mu\in\mathbf k^\times$ and $\Phi\in\mathsf M_\mu(\mathbf k)$. One has 
\begin{equation}\label{equality:13122017}
(\mathrm{comp}_{(\mu,\Phi)}^{\mathcal V,(1)})^{\otimes2}(\underline{\mathrm{col}}_1)=\kappa_{(\mu,\Phi)}\cdot\overline P_{(\mu,\Phi)}
\cdot\mathrm{col}_1\cdot v_{(\mu,\Phi)}
\end{equation}
(equality in $M_{3\times1}((\mathcal V^\DR)^{\otimes2\wedge})$), where $\mathrm{col}_1$, $\underline{\mathrm{col}}_1$ 
are as in \eqref{def:row:col:04012018}, \eqref{def:underline:row:col:04012018}, 
$\kappa_{(\mu,\Phi)}$, $\overline P_{(\mu,\Phi)}$
are as in \eqref{definition:of:Xi}, Definition \ref{def:barP:barR} and where  
\begin{equation}\label{def:v:mu:Phi}
v_{(\mu,\Phi)}:={1\over\mu}e^{\mu f_1}{{\Gamma_\Phi(e_1)\Gamma_\Phi(f_1)}\over{\Gamma_\Phi(e_1+f_1)}}\in(\mathcal V^\DR)^{\otimes2\wedge}. 
\index{v_mu,Phi@$v_{(\mu,\Phi)}$}
\end{equation}
\end{lem}

\proof Let $d_{(\mu,\Phi)}$ and $\tilde P_{(\mu,\Phi)}$ be the matrices of the right-hand side of equality of Corollary 
\ref{cor:equation:barP}, so that $\overline P_{(\mu,\Phi)}=d_{(\mu,\Phi)}\cdot\tilde P_{(\mu,\Phi)}$. One computes 
\begin{align*}
&\tilde P_{(\mu,\Phi)}\cdot\mathrm{col}_1=\begin{pmatrix} (e^{\mu f_1}-1)\Big(\varphi_1(-(e_1+f_1),f_1)+{1\over f_1}\Big)\\
-(e^{\mu e_1}-1)\Big(\varphi_1(-(e_1+f_1),e_1)+{1\over e_1}\Big) \\ 0 \end{pmatrix}
=\begin{pmatrix} {{e^{\mu f_1}-1}\over{f_1}}{{\Gamma_\Phi(e_1+f_1)\Gamma_\Phi(-f_1)}\over{\Gamma_\Phi(e_1)}}\\
-{{e^{\mu e_1}-1}\over{e_1}}{{\Gamma_\Phi(e_1+f_1)\Gamma_\Phi(-e_1)}\over{\Gamma_\Phi(f_1)}} \\ 0 \end{pmatrix}
\\ & 
=\begin{pmatrix} \mu e^{(\mu/2)f_1}\\-\mu e^{(\mu/2)e_1}\\ 0 \end{pmatrix}
{{\Gamma_\Phi(e_1+f_1)}\over{\Gamma_\Phi(e_1)\Gamma_\Phi(f_1)}}, 
\end{align*}
where the second equality follows from $\varphi_1(\alpha,\beta)={1\over\beta}\Big({{\Gamma_\Phi(-\alpha)\Gamma_\Phi(-\beta)}\over{\Gamma_\Phi(-\alpha-\beta)}}-1\Big)$ for $\alpha,\beta$ commutative formal variables (see \eqref{identity:Gamma:1/12/2017}), the third equality follows from the functional equation \eqref{funct:id:Gamma:Phi} for $\Gamma_\Phi$. Therefore 
$$
\tilde P_{(\mu,\Phi)}\cdot\mathrm{col}_1\cdot v_{(\mu,\Phi)}=
\begin{pmatrix}  e^{(3\mu/2)f_1}\\- e^{(\mu/2)e_1+\mu f_1}\\ 0 \end{pmatrix}.
$$ 
On the other hand, $\kappa_{(\mu,\Phi)}d_{(\mu,\Phi)}$ is a diagonal matrix of the form $\mathrm{diag}(e^{-(\mu/2)f_1},e^{-(\mu/2)e_1-\mu f_1},*)$, 
with $*\in(\mathcal V^{\DR})^{\otimes2\wedge}$. Then 
$$
\kappa_{(\mu,\Phi)}\overline P_{(\mu,\Phi)}\cdot\mathrm{col}_1\cdot v_{(\mu,\Phi)}=\kappa_{(\mu,\Phi)}d_{(\mu,\Phi)}\cdot
\tilde P_{(\mu,\Phi)}\cdot\mathrm{col}_1\cdot v_{(\mu,\Phi)}=\begin{pmatrix}  e^{\mu f_1}\\-1\\ 0 \end{pmatrix}
=(\mathrm{comp}_{(\mu,\Phi)}^{\mathcal V,(1)})^{\otimes2}(\underline{\mathrm{col}}_1).
$$
\hfill\qed\medskip 

\subsection{Relationship between $\mathrm{row}_1$, \underline{$\mathrm{row}$}$_1$ and $\overline P_{(\mu,\Phi)}$}
\label{sect:8:6:row:bar:row:21032018}

\begin{lem}
Let $\mu\in\mathbf k^\times$ and $\Phi\in\mathsf M_\mu(\mathbf k)$. One has 
\begin{equation}\label{newequality:13122017}
M_{1\times3}((\mathrm{comp}_{(\mu,\Phi)}^{\mathcal V,(1)})^{\otimes2})(\underline{\mathrm{row}}_1)\cdot\kappa_{(\mu,\Phi)}\cdot\overline P_{(\mu,\Phi)}
=u_{(\mu,\Phi)}\cdot\mathrm{row}_1 
\end{equation}
(equality in $M_{1\times3}((\mathcal V^\DR)^{\otimes2\wedge})$), where 
$\mathrm{row}_1$, $\underline{\mathrm{row}}_1$ 
are as in \eqref{def:row:col:04012018}, \eqref{def:underline:row:col:04012018}, 
$\kappa_{(\mu,\Phi)}$, $\overline P_{(\mu,\Phi)}$
are as in \eqref{definition:of:Xi}, Definition \ref{def:barP:barR}, and where
\begin{equation}\label{def:u:mu:Phi}
u_{(\mu,\Phi)}:=
\mu e^{-\mu f_1} {e^{\mu(e_1+f_1)}-1 \over e_1+f_1}
{\Gamma_\Phi(e_1+f_1)\over\Gamma_\Phi(e_1)\Gamma_\Phi(f_1)}
\in(\mathcal V^\DR)^{\otimes2\wedge}. 
\index{u_mu,Phi@$u_{(\mu,\Phi)}$}
\end{equation}
\end{lem}

\proof One has $M_{1\times3}((\mathrm{comp}_{(\mu,\Phi)}^{\mathcal V,(1)})^{\otimes2})(\underline{\mathrm{row}}_1)=
\begin{pmatrix} e^{\mu e_1}-1 & 1-e^{\mu f_1} & 0 \end{pmatrix}$. With $d_{(\mu,\Phi)}$ as in the proof of 
Lemma \ref{lemma:P:col}, this implies  
$$
M_{1\times3}((\mathrm{comp}_{(\mu,\Phi)}^{\mathcal V,(1)})^{\otimes2})
(\underline{\mathrm{row}}_1)\cdot \kappa_{(\mu,\Phi)}d_{(\mu,\Phi)}=
e^{-(\mu/2)f_1}\begin{pmatrix} e^{\mu e_1}-1 & -e^{-(\mu/2)(e_1+f_1)}(e^{\mu f_1}-1) & 0 \end{pmatrix}. 
$$
Then with $\tilde P_{(\mu,\Phi)}$ as in the proof of Lemma \ref{lemma:P:col}, 
\begin{align}\label{eq:interm:row}
& 
M_{1\times3}((\mathrm{comp}_{(\mu,\Phi)}^{\mathcal V,(1)})^{\otimes2})
(\underline{\mathrm{row}}_1)\cdot \kappa_{(\mu,\Phi)}\cdot\overline P_{(\mu,\Phi)} 
=M_{1\times3}((\mathrm{comp}_{(\mu,\Phi)}^{\mathcal V,(1)})^{\otimes2})
(\underline{\mathrm{row}}_1)\cdot \kappa_{(\mu,\Phi)}d_{(\mu,\Phi)}\cdot\tilde P_{(\mu,\Phi)}
\\ & \nonumber
=e^{-(\mu/2)f_1}\begin{pmatrix} e^{\mu e_1}-1 & -e^{-(\mu/2)(e_1+f_1)}(e^{\mu f_1}-1) & 0 \end{pmatrix}\cdot\tilde P_{(\mu,\Phi)}. 
\\ & \nonumber
=e^{-(\mu/2)f_1}(e^{\mu e_1}-1)(e^{\mu f_1}-1)\cdot
\\ & \nonumber
\cdot\begin{pmatrix}
\begin{matrix}
\scriptstyle{(\varphi_1-\varphi_0)(-(e_1+f_1),f_1)+e^{-(\mu/2)(e_1+f_1)}\varphi_0(-(e_1+f_1),e_1)} \\ 
\scriptstyle{+{1\over f_1}+e^{-(\mu/2)(e_1+f_1)}{{e^{(\mu/2)(e_1+f_1)}-1}\over{-e_1-f_1}}}
\end{matrix}
& 
\begin{matrix}
\scriptstyle{-\varphi_0(-(e_1+f_1),f_1)-e^{-(\mu/2)(e_1+f_1)}(\varphi_1-\varphi_0)(-(e_1+f_1),e_1)} \\ 
\scriptstyle{-{e^{-(\mu/2)(e_1+f_1)}\over e_1}+e^{-(\mu/2)(e_1+f_1)}{{e^{(\mu/2)(e_1+f_1)}-1}\over{-e_1-f_1}}}
\end{matrix}& 0 \end{pmatrix}
\end{align}

%

Then 
\begin{align*}
& (\varphi_1-\varphi_0)(-(e_1+f_1),f_1)+e^{-(\mu/2)(e_1+f_1)}\varphi_0(-(e_1+f_1),e_1)
+{1\over f_1}+e^{-(\mu/2)(e_1+f_2)}{{e^{(\mu/2)(e_1+f_2)}-1}\over{-e_1-f_1}}
\\ & =
({1\over-(e_1+f_1)}+{1\over f_1})\Big({{\Gamma_\Phi(e_1+f_1)\Gamma_\Phi(-f_1)}\over{\Gamma_\Phi(e_1)}}\Big)
+e^{-(\mu/2)(e_1+f_1)}\cdot{1\over e_1+f_1}\Big({{\Gamma_\Phi(e_1+f_1)\Gamma_\Phi(-e_1)}\over{\Gamma_\Phi(f_1)}}\Big)
\\ &= 
{\Gamma_\Phi(e_1+f_1)\over\Gamma_\Phi(e_1)\Gamma_\Phi(f_1)}\cdot\Big(
({1\over-(e_1+f_1)}+{1\over f_1})\Gamma_\Phi(f_1)\Gamma_\Phi(-f_1)
+e^{-(\mu/2)(e_1+f_1)}\cdot{1\over e_1+f_1}\Gamma_\Phi(e_1)\Gamma_\Phi(-e_1)
\Big)
\\ &= 
{\Gamma_\Phi(e_1+f_1)\over\Gamma_\Phi(e_1)\Gamma_\Phi(f_1)}\cdot\Big(
({1\over-(e_1+f_1)}+{1\over f_1}){\mu f_1 e^{(\mu/2)f_1}\over{e^{\mu f_1}-1}}
+e^{-(\mu/2)(e_1+f_1)}\cdot{1\over e_1+f_1}{\mu e_1 e^{(\mu/2)e_1}\over{e^{\mu e_1}-1}}
\Big)
\\ & = 
{\Gamma_\Phi(e_1+f_1)\over\Gamma_\Phi(e_1)\Gamma_\Phi(f_1)}\cdot {e^{-(\mu/2)f_1}\over{(e^{\mu e_1}-1)(e^{\mu f_1}-1)}}\cdot
{\mu e_1\over e_1+f_1}(e^{\mu(e_1+f_1)}-1). 
\end{align*}
where the first equality follows from \eqref{ids:phi:ab} and the third equality follows from \eqref{funct:id:Gamma:Phi}; 
similarly 
 \begin{align*}
& -\varphi_0(-(e_1+f_1),f_1)-e^{-(\mu/2)(e_1+f_1)}(\varphi_1-\varphi_0)(-(e_1+f_1),e_1)
-{e^{-(\mu/2)(e_1+f_1)}\over e_1}\\ & +e^{-(\mu/2)(e_1+f_1)}{{e^{(\mu/2)(e_1+f_1)}-1}\over{-e_1-f_1}}
\\ & ={1\over-(e_1+f_1)}{{\Gamma_\Phi(e_1+f_1)\Gamma_\Phi(-f_1)}\over{\Gamma_\Phi(e_1)}}
-e^{-(\mu/2)(e_1+f_1)}({1\over-(e_1+f_1)}+{1\over e_1}){{\Gamma_\Phi(e_1+f_1)\Gamma_\Phi(-e_1)}\over{\Gamma_\Phi(f_1)}}
\\ &=
{\Gamma_\Phi(e_1+f_1)\over\Gamma_\Phi(e_1)\Gamma_\Phi(f_1)}\cdot\Big(
{1\over-(e_1+f_1)}{\mu f_1\over{e^{(\mu/2)f_1}-e^{-(\mu/2)f_1}}}
-e^{-(\mu/2)(e_1+f_1)}({1\over-(e_1+f_1)}+{1\over e_1}){\mu e_1\over{e^{(\mu/2)e_1}-e^{-(\mu/2)e_1}}}\Big)
\\ &=
{\Gamma_\Phi(e_1+f_1)\over\Gamma_\Phi(e_1)\Gamma_\Phi(f_1)}\cdot {e^{-(\mu/2)f_1}\over{(e^{\mu e_1}-1)(e^{\mu f_1}-1)}}\cdot 
{-\mu f_1\over e_1+f_1} (e^{\mu(e_1+f_1)}-1),  
\end{align*}
where the first equality follows from \eqref{ids:phi:ab}, the second equality follows from  \eqref{funct:id:Gamma:Phi}.  
It follows that the right-hand side of \eqref{eq:interm:row} is equal to 
$$
e^{-\mu f_1}\cdot
{\Gamma_\Phi(e_1+f_1)\over\Gamma_\Phi(e_1)\Gamma_\Phi(f_1)}
{e^{\mu(e_1+f_1)}-1 \over e_1+f_1}
\begin{pmatrix}
\mu e_1& {-\mu f_1} & 0 
\end{pmatrix}=u_{(\mu,\Phi)}\cdot\mathrm{row}_1. 
$$
\hfill\qed\medskip 

\subsection{Commutative diagram relating $\mathrm{row}_1\cdot(-)\cdot\mathrm{col}_1$ and \underline{$\mathrm{row}$}$_1\cdot(-)\cdot$\underline{$\mathrm{col}$}$_1$ ((A4) in \eqref{diagg:main:alg})}\label{CD:row:col:barrow:barcol:21032018}

\begin{lem}\label{lemma:CD:04012018}
Let $\mu\in\mathbf k^\times$ and $\Phi\in\mathsf M_\mu(\mathbf k)$. The following diagram of $\mathbf k$-module morphisms is commutative, 
where $u\cdot(-)\cdot v$ is the linear map $x\mapsto uxv$ 
$$
\xymatrix{M_3((\mathcal V^\B)^{\otimes2\wedge}) \ar^{\underline{\mathrm{row}}_1\cdot(-)\cdot\underline{\mathrm{col}}_1}[rr]
\ar^{\simeq}_{M_3((\mathrm{comp}_{(\mu,\Phi)}^{\mathcal V,(1)})^{\otimes2})}[d]&& (\mathcal V^\B)^{\otimes2\wedge}
\ar_{\simeq}^{(\mathrm{comp}_{(\mu,\Phi)}^{\mathcal V,(1)})^{\otimes2}}[d]\\ 
M_3((\mathcal V^\DR)^{\otimes2\wedge})& &(\mathcal V^\DR)^{\otimes2\wedge}\\ 
M_3((\mathcal V^\DR)^{\otimes2\wedge})\ar_{\mathrm{row}_1\cdot(-)\cdot\mathrm{col}_1}[rr]
\ar^{\mathrm{Ad}(\kappa_{(\mu,\Phi)}\cdot\overline P_{(\mu,\Phi)})}_{\simeq}[u]&& 
(\mathcal V^\DR)^{\otimes2\wedge}\ar^{\simeq}_{u_{(\mu,\Phi)}\cdot(-)\cdot v_{(\mu,\Phi)}}[u] }
$$
\end{lem}

\proof This follows from the juxtaposition of the obviously commutative diagram  
$$
\xymatrix{M_3((\mathcal V^\B)^{\otimes2\wedge}) \ar^{\underline{\mathrm{row}}_1\cdot(-)\cdot\underline{\mathrm{col}}_1}[rrrrrrr]
\ar^{\simeq}_{M_3((\mathrm{comp}_{(\mu,\Phi)}^{\mathcal V,(1)})^{\otimes2})}[d]&&&&&&& (\mathcal V^\B)^{\otimes2\wedge}
\ar_{\simeq}^{(\mathrm{comp}_{(\mu,\Phi)}^{\mathcal V,(1)})^{\otimes2}}[d]\\ 
M_3((\mathcal V^\DR)^{\otimes2\wedge})\ar_{M_{1\times3}((\mathrm{comp}_{(\mu,\Phi)}^{\mathcal V,(1)})^{\otimes2})
(\underline{\mathrm{row}}_1)\cdot(-)\cdot
M_{3\times1}((\mathrm{comp}_{(\mu,\Phi)}^{\mathcal V,(1)})^{\otimes2})(\underline{\mathrm{col}}_1)}[rrrrrrr]&&&&& &&(\mathcal V^\DR)^{\otimes2\wedge}}
$$
with the following diagram
$$
\xymatrix{
M_3((\mathcal V^\DR)^{\otimes2\wedge})\ar^{M_{1\times3}((\mathrm{comp}_{(\mu,\Phi)}^{\mathcal V,(1)})^{\otimes2})
(\underline{\mathrm{row}}_1)\cdot(-)\cdot
M_{3\times1}((\mathrm{comp}_{(\mu,\Phi)}^{\mathcal V,(1)})^{\otimes2})(\underline{\mathrm{col}}_1)}[rrrrrrr]&&&&&& &(\mathcal V^\DR)^{\otimes2\wedge}
\\ 
M_3((\mathcal V^\DR)^{\otimes2\wedge})\ar_{\mathrm{row}_1\cdot(-)\cdot\mathrm{col}_1}[rrrrrrr]
\ar^{\mathrm{Ad}(\kappa_{(\mu,\Phi)}\cdot\overline P_{(\mu,\Phi)})}_{\simeq}[u]&&&&&&& (\mathcal V^\DR)^{\otimes2\wedge}
\ar^{\simeq}_{u_{(\mu,\Phi)}\cdot(-)\cdot v_{(\mu,\Phi)}}[u]
}
$$
whose commutativity follows from (\ref{equality:13122017}) and (\ref{newequality:13122017}). 
\hfill\qed\medskip

\subsection{Associators and harmonic algebra coproducts
}\label{sect:8:8:21:03:2018}

\begin{thm}\label{thm:compat:assoc:algebra:24dec2019}
Let $\mu\in\mathbf k^\times$ and $\Phi\in\mathsf M_\mu(\mathbf k)$. Set 
\begin{equation}\label{def:B:Phi}
B_\Phi:={{\Gamma_\Phi(-e_1)\Gamma_\Phi(-f_1)}\over{\Gamma_\Phi(-e_1-f_1)}}
\in((\mathcal W^\DR)^{\otimes2\wedge})^\times.
\index{B_Phi@$B_\Phi$} 
\end{equation}
The following diagram commutes
\begin{equation}\label{main:diag:alg:10dec2019}
\xymatrix{ \hat{\mathcal W}^\B\ar^{\hat\Delta^{\mathcal W,\B}}[rr]
\ar_{\mathrm{comp}_{(\mu,\Phi)}^{\mathcal W,(1)}}[d]& & (\mathcal W^\B)^{\otimes2\wedge}
\ar^{(\mathrm{comp}_{(\mu,\Phi)}^{\mathcal W,(1)})^{\otimes2}}[d]\\ 
 \hat{\mathcal W}^\DR\ar_{\hat\Delta^{\mathcal W,\DR}}[r]& (\mathcal W^\DR)^{\otimes2\wedge}
 \ar_{\mathrm{Ad}(B_\Phi)}[r]&
 (\mathcal W^\DR)^{\otimes2\wedge} }
\end{equation} 
\end{thm}

\proof

If $(\mu,g)\in\mathbf k^\times\times(\hat{\mathcal V}^\DR)^\times$, there is a unique extension 
of the algebra automorphism $\mathrm{aut}^{\mathcal V,(1),\DR}_{(\mu,g)}$ of $\hat{\mathcal V}^\DR$ 
(see \S\ref{sect:3:1:28oct}) to an algebra automorphism $\mathrm{aut}^{\mathcal V_{\mathrm{loc}},(1)}_{(\mu,g)}$\index{autVloc1@$\mathrm{aut}^{\mathcal V_{\mathrm{loc}},(1)}_{(\mu,g)}$} of its localization 
$\hat{\mathcal V}^\DR[{1\over e_1}]$ (see \S\ref{sect:def:loc:VDR}). The algebra isomorphism $\mathrm{iso}^{\mathcal V}:
\hat{\mathcal V}^\B\to\hat{\mathcal V}^\DR$ from \S\ref{sect:3:3:28oct} also extends uniquely to an algebra isomorphism 
$\mathrm{iso}^{\mathcal V_{\mathrm{loc}}}:\hat{\mathcal V}^\B[{1\over{X_1-1}}]\to\hat{\mathcal V}^\DR[{1\over e_1}]$\index{isoVloc@$\mathrm{iso}^{\mathcal V_{\mathrm{loc}}}$}.
We denote by 
\begin{equation}\label{def:comp:V:loc}
\mathrm{comp}^{\mathcal V_{\mathrm{loc}},(1)}_{(\mu,g)}:\hat{\mathcal V}^\B[{1\over{X_1-1}}]\to\hat{\mathcal V}^\DR[{1\over e_1}]
\index{compVloc1@$\mathrm{comp}^{\mathcal V_{\mathrm{loc}},(1)}_{(\mu,g)}$}
\end{equation} 
the algebra isomorphism given by the composition $\mathrm{aut}^{\mathcal V_{\mathrm{loc}},(1)}_{(\mu,g)}\circ
\mathrm{iso}^{\mathcal V_{\mathrm{loc}}}$. 
It is compatible with the filtrations on both sides, and in particular it induces an algebra isomorphism between 
$F^0\hat{\mathcal V}^\B[{1\over{X_1-1}}]$ and $\hat{\mathcal V}^\DR[{1\over e_1}]_{\geq0}$. 

Let $\mu\in\mathbf k^\times$ and $\Phi\in\mathsf M_\mu(\mathbf k)$. Consider the diagram 
\begin{equation}\label{diagg:main:alg}
{\tiny\xymatrix{
\hat{\mathcal W}^\B_+
\ar^{{\hat\Delta}^{\mathcal W,\B}}[rrrrrr]\ar^*[@]{\hbox to 15pt {$\mathrm{comp}_{(\mu,\Phi)}^{\mathcal W,(1)}$}}[ddddd]
\ar@{}^*[@]{{\hbox to -10pt{$\!\!\!\!\!\!\!\!\!\!\!\!\!\!\mathrm{mor}_{\hat{\mathcal V}^\B,X_1-1}$}}}[rd] 
\ar@{}[dddddr]|{\mathrm{(A2)}}
&&&&& 
\ar@{}[dllll]|{\mathrm{(A1)}}
& 
(\mathcal W^\B)^{\otimes2\wedge}
\ar^*[@]{\hbox to 15pt {$(\mathrm{comp}_{(\mu,\Phi)}^{\mathcal W,(1)})^{\otimes2}$}}[ddd]\ar@{^(->}[ld] \\
& 
\hat{\mathcal V}^\B
\ar^{\hbox to 20pt{$\underline{\hat\rho}$}}[r]
\ar^*[@]{\vbox to 4pt{\hbox to 4pt {$\!\!\!\!\!\!\mathrm{comp}_{(\mu,\Phi)}^{\mathcal V,(1)}$}}}[dd]
\ar[lu]& 
{\scriptstyle M_3((\mathcal V^\B)^{\otimes2\wedge})}
\ar^{\ \ \ \ \underline{\mathrm{row}}_1\cdot(-)\cdot\underline{\mathrm{col}}_1}[rr]
\ar^{M_3((\mathrm{comp}_{(\mu,\Phi)}^{\mathcal V,(1)})^{\otimes2})}[d] && 
(\mathcal V^\B)^{\otimes2\wedge}
\ar^{\begin{matrix} \mathrm{Ad}((X_1-1)^{-1}\cdot\\ \cdot(1-Y_1^{-1})^{-1})\end{matrix}}[r]
\ar^{(\mathrm{comp}_{(\mu,\Phi)}^{\mathcal V,(1)})^{\otimes2}}[d]
&  
F^0\mathcal V^\B[{1\over{X_1-1}}]^{\otimes2\wedge}
\ar^{(\mathrm{comp}_{(\mu,\Phi)}^{\mathcal V_{\mathrm{loc}},(1)})^{\otimes2}}[d] & \\
&\ar@{}[r]|{\mathrm{\!\!\!\!\!\!(A3)}}& 
{\scriptstyle M_3((\mathcal V^\DR)^{\otimes2\wedge})} 
\ar^{\mathrm{Ad}((\kappa_{(\mu,\Phi)}\cdot\overline P_{(\mu,\Phi)})^{-1})}[d]
\ar@{}[rr]|{\mathrm{(A4)}}
&& 
(\mathcal V^\DR)^{\otimes2\wedge}
\ar^{u_{(\mu,\Phi)}^{-1}\cdot(-)\cdot v_{(\mu,\Phi)}^{-1}}[d]
\ar@{}[r]|{\mathrm{(A5)}}
& 
(\mathcal V^\DR[{1\over{e_1}}]_{\geq0})^{\otimes2\wedge} 
\ar^{\begin{matrix} B_\Phi^{-1}\cdot(-)\cdot B_\Phi\cdot \\ \cdot {{e_1+f_1}\over{{e^{\mu(e_1+f_1)}}-1}}\end{matrix}}[d] 
& \\
& 
\hat{\mathcal V}^\DR
\ar_{\hbox to 20pt{$\hat\rho$}}[r]\ar^{\mathrm{mor}_{\hat{\mathcal V}^\DR,e_1}}[d] & 
{\scriptstyle M_3((\mathcal V^\DR)^{\otimes2\wedge})}
\ar_{\mathrm{row}_1\cdot(-)\cdot\mathrm{col}_1}[rr] && 
(\mathcal V^\DR)^{\otimes2\wedge}
\ar_{\mathrm{Ad}((e_1f_1)^{-1})}[r] 
\ar@{}[dll]|{\mathrm{(A7)}} & 
(\mathcal V^\DR[{1\over e_1}]_{\geq0})^{\otimes2\wedge} 
\ar@{}[dr]|{\mathrm{(A6)}}
&
({\mathcal W}^{\DR})^{\otimes2\wedge}
\ar^*[@]{\hbox to 10pt{$\mathrm{Ad}(B_\Phi^{-1})$}}[dd] 
\\
& 
\hat{\mathcal W}^{\DR}_+
\ar_{\hat\Delta^{\mathcal W,\DR}}[rrrr]
\ar[ld] 
&&&
\ar@{}[dll]|{\mathrm{(A8)}}
& 
(\mathcal W^\DR)^{\otimes2\wedge}
\ar@{^(->}[u]
\ar@{}^*[@]{\hbox to 23pt{\vbox to 0pt {$\!\!\!\!\!\!\!\!\!\!\!\!\!\!\!\!\!\!\!\!\!\!\!(-)\cdot{{e_1+f_1}\over{e^{\mu(e_1+f_1)}-1}}$}}}[rd]  
& \\
\hat{\mathcal W}^{\DR}_+
\ar_{\hat\Delta^{\mathcal W,\DR}}[rrrrrr]
\ar@{}^*[@]{{\hbox to 5pt{$\!\!\!\!\!\!\!\!\!\!\!(-)\cdot{e^{\mu e_1}-1}\over{e_1}$}}}[ru] 
&&&&&&(\mathcal W^{\DR})^{\otimes 2\wedge}\ar[ul]
}}
\end{equation}
It is divided into subdiagrams (A1) to (A8). 
In this diagram: 
\begin{itemize}
\item the commutativity of (A1) (resp. (A3), (A4), (A7)) follows from Lemma \ref{lemma:completion:10dec2019} (resp. 
Lemma \ref{lem:CD:relating:B:underlineB}, Lemma \ref{lemma:CD:04012018}, Lemma \ref{lem:completion:diag:LA}); 

\item the commutativity of (A2) follows from the definition of $\mathrm{mor}_{-,-}$, from the compatibilities of 
$\mathrm{comp}_{(\mu,\Phi)}^{\mathcal V,(1)}$ and $\mathrm{comp}_{(\mu,\Phi)}^{\mathcal W,(1)}$, and from 
the fact that $\mathrm{comp}_{(\mu,\Phi)}^{\mathcal V,(1)}$ is an algebra morphism taking
$X_1-1$ to $e^{\mu e_1}-1$; 

\item the commutativity of (A5) follows from the equalities 
\begin{equation}\label{relation:u:B:17dec2019}
B_\Phi^{-1}\cdot (e^{\mu e_1}-1)^{-1}(1-e^{-\mu f_1})^{-1}
=(e_1f_1)^{-1}u_{(\mu,\Phi)}^{-1}, 
\end{equation}
$$
(e^{\mu e_1}-1)(1-e^{-\mu f_1})\cdot B_\Phi\cdot{{e_1+f_1}\over{e^{\mu(e_1+f_1)}-1}}
=v_{\mu,\Phi}^{-1}e_1f_1
$$
\begin{equation}\label{relation:XY:ef:17dec2019}
(\mathrm{comp}_{(\mu,\Phi)}^{\mathcal V_{\mathrm{loc}},(1)})^{\otimes2}((X_1-1)^{\pm1}(1-Y_1^{-1})^{\pm1})=
(e^{\mu e_1}-1)^{\pm1}(1-e^{-\mu f_1})^{\pm1}
\end{equation} 
in $(\mathcal V^\DR[{1\over e_1}])^{\otimes2\wedge}$, where the two first equalities follow from \eqref{funct:id:Gamma:Phi}, 
and the last equality is immediate;  

\item the commutativity of (A6) follows from the compatibility of 
$\mathrm{comp}_{(\mu,\Phi)}^{\mathcal V_{\mathrm{loc}},(1)}$ and $\mathrm{comp}_{(\mu,\Phi)}^{\mathcal W,(1)}$, 
and the commutativity of (A8) follows from the fact that $\hat{\Delta}^{\mathcal W,\DR}$ 
is an algebra morphism, such that $e_1\mapsto e_1+f_1$. 
\end{itemize}
Using successively the commutativities of (A6), (A1), (A5), (A4), (A3), (A7), (A8), (A2), and denoting by $\mathrm{map}_i$ ($i=1,2$) 
the two maps $\hat{\mathcal W}^\B\to(\mathcal W^\DR)^{\otimes2\wedge}$ which can be derived from diagram \eqref{main:diag:alg:10dec2019}, 
one sees that the two maps 
$$
\xymatrix{
\hat{\mathcal V}^\B\ar^{\mathrm{mor}_{\mathcal V^\B,X_1-1}}[rr]&&
\hat{\mathcal W}^\B_+\ar^{\!\!\!\!\!\!\!\!\!\!\!\!\!\!\mathrm{map}_i}[r]&
(\mathcal W^\DR)^{\otimes2\wedge}\ar^{\!\!\!\!\!\!\!\!\!\!\!\!\!\!\!\!\!\!\!\!\!\!\!\!\!\!\!\!\!\!\!\!\!\!B_\Phi^{-1}\cdot(-)\cdot B_\Phi {{e_1+f_1}\over{e^{\mu(e_1+f_1)}-1}}}[rrr]&&&
(\mathcal W^\DR)^{\otimes2\wedge}\hookrightarrow
(\mathcal V^\DR[{1\over e_1}]_{\geq0})^{\otimes2\wedge}
}
$$
coincide ($i=1,2$). 
Since the map $\xymatrix{(\mathcal W^\DR)^{\otimes2\wedge}\ar^{\!\!\!\!\!\!\!\!\!\!\!\!\!\!\!\!\!\!\!\!\!\!\!\!\!\!\!\!\!\!\!\!\!\!B_\Phi^{-1}\cdot(-)
\cdot B_\Phi {{e_1+f_1}\over{e^{\mu(e_1+f_1)}-1}}}[rrr]&&&
(\mathcal W^\DR)^{\otimes2\wedge}\hookrightarrow
(\mathcal V^\DR[{1\over e_1}]_{\geq0})^{\otimes2\wedge}}$ is injective, and since $\mathrm{mor}_{\mathcal V^\B,X_1-1}:
\hat{\mathcal V}^\B\to\hat{\mathcal W}^\B_+$ is a bijection, it follows that the restrictions of $\mathrm{map}_1$ and $\mathrm{map}_2$ 
to $\hat{\mathcal W}^\B_+$ coincide.  One has $\mathrm{map}_1(1)=1^{\otimes2}=\mathrm{map}_2(1)$, which implies that 
the restrictions of $\mathrm{map}_1$ and $\mathrm{map}_2$ to $\mathbf k1\subset\hat{\mathcal W}^\B$ also coincide. 
Since $\hat{\mathcal W}^\B$ is the direct sum of $\mathbf k1$ and $\hat{\mathcal W}^\B_+$, it follows that the maps 
$\mathrm{map}_1$ and $\mathrm{map}_2$ coincide, therefore that \eqref{main:diag:alg:10dec2019} commutes. \hfill\qed\medskip 

\section{Associators and harmonic module coproducts}\label{sect:aamhc:2jan2020}

The purpose of this section is to prove the second main result of this paper, namely the compatibility of associators with harmonic module
coproducts (Theorem \ref{thm:new:crm}). This proof relies on the commutativity of diagram \eqref{diagram:main:mod}. The main new 
ingredients in this diagram are: a family of localized versions of the comparison isomorphisms from \S\ref{sect:fci:24dec2019}, 
which are constructed in \S\ref{sect:lvoci:24dec2019}; and a matrix $R_{(\mu,\Phi)}$, which is defined in \S\ref{sect:3:2:crm}. 
Diagram \eqref{diagram:main:mod} is divided into subdiagrams (M1) to (M5). Among them, subdiagram (M1) 
(resp. (M5)) expresses the geometric interpretation of the Betti (resp. de Rham) module coproduct and was proved in Lemma 
\ref{lemma:completion:10dec2019} (resp. Lemma \ref{lem:completion:diag:LA}). The main remaining diagrams are (M3) and (M4), which relate, 
by means of the module comparison isomorphisms and the matrices $R_{(\mu,\Phi)}$ from \S\ref{sect:3:2:crm} and 
$\overline P_{(\mu,\Phi)}$ from 
\S\ref{sect:def:matrices:P:R}, `de Rham' and `Betti' objects: namely, the morphisms $\hat\rho$ and $\underline{\hat\rho}$ in the 
case of (M3), and the morphisms $\mathrm{row}_1\cdot(-)\cdot\mathrm{col}_0$ and $\underline{\mathrm{row}}_1\cdot(-)
\cdot\underline{\mathrm{col}}_0$ in the case of (M4). The commutativity of (M3) is established in \S\ref{sect:3:2:crm} using the module 
morphism properties of its constituents over the constituents of (A3) in \eqref{diagg:main:alg}, the commutativity of (A3), and the 
definition of $R_{(\mu,\Phi)}$. The commutativity of (M4) is established in \S\S\ref{sect:4:1:1/2} to \ref{sect:4:4:crm};  more precisely, 
\S\ref{sect:4:1:1/2} is devoted to the expression of $R_{(\mu,\Phi)}$ in terms of the matrix $\overline Q_{(\mu,\Phi)}$ from 
\S\ref{sect:def:matrices:P:R}; \S\ref{sect:4:3} is devoted to the proof of an equality relating $R_{(\mu,\Phi)}$, $\mathrm{col}_0$ and 
$\underline{\mathrm{col}}_0$, based in \S\ref{sect:4:1:1/2} and the computation of $\overline Q_{(\mu,\Phi)}$ in \S\ref{sect:comp:R}; 
this result is combined in \S\ref{sect:4:4:crm} with the equality relating $\overline P_{(\mu,\Phi)}$, $\mathrm{row}_1$ and 
$\underline{\mathrm{row}}_1$ from \S\ref{sect:rel:col:barcol:21032018} to prove the commutativity of (M4). Theorem \ref{thm:new:crm} 
is formulated and proved in \S\ref{sect:aamhc:24dec2019}. 

\subsection{Localized versions of comparison isomorphisms}\label{sect:lvoci:24dec2019}

For $(\mu,g)\in\mathbf k^\times\times(\hat{\mathcal V}^\DR)^\times$, let 
$$
\mathrm{comp}_{(\mu,g)}^{\mathcal V_{\mathrm{loc}},(10)}:\mathcal V^\B[{1\over X_1-1}]^\wedge\to\mathcal V^\DR[{1\over e_1}]^\wedge
\index{compVloc10@$\mathrm{comp}^{\mathcal V_{\mathrm{loc}},(10)}_{(\mu,g)}$}
$$
be the $\mathbf k$-module isomorphism given by 
\begin{equation}\label{def:comp:V:loc:10}
\forall v\in\mathcal V^\B[{1\over X_1-1}]^\wedge,\quad \mathrm{comp}_{(\mu,g)}^{\mathcal V_{\mathrm{loc}},(10)}(v):=
\mathrm{comp}_{(\mu,g)}^{\mathcal V_{\mathrm{loc}},(1)}(v)\cdot g 
\end{equation}
(see \eqref{def:comp:V:loc}). One checks that it restricts to an isomorphism 
$\mathcal V^\B[{1\over X_1-1}]^\wedge(X_0-1)\to\mathcal V^\DR[{1\over e_1}]^\wedge e_0$. 

\begin{defn}
For $(\mu,g)\in\mathbf k^\times\times(\hat{\mathcal V}^\DR)^\times$, 
 $\mathrm{comp}_{(\mu,g)}^{\mathcal M_{\mathrm{loc}},(10)}:\mathcal M^\B[{1\over X_1-1}]^\wedge\to\mathcal M^\DR[{1\over e_1}]^\wedge$
 \index{compM10loc@$\mathrm{comp}^{\mathcal M_{\mathrm{loc}},(10)}_{(\mu,g)}$}
is the $\mathbf k$-module isomorphism induced by $\mathrm{comp}_{(\mu,g)}^{\mathcal V_{\mathrm{loc}},(10)}$. 
\end{defn}

\begin{lem}\label{compat:comM:compMloc:17dec2019}
For $(\mu,g)\in\mathbf k^\times\times(\hat{\mathcal V}^\DR)^\times$, the isomorphism 
$\mathrm{comp}_{(\mu,g)}^{\mathcal M_{\mathrm{loc}},(10)}$
is compatible with the filtrations on both sides. Moreover, the following diagram commutes
$$
\xymatrix{
\hat{\mathcal M}^\B \ar[d]\ar^{\mathrm{comp}^{\mathcal M,(10)}_{(\mu,g)}}[rr]&& \hat{\mathcal M}^\DR\ar[d]
\\
\mathcal M^\B[{1\over X_1-1}]^\wedge\ar_{\mathrm{comp}^{\mathcal M_{\mathrm{loc}},(10)}_{(\mu,g)}}[rr]&& 
\mathcal M^\DR[{1\over e_1}]^\wedge}
$$
\end{lem}

\proof The first statement follows from the fact that $\mathrm{comp}_{(\mu,g)}^{\mathcal V_{\mathrm{loc}},(10)}$ is compatible 
with filtrations. It follows from its definition (see \eqref{def:comp:V:loc}) that the algebra isomorphisms 
$\mathrm{comp}^{\mathcal V,(1)}_{(\mu,g)}$ and $\mathrm{comp}^{\mathcal V_{\mathrm{loc}},(1)}_{(\mu,g)}$ intertwine the algebra morphisms 
$\hat{\mathcal V}^\B\to\mathcal V^\B[{1\over{X_1-1}}]^\wedge$ and $\hat{\mathcal V}^\DR\to\mathcal V^\DR[{1\over{e_1}}]^\wedge$. 
Expressing the resulting identity and multiplying it by $g$, one sees that the $\mathbf k$-module morphisms 
$\mathrm{comp}^{\mathcal V,(10)}_{(\mu,g)}$ and $\mathrm{comp}^{\mathcal V_{\mathrm{loc}},(10)}_{(\mu,g)}$ intertwine the same 
algebra morphisms. The announced diagram follows by projection. \hfill\qed\medskip 

The compatibility of the algebra morphism $\mathrm{comp}_{(\mu,g)}^{\mathcal V,(1)}$ and the $\mathbf k$-module morphism 
$\mathrm{comp}_{(\mu,g)}^{\mathcal M,(10)}$ with the module structures (see \S\ref{sect:3:3:28oct}) extends to the localized 
situation as follows. 

\begin{lem}\label{module:structure:compMloc:compVloc:17dec2019}
For $(\mu,g)\in\mathbf k^\times\times(\hat{\mathcal V}^\DR)^\times$, the $\mathbf k$-module morphism 
$\mathrm{comp}_{(\mu,g)}^{\mathcal M_{\mathrm{loc}},(10)}$ is compatible (in the sense of \S\ref{conventions:0703})
with the $\mathbf k$-algebra morphism $\mathrm{comp}_{(\mu,g)}^{\mathcal V_{\mathrm{loc}},(1)}$ and with the module 
structure of $\mathcal M^\B[{1\over X_1-1}]^\wedge$ (resp. $\mathcal M^\DR[{1\over e_1}]^\wedge$) over 
$\mathcal V^\B[{1\over X_1-1}]^\wedge$ (resp. $\mathcal V^\DR[{1\over e_1}]^\wedge$). 
\end{lem}

\proof It follows from \eqref{def:comp:V:loc:10} and from the algebra morphism property of 
$\mathrm{comp}_{(\mu,g)}^{\mathcal V_{\mathrm{loc}},(1)}$
that 
$$
\forall v,v'\in\mathcal V^\B[{1\over X_1-1}]^\wedge,\quad 
\mathrm{comp}_{(\mu,g)}^{\mathcal V_{\mathrm{loc}},(10)}(vv')=
\mathrm{comp}_{(\mu,g)}^{\mathcal V_{\mathrm{loc}},(1)}(v)
\mathrm{comp}_{(\mu,g)}^{\mathcal V_{\mathrm{loc}},(10)}(v'). 
$$
Applying to this identity the projection $\mathcal V^\DR[{1\over e_1}]^\wedge\to\mathcal M^\DR[{1\over e_1}]^\wedge$, one obtains: 
$$
\forall v\in\mathcal V^\B[{1\over X_1-1}]^\wedge,\forall m\in\mathcal M^\B[{1\over X_1-1}]^\wedge,\quad 
\mathrm{comp}_{(\mu,g)}^{\mathcal M_{\mathrm{loc}},(10)}(vm)=
\mathrm{comp}_{(\mu,g)}^{\mathcal V_{\mathrm{loc}},(1)}(v)
\mathrm{comp}_{(\mu,g)}^{\mathcal M_{\mathrm{loc}},(10)}(m). 
$$
\hfill\qed\medskip 

\subsection{Definition of the matrix $R_{(\mu,\Phi)}$ and commutativity of (M3) in \eqref{diagram:main:mod}}\label{sect:3:2:crm} 

Recall that $\Phi\in(\hat{\mathcal V}^\DR)^\times$, and that $\hat\rho:\hat{\mathcal V}^\DR\to M_3((\mathcal V^\DR)^{\otimes2\wedge})$
is an algebra morphism. It follows that $\hat\rho(\Phi)\in\mathrm{GL}_3((\mathcal V^\DR)^{\otimes2\wedge})$. 

\begin{defn}\label{def:of:Q:23dec2019}
Let $\mu\in\mathbf k^\times$ and $\Phi\in\mathsf M_\mu(\mathbf k)$. We set
\begin{equation}\label{def:Q:mu:Phi}
R_{(\mu,\Phi)}:=\hat\rho(\Phi)^{-1}\overline P_{(\mu,\Phi)}^{-1}\kappa_{(\mu,\Phi)}^{-1}\Phi(e_0,e_1)\Phi(f_0,f_1)\in
\mathrm{GL}_3((\mathcal V^\DR)^{\otimes2\wedge}).
\index{R_mu,Phi@$R_{(\mu,\Phi)}$} 
\end{equation} 
\end{defn}

\begin{lem}\label{lem:S3:27032019}
Let $\mu\in\mathbf k^\times$ and $\Phi\in\mathsf M_\mu(\mathbf k)$. The diagram 
\begin{equation}\label{diagram:S3}
\xymatrix{
\hat{\mathcal V}^\B\ar^{\!\!\!\!\!\!\!\!\!\!\!\!\!\!\!\underline{\hat\rho}}[r]\ar_{\mathrm{comp}^{\mathcal V,(10)}_{(\mu,\Phi)}}[dd]
& M_3((\mathcal V^\B)^{\otimes2\wedge})
\ar^{M_3((\mathrm{comp}^{\mathcal V,(10)}_{(\mu,\Phi)})^{\otimes2})}[d]\\
& M_3((\mathcal V^\DR)^{\otimes2\wedge})\\
\hat{\mathcal V}^\DR\ar^{\!\!\!\!\!\!\!\!\!\!\!\!\!\!\!\hat\rho}[r]&
M_3((\mathcal V^\DR)^{\otimes2\wedge})\ar_{\kappa_{(\mu,\Phi)}\overline P_{(\mu,\Phi)}\cdot(-)\cdot R_{(\mu,\Phi)}}[u] }
\end{equation}
((M3) in \eqref{diagram:main:mod}) commutes. 
\end{lem}

\proof
Let $v\in\hat{\mathcal V}^\B$. Then 
\begin{align*}
& M_3((\mathrm{comp}^{\mathcal V,(10)}_{(\mu,\Phi)})^{\otimes2})(\underline{\hat\rho}(v))
=M_3((\mathrm{comp}^{\mathcal V,(1)}_{(\mu,\Phi)})^{\otimes2})(\underline{\hat\rho}(v))\cdot 
\Phi(e_0,e_1)\Phi(f_0,f_1)
\\ & 
=\mathrm{Ad}(\kappa_{(\mu,\Phi)}\overline P_{(\mu,\Phi)})\Big(\hat\rho\big(\mathrm{comp}_{(\mu,\Phi)}^{\mathcal V,(1)}(v)\big)\Big)\cdot 
\Phi(e_0,e_1)\Phi(f_0,f_1)
\\ & 
=\mathrm{Ad}(\kappa_{(\mu,\Phi)}\overline P_{(\mu,\Phi)})\Big(\hat\rho\big(\mathrm{comp}_{(\mu,\Phi)}^{\mathcal V,(10)}(v)\cdot\Phi^{-1}\big)\Big)\cdot 
\Phi(e_0,e_1)\Phi(f_0,f_1)
\\ & 
=\mathrm{Ad}(\kappa_{(\mu,\Phi)}\overline P_{(\mu,\Phi)})\Big(\hat\rho\big(\mathrm{comp}_{(\mu,\Phi)}^{\mathcal V,(10)}(v)\big)\cdot
\hat\rho(\Phi)^{-1}\Big)\cdot 
\Phi(e_0,e_1)\Phi(f_0,f_1)
\\ & 
=\kappa_{(\mu,\Phi)}\overline P_{(\mu,\Phi)}\cdot
\hat\rho\big(\mathrm{comp}_{(\mu,\Phi)}^{\mathcal V,(10)}(v)\big)\cdot
\hat\rho(\Phi)^{-1}\cdot
(\kappa_{(\mu,\Phi)}\overline P_{(\mu,\Phi)})^{-1}
\cdot 
\Phi(e_0,e_1)\Phi(f_0,f_1)
\\ & 
=\kappa_{(\mu,\Phi)}\overline P_{(\mu,\Phi)}\cdot
\hat\rho\big(\mathrm{comp}_{(\mu,\Phi)}^{\mathcal V,(10)}(v)\big)\cdot
R_{(\mu,\Phi)}, 
\end{align*}
where the first and third equalities follow from \eqref{relation:comp10:comp1:16dec2019}, the second equality follows from
Lemma \ref{lem:CD:relating:B:underlineB}, the fourth equality follows from the fact that $\hat\rho$ is an algebra morphism, 
and the last equality follows from \eqref{def:Q:mu:Phi}. 
\hfill\qed\medskip 

\subsection{Computation of  $R_{(\mu,\Phi)}$ 
in terms of $\overline Q_{(\mu,\Phi)}$}
\label{sect:4:1:1/2} 

Let us first relate the elements $P_{(\mu,\Phi)}$ and $Q_{(\mu,\Phi)}$ from Definition \ref{def:P:R:20191203}. 

\begin{lem}\label{lemma:30072019}
Let $\mu\in\mathbf k^\times$ and $\Phi\in\mathsf M_\mu(\mathbf k)$. One has 
\begin{equation}\label{formula:R}
Q_{(\mu,\Phi)}=\hat\ell(\Phi)^{-1}\cdot P_{(\mu,\Phi)}\cdot\hat \varpi(\hat\ell(\Phi)) 
\end{equation}
(equality in $\mathrm{GL}_3((U\mathfrak p_5)^{\otimes2\wedge})$). 
\end{lem}

\proof The following sequence of equalities holds in $M_{3\times1}((U\mathfrak p_5)^{\otimes2\wedge})$
\begin{align*}
& 
Q_{(\mu,\Phi)}\cdot \begin{pmatrix}e_{15} \\ e_{25} \\e_{35} \end{pmatrix}=
M_{3\times1}(\mathrm{comp}_{(\mu,\Phi)}^{(\bullet(\bullet\bullet))\bullet})
(\begin{pmatrix}x_{15}-1 \\ x_{25}-1 \\x_{35}-1 \end{pmatrix})
= \hat\ell(\Phi)^{-1}\cdot
M_{3\times1}(\mathrm{comp}_{(\mu,\Phi)}^{((\bullet\bullet)\bullet)\bullet})
(\begin{pmatrix}x_{15}-1 \\ x_{25}-1 \\x_{35}-1 \end{pmatrix})\cdot \hat\ell(\Phi)
\\ & = \hat\ell(\Phi)^{-1}\cdot P_{(\mu,\Phi)}\cdot\begin{pmatrix}e_{15} \\ e_{25} \\e_{35} \end{pmatrix}\cdot \hat\ell(\Phi)
= \hat\ell(\Phi)^{-1}\cdot P_{(\mu,\Phi)}\cdot \hat\varpi(\hat\ell(\Phi))\cdot\begin{pmatrix}e_{15} \\ e_{25} \\e_{35} \end{pmatrix}, 
\end{align*}
since: the first (resp. third) equality follows from the second (resp. first) part of \eqref{matrices:P:R:16dec2019};  
the second equality follows from \eqref{relation:comp:comp} together with $\Phi(e_{12},e_{23})=\hat\ell(\Phi)^{-1}$, which follows from 
the definition of $\hat\ell$ (see \S\ref{subsect:def:LA:morphisms:2}) and \eqref{duality:rel}; 
the last equality follows from \eqref{def:varpi:16dec2019}. 

The result then follows from the injectivity of the map $((U\mathfrak p_5)^\wedge)^{\oplus3}\to (U\mathfrak p_5)^\wedge$, 
$(p_i)_{i\in[\![1,3]\!]}\mapsto\sum_{i\in[\![1,3]\!]}p_i\cdot e_{i5}$ (see Lemma \ref{lemma:decomp:J:pr5}). 
\hfill\qed\medskip 

\begin{lem}\label{lem:31052019}
Let $\mu\in\mathbf k^\times$ and $\Phi\in \mathsf M_\mu(\mathbf k)$. One has 
$$
\overline P_{(\mu,\Phi)}\hat\rho(\Phi)=\overline Q_{(\mu,\Phi)}
$$
(equality in $\mathrm{GL}_3((\mathcal V^\DR)^{\otimes2,\wedge})$). 
\end{lem}

\proof One has 
\begin{align*}
&\overline Q_{(\mu,\Phi)}=\mathrm{pr}_{12}^\wedge(Q_{(\mu,\Phi)})
=\mathrm{pr}_{12}^\wedge(\hat\ell(\Phi)^{-1}\cdot P_{(\mu,\Phi)}\cdot \hat\varpi(\hat\ell(\Phi)))
\\ &=\mathrm{pr}_{12}^\wedge(\hat\ell(\Phi))^{-1}\cdot\mathrm{pr}_{12}^\wedge(P_{(\mu,\Phi)})\cdot
\mathrm{pr}_{12}^\wedge(\hat\varpi(\hat\ell(\Phi)))
=1\cdot \overline P_{(\mu,\Phi)}\cdot\hat\rho(\Phi)
=\overline P_{(\mu,\Phi)}\hat\rho(\Phi), 
\end{align*}
where the first equality follows from Definition \ref{def:barP:barR}, the second equality follows from Lemma \ref{lemma:30072019}, 
the third equality from the fact that $\mathrm{pr}_{12}^\wedge$ is an algebra morphism. 

One has $\mathrm{pr}_{12}^\wedge\circ\hat\ell(e_1)=0$. As $\mathrm{pr}_{12}^\wedge$ is an algebra morphism and as the 
logarithm of $\Phi$ is a Lie series in $e_0,e_1$ without degree 1 terms, this implies $\mathrm{pr}_{12}^\wedge\circ\hat\ell(\Phi)=1$. 
Together with Definition \ref{def:barP:barR} and the definition of $\hat\rho$ (see \S\ref{sect:514:12122017}), this implies the fourth 
equality, hence the result.  \hfill\qed\medskip 

\begin{cor}\label{cor:23dec2019}
Let $\mu\in\mathbf k^\times$ and $\Phi\in\mathsf M_\mu(\mathbf k)$. One has 
$$
R_{(\mu,\Phi)}=\overline Q_{(\mu,\Phi)}^{-1}\kappa_{(\mu,\Phi)}^{-1}\Phi(e_0,e_1)\Phi(f_0,f_1)
$$
(equality in $\mathrm{GL}_3((\mathcal V^\DR)^{\otimes2\wedge})$).
\end{cor}

\proof This follows from Definition \ref{def:of:Q:23dec2019} and Lemma \ref{lem:31052019}.  \hfill\qed\medskip

\subsection{Relationship between $\mathrm{col}_0$, \underline{$\mathrm{col}$}$_0$ and $R_{(\mu,\Phi)}$}\label{sect:4:3} 

\begin{lem}
Let $\mu\in\mathbf k^\times$ and $\Phi\in\mathsf M_\mu(\mathbf k)$. The following equalities hold in $(\mathcal M^\DR)^{\otimes2\wedge}$: 
\begin{equation}\label{interm:1:23dec2019}
\Big(-(e^{\mu e_1}-1)\varphi_1(e_0-f_0,e_1)-{{e^{\mu e_1}-1}\over{e_1}}\Big) e_1 1_\DR^{\otimes2}
=(1-e^{\mu e_1})\Phi(e_0,e_1)1_\DR^{\otimes2}
\end{equation}
\begin{equation}\label{interm:2:23dec2019}
\Big((e^{\mu(e_\infty+f_0)}-1)\varphi_1(e_0-f_0,e_\infty+f_0)+{{e^{\mu(e_\infty+f_0)}-1}\over{e_\infty+f_0}}\Big) e_1 1_\DR^{\otimes2}
= (1-e^{\mu e_\infty})\Phi(e_0,e_\infty) 1_\DR^{\otimes2}
\end{equation}
\begin{equation}\label{A:A':A''}
\Phi(e_0,e_1)e^{(\mu/2)e_0}\Phi(e_\infty,e_0)(1-e^{\mu e_\infty}) \Phi(e_0,e_\infty) 1_\DR^{\otimes2}
=
 (1-e^{-\mu e_1}) \Phi(e_0,e_1)1_\DR^{\otimes2}. 
\end{equation}
\end{lem}

\proof
The following holds in $(\mathcal M^\DR)^{\otimes2\wedge}$: 
\begin{align*}
&\nonumber \Big(-(e^{\mu e_1}-1)\varphi_1(e_0-f_0,e_1)-{{e^{\mu e_1}-1}\over{e_1}}\Big) e_1 1_\DR^{\otimes2}
=-(e^{\mu e_1}-1)\Big(\varphi_1(e_0-f_0,e_1)e_1+1\Big)  1_\DR^{\otimes2}
\\ & =(1-e^{\mu e_1})\Big(\varphi_0(e_0-f_0,e_1)(e_0-f_0)+\varphi_1(e_0-f_0,e_1)e_1+1\Big)  1_\DR^{\otimes2}
=(1-e^{\mu e_1})\Phi(e_0-f_0,e_1)1_\DR^{\otimes2}
\\ & =(1-e^{\mu e_1})\Phi(e_0,e_1)1_\DR^{\otimes2}
\end{align*}
as the second equality follows from $e_0 1_\DR^{\otimes2}=f_0 1_\DR^{\otimes2}=0$, the third equality from the relation 
between $\varphi_0,\varphi_1$ and $\Phi$ (see \S\ref{subsect:Gamma:fun:20032018}), and the last equality from the 
commutation of $f_0$ with $e_0$ and $e_1$. This proves \eqref{interm:1:23dec2019}. 

The following holds in $(\mathcal M^\DR)^{\otimes2\wedge}$: 
\begin{align*}
&\nonumber
\Big((e^{\mu(e_\infty+f_0)}-1)\varphi_1(e_0-f_0,e_\infty+f_0)+{{e^{\mu(e_\infty+f_0)}-1}\over{e_\infty+f_0}}\Big) e_1 1_\DR^{\otimes2}
\\ & 
=-\Big((e^{\mu(e_\infty+f_0)}-1)\varphi_1(e_0-f_0,e_\infty+f_0)+{{e^{\mu(e_\infty+f_0)}-1}\over{e_\infty+f_0}}\Big) (e_\infty+f_0)1_\DR^{\otimes2}
\\ & =\nonumber
-(e^{\mu(e_\infty+f_0)}-1)\Big(\varphi_1(e_0-f_0,e_\infty+f_0)(e_\infty+f_0)+
1\Big) 1_\DR^{\otimes2}
\\ &
=-(e^{\mu(e_\infty+f_0)}-1)\Big(\varphi_0(e_0-f_0,e_\infty+f_0)(e_0-f_0)+\varphi_1(e_0-f_0,e_\infty+f_0)(e_\infty+f_0)+
1\Big) 1_\DR^{\otimes2}
\\ &  
= \nonumber
-(e^{\mu(e_\infty+f_0)}-1)\Phi(e_0-f_0,e_\infty+f_0) 1_\DR^{\otimes2}
\\ & 
=(1-e^{\mu(e_\infty+f_0)})\Phi(e_0,e_\infty) 1_\DR^{\otimes2}= 
(1-e^{\mu e_\infty})\Phi(e_0,e_\infty) 1_\DR^{\otimes2}
\end{align*}
as the first and third equalities follow from $e_0 1_\DR^{\otimes2}=f_0 1_\DR^{\otimes2}=0$, the fourth from 
the relation between $\varphi_0,\varphi_1$ and $\Phi$ (see \S\ref{subsect:Gamma:fun:20032018}), the fifth from 
the commutation of $f_0$ with $e_0$ and $e_\infty$, and the last from the same fact, together with $f_0 1_\DR^{\otimes2}=0$. This 
proves \eqref{interm:2:23dec2019}. 

The following holds in $(\mathcal V^\DR)^{\otimes2\wedge}$: 
\begin{align*}
&\Phi(e_0,e_1)e^{(\mu/2)e_0}\Phi(e_\infty,e_0)(1-e^{\mu e_\infty}) \Phi(e_0,e_\infty) 
\\  &
=e^{-(\mu/2)e_1}\Phi(e_\infty,e_1)(e^{-(\mu/2)e_\infty}- e^{(\mu/2)e_\infty})\Phi(e_0,e_\infty)
\\ &
= e^{-(\mu/2)e_1}\cdot \Big(e^{(\mu/2)e_1}\Phi(e_0,e_1) e^{(\mu/2)e_0}- e^{-(\mu/2)e_1} \Phi(e_0,e_1) e^{-(\mu/2)e_0}\Big)
\\ & 
=(1-e^{-\mu e_1})\Phi(e_0,e_1)+\Phi(e_0,e_1)(e^{(\mu/2)e_0}-1)-e^{-(\mu/2)e_1}\Phi(e_0,e_1)(e^{-(\mu/2)e_0}-1)
\end{align*}
as the first equality follows from the 2-cycle identity \eqref{duality:rel} and the hexagon identity \eqref{hexagon+} for 
$(a,b,c)=(e_0,e_\infty,e_1)$ and the second equality follows from the hexagon identities \eqref{hexagon+} and \eqref{hexagon-} 
for $(a,b,c)=(e_0,e_1,e_\infty)$. Applying this equality to $1_\DR^{\otimes2}$, and using $e_0 1_\DR^{\otimes2}=0$, one 
obtains \eqref{A:A':A''}. \hfill\qed\medskip 

\begin{lem}
Let $\mu\in\mathbf k^\times$ and $\Phi\in\mathsf M_\mu(\mathbf k)$. One has 
\begin{equation}\label{equality:col0:17dec2019}
\Phi(e_0,e_1)^{-1} \Phi(f_0,f_1)^{-1}M_{3\times 1}((\mathrm{comp}_{(\mu,\Phi)}^{\mathcal M,(10)})^{\otimes2})\big(\underline{\mathrm{col}}_0 \big) 
=R_{(\mu,\Phi)}^{-1}\mathrm{col}_0
\end{equation}
(equality in $M_{3\times 1}((\mathcal M^\DR)^{\otimes2\wedge})$), where $\mathrm{col}_0$, $\underline{\mathrm{col}}_0$ 
are as in Definitions \ref{def:col0:21012021} and \ref{def:col:0:Betti:30oct}.
\end{lem}

\proof Using Lemmas \ref{lemma:3:1:23dec2019} and \ref{lem:com:pi:V:M:comp:17dec2019}, one obtains
\begin{align}\label{value:LHS}
M_{3\times1}((\mathrm{comp}_{(\mu,\Phi)}^{{\mathcal M},(10)})^{\otimes2})(\underline{\mathrm{col}}_0)
=
e^{-\mu f_1}\Phi(f_0,f_1)\begin{pmatrix} 0 \\ (1-e^{\mu e_1})\Phi(e_0,e_1) 1_\DR^{\otimes2}\\ 
(1-e^{-\mu e_1})\Phi(e_0,e_1) 1_\DR^{\otimes2}
\end{pmatrix}. 
\end{align}

On the other hand, 
\begin{align}\label{value:RHS:24dec2019}
&  \Phi(e_0,e_1) \Phi(f_0,f_1)R_{(\mu,\Phi)}^{-1}\cdot \mathrm{col}_0
=\kappa_{(\mu,\Phi)}\cdot \overline Q_{(\mu,\Phi)}\cdot \mathrm{col}_0 
\\
& \nonumber=\kappa_{(\mu,\Phi)}\mathrm{diag}\left(\Phi(f_1,f_\infty), 
e^{{\mu\over 2}f_\infty} \Phi(f_0,f_\infty)e^{{\mu\over 2}f_0} \Phi(e_1,e_0), 
e^{{\mu\over 2}(e_0+f_\infty)} \Phi(f_0,f_\infty)\Phi(e_\infty,e_0)\right)\cdot 
\\ & \nonumber
\cdot \begin{pmatrix} 0 \\ \Big(-(e^{\mu e_1}-1)\varphi_1(e_0-f_0,e_1)-{{e^{\mu e_1}-1}\over{e_1}}\Big) e_1 
1_\DR^{\otimes2}
\\ 
\Big((e^{\mu(e_\infty+f_0)}-1)\varphi_1(e_0-f_0,e_\infty+f_0)+{{e^{\mu(e_\infty+f_0)}-1}\over{e_\infty+f_0}}\Big) e_1 1_\DR^{\otimes2}\end{pmatrix}
\\ & \nonumber
=\begin{pmatrix} 0 \\ 
e^{-(\mu/2)f_1}\Phi(f_\infty,f_1)
e^{{\mu\over 2}f_\infty} \Phi(f_0,f_\infty)e^{{\mu\over 2}f_0} 
\Big(-(e^{\mu e_1}-1)\varphi_1(e_0-f_0,e_1)-{{e^{\mu e_1}-1}\over{e_1}}\Big) e_1 1_\DR^{\otimes2}\\ 
\scriptstyle{\Phi(e_0,e_1)\cdot e^{-(\mu/2)f_1}\Phi(f_\infty,f_1)
e^{{\mu\over 2}(e_0+f_\infty)} \Phi(f_0,f_\infty)\Phi(e_\infty,e_0) 
\Big((e^{\mu(e_\infty+f_0)}-1)\varphi_1(e_0-f_0,e_\infty+f_0)+{{e^{\mu(e_\infty+f_0)}-1}\over{e_\infty+f_0}}\Big) e_1 1_\DR^{\otimes2}}
\end{pmatrix}
\\ & \nonumber
=\begin{pmatrix} 0 \\ 
e^{-(\mu/2)f_1}\Phi(f_\infty,f_1)
e^{{\mu\over 2}f_\infty} \Phi(f_0,f_\infty)e^{{\mu\over 2}f_0} 
(1-e^{\mu e_1})\Phi(e_0,e_1)1_\DR^{\otimes2}
\\ 
\scriptstyle{\Phi(e_0,e_1)\cdot e^{-(\mu/2)f_1}\Phi(f_\infty,f_1)
e^{{\mu\over 2}(e_0+f_\infty)} \Phi(f_0,f_\infty)\Phi(e_\infty,e_0) 
(1-e^{\mu e_\infty})\Phi(e_0,e_\infty) 1_\DR^{\otimes2}}
\end{pmatrix}
\\ & \nonumber
=e^{-(\mu/2)f_1}\Phi(f_\infty,f_1)e^{{\mu\over 2}f_\infty} \Phi(f_0,f_\infty)e^{{\mu\over 2}f_0}
\begin{pmatrix} 0 \\ 
(1-e^{\mu e_1})\Phi(e_0,e_1)1_\DR^{\otimes2}
\\ 
\Phi(e_0,e_1)e^{{\mu\over 2}e_0} \Phi(e_\infty,e_0) 
(1-e^{\mu e_\infty})\Phi(e_0,e_\infty) 1_\DR^{\otimes2}
\end{pmatrix}
\\ & \nonumber
=e^{-\mu f_1}\Phi(f_0,f_1)
\begin{pmatrix} 0 \\ 
(1-e^{\mu e_1})\Phi(e_0,e_1)1_\DR^{\otimes2}
\\ 
\Phi(e_0,e_1)e^{{\mu\over 2}e_0} \Phi(e_\infty,e_0) 
(1-e^{\mu e_\infty})\Phi(e_0,e_\infty) 1_\DR^{\otimes2}
\end{pmatrix}
\\& =e^{-\mu f_1}\Phi(f_0,f_1)
\begin{pmatrix} 0 \\ 
(1-e^{\mu e_1})\Phi(e_0,e_1)1_\DR^{\otimes2}
\\ 
\nonumber
(1-e^{-\mu e_1})\Phi(e_0,e_1)1_\DR^{\otimes2}
\end{pmatrix}
\end{align}
where the first (resp. second, third) equality follows from Corollary \ref{cor:23dec2019} (resp. Corollary \ref{cor:comput:bar:R:muPhi}, 
\eqref{definition:of:Xi}), the fourth equality from the equalities \eqref{interm:1:23dec2019} and \eqref{interm:2:23dec2019}, the fifth 
equality from the commutativity of $e^{{\mu\over 2}f_0}$ with $e_\infty$ and $e_0$, and from $e^{{\mu\over 2}f_0}
1_\DR^{\otimes2}=1_\DR^{\otimes2}$, the sixth equality from the hexagon identity \eqref{hexagon+} for $(a,b,c)
=(f_0,f_1,f_\infty)$, and the last equality follows from \eqref{A:A':A''}. \eqref{equality:col0:17dec2019} then follows from the 
combination of \eqref{value:LHS} and \eqref{value:RHS:24dec2019}. \hfill\qed\medskip 

\subsection{Commutative diagram relating $\mathrm{row}_1\cdot(-)\cdot\mathrm{col}_0$ and 
\underline{$\mathrm{row}$}$_1\cdot(-)\cdot$\underline{$\mathrm{col}$}$_0$ ((M4) in \eqref{diagram:main:mod})}
\label{sect:4:4:crm} 

\begin{lem}
Let $\mu\in\mathbf k^\times$ and $\Phi\in\mathsf M_\mu(\mathbf k)$. One has 
\begin{equation}\label{equality:row:localized:17dec2019}
M_{1\times 3}((\mathrm{comp}_{(\mu,\Phi)}^{\mathcal V_{\mathrm{loc}},(1)})^{\otimes2})\big((X_1-1)^{-1}(1-Y_1^{-1})^{-1}\underline{\mathrm{row}}_1 \big)
=B_\Phi(e_1f_1)^{-1}\mathrm{row}_1(\kappa_{(\mu,\Phi)}\overline P_{(\mu,\Phi)})^{-1}
\end{equation}
(equality in $M_{1\times 3}(\mathcal V^\DR[{1\over e_1}]^{\otimes2\wedge})$). 
\end{lem}

\proof This follows from \eqref{newequality:13122017}, from the fact that 
$(\mathrm{comp}_{(\mu,\Phi)}^{\mathcal V_{\mathrm{loc}},(1)})^{\otimes2}$ is 
an algebra automorphism, and from the equality $(\mathrm{comp}_{(\mu,\Phi)}^{\mathcal V,(1)})^{\otimes2}((X_1-1)(1-Y_1^{-1}))
=e_1f_1 u_{(\mu,\Phi)}B_\Phi^{-1}$, which follows from \eqref{relation:u:B:17dec2019} and \eqref{relation:XY:ef:17dec2019}. 
\hfill\qed\medskip 

\begin{lem} \label{lem:S4:27032019}
Let $\mu\in\mathbf k^\times$ and $\Phi\in\mathsf M_\mu(\mathbf k)$. The diagram 
\begin{equation}\label{diagram:S4}
\xymatrix{
M_3((\mathcal V^\B)^{\otimes 2\wedge})\ar^{(X_1-1)^{-1}(1-Y_1^{-1})^{-1}\underline{\mathrm{row}}_1\cdot(-)\cdot\underline{\mathrm{col}}_0}[rrrr]
\ar_{M_3((\mathrm{comp}^{\mathcal V,(10)}_{(\mu,\Phi)})^{\otimes2})}[d]
& & & &
{\mathcal M}^\B[\frac{1}{X_1-1}]^{\otimes2\wedge}\ar^{(\mathrm{comp}^{{\mathcal M}_{\mathrm{loc}},(10)}_{(\mu,\Phi)})^{\otimes2}}[dd] \\
M_3((\mathcal V^\DR)^{\otimes2\wedge})
\ar_{(\kappa_{(\mu,\Phi)}\overline P_{(\mu,\Phi)})^{-1}\cdot(-)\cdot R_{(\mu,\Phi)}^{-1}}[d] & & & & \\
M_3((\mathcal V^\DR)^{\otimes2\wedge})\ar_{(e_1f_1)^{-1}\mathrm{row}_1\cdot(-)\cdot\mathrm{col}_0}[rr] 
& & {\mathcal M}^\DR[\frac{1}{e_1}]^{\otimes2\wedge}\ar_{B_\Phi\cdot(-)}[rr] 
& & {\mathcal M}^\DR[\frac{1}{e_1}]^{\otimes2\wedge} 
 }
\end{equation}
((M4) in \eqref{diagram:main:mod}) commutes, where $B_\Phi$ is as in \eqref{def:B:Phi}. 
\end{lem}

\proof Let $v\in M_3((\mathcal V^\B)^{\otimes 2\wedge})$. One has 
\begin{align*}
& (\mathrm{comp}_{(\mu,\Phi)}^{\mathcal M_{\mathrm{loc}},(10)})^{\otimes2}\big((X_1-1)^{-1}(1-Y_1^{-1})^{-1}\underline{\mathrm{row}}_1\cdot v\cdot\underline{\mathrm{col}}_0 \big)
\\ & =M_{1\times 3}((\mathrm{comp}_{(\mu,\Phi)}^{\mathcal V_{\mathrm{loc}},(1)})^{\otimes2})\big((X_1-1)^{-1}(1-Y_1^{-1})^{-1}\underline{\mathrm{row}}_1 \big) \cdot 
M_3((\mathrm{comp}_{(\mu,\Phi)}^{\mathcal V,(1)})^{\otimes2}\big(v\big))\cdot\\ & \cdot 
M_{3\times 1}((\mathrm{comp}_{(\mu,\Phi)}^{\mathcal M,(10)})^{\otimes2})\big(\underline{\mathrm{col}}_0 \big) 
\\ &  =M_{1\times 3}((\mathrm{comp}_{(\mu,\Phi)}^{\mathcal V_{\mathrm{loc}},(1)})^{\otimes2})\big((X_1-1)^{-1}(1-Y_1^{-1})^{-1}\underline{\mathrm{row}}_1 \big) \cdot 
M_3((\mathrm{comp}_{(\mu,\Phi)}^{\mathcal V,(10)})^{\otimes2}\big(v\big))\cdot
\\ & \cdot 
\Phi(e_0,e_1)^{-1} \Phi(f_0,f_1)^{-1} 
M_{3\times 1}((\mathrm{comp}_{(\mu,\Phi)}^{\mathcal M,(10)})^{\otimes2})\big(\underline{\mathrm{col}}_0 \big) 
\\ & 
=
B_\Phi(e_1f_1)^{-1}\mathrm{row}_1(\kappa_{(\mu,\Phi)}\overline P_{(\mu,\Phi)})^{-1} 
\cdot M_3((\mathrm{comp}_{(\mu,\Phi)}^{\mathcal V,(10)})^{\otimes2})(v)\cdot 
R_{(\mu,\Phi)}^{-1}\mathrm{col}_0, 
\end{align*}
where the first identity follows from Lemma \ref{module:structure:compMloc:compVloc:17dec2019}, the second 
identity follows from \eqref{relation:comp10:comp1:16dec2019}, and the last identity follows from 
\eqref{equality:row:localized:17dec2019} and \eqref{equality:col0:17dec2019}. This implies
the commutation of the diagram.  \hfill\qed\medskip 

\subsection{Associators and harmonic module coproducts
}\label{sect:aamhc:24dec2019}

\begin{thm}\label{thm:new:crm}
Let $\mu\in\mathbf k^\times$ and $\Phi\in\mathsf M_\mu(\mathbf k)$. Then the
diagram 
$$
\xymatrix{
\hat{\mathcal M}^\B
\ar^{{\hat\Delta}^{\mathcal M,\B}}[rrr]
\ar_{\mathrm{comp}^{\mathcal M,(10)}_{(\mu,\Phi)}}[d]
&&&({\mathcal M}^\B)^{\otimes 2\wedge}
\ar^{(\mathrm{comp}^{\mathcal M,(10)}_{(\mu,\Phi)})^{\otimes 2}}[d]
\\
\hat{\mathcal M}^\DR
\ar_{\!\!{\hat\Delta}^{\mathcal M,\DR}}[r]
&
({\mathcal M}^\DR)^{\otimes 2\wedge}
\ar_{B_\Phi\cdot(-)}[rr]
&&
({\mathcal M}^\DR)^{\otimes 2\wedge}
}
$$
commutes, where $B_\Phi$ is given by \eqref{def:B:Phi}.  
\end{thm}

\proof
Consider the following diagram. It is divided into the subdiagrams (M1), ..., (M5).  
\begin{equation}\label{diagram:main:mod}
{\xymatrix{
{\hat{\mathcal M}^\B} 
\ar^{{\hat\Delta}^{\mathcal M,\B}}[rr]
\ar^*[@]{\vbox to 4pt{\hbox to 4pt {$\!\!\!\!\!\!\scriptstyle{\mathrm{comp}^{\mathcal M,(10)}_{(\mu,\Phi)}}$}}}[dddd]
& & { ({\mathcal M}^\B)^{\otimes2\wedge}}\ar@{^(->}[rr]& & 
F^{-1}({{\mathcal M}^\B[\frac{1}{X_1-1}])^{\otimes 2\wedge}}
\ar^{(\mathrm{comp}^{{\mathcal M}_{\mathrm{loc}},(10)}_{(\mu,\Phi)})^{\otimes2}}[dddd] 
\\
&\hat{\mathcal V}^\B
\ar@{}[ur]|{\mathrm{(M1)}}
\ar@{}[dr]|{\mathrm{(M3)}}
\ar@{}[dddl]|{\mathrm{(M2)}}
\ar^{\!\!\!\!\!\!\!\underline{\hat\BB}}[r]
\ar_{(-)\cdot 1_\B}[ul]
\ar^*[@]{\vbox to 4pt{\hbox to 4pt {$\!\!\!\!\!\!\scriptstyle{\mathrm{comp}^{\mathcal V,(10)}_{(\mu,\Phi)}}$}}}[dd]
&
{ M_3((\mathcal V^\B)^{\otimes 2\wedge})}
\ar^*[@]{\vbox to 4pt{\hbox to 10pt {$\!\!\!\!\!\!\!\!\!\!\!\!\!\!\!\!\!\!\!\!\!\!\!\!\!\!\!\!\!\!\!\!\!\!\!\!\!\scriptstyle{(X_1-1)^{-1}(1-Y_1^{-1})^{-1}\underline{\mathrm{row}}_1\cdot(\text{-})\cdot\underline{\mathrm{col}}_0}$}}}[urr]
\ar^{M_3((\mathrm{comp}^{\mathcal V,(10)}_{(\mu,\Phi)})^{\otimes2})}[d]
\ar@{}[ddrr]|{\mathrm{(M4)}}&&
\\
&&
{M_3((\mathcal V^\DR)^{\otimes2\wedge})}
\ar^{(\kappa_{(\mu,\Phi)}\overline P_{(\mu,\Phi)})^{-1}\cdot (\text{-})\cdot R_{(\mu,\Phi)}^{-1}}[d]
&&
\\
& \hat{\mathcal V}^\DR
\ar^{\!\!\!\!\!\!\hat\BB}[r]
\ar[dl]
\ar@{}[dr]|{\mathrm{(M5)}}
& M_3((\mathcal V^\DR)^{\otimes2\wedge})
\ar^*[@]{\vbox to 4pt{\hbox to 10pt {$\!\!\!\!\!\!\!\!\!\!\!\!\!\!\scriptstyle{(e_1f_1)^{-1}\mathrm{row}_1\cdot(\text{-})\cdot\mathrm{col}_0}$}}}[rd]
&&
\\
\hat{\mathcal M}^\DR
\ar_{\hat\Delta^{\mathcal M,\DR}}[rr]
\ar@{}^*[@]{{\hbox to 0pt{\vbox to 10pt{$\!\!\!\!\!\!\!\!\!\!\!\!\scriptstyle{(-)\cdot 1_\DR}$}}}}[ur] 
&&({\mathcal M}^\DR)^{\otimes2\wedge}\ar@{^(->}[r]&
({\mathcal M}^\DR[\frac{1}{e_1}]_{\geq-1})^{\otimes2\wedge}
\ar_{\scriptstyle{B_\Phi\cdot(\text{-})}}[r]
&
({\mathcal M}^\DR[\frac{1}{e_1}]_{\geq-1})^{\otimes2\wedge}
}}
\end{equation}
The commutativity of subdiagram (M1) (resp. (M2), (M3), (M4), (M5)) follows from Lemma \ref{lemma:completion:10dec2019} 
(resp. Lemma \ref{lem:com:pi:V:M:comp:17dec2019}, Lemma \ref{lem:S3:27032019}, Lemma \ref{lem:S4:27032019}, 
Lemma \ref{lem:completion:diag:LA}). 

Using the commutativities of these diagrams in the following order M1-M4-M3-M5-M2, we obtain that the composition of the external 
square with the morphism $\pi^\B:\hat{\mathcal V}^\B\to\hat{\mathcal M}^\B$ is commutative. Since this morphism is surjective, 
this implies that the external diagram commutes. 

The commutativity of the announced diagram then follows from Lemma \ref{compat:comM:compMloc:17dec2019} and 
from the fact that the injection $(\mathcal M^\DR)^{\otimes2\wedge}\hookrightarrow(\mathcal M^\DR[{1\over e_1}])^{\otimes2\wedge}$ 
intertwines the left multiplication $B_\Phi\cdot(-)$ on both sides. \hfill\qed\medskip 

\newpage 

\printindex


\begin{thebibliography}{EFu2}

\bibitem[An]{An} Y. Andr\'e,
{\it Une introduction aux motifs (motifs purs, motifs mixtes, p\'eriodes). } 
Panoramas et Synth\`eses, 17. Soci\'et\'e Math\'ematique de France, Paris, 2004. 

\bibitem[Ar]{A} E. Artin,
{\it Theory of braids}, Ann. of Math. (2) 48, (1947). 101--126. 

\bibitem[BN]{BN} D. Bar-Natan,
{\it On associators and the Grothendieck-Teichm\"uller group I},
Selecta Mathematica, New Series 
4 (1998), 183--212. 

\bibitem[Bir]{Bir} J. Birman, 
{\it Braids, links, and mapping class groups},
Annals of Mathematics Studies, No. 82. Princeton University Press, 
Princeton, N.J.; University of Tokyo Press, Tokyo, 1974.

\bibitem[Bbk]{Bbk} N. Bourbaki, {\it \'El\'ements de math\'ematique. Fasc. XXXVII. Groupes et alg\`ebres de Lie. 
Chapitre II: Alg\`ebres de Lie libres. Chapitre III: Groupes de Lie,} Actualit\'es Scientifiques et Industrielles, 
No. 1349. Hermann, Paris, 1972. 

\bibitem[De]{Del:P1} P. Deligne, 
{\it Le groupe fondamental de la droite projective moins trois points.} Galois groups over $\mathbb Q$, 72--297,
MSRI publications, 16, Springer-Verlag, 1989.

\bibitem[DeG]{DG} P. Deligne, A. Goncharov, 
Groupes fondamentaux motiviques de Tate mixte. 
Ann. Sci. Ecole Norm. Sup. (4) 38 (2005), no. 1, 1--56. 

\bibitem[DeT]{DT} P. Deligne, T. Terasoma,
{\it Harmonic shuffle relation for associators}, preprint (2005). 

\bibitem[Dr]{Dr} V. Drinfeld,
{\it On quasitriangular quasi-Hopf algebras and on a group that is closely connected with 
$\mathrm{Gal}(\overline{\mathbb Q}/\mathbb Q)$},
Leningrad Math. J. 2 (1991), no. 4, 829--860.  

\bibitem[E]{E} B. Enriquez,
{\it On the Drinfeld generators of $\mathfrak{grt}_1(k)$ and $\Gamma$-functions for associators}, 
Math. Res. Lett. 13 (2006), no. 2-3, 231--243.

\bibitem[EF0]{EFu} B. Enriquez, H. Furusho, 
{\it A stabilizer interpretation of double shuffle Lie algebras, }, 
International Mathematics Research Notices 22 (2018), 6870-6907. 

\bibitem[EF2]{EF2} B. Enriquez, H. Furusho, {\it The Betti side of the double shuffle theory. II. Double shuffle relations for
associators.} Preprint arXiv:1807.07786, v3.

\bibitem[EF3]{EF3} B. Enriquez, H. Furusho, {\it The Betti side of the double shuffle theory. III. Bitorsor structures.} Preprint 
arXiv:1908.00444, v3.  

\bibitem[FaB]{FvB} E. Fadell, J. Van Buskirk,
{\it The braid groups of $E^2$ and $S^2$}, 
Duke Math. J. 29 (1962), 243--257. 

\bibitem[Fox]{Fox} R. Fox, {\it Free Differential Calculus, I: Derivation in the Free Group Ring},
Ann. of Math. (3) 57 (1969). 547--560.

\bibitem[F1]{Fu0} H. Furusho, 
{\it The multiple zeta value algebra and the stable derivation algebra.}
Publ. Res. Inst. Math. Sci. 39 (2003), no. 4, 695--720.

\bibitem[F2]{F:pentagon} H. Furusho,
{\it Pentagon and hexagon equations}, 
Ann. of Math. (2) 171 (2010), no. 1, 545--556. 

\bibitem[F3]{F} H. Furusho,
{\it Double shuffle relation for associators},
Ann. of Math. (1) 174 (2011), no. 1, 341--360. 

\bibitem[G]{G} A. Goncharov, 
{\it Multiple polylogarithms, cyclotomy and modular complexes.}
Math. Res. Lett. 5 (1998), no. 4, 497--516.

\bibitem[IKZ]{IKZ} K. Ihara, M. Kaneko, D. Zagier, 
{\it Derivation and double shuffle relations for multiple zeta values.}
Compos. Math. 142 (2006), no. 2, 307--338. 

\bibitem[Ih1]{Ih:G} Y. Ihara,
{\it Automorphisms of pure sphere braid groups and Galois representations},
The Grothendieck Festschrift, 
Vol. II, 353--373, Progr. Math., 87, Birkh\"auser Boston, Boston, MA, 1990.

\bibitem[Ih2]{Ih} Y. Ihara,
{\it On the stable derivation algebra associated with some braid groups}, 
Israel J. Math. 80 (1992), no. 1-2, 135--153. 

\bibitem[LeM]{LM} T.Q.T. Le,  J. Murakami, {\it Kontsevich's integral for the Kauffman polynomial.} 
Nagoya Math. J. 142 (1996), 39--65.

\bibitem[LoS]{LS} P. Lochak, L. Schneps,
{\it The Grothendieck--Teichm\"uller group and automorphisms of braid groups},
The Grothendieck theory of dessins d'enfants (Luminy, 1993), 323--358, London Math. Soc. Lecture Note Ser., 200, 
Cambridge Univ. Press, Cambridge, 1994. 

\bibitem[R]{Rac} G. Racinet,
{\it Doubles m\'elanges des polylogarithmes multiples aux racines de l'unit\'e}, 
Publ. Math. Inst. Hautes \'Etudes Sci. No. 95 (2002), 185--231. 

\bibitem[Wei]{Wei} C. Weibel, An introduction to homological algebra, Cambridge Studies in Advanced Mathematics 38, 
Cambridge University Press, 1994. 

\end{thebibliography}
\end{document}